\documentclass{amsart}

\usepackage{amsmath,amssymb,amsfonts,amssymb,amsthm}

\usepackage{verbatim}
\usepackage[usenames]{color}
\usepackage{hyperref}
\usepackage{url}
\usepackage{tikz,tikz-qtree,ifthen,cancel}
\usepackage{array,tikz-qtree,ifthen,cancel}
\usepackage{graphicx}
\usepackage{adjustbox}
\usepackage{amsthm,graphicx,tikz,appendix,tikz-qtree,ifthen,cancel}
\usetikzlibrary{calc,shapes,patterns,positioning}

\newtheorem{thm}{Theorem}[section]

\newtheorem{mainthm}[thm]{Main Theorem}
\newtheorem{lem}[thm]{Lemma}

\newtheorem{observation}[thm]{Observation}
\newtheorem*{thm7.3}{Theorem \ref{thm.MillikenSWP}}
\newtheorem*{thmR2}{Theorem \ref{thm.mainRamsey}}
\newtheorem*{thmfinalthm}{Theorem \ref{finalthm}}
\newtheorem{claim}{Claim}
\newtheorem{fact}[thm]{Fact}

\theoremstyle{remark}
\newtheorem{rem}[thm]{Remark}

\theoremstyle{definition}
\newtheorem{defn}[thm]{Definition}
\newtheorem{conv}[thm]{Convention}

\newtheorem{notation}[thm]{Notation}
\newtheorem{term}[thm]{Terminology}
\newtheorem{assumption}[thm]{Assumption}
\newtheorem{example}[thm]{Example}
\newtheorem{question}[thm]{Question}

\theoremstyle{remark}

\newcommand{\al}{\alpha}
\newcommand{\om}{\omega}

\newcommand{\sse}{\subseteq}
\newcommand{\contains}{\supseteq}
\newcommand{\forces}{\Vdash}

\DeclareMathOperator{\SP}{SP}

\DeclareMathOperator{\ran}{ran}
\DeclareMathOperator{\dom}{dom}

\DeclareMathOperator{\Spl}{Spl}

\DeclareMathOperator{\Sim}{Sim}
\DeclareMathOperator{\Ext}{Ext}

\DeclareMathOperator{\splitpred}{splitpred}
\DeclareMathOperator{\MPE}{MPE}

\newcommand{\re}{\restriction}
\newcommand{\bP}{\mathbb{P}}

\newcommand{\bD}{\mathbb{D}}

\newcommand{\bN}{\mathbb{N}}
\newcommand{\bQ}{\mathbb{Q}}

\newcommand{\bT}{\mathbb{T}}
\newcommand{\bS}{\mathbb{S}}

\newcommand{\G}{\mathrm{G}}
\newcommand{\HH}{\mathrm{H}}

\newcommand{\A}{\mathrm{A}}
\newcommand{\B}{\mathrm{B}}
\newcommand{\C}{\mathrm{C}}
\newcommand{\D}{\mathrm{D}}
\newcommand{\E}{\mathrm{E}}

\newcommand{\Seq}{\mathcal{S}}

\newcommand{\ssim}{\stackrel{s}{\sim}}

\newcommand{\sssim}{\stackrel{ss}{\sim}}

\newcommand{\ra}{\rightarrow}

\newcommand{\llra}{\longleftrightarrow}
\newcommand{\Llra}{\Longleftrightarrow}
\newcommand{\lgl}{\langle}
\newcommand{\rgl}{\rangle}

\newcommand{\Erdos}{Erd{\H{o}}s}
\newcommand{\Fraisse}{Fra{\"{i}}ss{\'{e}}}
\newcommand{\Lauchli}{L{\"{a}}uchli}
\newcommand{\Masulovic}{Ma{\v{s}}ulovi{\'{c}}}
\newcommand{\Nesetril}{Ne{\v{s}}et{\v{r}}il}

\newcommand{\Rodl}{R{\"{o}}dl}

\newcommand{\noprint}[1]{\relax}


\title[The Ramsey theory of  Henson graphs]{The Ramsey theory of
Henson graphs}

\author{Natasha  Dobrinen}
\address{Department of Mathematics\\
 University of Denver \\
C.M.\ Knudson Hall, Room 300\\
2390 S.\ York St.\\ Denver, CO \ 80208 U.S.A.}
\email{natasha.dobrinen@du.edu}
  \urladdr{\url{http://cs.du.edu/~ndobrine}}
\thanks{This research was supported  by National Science Foundation Grants DMS-1600781 and DMS-1901753}

\subjclass[2010]{05D10, 05C55,  05C15,   03C15, 03E75, 05C05}

\begin{document}

\begin{abstract}
Analogues of Ramsey's Theorem for infinite structures
such as the rationals or the Rado graph have been known for some time.
In this context, one looks for optimal bounds, called degrees,  for the number of colors in an isomorphic substructure rather than one color, as that is often impossible. 
Such  theorems for 
Henson  graphs however remained elusive, due to lack of techniques for handling forbidden cliques. 
 Building on the author's recent result for 
 the triangle-free Henson graph, 
we  prove
 that for each $k\ge 4$,
  the $k$-clique-free Henson graph
 has finite big Ramsey degrees, 
the appropriate    analogue of Ramsey's Theorem.

We  develop  a   method for coding copies of  Henson graphs into a new class of trees, called strong coding trees,  and
prove   Ramsey
 theorems  for  these  trees which are applied to deduce finite big Ramsey degrees.
  The approach here      provides  a general methodology   opening
  further study of big Ramsey degrees for
   ultrahomogeneous structures.
 The results have bearing on topological dynamics via  work of Kechris, Pestov, and Todorcevic  
 and of Zucker.
\end{abstract}

\maketitle

\section{Overview}

A central program of the theory of infinite structures
is to find which structures have  partition properties  resembling  Ramsey's Theorem.
In this context, one colors the copies of a  finite structure $\A$ inside the infinite structure $\mathcal{S}$ into finitely many colors and looks for an
infinite substructure  $\mathcal{S}'$, isomorphic to  $\mathcal{S}$,
in which   the  copies of $\A$ have the same color.
A wide collection of infinite structures have the Ramsey property for colorings of singletons.
However,
even the rationals as a linearly ordered structure do not have the Ramsey property for colorings of pairsets, as seen by Sierpi\'{n}ski's
two-coloring of pairs of rationals  where   each subcopy of the rationals retains both colors on its pairsets.
This leads to the following
 question:
 \begin{question}
Given an infinite structure $\mathcal{S}$ and
a finite substructure $\mathrm{A}$,
is there a  positive integer $T(\mathrm{A}, \mathcal{S})$ such that for any coloring of all copies of $\mathrm{A}$ in $\mathcal{S}$ into finitely many colors,
there is a substructure $\mathcal{S}'$ of $\mathcal{S}$, isomorphic to  $\mathcal{S}$,
in which all copies of $\mathrm{A}$  take no more than $T(\mathrm{A}, \mathcal{S})$ colors?
\end{question}
The smallest such  number $T(\mathrm{A}, \mathcal{S})$, when it exists, is called the
{\em big Ramsey degree} of $\A$ in $\mathcal{S}$.
Research in this area
has gained recent momentum, as it was
 highlighted by
 Kechris, Pestov, and Todorcevic
 in \cite{Kechris/Pestov/Todorcevic05}.
Big Ramsey degrees  have  implications for topological dynamics, as shown in \cite{Kechris/Pestov/Todorcevic05} and  further developed in Zucker's work  \cite{Zucker19}.

In contrast to finite structural Ramsey theory,
the development of Ramsey theory for infinite structures has  progressed quite slowly.
After  Sierpi\'{n}ski's coloring  for pairs of rationals, work of  Laver
and Devlin (see \cite{DevlinThesis}) established the exact big Ramsey degrees for  finite sets of rationals by 1979.
In the mid 1970's,  \Erdos, Hajnal, and Pos\'{a}
began work on the big Ramsey degrees of the Rado graph, establishing
 an analogue of Sierpi\'{n}ski's coloring for edges in  \cite{Erdos/Hajnal/Posa75}.
Building on work of Pouzet and Sauer in \cite{Pouzet/Sauer96}, the   full  Ramsey theory of the Rado graph for colorings of copies of any finite graph
was finally established
 in 2006  in the two papers
  \cite{Sauer06} by Sauer  and \cite{Laflamme/Sauer/Vuksanovic06}  by Laflamme, Sauer, and Vuksanovic.
Around that time, driven  by  the interest  generated by
  \cite{Kechris/Pestov/Todorcevic05}, the Ramsey theory of   other
 ultrahomogeneous structures was established in
  \cite{NVT08} and \cite{Laflamme/NVT/Sauer10}.
 A principal  component in the work in  \cite{DevlinThesis} and \cite{Sauer06} is a Ramsey  theorem for strong trees due to Milliken  \cite{Milliken79}, while
\cite{Laflamme/NVT/Sauer10} depended on the authors' development of a colored
 version  of this theorem.
The lack of similar  means for coding infinite  structures and the lack of   Ramsey theorems  for such  coded structures
 have been the  largest obstacles in the further development of this area, especially for ultrahomogeneous structures with forbidden configurations.
As stated in   Nguyen Van Th\'{e}'s   habilitation   \cite{NVTHabil},
``so far, the lack of tools to represent ultrahomogeneous structures is the major obstacle towards a better understanding of their infinite partition properties."

In this paper, we prove that for each $k\ge 4$, the  $k$-clique-free  Henson graph $\mathcal{H}_k$ has finite big Ramsey degrees, extending work of the author in \cite{DobrinenJML20} for the triangle-free Henson graph.
Given   $k\ge 3$,
the Henson graph $\mathcal{H}_k$ is the universal ultrahomogeneous $K_k$-free graph; that is, the $k$-clique-free analogue of the Rado graph.
The only prior work on the big Ramsey degrees of $\mathcal{H}_k$ for $k\ge 4$  was work of
 El-Zahar and Sauer
   in  \cite{El-Zahar/Sauer89} for vertex colorings in 1989.
In \cite{DobrinenJML20}, we proved that the triangle-free Henson graph has finite big Ramsey degrees.
The work in this paper follows  the general outline in \cite{DobrinenJML20}, but
the extension of Ramsey theory to all Henson graphs
required  expanded ideas, a better understanding of the nature of coding structures with forbidden configurations, and many new lemmas.
 This article
  presents a unified framework for the Ramsey theory of Henson graphs.
We develop new
  techniques for coding copies of $\mathcal{H}_k$ via {\em strong $\mathcal{H}_k$-coding trees} and  prove    Ramsey
 theorems  for these trees, forming a family of   Milliken-style  theorems.
  The approach here   streamlines  the one  in  \cite{DobrinenJML20} for $\mathcal{H}_3$ and     provides  a general methodology   opening
  further study of big Ramsey degrees for ultrahomogeneous structures with  and without forbidden configurations.


\section{Introduction}

The field  of
Ramsey theory
was established by
 the following celebrated result.

\begin{thm}[Infinite Ramsey Theorem, \cite{Ramsey30}]\label{thm.RamseyInfinite}
Given positive integers $m$ and $j$,
suppose the  collection of  all $m$-element subsets of $\mathbb{N}$
is colored by  $j$ colors.
Then there is an infinite set  $N$ of natural numbers
such that all $m$-element subsets of $N$  have the same color.
\end{thm}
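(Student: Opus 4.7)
The plan is to prove the theorem by induction on the subset size $m$, reducing the case of $m$-subsets to that of $(m-1)$-subsets by constructing a descending chain of infinite ``reservoirs.'' The base case $m=1$ is immediate from the pigeonhole principle, since any $j$-coloring of $\mathbb{N}$ must have an infinite monochromatic class.

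For the inductive step, assume the result for $m-1$ and fix a coloring $c \colon [\mathbb{N}]^m \to \{1,\dots,j\}$. I would build inductively a strictly increasing sequence $n_0 < n_1 < \cdots$, a nested chain of infinite sets $\mathbb{N} = N_0 \supseteq N_1 \supseteq \cdots$, and a sequence of colors $c_0, c_1, \ldots \in \{1,\dots,j\}$ such that for every $i$: $n_i = \min N_i$, $N_{i+1} \subseteq N_i \setminus \{n_i\}$, and every $(m-1)$-subset $S$ of $N_{i+1}$ satisfies $c(\{n_i\} \cup S) = c_i$. At stage $i$, having $N_i$ in hand, set $n_i := \min N_i$ and apply the inductive hypothesis to the auxiliary $j$-coloring $S \mapsto c(\{n_i\} \cup S)$ on $[N_i \setminus \{n_i\}]^{m-1}$ to extract the required $N_{i+1}$ together with its common color $c_i$.

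Once the construction is complete, a final application of pigeonhole to the sequence $(c_i)_{i \in \mathbb{N}}$ produces an infinite index set $I$ on which $c_i$ takes a single value $c^*$. The set $N := \{n_i : i \in I\}$ is the desired monochromatic infinite set: for any $m$-subset $\{n_{i_1}, \dots, n_{i_m}\}$ of $N$ with $i_1 < \cdots < i_m$, the tail $\{n_{i_2}, \dots, n_{i_m}\}$ lies in $N_{i_1 + 1}$ by the nesting, so $c(\{n_{i_1}, \dots, n_{i_m}\}) = c_{i_1} = c^*$. The only real subtlety is this nesting bookkeeping: one must ensure each later chosen element lies inside every earlier reservoir, which is what upgrades the ``same-color-given-first-element'' property built into the construction into genuine monochromaticity. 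Aside from that, the argument is routine.
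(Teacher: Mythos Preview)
Your proof is correct and is the standard inductive argument for the Infinite Ramsey Theorem. The paper does not supply its own proof of this statement; it is quoted as a classical result with a citation to Ramsey's original 1930 paper, so there is nothing to compare against.
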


From this,  Ramsey  deduced  the following finite version,  which also can be proved directly.  
Throughout the paper, each natural number is identified with the set of its predecessors.

\begin{thm}[Finite Ramsey Theorem, \cite{Ramsey30}]\label{thm.RamseyFinite}
Given positive integers $m,n,j$ with
$m\le n$, there is an integer $r>n$ such that for any coloring of the $m$-element subsets of $r$ into $j$ colors,
there is a subset  $N\sse r$ of cardinality $n$  such that  all $m$-element subsets of $N$ have the same color.
\end{thm}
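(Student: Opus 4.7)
The plan is to deduce the finite statement from the Infinite Ramsey Theorem (Theorem \ref{thm.RamseyInfinite}) via a compactness argument, following the route Ramsey himself used. I would argue by contradiction. Fix $m$, $n$, $j$ with $m\le n$ and suppose no $r>n$ has the stated property. Then for each $r>n$ there is a coloring $c_r:[r]^m\to\{1,\ldots,j\}$ admitting no monochromatic $n$-element subset; call such a $c_r$ \emph{bad}. The objective is to assemble these bad colorings into a single coloring $c:[\mathbb{N}]^m\to\{1,\ldots,j\}$ whose restriction to every finite initial segment agrees with some bad $c_r$, thereby contradicting the infinite version.

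To carry out the assembly I would build a finitely branching tree $T$ and apply K\"onig's Lemma. The nodes of $T$ at level $\ell$ are the colorings $\sigma:[\ell]^m\to\{1,\ldots,j\}$ such that $\sigma = c_r\restriction [\ell]^m$ for infinitely many $r>n$, ordered by extension. Since there are only $j^{\binom{\ell}{m}}$ colorings of $[\ell]^m$, the tree is finitely branching. The pigeonhole principle guarantees that every node $\sigma$ at level $\ell$ has a child: among the infinitely many $c_r$ restricting to $\sigma$ on $[\ell]^m$, infinitely many agree on a common extension to $[\ell+1]^m$. By K\"onig's Lemma, $T$ has an infinite branch, which encodes a coloring $c:[\mathbb{N}]^m\to\{1,\ldots,j\}$ with the property that for every $\ell$, some $r>\ell$ satisfies $c_r\restriction [\ell]^m = c\restriction [\ell]^m$.

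Now I apply Theorem \ref{thm.RamseyInfinite} to $c$ to obtain an infinite $M\subseteq\mathbb{N}$ all of whose $m$-subsets receive the same $c$-color. Picking any $N\subseteq M$ with $|N|=n$ and setting $r^{\ast} = \max N$, I choose $r>r^{\ast}$ with $c_r\restriction [r^{\ast}]^m = c\restriction [r^{\ast}]^m$. Then $N\subseteq [r]$ is an $n$-element set all of whose $m$-subsets are monochromatic for $c_r$, contradicting the badness of $c_r$.

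The main obstacle is essentially bookkeeping: verifying that the tree is finitely branching, that the pigeonhole step at each level produces a genuine child, and that the resulting branch coloring inherits bad behavior on arbitrarily long initial segments. There is no deep mathematical content beyond K\"onig's Lemma and the Infinite Ramsey Theorem, which is precisely why Ramsey was able to extract the finite statement as a corollary of the infinite one.
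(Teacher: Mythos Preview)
Your argument is correct and is precisely the standard compactness route the paper alludes to when it remarks, just before stating the theorem, that ``from this, Ramsey deduced the following finite version.'' The paper does not give its own proof of Theorem~\ref{thm.RamseyFinite}; it is simply cited as a classical result of Ramsey, so there is nothing further to compare. One trivial bookkeeping point: with the paper's von Neumann convention $r^\ast=\{0,\dots,r^\ast-1\}$, you should restrict to level $r^\ast+1$ rather than $r^\ast$ so that the element $\max N$ itself lies in the domain on which $c_r$ and $c$ agree; this does not affect the substance of the argument.
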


In both cases, we say that the coloring is {\em monochromatic} on
 $N$.
Interestingly, Theorem \ref{thm.RamseyFinite}  was motivated by Hilbert's  Entscheidungsproblem: to find a decision procedure deciding which formulas in first order logic are valid.
 Ramsey applied Theorem \ref{thm.RamseyFinite} to prove that  the validity, or lack of it, for
 certain types of formulas in first order logic (those with no existential quantifiers) can be ascertained algorithmically.
Later, Church and Turing  each showed that a general solution to Hilbert's problem is impossible, so Ramsey's success for the class of existential formulas is quite remarkable.
Ever since the inception of Ramsey theory, its connections with
logic have continually spurred  progress in both fields.
This phenomenon occurs once again
  in Sections \ref{sec.5} and \ref{sec.1SPOC}, where methods of logic are used to deduce Ramsey theorems.

Structural Ramsey theory   investigates which structures satisfy versions of  Ramsey's Theorem.
All structures in this paper are 
first order, and  embeddings are as in model theory. 
In this setting,  one tries to find a substructure  isomorphic to some fixed  structure on which the coloring is monochromatic.
Given  structures $\A$ and $\B$,
we write $\A\le \B$ if and only if there is an embedding of $\A$ into $\B$.
A substructure $\A'$ of  $\B$ is  called a  {\em copy} of $\A$  if and only if $\A'$ is the image of some embedding of $\A$ into $\B$.
The collection of all copies of $\A$ in $\B$ is denoted by  ${\B \choose \A}$.
Given structures  $\A,\B,\C$ with $\A\le \B\le \C$
and an integer $j\ge 1$, we write
\begin{equation}
C\ra (B)^A_j
\end{equation}
to mean that for each $c:{\C\choose \A}\ra j$, there is a $\B'\in {\C\choose \B}$ for which $c$ takes only one color on ${\B'\choose \A}$.
A  class $\mathcal{K}$ of finite structures  is said to have  the {\em Ramsey property}
if given  $\A,\B\in\mathcal{K}$ with $\A\le \B$,
  for any integer $j\ge 1$,
there is some  $\C\in\mathcal{K}$ for which $B\le C$ and
$\C\ra (\B)^{\A}_j$.

Some classic examples of classes of  structures
 with the Ramsey property include
finite Boolean algebras (Graham and Rothschild \cite{Graham/Rothschild71}),
finite vector spaces over a finite field (Graham, Leeb, and Rothschild \cite{Graham/Leeb/Rothschild72} and
\cite{Graham/Leeb/Rothschild73}),
 finite  linearly ordered relational structures  (independently, Abramson and Harrington, \cite{Abramson/Harringon78} and \Nesetril\ and \Rodl, \cite{Nesetril/Rodl77}, \cite{Nesetril/Rodl83}),
in particular, the class of finite linearly ordered graphs.
The papers \cite{Nesetril/Rodl77} and \cite{Nesetril/Rodl83} further  proved
 that  all set-systems of  finite linearly ordered relational structures omitting some irreducible substructure have the Ramsey property.
This includes
 the  classes of finite linearly ordered graphs omitting $k$-cliques, denoted $\mathcal{G}_k^{<}$, for each $k\ge 3$.
 \Fraisse\ classes are  natural objects for  structural Ramsey theory investigations, for as shown by \Nesetril, any class with the Ramsey property must satisfy the amalgamation property.
Since \Fraisse\ theory is not central to the proofs  in this article, we refer the interested reader to \cite{Fraisse54} and Section 2 of the more recent \cite{Kechris/Pestov/Todorcevic05} for background; the properties of the specific examples contained in this article will be clear.

Most   classes of finite unordered structures do not have
the Ramsey property.
However, if  equipping
the class with an additional linear order  produces  the Ramsey property, then some
 remnant of it  remains in the unordered reduct.
 This is the idea behind {\em small Ramsey degrees}.
Following notation  in \cite{Kechris/Pestov/Todorcevic05},
given any \Fraisse\ class $\mathcal{K}$ of finite structures,
for  $A\in\mathcal{K}$,
$t(A,\mathcal{K})$ denotes  the smallest number $t$, if it exists, such that
for each $B\in \mathcal{K}$ with $A\le B$ and for each $j\ge 2$,
there is some $C\in\mathcal{K}$ into which $B$ embeds  such that for
 any coloring $c:{C \choose A}\ra j$,
 there is a $B'\in {C \choose B}$ such that the restriction of $c$ to ${B'\choose A}$ takes no more than $t$ colors.
In the arrow notation, this is written as
\begin{equation}
C\ra (B)^A_{j,t(A,\mathcal{K})}.
\end{equation}
A class  $\mathcal{K}$ has  {\em finite (small) Ramsey degrees} if
for each $\A\in\mathcal{K}$ the number
 $t(\A,\mathcal{K})$  exists.
The number $t(\A,\mathcal{K})$ is called the {\em Ramsey degree of $A$} in $\mathcal{K}$ \cite{Fouche98}.
Notice that $\mathcal{K}$ has the Ramsey property if and only if $t(A,\mathcal{K})=1$ for each $A\in\mathcal{K}$.

The  connection between \Fraisse\ classes with finite small Ramsey degrees and ordered expansions is made explicit in Section 10 of \cite{Kechris/Pestov/Todorcevic05},
where it is shown that if an ordered expansion $\mathcal{K}^{<}$ of a \Fraisse\ class $\mathcal{K}$ has the Ramsey property,
then $\mathcal{K}$ has finite small Ramsey degrees. Furthermore, the degree of $\A\in\mathcal{K}$  can be computed from the number of non-isomorphic order expansions it has in $\mathcal{K}^{<}$.
Nguyen Van Th\'{e} has extended this to the more  general  notion of
pre-compact expansions (see \cite{NVTHabil}).
In particular, the
 classes of  finite  (unordered) graphs and finite (unordered) graphs omitting $k$-cliques have   finite small Ramsey degrees.

Continuing this expansion of Ramsey theory leads to investigations of which infinite structures have  properties similar to Theorem \ref{thm.RamseyInfinite}.
Notice that the  infinite homogeneous subset $N\sse\mathbb{N}$ in
 Theorem  \ref{thm.RamseyInfinite}
 is actually isomorphic to
$\mathbb{N}$ as a linearly ordered structure.
Ramsey theory on infinite structures  is concerned with finding  substructures isomorphic to the original infinite  structure in which a given coloring is as simple as possible.
Many infinite structures have been proved to be
 {\em indivisible}:  given a coloring of its single-element  substructures
into finitely many colors, there is an infinite substructure isomorphic to the original structure in which all single-element substructures have the same color.
The natural numbers and the rational numbers as linearly ordered structures are indivisible, the proofs being straightforward.
Similarly, it is folklore that the Rado graph is indivisible, the proof following naturally from the defining properties of this graph.

In contrast,  it took much more effort to prove  the indivisibility of the Henson graphs, and this was achieved first for the triangle-free Henson graphs in
 \cite{Komjath/Rodl86},  and for   all other Henson graphs in \cite{El-Zahar/Sauer89}.
When  one
considers colorings of structures of two or more elements, more complexity begins to emerge.
Even for  the simple structure of  the rationals, there is a coloring of pairsets  into two colors  such that each subset isomorphic to the rationals has pairsets in both colors.
This is  the infamous example of Sierpi\'{n}ski,
and it immediately  leads to the notion of big Ramsey degree.
We take the definition from \cite{Kechris/Pestov/Todorcevic05}, slightly changing some notation.

\begin{defn}[\cite{Kechris/Pestov/Todorcevic05}]\label{defn.bRd}
Given an infinite structure $\mathcal{S}$ and a finite substructure $\A\le \mathcal{S}$,
let $T(\A,\mathcal{S})$ denote the least integer $T\ge 1$, if it exists, such that
given any coloring of ${\mathcal{S}\choose \A}$ into finitely many colors, there is a
 substructure $\mathcal{S}'$ of $\mathcal{S}$, isomorphic to $\mathcal{S}$,  such that ${\mathcal{S}'\choose \A}$ takes no more than $T$ colors.
This may be written succinctly as
\begin{equation}\label{eq.bRd}
\forall j\ge 1,\ \ {\mathcal{S}}\ra ({\mathcal{S}})^{\A}_{j,T(\A,\mathcal{S})}.
\end{equation}
We say that
 $\mathcal{S}$ has {\em finite big Ramsey degrees} if for each finite substructure
$\A\le\mathcal{S}$,
there is an integer $T(A,\mathcal{S})\ge 1$
such that (\ref{eq.bRd}) holds.
\end{defn}

Infinite structures which have been investigated in this light include the rationals
(\cite{DevlinThesis}),
the Rado graph (\cite{Erdos/Hajnal/Posa75},
\cite{Pouzet/Sauer96}, \cite{Sauer06}, \cite{Laflamme/Sauer/Vuksanovic06}),
ultrametric spaces (\cite{NVT08}), the
rationals with  a fixed finite number of  equivalence relations,  and
 the tournaments $\bf{S}(2)$ and $\bf{S}(3)$
(\cite{Laflamme/NVT/Sauer10}),
and recently, the triangle-free   Henson graph (\cite{DobrinenJML20}).
These results will be discussed below.
See \cite{NVTHabil} for an overview of results on big Ramsey degrees obtained prior to 2013.
Each of these structures is {\em ultrahomogeneous}: any isomorphism between two finitely generated substructures can be extended to an automorphism of the infinite structure.
Recently, \Masulovic\ has
 widened the  investigation   of  big Ramsey degrees to    universal structures,  regardless of ultrahomogeneity, and proved transfer principles  in \cite{Masulovic18} from which big Ramsey degrees for one structure may be transferred to other categorically related structures.
More background on the development of Ramsey theory on infinite structures will be given below, but first, we present some recent  motivation
from topological dynamics for further exploration of big Ramsey degrees.

For the specialist, we briefly remark on 
connections between
topological dynamics  and Ramsey theory.
These connections  have been known for some time (see for instance
\cite{Glasner/Weiss02} and 
\cite{Pestov98_2})
and  many of the  previously known phenomena 
were subsumed in the work of
Kechris, Pestov, and Todorcevic in \cite{Kechris/Pestov/Todorcevic05}, where they proved
 several general correspondences between Ramsey theory and topological dynamics.
A \Fraisse\  class which
has at least one relation which is a linear order  is called a {\em \Fraisse\ order class}.
One of the main theorems  in \cite{Kechris/Pestov/Todorcevic05} (Theorem 4.7)
shows that  the extremely amenable
 closed subgroups of the infinite symmetric group $S_{\infty}$ are exactly those of the form Aut$(\mathbf{F})$, where $\mathbf{F}$
is the \Fraisse\ limit (and hence an ultrahomogeneous structure) of some \Fraisse\ order class satisfying the Ramsey property.
Another significant theorem (Theorem 10.8)
provides a way to compute the universal minimal flow of
topological groups which arise as the automorphism groups of \Fraisse\ limits of \Fraisse\ classes with the Ramsey property and the ordering property.
That the ordering property can be relaxed to the expansion property was proved by Nguyen Van Th\'{e} in \cite{NVT13}.

In \cite{Kechris/Pestov/Todorcevic05}, Kechris, Pestov, and Todorcevic also demonstrated how  big Ramsey degrees for \Fraisse\  structures $\mathbf{F}$ are related to
big oscillation degrees for their automorphism groups, Aut$(\mathbf{F})$.
Recently,
Zucker proved  in \cite{Zucker19} that
if  a  \Fraisse\ structure  $\mathbf{F}$ has finite big Ramsey degrees and moreover, $\mathbf{F}$  admits a big Ramsey structure,
then
any big Ramsey flow of Aut$(\mathbf{F})$ is a universal completion flow, and further,  any two universal completion flows are isomorphic.


\subsection{A brief history of big Ramsey degrees and main obstacles to its development}

In contrast to the robust  development  for  finite structures,
results on  the Ramsey theory of  infinite structures  have been meager and the development  quite slow.
Motivated by  Sierpi\'{n}ski's  coloring for pairs of rationals which admits no isomorphic copy in one color,
Laver investigated the more general problem of finding whether or not there are bounds for colorings of $m$-sized  subsets of rationals, for any positive integer $m$.
In the 1970's,
Laver showed that the rationals have finite big Ramsey degrees, finding good upper bounds.
Guided by Laver's results and methods,
Devlin found
the exact bounds  in
\cite{DevlinThesis}.
Interestingly, these numbers  turn  out to be coefficients of the Taylor series for the tangent function.
Around the same time,
 \Erdos, Hajnal, and Pos\'{a}  initiated investigations of the  Rado graph,  the universal ultrahomogeneous graph on countably many vertices.
 In 1975, they proved in \cite{Erdos/Hajnal/Posa75}
 that there is a coloring of edges into two colors in which
each subcopy of the Rado graph has edges in both colors.
That the upper bound for the big Ramsey degree of edges in the Rado graph is exactly two was proved much  later (1996)
 by
 Pouzet and Sauer in \cite{Pouzet/Sauer96}.
The  full  Ramsey theory of the Rado graph
was finally established a decade later  by
 Sauer in  \cite{Sauer06} and by Laflamme, Sauer, and Vuksanovic in \cite{Laflamme/Sauer/Vuksanovic06}.
 Together, these two papers  gave a full description of the big Ramsey degrees of the Rado graph in terms of types of certain trees.
A recursive procedure for computing  these numbers was given by Larson in \cite{Larson08} soon after.
 It is notable that while the big Ramsey degrees for the rationals are computed  by a closed formula,
 there is no closed formula producing the big Ramsey degrees for the Rado graph.

The successful completion of the Ramsey theory of the Rado graph  so soon after the work of
of Kechris, Pestov, and Todorcevic stimulated more interest in Ramsey theory of infinite structures, especially ultrahomogeneous structures,  which  are obtained  as limits of \Fraisse\ classes.
In 2008,
Nguyen Van Th\'{e}
investigated big Ramsey degrees for ultrahomogeneous ultrametric spaces.
Given $S$ a set of positive real numbers,
$\mathcal{U}_S$ denotes the class of all finite ultrametric spaces  with strictly positive distances in $S$.
Its \Fraisse\ limit, denoted
$\mathbf{Q}_S$, is called the {\em Urysohn space associated with} $\mathcal{U}_S$.
In  \cite{NVT08},
Nguyen Van Th\'{e} proved  that
$\mathbf{Q}_S$ has finite big Ramsey degrees whenever $S$ is finite.
Moreover, if $S$ is infinite, then any member of $\mathcal{U}_S$ of size greater than or equal to $2$ does not have a big Ramsey degree.
Soon after this,
 Laflamme, Nguyen Van Th\'{e}, and Sauer proved
in
\cite{Laflamme/NVT/Sauer10}
that enriched structures of the rationals, and two related directed graphs, have  finite big Ramsey degrees.
For each $n\ge 1$,
 $\mathbb{Q}_n$ denotes the structure $(\mathbb{Q},  Q_1,\dots, Q_n,<)$, where  $Q_1,\dots, Q_n$ are disjoint dense subsets of $\mathbb{Q}$  whose union is $\mathbb{Q}$.
This is the \Fraisse\ limit of the class $\mathcal{P}_n$ of all finite linear orders equipped with an equivalence relation with  $n$ many equivalence classes.
Laflamme, Nguyen Van Th\'{e}, and Sauer proved  that
each member of $\mathcal{P}_n$ has a finite big Ramsey degree in $\mathbb{Q}_n$.
Further,
using the bi-definability between $\mathbb{Q}_n$ and the circular directed graphs $\mathbf{S}(n)$, for $n=2,3$,
they proved that
 $\mathbf{S}(2)$ and  $\mathbf{S}(3)$
have finite big Ramsey degrees.
Central to these results is a colored verision of  Milliken's theorem which they  proved in order to deduce the big Ramsey degrees.
For more details,
we recommend the paper \cite{NVTHabil} containing a good overview
of  these results.





A common theme emerges when one looks at the proofs in \cite{DevlinThesis}, \cite{Sauer06}, and
\cite{Laflamme/NVT/Sauer10}.
The first two rely  in an essential way  on Milliken's Theorem,
(see Theorem \ref{thm.Milliken}  in Section \ref{sec.2}).
The third proves a new colored version of Milliken's Theorem and uses it to deduce the results.
The results in \cite{NVT08} use Ramsey's theorem.
This would lead one to conclude or at least conjecture that, aside from Ramsey's Theorem itself,  Milliken's Theorem contains the core combinatorial content of  big Ramsey degree results, at least for binary relational structures.
The lack of useful representations and lack of Milliken-style
 theorems  for infinite structures in general
pose  the two main obstacles to broader  investigations of  big Ramsey degrees.
Upon the author's initial interest in  the Ramsey theory  of the triangle-free Henson graph, these  two challenges
were pointed out to the author   by Todorcevic in 2012 and by Sauer in 2013;
this idea is also expressed in \cite{NVTHabil}, quoted in the Overview.


\subsection{Big Ramsey degrees for Henson graphs:  Main theorem  and prior results}

For  $k\ge 3$, the Henson graph $\mathcal{H}_k$ is the universal ultrahomogeneous $k$-clique free graph.
These graphs  were first constructed by
Henson  in 1971
in \cite{Henson71}.
It was later noticed that $\mathcal{H}_k$ is  isomorphic to the \Fraisse\ limit of the \Fraisse\ class of finite $k$-clique free graphs, $\mathcal{G}_k$.
Henson proved in \cite{Henson71} that
these graphs are {\em weakly indivisible}; given a coloring of the vertices into two colors, either there is a subgraph isomorphic to $\mathcal{H}_k$ in which all vertices have the first  color, or else every finite $k$-clique free graph has a copy whose vertices all have the second color.
 However, the indivisibility of  $\mathcal{H}_k$ took  longer  to prove.
 In 1986,
Komj\'{a}th and \Rodl\ proved
in \cite{Komjath/Rodl86}
 that
given a coloring of the vertices of $\mathcal{H}_3$ into finitely many colors, there is an induced subgraph isomorphic to $\mathcal{H}_3$ in which all vertices have the same color.
A few years later,
El-Zahar and Sauer proved
 more generally  that
$\mathcal{H}_k$  is indivisible for each $k\ge 4$ in  \cite{El-Zahar/Sauer89}.

Prior to the author's work in  \cite{DobrinenJML20},
the only further progress on big Ramsey degrees for Henson graphs was for edge relations on the triangle-free Henson graph.
In 1998,
Sauer proved in \cite{Sauer98} that the big Ramsey degree for edges in $\mathcal{H}_3$ is two.
There, progress stalled for lack of techniques.
This intrigued the author for several reasons.
Sauer and Todorcevic each stated to the author that the heart of the problem was to
find the correct analogue of Milliken's Theorem applicable to Henson graphs.
This would of course
 help solve the problem of whether Henson graphs have finite big Ramsey degrees, but it would moreover  have broader repercussions, as Ramsey theorems for trees are combinatorially strong objects and Milliken's Theorem has already found numerous applications.
 The problem was that it was unclear to experts what such an analogue of Milliken's Theorem should be.

In  \cite{DobrinenJML20}, the author  developed 
an analogue of Milliken's Theorem applicable to the 
 triangle-free Henson graph, and used it to
prove  that $\mathcal{H}_3$ has finite big Ramsey degrees.
In this paper, we provide a unified  approach to the Ramsey theory of the  $k$-clique-free Henson graphs  $\mathcal{H}_k$, for each $k\ge 3$.
This includes the
 development of new types of  trees which  code Henson graphs and  new Milliken-style theorems for these classes of trees which are applied to determine upper bounds for the big Ramsey degrees.
Our  presentation encompasses and streamlines work in \cite{DobrinenJML20} for $\mathcal{H}_3$.
New obstacles  arose   for $k\ge 4$;
these  challenges and their solutions are discussed  as the sections of the paper are delineated below.


\subsection{Outline of paper}

In Section \ref{sec.2}, we introduce  basic definitions
and notation, and review strong trees and
 the
Halpern-\Lauchli\ and Milliken Theorems
(Theorems \ref{thm.HL} and \ref{thm.Milliken})
and their use in obtaining upper bounds for the finite big Ramsey degrees  of the Rado graph.
In Subsection \ref{subsec.HLHarrington}, we include a version of Harrington's forcing proof of the Halpern-\Lauchli\ Theorem.
Many new issues arise due to  $k$-cliques being forbidden in Henson graphs,
but the proof of Theorem \ref{thm.HL}  will at least provide the reader with a toehold into 
the  
 proof strategies   for  Theorem \ref{thm.matrixHL} and Lemma  \ref{lem.Case(c)}, which lead to Theorem \ref{thm.MillikenSWP}, an analogue of Milliken's Theorem.

The article consists of three main phases.
Phase I  occurs in Sections
\ref{sec.3}--\ref{sec.ExtLem}, where we define the tree structures and prove extension lemmas.
In  Section \ref{sec.3},
we introduce
the  notion of {\em strong  $K_k$-free trees},
 analogues of Milliken's strong trees
capable of  coding $k$-clique free graphs.
These trees contain certain distinguished nodes, called  {\em coding nodes},
which  code the vertices of a given graph.
These trees branch maximally,  subject to the constraint of  the coding nodes not coding any $k$-cliques, and thus are the analogues of strong trees for the $K_k$-free setting.
Model-theoretically, such trees are simply coding all  (incomplete)  $1$-types over initial segments of a Henson graph, where the vertices are indexed by the natural numbers.
Although it is not possible to fully develop Ramsey theory on
strong $K_k$-free trees (see Example \ref{ex.badcoloring}),
 they have the  main structural aspects of the trees    for which we will prove analogues of Halpern-\Lauchli\ and Milliken Theorems,
defined in Section \ref{sec.4}.
Section \ref{sec.3} is given for the sole purpose of building the reader's understanding of strong  $\mathcal{H}_k$-coding trees.

Section \ref{sec.4} presents the  definition of {\em strong $\mathcal{H}_k$-coding trees} as  subtrees of a given tree  $\mathbb{T}_k$ which are strongly isomorphic
(Definition \ref{defn.stable})
 to $\mathbb{T}_k$.
The class of these trees is denoted $\mathcal{T}_k$, and these trees are best thought of as skew versions of the trees presented in
Section \ref{sec.3}.
Secondarily, an internal description of the trees in $\mathcal{T}_k$ is given.
This is a new  and simpler approach than the one we took in  \cite{DobrinenJML20} for the triangle-free Henson graph.
An important property  of 
strong $\mathcal{H}_k$-coding
trees is 
the Witnessing Property
(Definition \ref{defn.PrekCrit}).
This means that certain configurations which can give rise to codings of pre-cliques (Definition \ref{defn.newll1s})
 are witnessed by coding nodes.
 The effect is a type of book-keeping to guarantee when finite trees can be extended within a given tree $T\in\mathcal{T}_k$ to another tree in $\mathcal{T}_k$.

Section \ref{sec.ExtLem} presents  Extension Lemmas,
guaranteeing when a given finite tree can be extended to a desired configuration.
For $k\ge 4$, some new  difficulties arise   which did not exist for $k=3$.
The lemmas in this section extend  work in
\cite{DobrinenJML20}, while addressing  new complexities.
Further,  this section includes some new extension lemmas not in \cite{DobrinenJML20}.
These have the
added benefit of streamlining proofs in Section \ref{sec.5}
in which analogues of the Halpern-\Lauchli\ Theorem are proved.

Phase II of the paper takes place in Sections \ref{sec.5} and  \ref{sec.1SPOC},
the goal being to
prove the Milliken-style  Theorem \ref{thm.MillikenSWP}  for colorings of certain finite trees, namely those with the Strict Witnessing Property (see Definition \ref{defn.SWP}).
First, we prove analogues of
 the Halpern-\Lauchli\ Theorem for strong $\mathcal{H}_k$-coding trees
in Theorem
 \ref{thm.matrixHL}.
 The proof builds on ideas  from Harrington's forcing proof of the Halpern-\Lauchli\ Theorem,
 but now we must use
distinct  forcings for two
 cases:  the level sets being colored   have either  a coding node  or else a splitting node.
 The Extension Lemmas from  Section \ref{sec.ExtLem} and the Witnessing Property are essential to  these proofs.
 A new ingredient for $k\ge 4$ is that all pre-$a$-cliques  for $3\le a\le k$ need to be considered and witnessed, not just pre-$k$-cliques.
 It is important to note that
 the technique of forcing is used to conduct  unbounded searches for  finite objects;
  the proof takes place entirely within the standard axioms of set theory and does not involve passing to a generic model.

In Section \ref{sec.1SPOC},
we apply induction and fusion lemmas and
a third  Harrington-style  forcing argument   to  obtain  our  first Ramsey Theorem for colorings of finite trees.

\begin{thm7.3}
Let $k\ge 3$ be given and let  $T\in\mathcal{T}_k$ be a strong  $\mathcal{H}_k$-coding tree and let $A$ be a finite subtree of $T$ satisfying the Strict Witnessing Property.
Then for any coloring of the copies of $A$ in $T$ into finitely many colors,
there is a strong  $\mathcal{H}_k$-coding tree $S$ contained in $T$
such that all  copies of $A$ in $S$  have the same color.
\end{thm7.3}

Phase III of the article takes place in  Sections \ref{sec.MainThm} and \ref{sec.7}.
There,
we  prove a Ramsey theorem  for  finite antichains of coding nodes (Theorem \ref{thm.mainRamsey}), which is then applied to deduce that each Henson graph has finite big Ramsey degrees.
To do this, we  must first  develop a  way to transform
antichains of coding nodes  into  finite trees with the Strict Witnessing Property.
This is accomplished
in Subsections
\ref{sec.squiggle}
   and
 \ref{sec.1color}, where
we develop  the   notions of  incremental  new pre-cliques
and envelopes.
Given any   finite  $K_k$-free graph $\G$, there are only finitely many strict similarity types (Definition
\ref{defn.ssimtype})
of antichains coding $\G$.
Given a
coloring  $c$ of all copies of $\G$ in $\mathcal{H}_k$ into finitely many colors,
we
transfer the coloring to the envelopes of copies of $\G$ in a given strong coding tree $T$.
Then we
 apply the results in previous sections to obtain a strong $\mathcal{H}_k$-coding tree $T'\le T$ in which all  envelopes  encompassing the same strict similarity type  have the same color.
Upon  thinning
to an incremental strong subtree $S\le T'$ while simultaneously choosing a set $W\sse T'$ of {\em witnessing coding nodes},
each  finite
antichain $X$ of nodes in  $S$  is incremental and
has an  envelope comprised of nodes from  $W$  satisfying the Strict Witnessing Property.
Applying Theorem \ref{thm.MillikenSWP} finitely many times, once for each strict similarity type of envelope,
 we obtain our second Ramsey theorem for strong $\mathcal{H}_k$-coding trees, extending the first one.

\begin{thmR2}
[Ramsey Theorem for Strict Similarity Types]
Fix $k\ge 3$.
Let $Z$ be a finite antichain of coding nodes in a strong $\mathcal{H}_k$-coding tree $T$,
and let $h$ be a coloring of all subsets of $T$ which are strictly similar to $Z$ into finitely many colors.
Then  there is an incremental strong $\mathcal{H}_k$-coding tree $S$ contained in $T$ such that all  subsets of $S$
strictly similar to  $Z$  have the same $h$ color.
\end{thmR2}

Upon taking an antichain of coding nodes  $D\sse S$  coding $\mathcal{H}_k$,
the only sets of coding nodes in $D$ coding a given finite $K_k$-free graph $\G$ are automatically antichains which are incremental.
Applying  Theorem \ref{thm.mainRamsey} to the finitely many strict similarity types of antichains  coding $\G$, we arrive at the main theorem.
\begin{thmfinalthm}
The universal  homogeneous $k$-clique free graph has finite big Ramsey degrees.
\end{thmfinalthm}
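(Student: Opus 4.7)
The plan is to reduce the big Ramsey statement for $\mathcal{H}_k$ to the tree Ramsey theorem already announced, namely Theorem~\ref{thm.mainRamsey}. First I fix a strong $\mathcal{H}_k$-coding tree $T \in \mathcal{T}_k$ together with an antichain $D \sse T$ of coding nodes whose induced graph is isomorphic to $\mathcal{H}_k$; such a $D$ is built into the very definition of $\mathcal{T}_k$. This identifies $\mathcal{H}_k$ with the coding-node graph of a subset of $T$, so every finite induced subgraph of $\mathcal{H}_k$ of a given isomorphism type corresponds to an antichain of coding nodes in $D$.

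Now let $\G$ be any finite $K_k$-free graph, and let $c : {\mathcal{H}_k \choose \G} \to r$ be a coloring into finitely many colors. Via the identification above, $c$ pulls back to a coloring of the antichains of coding nodes in $T$ whose induced graph is isomorphic to $\G$. The crucial combinatorial observation is that these antichains split into only finitely many strict similarity types $\tau_1,\dots,\tau_m$, since each such type is determined by a finite amount of tree data (meet structure, passing numbers, relative levels of coding nodes, etc.) bounded by the size of $\G$.

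Next I iterate Theorem~\ref{thm.mainRamsey}. Applied to $\tau_1$, it produces an incremental strong $\mathcal{H}_k$-coding tree $T_1 \sse T$ on which the pullback of $c$ is constant on all subsets of strict similarity type $\tau_1$; since the output is itself a member of $\mathcal{T}_k$, the argument can be repeated inside $T_1$ for $\tau_2$, and so on. This yields nested trees $T \contains T_1 \contains \dots \contains T_m = S$ such that for each $i\le m$ the color of subsets of strict similarity type $\tau_i$ in $S$ is a fixed constant $c_i$. Because $S \in \mathcal{T}_k$, I can extract an antichain $D' \sse S$ of coding nodes whose induced graph is an isomorphic copy $\mathcal{H}_k' \cong \mathcal{H}_k$. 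Every copy of $\G$ in $\mathcal{H}_k'$ is realized by an antichain in $D'$ of some strict similarity type $\tau_i$, so $c$ takes at most $m$ colors on ${\mathcal{H}_k' \choose \G}$. Setting $T(\G,\mathcal{H}_k) \le m$ completes the argument.

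The main obstacle I anticipate is bookkeeping rather than new combinatorial content: one must verify that each intermediate tree $T_i$ still belongs to $\mathcal{T}_k$ so that Theorem~\ref{thm.mainRamsey} can be re-applied, and that strict similarity types of antichains in $T$ are preserved when passing to subtrees (so that the same finite list $\tau_1,\dots,\tau_m$ suffices throughout the iteration). Both facts are inherent in the construction: the conclusion of Theorem~\ref{thm.mainRamsey} is an incremental strong $\mathcal{H}_k$-coding tree, and strict similarity is defined structurally on antichains so it is inherited downwards. Once these are in hand, the iteration closes in finitely many steps and delivers the desired monochromatic upper bound on colors.
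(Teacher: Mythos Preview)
Your overall strategy matches the paper's: list the strict similarity types of antichains coding $\G$, iterate Theorem~\ref{thm.mainRamsey} once per type, then pass to an antichain of coding nodes inside the resulting $S$. There is, however, a genuine slip in your initial identification of $\mathcal{H}_k$.

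You identify $\mathcal{H}_k$ with the graph coded by an antichain $D\sse T$ (this is Lemma~\ref{lem.bD}, not part of the definition of $\mathcal{T}_k$). Under that identification, the coloring $c$ lives only on antichains contained in $D$; it does \emph{not} pull back to arbitrary antichains of coding nodes in $T$ as you then assert. More seriously, after the iteration you extract $D'\sse S$, but $D'$ need not be contained in $D$ or even meet it, so the graph on $D'$ is not a substructure of the copy of $\mathcal{H}_k$ you fixed, and $c$ says nothing about copies of $\G$ inside it. The argument as written does not close.

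The repair is exactly what the paper does: identify $\mathcal{H}_k$ with the graph coded by the \emph{full} set of coding nodes of $\bT_k$. Then every antichain $Y$ of coding nodes in $\bT_k$ coding $\G$ is automatically a copy of $\G$ inside that fixed $\mathcal{H}_k$, so setting $h(Y)=f(\G_Y)$ gives a well-defined coloring on which Theorem~\ref{thm.mainRamsey} can be iterated. After passing to $S$ and applying Lemma~\ref{lem.bD} to obtain an antichain $D\sse S$, every subset of $D$ coding $\G$ is an antichain of coding nodes of $\bT_k$, hence a genuine copy of $\G$ in the original $\mathcal{H}_k$, and its $f$-color is one of the finitely many fixed values. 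The paper also records that since $S$ is incremental, every such antichain is automatically incremental, so the upper bound is the number of \emph{incremental} strict similarity types rather than all of them---a point your outline omits but which does not affect finiteness.
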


For each $\G\in\mathcal{G}_k$,
the number $T(\G,\mathcal{G}_k)$
is bounded by  the number   of strict similarity types of antichains of coding nodes coding $\G$.
It is  presently open to see whether this number is in fact the lower bound.
If so, then recent work of Zucker in \cite{Zucker19} would provide an interesting connection with topological dynamics, as the  colorings obtainable from our structures  cohere in the manner necessary  to apply  Zucker's work.

\vskip.1in

\it Acknowledgements. \rm  The author would like to thank 
Andy Zucker for careful reading of a previous version of this paper, pointing  out an oversight which led the author to consider singleton pre-cliques when $k\ge 4$, an issue  that does not arise in the  triangle-free Henson graph. 
Much gratitude  also goes to
 Dana Barto\v{s}ov\'a and Jean Larson for listening to early and later  stages of these results and for useful feedback; 
Norbert Sauer for discussing key aspects of the homogeneous triangle-free graph with me during a research visit in Calgary in 2014;    Stevo Todorcevic for pointing out to her in 2012 that any proof of finite big Ramsey degrees for $\mathcal{H}_3$ would likely involve a new type of Halpern-\Lauchli\ Theorem;
 and to the organizers and participants of the
Ramsey Theory Doccourse at Charles University, Prague, 2016, for their encouragement.
The author would like to thank the referee for  helpful suggestions which improved  this paper. 
Most of all, the author is  grateful for  and much indebted to
  Richard Laver  for
providing  for  her  in 2011 the main  points of
 Harrington's forcing proof of the Halpern-\Lauchli\ Theorem, 
opening  the path  of applying forcing methods  to develop Ramsey theory on  infinite structures.




\section{Background and the Milliken and   Halpern-\Lauchli\ Theorems}\label{sec.2}

This section provides background and sets some notation and terminology for the paper.
We review
Milliken's Ramsey  Theorem for trees and its application to proving that the Rado graph has finite big Ramsey degrees.
Then we discuss why this theorem is not sufficient for handling Henson graphs.
In Subsection \ref{subsec.HLHarrington}, we present
the Halpern-\Lauchli\ Theorem, which is a Ramsey theorem on
products of trees.
This theorem forms the basis for
Milliken's Theorem.
We present a version of
Harrington's forcing proof of the Halpern-\Lauchli\ Theorem in order to offer the reader  an introduction to 
the tack we take in later sections toward proving Theorem \ref{thm.MillikenSWP}.


\subsection{Coding  graphs via finite binary sequences}\label{subsection.treescodinggraphs}

The following notation shall be used throughout.
The  set of all natural numbers  $\{0,1,2,\dots\}$ is denoted by $\mathbb{N}$.
Each natural number  $n$
is equated with  the set  of all natural numbers strictly less than $n$; thus,  $n=\{0,\dots,n-1\}$ and, in particular,
 $0$ is the emptyset.
 It follows that for $m,n\in \mathbb{N}$, $m<n$ if and only if $m\in n$.
For  $n\in \mathbb{N}$,  we shall let  $\Seq_n$  denote the collection of
 all  binary sequences of length $n$.
 Thus, each $s\in \Seq_n$ is a
 function from $n$ into $\{0,1\}$, and
we often  write $s$ as $\lgl s(0), \dots, s(n-1)\rgl$.
For  $i<n$, $s(i)$ denotes the $i$-th {\em value} or {\em entry}  of the sequence $s$.
The {\em length} of $s$, denoted  $|s|$, is the domain of $s$.
Note that $\Seq_0$ contains just the empty sequence, denoted $\lgl \rgl$.

\begin{notation}\label{notn.Seq}
We shall  let   $\Seq$   denote  $\bigcup_{n\in \mathbb{N}}\Seq_n$,
the collection of all binary sequences of finite length.
\end{notation}

For nodes $s,t\in \Seq$, we write
$s\sse t$ if and only if $|s|\le |t|$ and for each $i<|s|$, $s(i)=t(i)$.
In this case, we say that  $s$ is an {\em initial segment} of $t$, or that $t$ {\em extends} $s$.
If $s$ is an initial segment of  $t$ and $|s|<|t|$, then we write $s\subset t$ and  say that $s$ is a {\em proper initial segment} of $t$.
For $i\in \mathbb{N}$, we let $s\re i$ denote
the  sequence  $s$ restricted to domain $i$.
Thus, if $i< |s|$, then $s\re i$ is
 the proper initial segment of $s$ of length $i$, $s\re i=\lgl s(0),\dots, s(i-1)\rgl$;
if $i\ge |s|$, then $s\re i$  equals $s$.
Let $\Seq_{\le n}$ denote $\bigcup_{i\le n}\Seq_i$, the set of all binary sequences of length at most $n$.

In \cite{Erdos/Hajnal/Posa75},
\Erdos, Hajnal and P\'{o}sa
used the edge relation on  a given ordered graph to induce a
 lexicographic order
 on the vertices, which
 they employed
  to solve problems regarding strong embeddings of graphs.
With this lexicographic order,
vertices in a given graph  can
   can be  represented by 
    finite sequences of $0$'s and $1$'s,  a
view made explicit  in \cite{Sauer98} which we review now, using terminology from \cite{Sauer06}.

\begin{defn}[\cite{Sauer06}]\label{defn.pn}
Given nodes $s,t\in \Seq$ with $|s|<|t|$,
 we call the integer $t(|s|)$
  the {\em passing number} of $t$ at $s$.
  Passing numbers represent the edge relation in a graph as follows:
Two vertices  $v,w$  in a  graph  can be
{\em represented}  by nodes  $s,t\in \Seq$ with $|s|<|t|$,  respectively, if
 \begin{equation}
 v\, E\, w \Longleftrightarrow t(|s|)=1.
 \end{equation}
 \end{defn}

Using this correspondence between passing numbers and the edge relation $E$, any graph can be coded by nodes in a binary tree as follows.
Let $\G$ be a  graph with $N$ vertices,
where either $N\in \mathbb{N}$ or $N=\mathbb{N}$,
and
let  $\lgl v_n:n\in N\rgl$ be any enumeration of
the vertices of  $\G$.
Choose any node  $t_0\in \Seq$ to  represent the vertex $v_0$.
For  $n>0$,
given nodes $t_0,\dots,t_{n-1}$  in $\Seq$ coding the vertices $v_0,\dots,v_{n-1}$,
take  $t_n$  to be any node in $\Seq$  such that
 $|t_n|>|t_{n-1}|$ and
 for all $i< n$,
$v_n$ and $ v_i$ have an edge between them if and only if $t_n(|t_i|)=1$.
Then the set of nodes $\{t_n:n\in N\}$  codes the graph $\G$.
For  the purposes of developing the Ramsey theory of Henson graphs, we make the convention that
the nodes in a tree used to code the vertices in a
 graph  have different lengths.
Figure 1.\  shows a  set of nodes $\{t_0,t_1,t_2,t_3\}$  from $\Seq$  coding the four-cycle $\{v_0,v_1,v_2,v_3\}$.

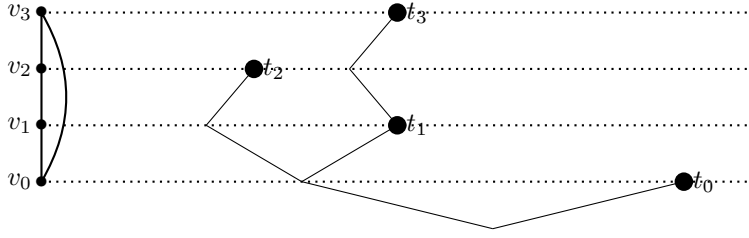
\begin{figure}
\begin{tikzpicture}[grow'=up,scale=.5]
\tikzstyle{level 1}=[sibling distance=4in]
\tikzstyle{level 2}=[sibling distance=2in]
\tikzstyle{level 3}=[sibling distance=1in]
\tikzstyle{level 4}=[sibling distance=1in]
\node {}
 child{
	child{
		child{edge from parent[draw=none]
			child{edge from parent[draw=none]}
			child{edge from parent[draw=none]}}
		child{ coordinate (t2)
			child{edge from parent[draw=none]}
			child {edge from parent[draw=none]}}
		}
	child{ coordinate (t1)
		child{
			child{edge from parent[draw=none]}
			child{coordinate (t3)}}
		child {edge from parent[draw=none]} }}
 child{coordinate (t0)
	child{edge from parent[draw=none]
		child{edge from parent[draw=none]
			child{edge from parent[draw=none]}
			child{edge from parent[draw=none]}
		}
		child{edge from parent[draw=none]
			child{edge from parent[draw=none]}
			child {edge from parent[draw=none]}}
	}
	child {edge from parent[draw=none]}};
		
\node[right] at (t0) {$t_{0}$};
\node[right] at (t1) {$t_{1}$};
\node[right] at (t2) {$t_{2}$};
\node[right] at (t3) {$t_{3}$};

\node[circle, fill=black,inner sep=0pt, minimum size=7pt] at (t0) {};
\node[circle, fill=black,inner sep=0pt, minimum size=7pt] at (t1) {};
\node[circle, fill=black,inner sep=0pt, minimum size=7pt] at (t2) {};
\node[circle, fill=black,inner sep=0pt, minimum size=7pt] at (t3) {};

\draw[thick, dotted] let \p1=(t1) in (-12,\y1) node (v1) {$\bullet$} -- (7,\y1);
\draw[thick, dotted] let \p1=(t2) in (-12,\y1) node (v2) {$\bullet$} -- (7,\y1);
\draw[thick, dotted] let \p1=(t3) in (-12,\y1) node (v3) {$\bullet$} -- (7,\y1);
\draw[thick, dotted] let \p1=(t0) in (-12,\y1) node (v0) {$\bullet$} -- (7,\y1);

\node[left] at (v1) {$v_1$};
\node[left] at (v2) {$v_2$};
\node[left] at (v3) {$v_3$};
\node[left] at (v0) {$v_0$};

\draw[thick] (v0.center) to (v1.center) to (v2.center) to (v3.center) to [bend left] (v0.center);

\end{tikzpicture}
\caption{A tree with nodes $\{t_0,t_1,t_2,t_3\}$ coding the 4-cycle $\{v_0,v_1,v_2,v_3\}$}
\end{figure}


\subsection{Trees and related notation}

In Ramsey theory on trees, it is standard to use the  slightly relaxed  definition of tree below.
(See for instance Chapter 6 of \cite{TodorcevicBK10}.)
Recall that
the {\em meet}  of two nodes $s$ and $t$ in $\Seq$, denoted $s\wedge t$,
is the  maximal  initial segment of both $s$ and $t$.
In particular, if $s\sse t$ then $s\wedge t=s$.
A set  of nodes $A\sse \Seq$ is {\em closed under meets}
if $s\wedge t$ is in $A$, for every pair $s,t\in A$.

\begin{defn}[Tree]\label{defn.tree}
A  subset $T\sse \Seq$  is a {\em tree}
if $T$
 is closed under meets and  for each pair $s,t\in T$
with $|s|\le |t|$,
$t\re |s|$ is also in $T$.
\end{defn}

Thus,  in our context,  a {\em tree} $T$  is not necessarily closed under initial segments in $\Seq$, but rather  is closed under initial segments having  length  in $L(T)=\{|s|:s\in T\}$.
Given a tree $T\sse \Seq$, let
\begin{equation}\label{eq.That}
\widehat{T}=\{
s\in \Seq: \exists t\in T\  \exists n\le |t|\, (s=t\re n)\}.
\end{equation}
Thus,   $\widehat{T}$ is closed under initial segments in
the usual sense.

Given  a tree $T\sse \Seq$, for each node $t\in T$, define
\begin{equation}\label{eq.ht_T}
\mbox{ht}_T(t)
=|\{s\in T:s\subset t\}|,
\end{equation}
where we recall that $s\subset t$ denotes that $s$  is a proper initial segment of $t$.
For
$n\in\bN$, define
 \begin{equation}
T(n)=\{t\in T:\mbox{ht}_T(t)=n\}.
\end{equation}
A set $X\sse T$ is  called a {\em level set}
 if $X\sse T(n)$ for some  $n\in\bN$.
 Given $l\in \bN$, let
 \begin{equation}\label{eq.Trel}
 T\re l=\{t\in\widehat{T}:|t|=l\}.
 \end{equation}
 If $l$ is in $L(T)$, then $T\re l$ is a level subset of  $T$.  
 In this case, there is some $n$ such that $T\re l=T(n)$.


\subsection{Milliken's  Theorem, its use in  proving upper bounds for big Ramsey degrees of the Rado graph,  and its  insufficiency for Henson graphs}\label{subsection.HLM}

A Ramsey theorem for colorings of finite subtrees of a given tree was proved by Milliken in 1979 (Theorem \ref{thm.Milliken} below).
This theorem  has turned out to be central to proving upper bounds for
big Ramsey degrees of several types  ultrahomogeneous structures,
 including the rationals  as a linearly ordered structure in \cite{DevlinThesis},
the Rado graph  and other simple  binary relational structures   in \cite{Sauer06} and \cite{Laflamme/Sauer/Vuksanovic06},
and
circular directed graphs and   rationals with finitely many equivalence relations $\mathbb{Q}_n$  in \cite{Laflamme/NVT/Sauer10}.
A good review of these results appears in \cite{NVTHabil}.
Chapter 6 of \cite{TodorcevicBK10} provides a solid  foundation  for understanding how  Milliken's theorem is used to  deduce upper bounds on  big Ramsey degrees for both  the rationals and the Rado graph.

Given a tree $T\sse\Seq$,
 a node  $t\in T$  {\em splits} in $T$ if and only if  there are $u,v\in T$  properly extending  $t$ with
 $u(|t|)=0$ and $v(|t|)=1$.

\begin{defn}[Strong tree]\label{defn.strongtree}
Given a tree
 $T\sse \Seq$,  recall that  $L(T)=\{|s|:s\in T\}$, the set of lengths of nodes in $T$.
 We say that
$T$ is a
{\em strong tree} if and only if
for each $t\in T$ with $|t|\ne\sup L(T)$,
 $t$  splits in $T$.
 We say that $T$ is
a {\em strong tree of height $k$}, or simply a {\em $k$-strong tree}, if and only if $L(T)$ has exactly $k$ members.
\end{defn}

Note that a strong tree  $T$ has no maximal nodes if and only if $L(T)$ is infinite.
Further, for
each $k$-strong subtree of $\Seq$
is isomorphic as a tree to some binary tree of height $k$, where the isomorphism preserves relative lengths  of nodes.
In particular, a $1$-strong tree   is simply a node in $\Seq$.
See Figure 2.\ for an example of a $3$-strong tree $T$  with $L(T)=\{0,2,5\}$.


\begin{figure}
\begin{tikzpicture}[grow'=up,scale=.6]
\tikzstyle{level 1}=[sibling distance=4in]
\tikzstyle{level 2}=[sibling distance=2in]
\tikzstyle{level 3}=[sibling distance=1in]
\tikzstyle{level 4}=[sibling distance=0.5in]
\tikzstyle{level 5}=[sibling distance=0.2in]
\node {} coordinate (t9)
child{coordinate (t0) edge from parent[thick]
			child{coordinate (t00) edge from parent[thin]
child{coordinate (t000)
child {coordinate(t0000)
child{coordinate(t00000) edge from parent[color=black] }
child{coordinate(t00001)}}
child {coordinate(t0001) edge from parent[color=black]
child {coordinate(t00010)}
child{coordinate(t00011)}}}
child{ coordinate(t001)
child{ coordinate(t0010)
child{ coordinate(t00100)}
child{ coordinate(t00101) edge from parent[color=black] }}
child{ coordinate(t0011) edge from parent[color=black]
child{ coordinate(t00110)}
child{ coordinate(t00111)}}}}
			child{ coordinate(t01)  edge from parent[color=black]
child{ coordinate(t010)
child{ coordinate(t0100) edge from parent[thin]
child{ coordinate(t01000)}
child{ coordinate(t01001)}}
child{ coordinate(t0101)
child{ coordinate(t01010) edge from parent[thin]}
child{ coordinate(t01011)}}}
child{ coordinate(t011)
child{ coordinate(t0110)
child{ coordinate(t01100) edge from parent[thin]}
child{ coordinate(t01101)}}
child{ coordinate(t0111) edge from parent[thin]
child { coordinate(t01110)}
child{ coordinate(t01111)}}}}}
		child{ coordinate(t1)  edge from parent[thick]
			child{ coordinate(t10)
child{ coordinate(t100)
child{ coordinate(t1000) edge from parent[color=black, thin]
child{ coordinate(t10000)}
child{ coordinate(t10001)}}
child{ coordinate(t1001)
child{ coordinate(t10010)}
child{ coordinate(t10011) edge from parent[color=black, thin] }}}
child{ coordinate(t101)
child{ coordinate(t1010) edge from parent[color=black, thin]
child{ coordinate(t10100) }
child{ coordinate(t10101)}}
child{ coordinate(t1011)
child{ coordinate(t10110) edge from parent[color=black, thin] }
child{ coordinate(t10111)}}}}
			child{  coordinate(t11)  edge from parent[color=black, thin]
child{ coordinate(t110)
child{ coordinate(t1100)
child{ coordinate(t11000)}
child{ coordinate(t11001)}}
child{ coordinate(t1101)
child{ coordinate(t11010)}
child{ coordinate(t11011)}}}
child{  coordinate(t111)
child{  coordinate(t1110)
child{  coordinate(t11100)}
child{  coordinate(t11101)}}
child{  coordinate(t1111)
child{  coordinate(t11110)}
child{  coordinate(t11111)}}}} };

\node[left] at (t0) {$0$};
\node[left] at (t00) {$00$};
\node[left] at (t000) {$000$};
\node[left] at (t001) {$001$};
\node[left] at (t01) {$01$};
\node[left] at (t010) {$010$};
\node[left] at (t011) {$011$};
\node[right] at (t1) {$1$};
\node[right] at (t10) {$10$};
\node[right] at (t100) {$100$};
\node[right] at (t101) {$101$};
\node[right] at (t11) {$11$};
\node[right] at (t110) {$110$};
\node[right] at (t111) {$111$};

\node[circle, fill=black,inner sep=0pt, minimum size=6pt] at (t9) {};

\node[circle, fill=black,inner sep=0pt, minimum size=6pt] at (t01) {};
\node[circle, fill=black,inner sep=0pt, minimum size=6pt] at (t01011) {};
\node[circle, fill=black,inner sep=0pt, minimum size=6pt] at (t01101) {};
\node[circle, fill=black,inner sep=0pt, minimum size=6pt] at (t10010) {};
\node[circle, fill=black,inner sep=0pt, minimum size=6pt] at (t10) {};
\node[circle, fill=black,inner sep=0pt, minimum size=6pt] at (t10111) {};

\end{tikzpicture}
\caption{A strong subtree of $\Seq$ of height $3$}
\end{figure}


In \cite{Milliken79}, Milliken proved a Ramsey theorem  for finite strong subtrees of   finitely branching  trees with no maximal nodes.
Here, we present a restricted version of that theorem relevant to this paper.

\begin{thm}[Milliken, \cite{Milliken79}]\label{thm.Milliken}
Let $k\ge 1$ be given and let  all  $k$-strong subtrees of $\Seq$   be colored by finitely many colors.
Then there is an infinite strong subtree $T$ of $\Seq$ such that all $k$-strong subtrees of $T$  have the same color.
\end{thm}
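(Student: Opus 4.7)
The plan is to prove Milliken's Theorem by induction on $k$, using the Halpern-\Lauchli\ Theorem (Theorem 2.5) as the principal combinatorial engine and a fusion argument to upgrade level-by-level homogenization into an infinite strong subtree.

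For the base case $k=1$, a $1$-strong subtree is a single node, so we have a coloring $c:\Seq\to r$. I would apply the one-tree version of Halpern-\Lauchli\ to obtain an infinite strong subtree $T'\sse\Seq$ on which $c(t)$ depends only on $|t|$, say $c(t)=g(|t|)$ for some $g:L(T')\to r$. Infinite pigeonhole then yields an infinite $N\sse L(T')$ on which $g$ is constant, and restricting $T'$ to meet exactly the levels in $N$ (keeping the full binary branching at each retained level) produces the desired monochromatic infinite strong subtree.

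For the inductive step, assume the result for $k$ and let $c$ be a finite coloring of the $(k+1)$-strong subtrees of $\Seq$. Each such subtree decomposes canonically as a pair $(A,B)$, where $A$ is the $k$-strong subtree consisting of its bottom $k$ levels and $B$ is a one-level extension above $A$: a level set lying strictly above $A$ containing exactly two extensions of each maximal node of $A$. I would first build an infinite strong subtree $T\sse\Seq$ in which $c(A,B)$ depends only on $A$, by fusion: maintaining at stage $n$ a finite strong subtree $T_n$ of height $n$ together with an infinite strong end-extension $U_n$, I pass to stage $n+1$ by selecting the next level of $T_{n+1}$ inside $U_n$ and then thinning $U_n$ to an infinite strong extension $U_{n+1}$ such that, for every $k$-strong subtree $A\sse T_{n+1}$ whose top level is at most $n$, the color $c(A,B)$ is constant as $B$ ranges over all admissible one-level extensions of $A$ within $U_{n+1}$. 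This homogenization step is where the multidimensional Halpern-\Lauchli\ Theorem is invoked: for fixed $A$ with $2^{k-1}$ maximal nodes, the choices of $B$ form a product structure over the cones above those maximal nodes, which is precisely what HL controls. Since only finitely many $k$-strong subtrees $A$ are processed at each stage, iterating HL and intersecting preserves an infinite strong end-extension. Once $T=\bigcup_n T_n$ is built, the assignment $A\mapsto c(A,\cdot)$ is a well-defined finite coloring of the $k$-strong subtrees of $T$; applying the inductive hypothesis (after transporting via a strong isomorphism between $T$ and $\Seq$) yields an infinite strong subtree $T^*\sse T$ on which every $(k+1)$-strong subtree has the same color.

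The main obstacle is organizing the fusion so that the homogenization of $c(A,\cdot)$ for each $k$-strong subtree $A$ of $T$ is completed before any $(k+1)$-extension of $A$ is permanently committed to $T$. This is handled by the standard promise-keeping pattern: at stage $n+1$ one processes every $k$-strong subtree $A\sse T_{n+1}$ whose top level is strictly below the newly chosen level, applying the HL homogenization to each in turn. A secondary but essential point is that the conclusion of Halpern-\Lauchli\ furnishes a strong subtree (not merely a thin level set), which is exactly what enables the iterative application of HL inside the fusion without degrading the strong subtree structure in the limit.
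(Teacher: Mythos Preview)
The paper does not prove this theorem; it is quoted as a known result from Milliken's 1979 paper and used as background for the later Milliken-style theorems on strong coding trees. Your proposal is the standard proof of Milliken's theorem and is essentially correct.

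Two small comments. First, your base case is slightly roundabout: the one-tree instance ($d=1$) of the Halpern--\Lauchli\ Theorem as stated in the paper already yields full monochromaticity on the strong subtree, not merely dependence on the level, so the subsequent pigeonhole step is unnecessary. Second, in the fusion, when you apply Halpern--\Lauchli\ to the $2^{k}$ cones above the maximal nodes of a fixed $A$, you must also maintain the strong-subtree structure in the remaining cones above the current committed level. The cleanest way to handle this is to apply Halpern--\Lauchli\ simultaneously to \emph{all} $2^{n+1}$ cones above the newly committed level (the coloring simply ignores the irrelevant coordinates); alternatively, after obtaining the common level set $L^*$ from the $2^{k}$ relevant cones, choose in each remaining cone an arbitrary strong subtree with that same level set. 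Your final paragraph indicates you are aware of this bookkeeping, but it is worth making explicit.
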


\begin{rem}
A theorem  stronger than Theorem \ref{thm.Milliken},  also due to Milliken in \cite{Milliken81},
shows that the collection of all infinite strong subtrees of
an infinite finitely branching tree
forms a topological Ramsey space, meaning that it satisfies an infinite-dimensional Ramsey theorem for Baire sets when equipped with its version of the  Ellentuck  topology (see \cite{TodorcevicBK10}).
This  fact informed some of our intuition when  approaching  the present work.
\end{rem}

Now, we present the basic ideas behind  the  upper bounds for the big Ramsey degrees of the Rado graph.
Upper bounds for the rationals are similar.
The work in \cite{Laflamme/NVT/Sauer10}  relied on   a stronger variation of Milliken's theorem proved in that paper;   given that theorem, the basic idea behind  the upper bounds is similar to what we now present.

The Rado graph, denoted by $\mathcal{R}$, is universal for all countable graphs.
However, it is not the only universal countable graph.
The following graph $\mathcal{G}$  is also universal.
Let $\mathcal{G}$   denote the graph with  $\Seq$  as its countable vertex set and  the edge relation $E_{\mathcal{G}}$ defined as follows:
For $s,t\in \Seq$,
\begin{equation}
s\, E_{\mathcal{G}}\, t \ \Llra (|s|>|t|\mathrm{\  and\ } s(|t|)=1)\mathrm{\  or \ } (|t|>|s|\mathrm{\ and\  }t(|s|)=1).
\end{equation}
It turns out that this graph $\mathcal{G}$ is also universal, so $\mathcal{G}$ embeds into $\mathcal{R}$ and vice versa.

Suppose that
$\mathrm{A}$ is  a finite graph and
 all copies of $\mathrm{A}$ in $\mathcal{R}$ are colored into finitely many colors.
Take a copy of  $\mathcal{G}$   in $\mathcal{R}$ and restrict our attention to those copies of $\mathrm{A}$ in $\mathcal{G}$.
Each copy of $\mathrm{A}$ in $\mathcal{G}$
has vertices which are nodes in $\Seq$.
There are only finitely many {\em  strong similarity types} of  embeddings    of $\mathrm{A}$ into $\Seq$,
(Definition 3.1 in  \cite{Sauer06}), which we now review.

\begin{defn}[Strong Similarity Map, \cite{Sauer06}]\label{def.3.1.Sauer}
Let $k\ge 3$ be given and let  $S,T\sse \Seq$ be meet-closed subsets.
A function $f:S\ra T$ is a {\em strong similarity map} from $S$ to $T$ if for all nodes $s,t,u,v\in S$, the following hold:
\begin{enumerate}
\item
$f$ is a bijection.
\item
$f$ preserves lexicographic order: $s<_{\mathrm{lex}}t$ if and only if $f(s)<_{\mathrm{lex}}f(t)$.

\item
$f$ preserves meets, and hence splitting nodes:
$f(s\wedge t)=f(s)\wedge f(t)$.

\item
$f$ preserves relative lengths:
$|s\wedge t|<|u\wedge v|$ if and only if
$|f(s)\wedge f(t)|<|f(u)\wedge f(v)|$.

\item
$f$ preserves initial segments:
$s\wedge t\sse u\wedge v$ if and only if $f(s)\wedge f(t)\sse f(u)\wedge f(v)$.

\item
$f$ preserves passing numbers:
If  $|s|<|t|$,
then $f(t)(|f(s)|)=t(|s|)$.
\end{enumerate}
We  say that $S$ and $T$ are {\em strongly similar}, and write $S\ssim T$, exactly when there is a strong similarity map between $S$ and $T$.
\end{defn}

The relation $\ssim$ is an equivalence relation, and given a fixed finite graph $\mathrm{A}$, there are only finitely many different  equivalence classes of strongly similar  copies of  $\mathrm{A}$ in  $\mathcal{G}$.
Each equivalence class is called a {\em strong similarity type}.
Thus, each  copy  of $\mathrm{A}$ in $\mathcal{G}$
is in exactly one of finitely many strong similarity types.

 Fix one strong similarity type for $\mathrm{A}$, call it $\tau$.
For each copy $\mathrm{B}$ of $\mathrm{A}$ in
$\mathcal{G}$ of type $\tau$,  as the vertices of
 $\mathrm{B}$ are nodes in the tree $\Seq$, we
 let $T_{\mathrm{B}}$ denote the   tree    induced by the meet-closure in $\Seq$ of the vertices in  $\mathrm{B}$;
let $k$ be the number of levels of
$T_{\mathrm{B}}$.
 There are finitely many $k$-strong subtrees of $\Seq$ which contain $T_{\mathrm{B}}$.
 Moreover, each $k$-strong subtree of $\Seq$ contains  $T_{\mathrm{B}}$ for exactly one $\mathrm{B}$ in $\tau$.
 (A proof of this fact can be found in Section 6 of \cite{TodorcevicBK10}.)
 Define a coloring $h$ on the $k$-strong subtrees of $\Seq$ as follows:
 Given a $k$-strong subtree $T\sse\Seq$, let
  $\mathrm{B}$ be the unique copy of $\mathrm{A}$ in $\tau$
such that $T_{\mathrm{B}}$ is contained in $T$.
Let $h(T)$ be the color of $\mathrm{B}$.
Applying Milliken's Theorem \ref{thm.Milliken}, we obtain an infinite  strong subtree $S_{\tau}$  of $\Seq$  with all of its $k$-strong subtrees having the same color.
  Thus, all copies of $\mathrm{A}$ in $\tau$ with vertices in $S_{\tau}$ have the same color.

  Repeating this argument for each strict similarity type,
  after finitely many applications of Milliken's Theorem, we obtain an infinite strong subtree $S\sse\Seq$ with the following property:
  For each strong similarity type $\tau$ for $\mathrm{A}$,
  all copies of $\mathrm{A}$ of type $\tau$ with vertices in $S$ have the same color.
 Since $S$ is an infinite strong subtree of  $\Seq$, the subgraph $\mathcal{G}'$ of
 $\mathcal{G}$ coded by the nodes in $S$ is isomorphic to $\mathcal{G}$.
 Since $\mathcal{R}$ embeds into $\mathcal{G}$,
 we may take a copy of $\mathcal{R}$ whose nodes come from $S$; call this copy $\mathcal{R}'$.
 Then every copy of $\mathrm{A}$ in  $\mathcal{R}'$ has the color of its strong similarity type in $S$.
The number of strong similarity types for copies of  $\mathrm{A}$ is the upper bound for the number of colors of copies of $\mathrm{A}$ in $\mathcal{R}'$.

To find the exact big Ramsey degrees, Laflamme, Sauer and Vuksanovic use an additional argument in \cite{Laflamme/Sauer/Vuksanovic06}.
We do not reproduce their argument here, as the lower bounds for  big Ramsey degrees   of the Henson graphs are not the subject of this article.
However,
we do point out that  at the end of the article
 \cite{Sauer06},
Sauer takes
an  antichain
 $\mathcal{A}$ of nodes in $S$  coding a copy of $\mathcal{R}$
with the further properties:
(a)  The  tree induced by the meet-closure of the nodes in
 $\mathcal{A}$ has at each level  at most one splitting node or one
maximal node, but never both.
(b)
Passing numbers at splitting nodes are always zero, except of course for the right extension of the splitting node itself.
These  crucial properties (a) and (b)  were used
to reduce the upper bounds found in
\cite{Sauer06} to the number of strong similarity types  of copies of a finite graph $\mathrm{A}$
occuring in the copy $\mathcal{A}$ of $\mathcal{R}$.
This number was
then proved to be the exact lower bound for the big Ramsey degrees
in  \cite{Laflamme/Sauer/Vuksanovic06}.

Milliken's Theorem is not able to handle big Ramsey degrees of Henson graphs for the following reasons:
First, there is no natural way to  code a Henson graph using all nodes in a strong subtree of $\Seq$, nor is there a nicely defined graph which is bi-embeddable with a Henson graph.
Second, even if there were,  there is no way to guarantee that the strong subtree  obtained by Milliken's Theorem  would contain a Henson graph.
Thus, the need for a new Milliken-style theorem able to handle $k$-clique-free graphs.

We   begin with the properties (a) and (b) in mind when we construct trees with special distinguished nodes which code Henson graphs.
 The reader will notice that our  strong $\mathcal{H}_k$-coding trees in Section
 \ref{subsec.T_k}
 have the property that each level of the tree has at most one splitting node or one {\em coding node}
 (Definition \ref{defn.treewcodingnodes}).
 While our  $\mathcal{H}_k$-coding trees are certainly not antichains (there are no maximal nodes),
 they set the stage for taking an antichain of coding nodes which code $\mathcal{H}_k$
 and have the properties (a) and (b)
 (Lemma \ref{lem.bD}).
 This serves to reduce the  upper  bound on the number of colors.
We conjecture that the upper bounds found in this article - the number of strict similarity types of incremental antichains of coding nodes - are in fact the big Ramsey degrees.


\subsection{Halpern-\Lauchli\ Theorem and  Harrington's forcing proof}\label{subsec.HLHarrington}

An important  Ramsey theorem for trees is Theorem
\ref{thm.HL},
 due to
Halpern and \Lauchli.
This theorem was found as a key step in the celebrated result of Halpern and L\'{e}vy in \cite{Halpern/Levy71}, proving
that
 the Boolean Prime Ideal Theorem (the statement that any filter can be extended to an ultrafilter)  is strictly weaker than the Axiom of Choice, assuming  the Zermelo-Fraenkel axioms of set theory.

The Halpern-\Lauchli\ Theorem
 is a Ramsey theorem for colorings of products of level sets  of finitely many trees, forming
the basis for
Milliken's
Theorem \ref{thm.Milliken},  discussed in the previous subsection.
In-depth presentations and proofs of  various versions of the Halpern-\Lauchli\ Theorem as well as Milliken's Theorem can be found in  \cite{Farah/TodorcevicBK}, \cite{TodorcevicBK10}, and  \cite{DodosKBK}.
The book \cite{Farah/TodorcevicBK} contains
the first published
 version of a proof due to Harrington using the method of forcing to produce a result inside the standard axioms of set theory, Zermelo-Fraenkel + Axiom of Choice.
Harrington's novel approach is central to the methods we developed in \cite{DobrinenJML20} for the triangle-free Henson graph and the more general approach developed in this paper for all Henson graphs.
To provide the reader  with a warm-up  for our proof of Theorem
\ref{thm.MillikenSWP},
we reproduce  here
a forcing proof  from \cite{DobrinenRIMS17}.
This proof  was outlined for us in 2011 by Richard Laver.
It is simpler than the one given in  \cite{Farah/TodorcevicBK} (at the expense of using $\kappa=\beth_{2d-1}(\aleph_0)^+$ instead of the $\kappa=\beth_{d-1}(\aleph_0)^+$
used in \cite{Farah/TodorcevicBK})
 and provides the starting point  towards obtaining Theorem \ref{thm.MillikenSWP}.

The Halpern-\Lauchli\ Theorem holds for finitely many finitely branching trees with no maximal nodes;
here, we  restrict attention to  binary trees since they are  sufficient for applications to  graphs.
The following is the simplest version of the Halpern-\Lauchli\ Theorem for strong trees, which   provides the reader with a basic understanding of  the starting point for our Ramsey theorems in Sections
\ref{sec.5} and  \ref{sec.1SPOC}.
Recall that for a tree $T\sse\Seq$, $L(T)$ denotes the set of lengths of nodes in $T$ and $T(n)$ is a level set.

\begin{thm}[Halpern-\Lauchli, \cite{Halpern/Lauchli66}]\label{thm.HL}
Let $d\ge 1$ be fixed, and for each $i<d$, let $T_i$ denote
$\Seq$, the tree of all binary sequences of finite length.
Suppose
\begin{equation}
h:\bigcup_{n\in \mathbb{N}}\prod_{i<d} T_i(n)\ra r
\end{equation}
is  a given coloring, where $r$ is any positive integer.
Then there are infinite  strong subtrees $S_i\sse T_i$, where  $L(S_i)=L(S_j)$ for all $i<j<d$,
such that $h$ is monochromatic on
\begin{equation}
\bigcup_{n\in \mathbb{N} }\prod_{i<d} S_i(n).
\end{equation}
\end{thm}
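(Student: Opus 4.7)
The plan is to reproduce Harrington's forcing proof (as communicated to the author by Laver in 2011), obtaining Theorem~\ref{thm.HL} within ZFC by means of a large-cardinal-indexed forcing extension and an application of the \Erdos--Rado partition theorem. Set $\kappa=\beth_{2d-1}(\aleph_0)^+$ and use a Cohen-like forcing to add $\kappa$-many generic $d$-tuples of branches, one tuple assigned to each $\alpha<\kappa$, with the $i$-th coordinate of the $\alpha$-th tuple running through $T_i$. From the given coloring $h$, derive a finite-valued coloring on $[\kappa]^{2d}$, apply \Erdos--Rado to find a large homogeneous set $I$, and use $I$ to build, level by level, the required infinite strong subtrees. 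Absoluteness of the conclusion between $V$ and the generic extension $V[G]$ then yields the theorem in $V$.

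Concretely, let $\mathbb{P}$ consist of conditions $p=(\ell_p,f_p)$ where $\ell_p\in\bN$, $A_p\sse\kappa$ is finite, and $f_p:A_p\to\prod_{i<d}T_i(\ell_p)$; extension $q\le p$ requires $\ell_q\ge\ell_p$, $A_q\contains A_p$, and end-extension of each coordinate of $f_q(\alpha)$ over $f_p(\alpha)$ for all $\alpha\in A_p$. For each $\alpha<\kappa$ and $i<d$, $\mathbb{P}$ yields a name $\dot b^{\alpha}_i$ for a generic branch through $T_i$, and for every $n\in\bN$ the node $\dot b^{\alpha}_i\re n$ is forced to lie in $T_i(n)$. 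In $V[G]$, for each increasing $\vec\alpha=\lgl\alpha_j:j<2d\rgl\in[\kappa]^{2d}$, let $n(\vec\alpha)$ be the least level at which the $2d\cdot d$ nodes $\dot b^{\alpha_j}_i\re n$ are pairwise distinct, and define $F(\vec\alpha)$ to record both the strong similarity type (meet-heights, relative lengths, passing numbers) of the resulting $2d\times d$ array of nodes at that level and the $h$-colors of every $d$-tuple obtained by choosing one branch per coordinate. Since $F$ takes only finitely many values, \Erdos--Rado in the form $\kappa\to(\aleph_1)^{2d}_{\aleph_0}$ produces an uncountable $I\sse\kappa$ on which $F$ is constant.

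With $I$ fixed, I build infinite strong subtrees $S_i\sse T_i$ by induction on levels. At each stage, having already chosen a finite common-level set in each $S_i$, pick a finite $J\sse I$ large enough to accommodate the next round of splitting, then take the next level of each $S_i$ to be the nodes $\dot b^{\alpha}_i\re m$ for $\alpha\in J$, where $m$ is the next common level dictated by the constant similarity type recorded in $F$. Homogeneity of $F$ on $I$ guarantees that these nodes form strong successors of the previous level simultaneously in all coordinates and that $h$ takes the fixed constant value on every product level set. Taking unions produces $S_i$ with $L(S_i)=L(S_j)$ for all $i,j<d$ and $h$ monochromatic on $\bigcup_n\prod_{i<d}S_i(n)$; absoluteness pulls this back to $V$. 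The main obstacle, and the reason the exponent in \Erdos--Rado is $2d$ rather than $d$, is that one must control the relative meet-structure \emph{between pairs} of tuples chosen from successive stages simultaneously—not merely the $h$-color of a single tuple—so that the successors genuinely assemble into a strong subtree in each coordinate; this bookkeeping is precisely the combinatorial seed that the later sections will enrich with pre-cliques and witnessing coding nodes to handle the Henson-graph setting.
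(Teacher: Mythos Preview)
Your proposal has a fatal gap at the \Erdos--Rado step. You define $F$ on $[\kappa]^{2d}$ in $V[G]$ and invoke $\kappa\to(\aleph_1)^{2d}_{\aleph_0}$ there, but your forcing adds $\kappa$ Cohen reals, so $(2^{\aleph_0})^{V[G]}\ge\kappa$; Sierpi\'{n}ski's partition gives $\kappa\not\to(\aleph_1)^2_2$ in $V[G]$, and a fortiori $\kappa\not\to(\aleph_1)^{2d}_{\aleph_0}$ there. The partition relation fails exactly in the model where you try to use it. A second issue: your $F$ records the $h$-color only at the single level $n(\vec\alpha)$, so even on a homogeneous $I$ you have no mechanism to produce \emph{new} monochromatic levels at each inductive stage; ``the next common level dictated by the constant similarity type'' is not well-defined, since a similarity type carries no absolute level information.

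The paper's version of Harrington's argument never enters $V[G]$. Branches are indexed by pairs $(i,\alpha)\in d\times\kappa$ (not by single ordinals labeling full $d$-tuples), so a product tuple is $\langle\dot b_{i,\alpha_i}:i<d\rangle$ for $\vec\alpha\in[\kappa]^d$; this indexing is what later allows free mixing across coordinates. For each such $\vec\alpha$ one fixes \emph{in $V$} a condition $p_{\vec\alpha}$ which, via a $\bP$-name $\dot{\mathcal U}$ for a nonprincipal ultrafilter on $\mathbb N$, decides a color $\varepsilon_{\vec\alpha}$ and forces $h(\dot b_{\vec\alpha}\re l)=\varepsilon_{\vec\alpha}$ for $\dot{\mathcal U}$-many $l$. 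The coloring $f$ on $[\kappa]^{2d}$ is then defined in $V$ from the finite data of the conditions $p_{\vec\alpha}$ (domains, ranges, $\varepsilon$-values, and overlap pattern), and \Erdos--Rado is applied in $V$, where $\kappa=\beth_{2d-1}(\aleph_0)^+$ is large enough by design. Homogeneity yields blocks $K_0<\cdots<K_{d-1}$ on which the $p_{\vec\alpha}$ are pairwise compatible with common range $\{t^*_i:i<d\}$; the $S_i$ are built level by level in $V$ by amalgamating finitely many $p_{\vec\alpha}$ into one condition $q$ and extending to some $r$ deciding a fresh level $l$ of color $\varepsilon^*$. The ultrafilter name is precisely what supplies such an $l$ at every stage; no generic filter is ever chosen.
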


Harrington's proof  uses a cardinal $\kappa$ large enough to satisfy a partition relation guaranteed by the following theorem.
Recall that
given cardinals $d,\sigma,\kappa,\lambda$,
$[\lambda]^d$ denotes the collection of all subsets of $\lambda$ of cardinality $d$, and 
\begin{equation}
\lambda\ra(\kappa)^d_{\sigma}
\end{equation}
means that for each coloring of $[\lambda]^d$ into $\sigma$ many colors,
there is a subset $X$ of $\lambda$ such that $|X|=\kappa$ and all members of $[X]^d$ have the same color.
The following is  a ZFC result guaranteeing cardinals large enough to have the Ramsey property for colorings into infinitely many colors.

\begin{thm}[\Erdos-Rado]\label{thm.ER}
For any non-negative integer $d$ and  infinite cardinal $\mu$,
$$
\beth_d(\mu)^+\ra(\mu^+)_{\mu}^{d+1}.
$$
\end{thm}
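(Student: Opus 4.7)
The plan is to proceed by induction on $d \geq 0$, following the classical \Erdos--Rado argument. The base case $d = 0$ asserts $\mu^+ \to (\mu^+)^1_\mu$, which is the pigeonhole principle: a union of $\mu$ sets each of cardinality at most $\mu$ has cardinality at most $\mu$, hence cannot exhaust a set of cardinality $\mu^+$. For the inductive step, suppose $\beth_d(\mu)^+ \to (\mu^+)^{d+1}_\mu$, set $\kappa = \beth_{d+1}(\mu)^+$, and let $f\colon [\kappa]^{d+2} \to \mu$ be given. I would build a strictly increasing sequence $\langle x_\alpha : \alpha < \beth_d(\mu)^+ \rangle$ of elements of $\kappa$ satisfying the ``upward homogeneity'' property: whenever $\alpha_0 < \cdots < \alpha_d < \gamma < \gamma'$ in $\beth_d(\mu)^+$, the $f$-colors of $\{x_{\alpha_0}, \dots, x_{\alpha_d}, x_\gamma\}$ and $\{x_{\alpha_0}, \dots, x_{\alpha_d}, x_{\gamma'}\}$ coincide. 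Once this is in hand, the reduced coloring $h\colon [\beth_d(\mu)^+]^{d+1} \to \mu$ given by $h(\{\alpha_0, \dots, \alpha_d\}) = f(\{x_{\alpha_0}, \dots, x_{\alpha_d}, x_\gamma\})$ (for any $\gamma > \alpha_d$) is well-defined, and the inductive hypothesis yields an $h$-homogeneous $I \subseteq \beth_d(\mu)^+$ of order type $\mu^+$; the image $\{x_\alpha : \alpha \in I\}$ is then $f$-homogeneous of order type $\mu^+$.

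To construct the sequence I would use an elementary submodel argument. Fix a sufficiently large regular $\theta$ and build a continuous elementary chain $\langle M_\alpha : \alpha < \beth_d(\mu)^+ \rangle$ of elementary submodels of $(H_\theta, \in, <_\theta)$ with $f, \kappa \in M_0$, $|M_\alpha| = \beth_d(\mu)$, each $M_\alpha \cap \kappa$ an ordinal $\delta_\alpha < \kappa$, and $\langle M_\beta : \beta \leq \alpha \rangle \in M_{\alpha+1}$. Set $\delta = \sup_\alpha \delta_\alpha$, which lies strictly below $\kappa$ because $\kappa$ is a successor cardinal of cofinality exceeding $\beth_d(\mu)^+$, and fix any witness $\gamma^* \in \kappa \setminus \delta$. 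At stage $\alpha$, choose $x_\alpha \in \delta_{\alpha+1} \setminus \delta_\alpha$ to realize the same $f$-type over $\{x_\beta : \beta < \alpha\}$ as $\gamma^*$ does, i.e., $f(s \cup \{x_\alpha\}) = f(s \cup \{\gamma^*\})$ for every $(d+1)$-subset $s$ of $\{x_\beta : \beta < \alpha\}$. Existence of such an $x_\alpha$ inside $\delta_{\alpha+1}$ follows by elementarity of $M_{\alpha+1}$: the relevant type-realization set is nonempty in $\kappa$ (witnessed by $\gamma^*$) and definable from parameters in $M_{\alpha+1}$. The upward homogeneity property then drops out, since both $x_\gamma$ and $x_{\gamma'}$ inherit $\gamma^*$'s type over $\{x_{\alpha_0}, \dots, x_{\alpha_d}\}$.

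The main obstacle is to justify the type-realization step rigorously, because the full type of $\gamma^*$ over $\{x_\beta : \beta < \alpha\}$ is a function with up to $\beth_d(\mu)$ many values and need not itself be an element of $M_{\alpha+1}$. The standard workaround is to consider the partition of $\kappa \setminus \delta_\alpha$ into type-equivalence classes with respect to $f$ and the chosen $\{x_\beta : \beta < \alpha\}$; there are at most $\mu^{|\alpha|^{d+1}} \leq \mu^{\beth_d(\mu)} = \beth_{d+1}(\mu) < \kappa$ such classes, so the partition is definable inside $M_{\alpha+1}$ from $f$ and the sequence of $x_\beta$ (which one verifies by continuity of the chain to lie in $M_{\alpha+1}$). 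Elementarity of $M_{\alpha+1}$ then places a representative of $\gamma^*$'s class into $M_{\alpha+1} \cap \kappa = \delta_{\alpha+1}$, delivering the required $x_\alpha$. The cardinal arithmetic $\mu^{\beth_d(\mu)} = \beth_{d+1}(\mu)$ sitting strictly below the regular cardinal $\kappa = \beth_{d+1}(\mu)^+$ is precisely what makes the threshold in the statement of the theorem sharp for this argument.
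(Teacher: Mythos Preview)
The paper does not prove the \Erdos--Rado theorem; it is quoted as a classical result and used as a black box in the Halpern--\Lauchli\ argument. So there is no ``paper's proof'' to compare against, and your write-up should be evaluated on its own merits.

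Your overall architecture is the standard one and is correct: induction on $d$, with the inductive step reducing a $(d{+}2)$-coloring of $\kappa=\beth_{d+1}(\mu)^+$ to a $(d{+}1)$-coloring of $\beth_d(\mu)^+$ via an end-homogeneous sequence built against a fixed oracle $\gamma^*$.

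There is, however, a genuine gap in the elementary-submodel step: taking $|M_\alpha|=\beth_d(\mu)$ is too small. Write $\lambda=\beth_d(\mu)$. At stage $\alpha$ you need both the sequence $\langle x_\beta:\beta<\alpha\rangle$ and the type $t$ of $\gamma^*$ over $\{x_\beta:\beta<\alpha\}$ to lie in $M_{\alpha+1}$, so that the (nonempty) set of realizers of $t$ is definable with parameters in $M_{\alpha+1}$ and elementarity produces $x_\alpha$. Continuity of the chain and the hypothesis $\langle M_\beta:\beta\le\alpha\rangle\in M_{\alpha+1}$ do not give this: the $x_\beta$'s are defined by recursion using the external parameter $\gamma^*\notin\bigcup_\alpha M_\alpha$, so neither the sequence nor the type is definable from parameters available in $M_{\alpha+1}$. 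And since there are up to $\mu^{\lambda}=2^{\lambda}=\beth_{d+1}(\mu)$ possible types while $|M_{\alpha+1}|=\lambda$, there is no reason the particular type $t$ determined by $\gamma^*$ is an element of $M_{\alpha+1}$.

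The fix is to take $|M_\alpha|=2^{\lambda}=\beth_{d+1}(\mu)$ and arrange ${}^{\lambda}M_\alpha\subseteq M_\alpha$ (possible since $(2^{\lambda})^{\lambda}=2^{\lambda}$). One still has $\delta_\alpha=M_\alpha\cap\kappa<\kappa$ and $\mathrm{cf}(\delta)\le\lambda^+<\kappa$, so $\gamma^*$ exists. Now $\{x_\beta:\beta<\alpha\}\subseteq M_{\alpha+1}$ together with $\lambda$-closure forces both the sequence and the function $t\colon[\{x_\beta\}]^{d+1}\to\mu$ to be elements of $M_{\alpha+1}$, and your elementarity argument goes through verbatim. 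With this correction the proof is complete.
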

\vskip.1in

\noindent \bf Proof of Theorem \ref{thm.HL}. \rm
It  is  sufficient  to consider the case $r=2$.
Let
$h:\bigcup_{n\in\mathbb{N}}\prod_{i<d} T_i(n) \ra 2$
be given.
Notice that since each $T_i=\Seq$, it follows that each  $T_i(n)=T_i\re n=\Seq_n$.
Let $\kappa=\beth_{2d-1}(\aleph_0)^+$.
(Recall that
$\beth_1(\aleph_0)=2^{\aleph_0}$ and  in general, $\beth_{n+1}(\aleph_0)=2^{\beth_n(\aleph_0)}$.)
Define  $\bP$ to be the set of   functions
$p$
of the form
\begin{equation}
p: d\times\vec{\delta}_p\ra \bigcup_{i<d} T_i\re l_p,
\end{equation}
where 
\begin{enumerate}
\item[(i)]
$\vec{\delta}_p$ is a finite subset of $\kappa$ and
$l_p\in\mathbb{N}$;
\item [(ii)]
for each $i<d$,
 $\{p(i,\delta) : \delta\in  \vec{\delta}_p\}\sse  T_i\re l_p$.
\end{enumerate}
The partial  ordering on $\bP$ is  inclusion:
$q\le p$ if and only if
$l_q\ge l_p$, $\vec\delta_q\contains \vec\delta_p$,
and for each $(i,\delta)\in d\times \vec\delta_p$,
$q(i,\delta)\contains p(i,\delta)$.

Forcing with
$\bP$ adds $\kappa$ branches through the  tree $T_i$, for each  $i<d$.
For  $\al<\kappa$,
let  $\dot{b}_{i,\al}$ denote the $\al$-th generic branch through $T_i$.
Thus,
\begin{equation}
\dot{b}_{i,\al}=\{\lgl p(i,\al),p\rgl :p\in \bP,\ \mathrm{and}\ (i,\al)\in\dom(p)\}.
\end{equation}
Note that for each $p\in \bP$ with $(i,\al)\in\dom(p)$, $p$ forces that $\dot{b}_{i,\al}\re l_p= p(i,\al)$.
Let $\dot{\mathcal{U}}$ be a $\bP$-name for a non-principal ultrafilter on $\mathbb{N}$.
To simplify notation, we   write sets $\{\al_i:i< d\}$ in
$[\kappa]^d$ as vectors $\vec{\al}=\lgl \al_0,\dots,\al_{d-1}\rgl$ in strictly increasing order.
For $\vec{\al}=\lgl\al_0,\dots,\al_{d-1}\rgl\in[\kappa]^d$,
we let
\begin{equation}
\dot{b}_{\vec{\al}}\mathrm{\  \  denote\ \ }
\lgl \dot{b}_{0,\al_0},\dots, \dot{b}_{d-1,\al_{d-1}}\rgl,
\end{equation}
 and for any $l\in \mathbb{N}$, let 
\begin{equation}
\dot{b}_{\vec\al}\re l
\mathrm{\ \ denote \  \ }
\{\dot{b}_{i,\al_i}\re l:i<d\}.
\end{equation}

The goal now is to find disjoint
 infinite  sets $K_i\sse \kappa$, for $i<d$,
and a set of conditions $\{p_{\vec\al}:\vec\al\in \prod_{i<d}K_i\}$ which are compatible,
have the same images in $T$,
and such that for some fixed $\varepsilon^*$,
for each $\vec\al\in\prod_{i<d}K_i$,
$p_{\vec\al}$ forces
$h(\dot{b}_{\vec\al}\re l)=\varepsilon^*$  for   $\dot{\mathcal{U}}$-many  $l$.
Moreover, we will find nodes $t^*_i$, $i\le d$, such that for each $\vec\al\in\prod_{i<d}K_i$,
$p_{\vec\al}(i,\al_i)=t^*_i$.
These will  serve as the basis  for  the  process of building the strong subtrees  $S_i\sse T_i$ on which $h$ is monochromatic.

For each $\vec\al\in[\kappa]^d$,
choose a condition $p_{\vec{\al}}\in\bP$ such that
\begin{enumerate}
\item
 $\vec{\al}\sse\vec{\delta}_{p_{\vec\al}}$;
\item
$p_{\vec{\al}}\forces$ ``There is an $\varepsilon\in 2$  such that
$h(\dot{b}_{\vec{\al}}\re l)=\varepsilon$
for $\dot{\mathcal{U}}$ many $l$";
\item
$p_{\vec{\al}}$ decides a value for $\varepsilon$, label it  $\varepsilon_{\vec{\al}}$; and
\item
$h(\{p_{\vec\al}(i,\al_i):i< d\})=\varepsilon_{\vec{\al}}$.
\end{enumerate}

Such conditions $p_{\vec\al}$ may be obtained as follows.
Given $\vec\al\in[\kappa]^d$,
take $p_{\vec\al}^1$ to be any condition such that $\vec\al\sse\vec{\delta}_{p_{\vec\al}^1}$.
Since $\bP$ forces $\dot{\mathcal{U}}$ to be an ultrafilter on $\mathbb{N}$, there is a condition
 $p_{\vec\al}^2\le p_{\vec\al}^1$ such that
$p_{\vec\al}^2$ forces that $h(\dot{b}_{\vec\al}\re l)$ is the same color for $\dot{\mathcal{U}}$ many $l$.
Furthermore,  there must be a stronger
condition deciding which of the colors
$h(\dot{b}_{\vec\al}\re l)$ takes on $\dot{\mathcal{U}}$ many levels $l$.
Let $p_{\vec\al}^3\le p_{\vec\al}^2$  be a condition which decides this  color, and let $\varepsilon_{\vec\al}$ denote  that color.
Finally, since $p_{\vec\al}^3$ forces that for  $\dot{\mathcal{U}}$ many $l$ the color
 $h(\dot{b}_{\vec\al}\re l)$
will equal $\varepsilon_{\vec{\al}}$,
there is some $p_{\vec\al}^4\le p_{\vec\al}^3$ which decides some level $l$ so that
$h(\dot{b}_{\vec\al}\re l)=\varepsilon_{\vec{\al}}$.
If $l_{p_{\vec\al}^4}<l$,
let $p_{\vec\al}$ be any member of $\bP$ such that
$p_{\vec\al}\le p_{\vec\al}^4$ and $l_{p_{\vec\al}}=l$.
If $l_{p_{\vec\al}^4}\ge l$,
let $p_{\vec\al}=\{((i,\delta), p_{\vec\al}^4(i,\delta)\re l):(i,\delta)\in d\times\vec\delta_{p_{\vec\al}^4}\}$,
the truncation of $p_{\vec\al}^4$ to images that have length $l$.
Then $p_{\vec\al}$ forces that $\dot{b}_{\vec\al}\re l=\{p_{\vec\al}(i,\al_i):i< d\}$, and hence
 $p_{\vec\al}$  forces that
$h(\{p_{\vec\al}(i,\al_i):i< d\})=\varepsilon_{\vec{\al}}$.

Recall that we chose  $\kappa$ large enough so that   $\kappa\ra(\aleph_1)^{2d}_{\aleph_0}$ holds.
Now we prepare  for an application of the \Erdos-Rado Theorem.
Given two sets of ordinals $J,K$ we shall write $J<K$ if and only if every member of $J$ is less than every member of $K$.
Let $D_e=\{0,2,\dots,2d-2\}$ and  $D_o=\{1,3,\dots,2d-1\}$, the sets of  even and odd integers less than $2d$, respectively.
Let $\mathcal{I}$ denote the collection of all functions $\iota: 2d\ra 2d$ such that
\begin{equation}
\{\iota(0),\iota(1)\}<\{\iota(2),\iota(3)\}<\dots<\{\iota(2d-2),\iota(2d-1)\}.
\end{equation}
Thus, each $\iota$ codes two strictly increasing sequences $\iota\re D_e$ and $\iota\re D_o$, each of length $d$.
For $\vec{\theta}\in[\kappa]^{2d}$,
$\iota(\vec{\theta}\,)$ determines the pair of sequences of ordinals
\begin{equation}
 (\theta_{\iota(0)},\theta_{\iota(2)},\dots,\theta_{\iota(2d-2))}), (\theta_{\iota(1)},\theta_{\iota(3)},\dots,\theta_{\iota(2d-1)}),
\end{equation}
both of which are members of $[\kappa]^d$.
Denote these as $\iota_e(\vec\theta\,)$ and $\iota_o(\vec\theta\,)$, respectively.
To ease notation, let $\vec{\delta}_{\vec\al}$ denote
$\vec\delta_{p_{\vec\al}}$,
 $k_{\vec{\al}}$ denote $|\vec{\delta}_{\vec\al}|$,
and let $l_{\vec{\al}}$ denote  $l_{p_{\vec\al}}$.
Let $\lgl \delta_{\vec{\al}}(j):j<k_{\vec{\al}}\rgl$
denote the enumeration of $\vec{\delta}_{\vec\al}$
in increasing order.

Define a coloring  $f$ on $[\kappa]^{2d}$ into countably many colors as follows:
Given  $\vec\theta\in[\kappa]^{2d}$ and
 $\iota\in\mathcal{I}$, to reduce the number of subscripts,  letting
$\vec\al$ denote $\iota_e(\vec\theta\,)$ and $\vec\beta$ denote $\iota_o(\vec\theta\,)$,
define
\begin{align}\label{eq.fseq}
f(\iota,\vec\theta\,)= \,
&\lgl \iota, \varepsilon_{\vec{\al}}, k_{\vec{\al}},
\lgl \lgl p_{\vec{\al}}(i,\delta_{\vec{\al}}(j)):j<k_{\vec{\al}}\rgl:i< d\rgl,\cr
& \lgl  \lgl i,j \rgl: i< d,\ j<k_{\vec{\al}},  \vec{\delta}_{\vec{\al}}(j)=\al_i \rgl,\cr
&\lgl \lgl j,k\rgl:j<k_{\vec{\al}},\ k<k_{\vec{\beta}},\ \delta_{\vec{\al}}(j)=\delta_{\vec{\beta}}(k)\rgl\rgl.
\end{align}
Let $f(\vec{\theta}\,)$ be the sequence $\lgl f(\iota,\vec\theta\,):\iota\in\mathcal{I}\rgl$, where $\mathcal{I}$ is given some fixed ordering.
Since the range of $f$ is countable,
applying the \Erdos-Rado Theorem \ref{thm.ER},
there is a  subset $K\sse\kappa$ of cardinality $\aleph_1$
which is homogeneous for $f$.
Take $K'\sse K$ such that between each two members of $K'$ there is a member of $K$.
Take subsets $K_i\sse K'$ such that  $K_0<\dots<K_{d-1}$
and   each $|K_i|=\aleph_0$.

\begin{lem}\label{lem.HLonetypes}
There are $\varepsilon^*\in 2$, $k^*\in\mathbb{N}$,
and $ \lgl t_{i,j}: j<k^*\rgl$, $i< d$,
 such that
 $\varepsilon_{\vec{\al}}=\varepsilon^*$,
$k_{\vec\al}=k^*$,   and
$\lgl p_{\vec\al}(i,\delta_{\vec\al}(j)):j<k_{\vec\al}\rgl
=
 \lgl t_{i,j}: j<k^*\rgl$,
for each $i< d$,
for all $\vec{\al}\in \prod_{i<d} K_i$.
\end{lem}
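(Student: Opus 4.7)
The plan is to exploit the homogeneity of $K$ under $f$ by observing that the coloring $f$ was defined precisely so as to record the data we want to stabilize. The key idea is that the first four entries of $f(\iota,\vec\theta\,)$, for a fixed $\iota\in\mathcal{I}$, depend only on the ``even part'' $\vec\al=\iota_e(\vec\theta\,)$, not on $\vec\theta$ itself, and hence are constant as $\vec\theta$ ranges over $[K]^{2d}$.

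Concretely, I would fix $\iota\in\mathcal{I}$ to be the identity, so that for any $\vec\theta=\lgl\theta_0,\dots,\theta_{2d-1}\rgl\in[\kappa]^{2d}$ we have $\iota_e(\vec\theta\,)=\lgl\theta_0,\theta_2,\dots,\theta_{2d-2}\rgl$. Given any $\vec\al=\lgl\al_0,\dots,\al_{d-1}\rgl\in\prod_{i<d}K_i$, I would use the defining property of $K'$ to interleave: for each $i<d-1$ pick some $\beta_i\in K$ with $\al_i<\beta_i<\al_{i+1}$ (possible since $\al_i,\al_{i+1}\in K'$ and a member of $K$ lies between them), and pick $\beta_{d-1}\in K$ with $\al_{d-1}<\beta_{d-1}$ (available since $K_{d-1}$ is infinite, so some $\al'\in K_{d-1}\sse K'$ lies above $\al_{d-1}$, and we take a member of $K$ between $\al_{d-1}$ and $\al'$). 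Then $\vec\theta_{\vec\al}:=\lgl\al_0,\beta_0,\al_1,\beta_1,\dots,\al_{d-1},\beta_{d-1}\rgl$ lies in $[K]^{2d}$ and satisfies $\iota_e(\vec\theta_{\vec\al})=\vec\al$.

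Since $K$ is homogeneous for $f$, the sequence $f(\vec\theta\,)=\lgl f(\iota',\vec\theta\,):\iota'\in\mathcal{I}\rgl$ takes a single value as $\vec\theta$ varies over $[K]^{2d}$; in particular, its $\iota$-component $f(\iota,\vec\theta\,)$ is constant on this set. Reading off the second, third, and fourth entries of $f(\iota,\vec\theta_{\vec\al})$ as displayed in \eqref{eq.fseq} produces fixed values $\varepsilon^*\in 2$, $k^*\in\mathbb{N}$, and sequences $\lgl t_{i,j}:j<k^*\rgl$ for each $i<d$, which by the definition of $f$ equal $\varepsilon_{\vec\al}$, $k_{\vec\al}$, and $\lgl p_{\vec\al}(i,\delta_{\vec\al}(j)):j<k_{\vec\al}\rgl$ respectively. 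Since every $\vec\al\in\prod_{i<d}K_i$ arises as $\iota_e(\vec\theta\,)$ for some $\vec\theta\in[K]^{2d}$ via the construction above, these values are the same for all such $\vec\al$, establishing the lemma.

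There is no real obstacle here beyond the bookkeeping of the interleaving argument; the substance was front-loaded into the clever definition of the coloring $f$ on $[\kappa]^{2d}$, which packaged exactly the invariants we need into its first four coordinates so that a single application of the \Erdos--Rado partition relation pins them down simultaneously.
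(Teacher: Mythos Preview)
Your proposal is correct and takes essentially the same approach as the paper's proof: fix $\iota$ to be the identity on $2d$, realize each $\vec\al\in\prod_{i<d}K_i$ as $\iota_e(\vec\theta\,)$ for some $\vec\theta\in[K]^{2d}$, and read off the desired constancy from the homogeneity of $K$ for $f$. The paper simply asserts the existence of such $\vec\theta$, whereas you spell out the interleaving construction using the defining property of $K'$; otherwise the arguments are identical.
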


\begin{proof}
Let  $\iota$ be the member in $\mathcal{I}$
which is the identity function on $2d$.
For any pair $\vec{\al},\vec{\beta}\in \prod_{i<d}K_i$, there are $\vec\theta,\vec\theta'\in [K]^{2d}$
such that
$\vec\al=\iota_e(\vec\theta\,)$ and $\vec\beta=\iota_e(\vec\theta'\,)$.
Since $f(\iota,\vec\theta\,)=f(\iota,\vec\theta'\,)$,
it follows that $\varepsilon_{\vec\al}=\varepsilon_{\vec\beta}$, $k_{\vec{\al}}=k_{\vec{\beta}}$,
and $\lgl \lgl p_{\vec{\al}}(i,\delta_{\vec{\al}}(j)):j<k_{\vec{\al}}\rgl:i< d\rgl
=
\lgl \lgl p_{\vec{\beta}}(i,\delta_{\vec{\beta}}(j)):j<k_{\vec{\beta}}\rgl:i< d\rgl$.
\end{proof}

Let $l^*$ denote the length of the nodes $t_{i,j}$.

\begin{lem}\label{lem.HLj=j'}
Given any $\vec\al,\vec\beta\in \prod_{i<d}K_i$,
if $j,j'<k^*$ and $\delta_{\vec\al}(j)=\delta_{\vec\beta}(j')$,
 then $j=j'$.
\end{lem}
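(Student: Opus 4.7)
The plan is to argue by contradiction: assume $j\ne j'$ yet $\delta_{\vec\al}(j)=\delta_{\vec\beta}(j')$, and use the homogeneity of $f$ on $[K]^{2d}$ together with a ``three disjoint $d$-tuples'' construction to force $j=j'$.

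Assume first that $\vec\al$ and $\vec\beta$ are disjoint. Their interleaving within the pairwise separated family $K_0<K_1<\cdots<K_{d-1}$ determines a unique $\iota\in\mathcal I$ with $\vec\al=\iota_e(\vec\theta)$ and $\vec\beta=\iota_o(\vec\theta)$, where $\vec\theta=\vec\al\cup\vec\beta\in[K']^{2d}\sse[K]^{2d}$. I would then choose a third $\vec\gamma\in\prod_{i<d}K_i$ disjoint from both, with each coordinate $\gamma_i$ chosen in $K_i$ on the same side of $\{\al_i,\beta_i\}$ as $\iota$ dictates: $\gamma_i>\max(\al_i,\beta_i)$ when $\iota(2i)<\iota(2i+1)$, and $\gamma_i<\min(\al_i,\beta_i)$ otherwise. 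The infinitude of each $K_i$ makes this possible, and ensures that the pairs $(\vec\al,\vec\gamma)$ and $(\vec\beta,\vec\gamma)$ also realize the same $\iota$.

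Next I would invoke the \Erdos-Rado homogeneity: the value $f(\iota,\vec\theta\,)$ is the same for every $\vec\theta\in[K]^{2d}$, so the overlap component of (\ref{eq.fseq}) is a fixed set $O_\iota\sse k^*\times k^*$. The hypothesis gives $(j,j')\in O_\iota$, which, applied to $(\vec\al,\vec\gamma)$ and $(\vec\beta,\vec\gamma)$, yields $\delta_{\vec\al}(j)=\delta_{\vec\gamma}(j')$ and $\delta_{\vec\beta}(j)=\delta_{\vec\gamma}(j')$. Hence $\delta_{\vec\al}(j)=\delta_{\vec\beta}(j)$, so $(j,j)\in O_\iota$; applying this to $(\vec\al,\vec\gamma)$ gives $\delta_{\vec\al}(j)=\delta_{\vec\gamma}(j)$. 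Combined with $\delta_{\vec\al}(j)=\delta_{\vec\gamma}(j')$, the injectivity of the enumeration $\delta_{\vec\gamma}$ forces $j=j'$, the sought contradiction. For non-disjoint $\vec\al,\vec\beta\in\prod_{i<d}K_i$, I would route the coincidence through an auxiliary $\vec\gamma\in\prod_{i<d}K_i$ disjoint from both and apply the disjoint conclusion to $(\vec\al,\vec\gamma)$ and $(\vec\beta,\vec\gamma)$ in turn.

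The main obstacle is constructing the witness $\vec\gamma$ so that it realizes $\iota$ jointly with both $\vec\al$ and $\vec\beta$; once the separation $K_0<\cdots<K_{d-1}$ and the infinitude of each $K_i$ are exploited, this is a routine coordinate-by-coordinate check, after which the contradiction reduces to a short chase through homogeneity and injectivity.
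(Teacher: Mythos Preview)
Your overall strategy matches the paper's: bridge the coincidence $\delta_{\vec\al}(j)=\delta_{\vec\beta}(j')$ through a third $d$-tuple $\vec\gamma$ so that the relevant pairs all realize the same $\iota$, then use homogeneity of the overlap component to force $\delta_{\vec\gamma}(j)=\delta_{\vec\gamma}(j')$. However, your construction of $\vec\gamma$ has a genuine gap. You place $\gamma_i\in K_i$ on the far side of both $\al_i$ and $\beta_i$---in particular $\gamma_i<\min(\al_i,\beta_i)$ whenever $\al_i>\beta_i$---and justify this by ``the infinitude of each $K_i$.'' But each $K_i$ is a countably infinite subset of the ordinal $\kappa$, hence well-ordered with a least element; if $\beta_i=\min K_i$ there is no such $\gamma_i$. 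Infinitude lets you go above any given element, not below. Your role assignment (both pairs taken with $\vec\gamma$ in the odd slot) is exactly what forces this one-sided placement.

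The paper instead places $\vec\gamma$ \emph{between} $\vec\al$ and $\vec\beta$ coordinatewise: $\al_i\,\rho_i\,\gamma_i\,\rho_i\,\beta_i$. This is precisely why $K'$ was carved out of $K$ so that between any two members of $K'$ there lies a member of $K$---a property you never invoke. With $\vec\gamma$ between, the pairs $(\vec\al,\vec\gamma)$ and $(\vec\gamma,\vec\beta)$ (note $\vec\gamma$ switches from the odd to the even slot) both realize the original $\iota$, and the chain $\delta_{\vec\gamma}(j')=\delta_{\vec\al}(j)=\delta_{\vec\beta}(j')=\delta_{\vec\gamma}(j)$ closes directly, without your extra pass through $(j,j)\in O_\iota$. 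This placement also absorbs the non-disjoint case (take $\gamma_i=\al_i=\beta_i$ when $\rho_i$ is equality, with $\iota(2i)=\iota(2i+1)$), so your separate reduction there becomes unnecessary.
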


\begin{proof}
Let $\vec\al,\vec\beta$ be members of $\prod_{i<d}K_i$   and suppose that
 $\delta_{\vec\al}(j)=\delta_{\vec\beta}(j')$ for some $j,j'<k^*$.
For each $i<d$, let  $\rho_i$ be the relation from among $\{<,=,>\}$ such that
 $\al_i\,\rho_i\,\beta_i$.
Let   $\iota$ be a member of  $\mathcal{I}$  such that for each $\vec\zeta\in[K]^{2d}$ and each $i<d$,
$\zeta_{\iota(2i)}\ \rho_i \ \zeta_{\iota(2i+1)}$.
Take
$\vec\theta\in[K']^{2d}$ satisfying
$\iota_e(\vec\theta)=\vec\al$ and $\iota_o(\vec\theta)= \vec\beta$.
Since between any two members of $K'$ there is a member of $K$, there is a
 $\vec\gamma\in[K]^{d}$ such that  for each $i< d$,
 $\al_i\,\rho_i\,\gamma_i$ and $\gamma_i\,\rho_i\, \beta_i$.
Given that  $\al_i\,\rho_i\,\gamma_i$ and $\gamma_i\,\rho_i\, \beta_i$ for each $i<d$,
there are  $\vec\mu,\vec\nu\in[K]^{2d}$ such that $\iota_e(\vec\mu)=\vec\al$,
$\iota_o(\vec\mu)=\vec\gamma$,
$\iota_e(\vec\nu)=\vec\gamma$, and $\iota_o(\vec\nu)=\vec\beta$.
Since $\delta_{\vec\al}(j)=\delta_{\vec\beta}(j')$,
the pair $\lgl j,j'\rgl$ is in the last sequence in  $f(\iota,\vec\theta)$.
Since $f(\iota,\vec\mu)=f(\iota,\vec\nu)=f(\iota,\vec\theta)$,
also $\lgl j,j'\rgl$ is in the last  sequence in  $f(\iota,\vec\mu)$ and $f(\iota,\vec\nu)$.
It follows that $\delta_{\vec\al}(j)=\delta_{\vec\gamma}(j')$ and $\delta_{\vec\gamma}(j)=\delta_{\vec\beta}(j')$.
Hence, $\delta_{\vec\gamma}(j)=\delta_{\vec\gamma}(j')$,
and therefore $j$ must equal $j'$.
\end{proof}

For any $\vec\al\in \prod_{i<d}K_i$ and any $\iota\in\mathcal{I}$, there is a $\vec\theta\in[K]^{2d}$ such that $\vec\al=\iota_o(\vec\theta)$.
By homogeneity of $f$ and  by the first sequence in the second line of equation  (\ref{eq.fseq}), there is a strictly increasing sequence
$\lgl j_i:i< d\rgl$  of members of $k^*$ such that for each $\vec\al\in \prod_{i<d}K_i$,
$\delta_{\vec\al}(j_i)=\al_i$.
For each $i< d$, let $t^*_i$ denote $t_{i,j_i}$.
Then  for each $i<d$ and each $\vec\al\in \prod_{i<d}K_i$,
\begin{equation}
p_{\vec\al}(i,\al_i)=p_{\vec{\al}}(i, \delta_{\vec\al}(j_i))=t_{i,j_i}=t^*_i.
\end{equation}

\begin{lem}\label{lem.HLcompat}
The set of conditions  $\{p_{\vec{\al}}:\vec{\al}\in \prod_{i<d}K_i\}$ is  compatible.
\end{lem}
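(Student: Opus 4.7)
The plan is to prove compatibility by showing something strictly stronger: the union of any finite subcollection of these conditions is already itself a condition in $\bP$. The only way this could fail is if two of the conditions disagreed on a common coordinate or had different lengths, so the entire argument reduces to verifying these two possibilities are ruled out by the preceding lemmas.

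First I would note that Lemma \ref{lem.HLonetypes} gives a single common length $l^*$ for every $p_{\vec\al}$ (namely $l^* = |t_{i,j}|$, which is independent of $i,j$ since all images of a fixed $p_{\vec\al}$ lie at the same level $l_{\vec\al}$). So there is no length obstruction to gluing: all conditions map into $\bigcup_{i<d} T_i\re l^*$.

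Next, the main step is to check that any two conditions $p_{\vec\al}$ and $p_{\vec\beta}$ agree on their common domain. Fix $i<d$ and $\delta \in \vec\delta_{\vec\al} \cap \vec\delta_{\vec\beta}$, and write $\delta = \delta_{\vec\al}(j) = \delta_{\vec\beta}(j')$ for some $j,j' < k^*$. Lemma \ref{lem.HLj=j'} forces $j = j'$, and then Lemma \ref{lem.HLonetypes} yields
\[
p_{\vec\al}(i,\delta) \;=\; t_{i,j} \;=\; t_{i,j'} \;=\; p_{\vec\beta}(i,\delta).
\]
Thus $p_{\vec\al}$ and $p_{\vec\beta}$ agree wherever they are both defined.

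Finally, for any finite $F \sse \prod_{i<d} K_i$, I would define $p_F := \bigcup_{\vec\al \in F} p_{\vec\al}$. By the agreement just established this is a well-defined function with domain $d \times \vec\delta_F$ where $\vec\delta_F = \bigcup_{\vec\al \in F} \vec\delta_{\vec\al}$, with every image a node of length $l^*$ in the appropriate $T_i$. Hence $p_F \in \bP$ and $p_F \le p_{\vec\al}$ for every $\vec\al \in F$, witnessing compatibility. There is no serious obstacle here: the work was done upstream in setting up the \Erdos--Rado homogeneity and extracting Lemmas \ref{lem.HLonetypes} and \ref{lem.HLj=j'}, and this lemma is essentially the payoff.
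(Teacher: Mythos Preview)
Your proof is correct and follows essentially the same approach as the paper: both arguments use Lemma~\ref{lem.HLj=j'} to show that a common $\delta$ must occur at the same index $j$ in both $\vec\delta_{\vec\al}$ and $\vec\delta_{\vec\beta}$, and then Lemma~\ref{lem.HLonetypes} to conclude the conditions agree there. The paper phrases this as a proof by contradiction establishing pairwise compatibility, whereas you argue directly and observe the slightly stronger fact that the union $p_F$ of any finite subcollection is itself a condition (this is exactly what the paper proves later in Lemma~\ref{lem.compat} for the analogous forcing, and is what is actually used downstream).
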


\begin{proof}
Suppose toward a contradiction that there are $\vec\al,\vec\beta\in\prod_{i<d}K_i$ such that $p_{\vec\al}$ and
 $p_{\vec\beta}$ are incompatible.
By Lemma \ref{lem.HLonetypes},
for each $i<d$ and $j<k^*$,
\begin{equation}
 p_{\vec{\al}}(i,\delta_{\vec{\al}}(j))
=t_{i,j}
=p_{\vec{\beta}}(i,\delta_{\vec{\beta}}(j)).
\end{equation}
Thus,
 the only way $p_{\vec\al}$ and $p_{\vec\beta}$ can be incompatible is if
there are  $i< d$ and $j,j'<k^*$ such that
$\delta_{\vec\al}(j)=\delta_{\vec\beta}(j')$
but
$p_{\vec\al}(i,\delta_{\vec\al}(j))\ne p_{\vec\beta}(i,\delta_{\vec\beta}(j'))$.
Since
$p_{\vec\al}(i,\delta_{\vec\al}(j))=t_{i,j}$ and
$p_{\vec\beta}(i,\delta_{\vec\beta}(j'))= t_{i,j'}$,
this would imply
 $j\ne j'$.
But by Lemma \ref{lem.HLj=j'},
$j\ne j'$ implies that $\delta_{\vec\al}(j)\ne\delta_{\vec\beta}(j')$, a contradiction.
Therefore,
 $p_{\vec\al}$ and $p_{\vec\beta}$ must be  compatible.
\end{proof}

We now construct  the strong subtrees $S_i\sse T_i$,
for each $i<d$, by induction on the number of levels in the trees.
For each $i<d$, let $S_i(0)=\{t_i^*\}$ and
let $l_0=|t_i^*|$, the length of the $t_i^*$,
which is well defined since all nodes in the range of any $p\in\mathbb{P}$ have the same length.

Assume now that  $n\ge 1$,
there  are  lengths  $l_0<\dots< l_{n-1}$,
and
 we have constructed   finite strong subtrees
 $\bigcup_{m<n}S_i(m)$ of $T_i$, $i<d$,
such that
for each $m<n$,
$h$ takes color $\varepsilon^*$ on each member of
$\prod_{i<d} S_i(m)$.

For each $i<d$, let $X_i$ denote the set of immediate extensions  in $T_i$ of  the nodes in $S_i(n-1)$.
For each $i<d$,
let $J_i$ be a subset of $K_i$ with the same size as $X_i$.
For each $i< d$, label the nodes in $X_i$ as
 $\{q(i,\delta):\delta\in J_i\}$.
Let $\vec{J}$ denote  $\prod_{i< d}J_i$.
Notice that for each
$\vec\al\in \vec{J}$ and $i<d$, $q(i,\al_i)\contains t^*_i=p_{\vec{\al}}(i,\al_i)$.

We now construct a condition $q\in\bP$ such that
for each $\vec\al\in\vec{J}$,
$q\le p_{\vec\al}$.
Let
 $\vec{\delta}_q=\bigcup\{\vec{\delta}_{\vec\al}:\vec\al\in \vec{J}\}$.
For each pair $(i,\gamma)$ with $i<d$ and $\gamma\in\vec{\delta}_q\setminus
J_i$,
there is at least one $\vec{\al}\in\vec{J}$ and some $j'<k^*$ such that $\delta_{\vec\al}(j')=\gamma$.
For any other $\vec\beta\in\vec{J}$ for which $\gamma\in\vec{\delta}_{\vec\beta}$,
since the set $\{p_{\vec{\al}}:\vec{\al}\in\vec{J}\}$ is pairwise compatible by Lemma \ref{lem.HLcompat},
it follows
 that $p_{\vec\beta}(i,\gamma)$ must  equal $p_{\vec{\al}}(i,\gamma)$, which is exactly $t^*_{i,j'}$.
Let $q(i,\gamma)$ be the leftmost extension
 of $t_{i,j'}^*$ in $T$.
Thus, $q(i,\gamma)$ is defined for each pair $(i,\gamma)\in d\times \vec{\delta}_q$.
Define
\begin{equation}
q= \{\lgl (i,\delta),q(i,\delta)\rgl: i<d,\  \delta\in \vec{\delta}_q\}.
\end{equation}

\begin{lem}\label{lem.HLqbelowpal}
For each $\vec\al\in \vec{J}$,
$q\le p_{\vec\al}$.
\end{lem}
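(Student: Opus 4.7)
The plan is to verify in turn each of the three defining clauses of the partial order $q \le p_{\vec\al}$: namely, (a) $l_q \ge l_{p_{\vec\al}}$, (b) $\vec\delta_q \contains \vec\delta_{p_{\vec\al}}$, and (c) $q(i,\delta) \contains p_{\vec\al}(i,\delta)$ for every $(i,\delta) \in d \times \vec\delta_{p_{\vec\al}}$. Clauses (a) and (b) are immediate from the construction: $\vec\delta_q = \bigcup_{\vec\beta \in \vec{J}} \vec\delta_{p_{\vec\beta}}$ contains $\vec\delta_{p_{\vec\al}}$ since $\vec\al \in \vec{J}$, and every node in $\ran(q)$ has length equal to the common level of the immediate extensions $X_i$, which lies strictly above $l^* = l_{p_{\vec\al}}$.

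The substance is clause (c). I would fix $(i,\delta)$ as above, write $\delta = \delta_{\vec\al}(j)$ for the unique $j<k^*$, and use Lemma \ref{lem.HLonetypes} to identify $p_{\vec\al}(i,\delta) = t_{i,j}$. The argument then splits by the position of $\delta$ relative to $J_i$. First, when $\delta = \al_i$, the labelling of $X_i$ was chosen so that $q(i,\al_i) \contains t^*_i$, and since $j = j_i$ in this subcase we have $p_{\vec\al}(i,\delta) = t^*_i$, closing this case. Second, when $\delta \notin J_i$, the construction defines $q(i,\delta)$ to be the leftmost extension of $t_{i,j'}$ for any $\vec\beta \in \vec{J}$ with $\delta_{\vec\beta}(j') = \delta$; taking $\vec\beta = \vec\al$ gives $j' = j$ and the required containment is immediate.

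The last and subtlest case is $\delta \in J_i$ with $\delta \ne \al_i$, i.e.\ $\delta = \beta_i$ for some other $\vec\beta \in \vec{J}$; then $q(i,\delta) = q(i,\beta_i) \in X_i$ extends $t^*_i$ by the construction, and what must be checked is $p_{\vec\al}(i,\delta) = t^*_i$. The key point is to identify the index $j$ of $\delta$ in $\vec\delta_{\vec\al}$: from $\delta = \delta_{\vec\al}(j) = \delta_{\vec\beta}(j_i)$, Lemma \ref{lem.HLj=j'} forces $j = j_i$, whence $p_{\vec\al}(i,\delta) = t_{i,j_i} = t^*_i$ and the containment follows. This last case is the only non-routine step; it is precisely what the homogeneity supplied by Lemma \ref{lem.HLj=j'}, together with the overall compatibility of Lemma \ref{lem.HLcompat}, is designed to handle.
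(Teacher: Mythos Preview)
Your proof is correct and follows the same direct-verification approach as the paper, which compresses the argument to a single line (``by our construction for each pair $(i,\gamma)\in d\times\vec{\delta}_{\vec\al}$, we have $q(i,\gamma)\contains p_{\vec{\al}}(i,\gamma)$''). One remark: your third case is in fact vacuous --- once Lemma~\ref{lem.HLj=j'} gives $j = j_i$, you obtain $\delta = \delta_{\vec\al}(j_i) = \al_i$, contradicting the case hypothesis $\delta \ne \al_i$; thus $\vec\delta_{\vec\al} \cap J_i = \{\al_i\}$ and only your first two cases ever arise.
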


\begin{proof}
Given $\vec\al\in\vec{J}$,
by  our construction
for each pair $(i,\gamma)\in d\times\vec{\delta}_{\vec\al}$, we have
$q(i,\gamma)\contains p_{\vec{\al}}(i,\gamma)$.
\end{proof}

To construct  the $n$-th level  of the strong trees $S_i$,
take an $r\le q$ in  $\bP$ which  decides some $l_n\ge l_q$  for which   $h(\dot{b}_{\vec\al}\re l_n)=\varepsilon^*$, for all $\vec\al\in\vec{J}$.
By extending or truncating $r$, we may assume without
 loss of generality  that $l_n$ is equal to the length of the nodes in the image of $r$.
Notice that since
$r$ forces $\dot{b}_{\vec{\al}}\re l_n=\{r(i,\al_i):i<d\}$ for each $\vec\al\in \vec{J}$,
and since the coloring $h$ is defined in the ground model,
it is simply true in the ground model that
$h(\{r(i,\al_i):i<d\})=\varepsilon^*$ for each $\vec\al\in \vec{J}$.
For each $i<d$ and $\al_i\in J_i$,
extend the nodes in $X_i$ to length $l_n$ by extending  $q(i,\delta)$ to $r(i,\delta)$.
Thus, for each $i<d$,
we define $S_i(n)=\{r(i,\delta):\delta\in J_i\}$.
It follows that $h$ takes value $\varepsilon^*$ on each member of $\prod_{i<d} S_i(n)$.

For each $i<d$,  let
 $S_i=\bigcup_{n\in\mathbb{N}} S_i(n)$.
Then  each $S_i$ is a strong subtree of $T_i$
with the same set of lengths
$L(S_i)=\{l_n:n\in\bN\}$, and
$h$ takes value $\varepsilon^*$ on $\bigcup_{n\in \bN}\prod_{i<d}S_i(n)$.
\hfill$\square$

\begin{rem}
This theorem of Halpern and \Lauchli\ was applied by Laver in
 \cite{Laver84}
to prove that
given $k\ge 2$ and given
any coloring of the product of $k$ many copies of the rationals  $\mathbb{Q}^k$
into finitely many colors,
there are subsets $X_i$ of the rationals which again are  dense linear orders without endpoints such that
$X_0\times\dots\times X_{k-1}$ has at most $k!$ colors.
Laver further proved that  $k!$ is  the lower bound.
Thus, the big Ramsey degree for the simplest object ($k$-length sequences) in the \Fraisse\ class of products of $k$-many  finite linear orders
has been found.

Shelah extended the arguments above, applying forcing methods to prove consistent
versions of the Halpern-\Lauchli\ Theorem
at a measurable cardinal  in \cite{Shelah91}.
Modifications were used to prove big Ramsey degrees for the $\kappa$-rationals and $\kappa$-Rado graph by D\v{z}amonja, Larson, and Mitchell in
\cite{Dzamonja/Larson/MitchellQ09} and \cite{Dzamonja/Larson/MitchellRado09}.
Further work on the Halpern-\Lauchli\ Theorem at uncountable cardinals has been continued in \cite{Dobrinen/Hathaway16}, \cite{Dobrinen/Hathaway18} and by
Zhang who proved the analogue of Laver's result
 \cite{Laver84}  for measurable cardinals in \cite{Zhang17}.
\end{rem}


\section{Trees coding Henson graphs}\label{sec.3}

This section introduces a  unified approach for coding the Henson graphs via trees with special  distinguished nodes.
We call these trees {\em strong $K_k$-free trees} (Definition \ref{defn.stft}), since they branch as fully as
 possible
 (like strong trees) subject to never  coding $k$-cliques.
The constructions extend and streamline the construction of
 strong triangle-free trees  in \cite{DobrinenJML20}.
 The distinguished nodes will code the vertices of a Henson graph.
 The nodes in a given level of a strong $K_k$-free tree will
  code all possible  types over the
  finite graph  coded by the  lower levels of the tree.
Example 3.18 of \cite{DobrinenJML20} showed that there is a bad coloring for strong $K_3$-free  trees which thwarts their development of Ramsey theory.
This will be  overcome by using  skewed versions of strong $K_k$-free   trees,  called strong $\mathcal{H}_k$-coding trees, developed in Section \ref{sec.4}.
The work in the current  section
  provides the reader with the   essential structure of and intuition behind   strong coding trees
 utilized in the remainder of the paper.


\subsection{Henson's Criterion}\label{subsection.HC}

Recall that $K_k$ denotes  a {\em $k$-clique}, a
complete graph on $k$ vertices.
In \cite{Henson71},  for each $k\ge 3$, Henson constructed  an ultrahomogeneous $K_k$-free graph which is universal for all $K_k$-free graphs on countably many vertices.
We denote these graphs by $\mathcal{H}_k$.
It was later seen that  $\mathcal{H}_k$ is isomorphic to the \Fraisse\ limit of the \Fraisse\ class of finite $K_k$-free graphs.
Given a graph $\HH$ and a subset $V_0$ of the vertices of $\HH$,
let $\HH|V_0$ denote the induced subgraph  of $\HH$ on the vertices in $V_0$.
In  \cite{Henson71},
Henson proved that a countable graph $\HH$ is ultrahomogeneous and  universal for  countable $K_k$-free graphs if and only if $\HH$ satisfies the following property ($A_k$).
\begin{enumerate}
\item[($A_k$)]
\begin{enumerate}
\item[(i)]
$\HH$ does not admit any $k$-cliques.
\item[(ii)]
If $V_0,V_1$ are disjoint finite sets of vertices of $\HH$, and $\HH|V_0$ has no copy of $K_{k-1}$,
then there is another vertex which is connected in $\HH$ to every member of $V_0$ and to no member of $V_1$.
\end{enumerate}
\end{enumerate}

The following equivalent  modification  will be useful for our constructions.
\begin{enumerate}
\item[$(A_k)'$]
\begin{enumerate}
\item[(i)]
$\HH$ does not admit any $k$-cliques.
\item[(ii)]
Let $\lgl v_n:n\in\mathbb{N}\rgl$ enumerate the vertices of $\HH$, and
let $\lgl F_i:i\in\mathbb{N}\rgl$ be any enumeration of the finite subsets of $\mathbb{N}$ such that for each $i \in\mathbb{N}$, $\max(F_i)<i$,
 and each finite set  appears  infinitely many times in the enumeration.
Then there is a strictly increasing sequence $\lgl n_i: i \in\mathbb{N}\rgl$
such that for each $i \in\mathbb{N}$, if  $\HH|\{v_m:m\in F_i\}$
has no copy of $K_{k-1}$,
then  for all $m<i$, $v_{n_i} \, E\, v_m\longleftrightarrow m\in F_i$.
\end{enumerate}
\end{enumerate}
It is routine to check that any countably infinite graph
$\HH$
is ultrahomogeneous and universal for countable $K_k$-free graphs if and only if
$(A_k)'$ holds.


\subsection{Trees with coding nodes and strong $K_k$-free trees}\label{subsection.K_kfree}

As seen  for the case of triangle-free graphs in \cite{DobrinenJML20},
enriching trees with a collection of distinguished nodes allows for coding graphs with forbidden configurations into trees  which have properties similar to strong trees.
Recall that $\Seq$ denotes the set  of all finite sequences of $0$'s and $1$'s.

\begin{defn}[\cite{DobrinenJML20}]\label{defn.treewcodingnodes}
A {\em tree with coding nodes}
is a structure $(T,N;\sse,<,c^T)$ in the language
$\mathcal{L}=\{\sse,<,c\}$,
 where $\sse$ and $<$ are binary relation symbols  and $c$ is a unary function symbol,
 satisfying the following:
 $T$ is a subset of  $\Seq$ satisfying that   $(T,\sse)$ is a tree (recall Definition \ref{defn.tree}),
either   $N\in \mathbb{N}$ or $N=\mathbb{N}$, $<$ is the usual linear order on $N$,  and $c^T:N\ra T$ is an injective  function such that whenever   $m<n$ in $N$, then $|c^T(m)|<|c^T(n)|$.
\end{defn}

\begin{notation}
 The  {\em $n$-th coding node} in $T$, $c^T(n)$, will usually be denoted as
$c^T_n$.
The length $|c^T_n|$ of the $n$-th coding node  in $T$ shall be denoted by  $l^T_n$.
Whenever no ambiguity arises, we shall drop the superscript $T$.

Throughout this paper,
 we use
$N$ to denote either a member of $\bN=\{0,1,2,\dots\}$, or $\bN$ itself.
We  shall treat the natural numbers as von Neumann ordinals.
Thus,
 for $N\in\mathbb{N}$,  $N$ denotes the set  of natural numbers less than $N$; that is, $N=\{0,\dots,N-1\}$.
 Hence, in either case that $N\in\bN$ or $N=\bN$,
 it makes sense to  write $n\in N$.
\end{notation}

\begin{defn}[\cite{DobrinenJML20}]\label{def.rep}
A graph $\G$ with vertices enumerated as $\lgl v_n:n\in N\rgl$ is {\em  represented}  by a tree $T$ with  coding nodes $\lgl c_n:n\in N\rgl$
if and only if
for each pair $i<n$ in $N$,
 $v_n\, \E\, v_i\Longleftrightarrow  c_n(l_i)=1$.
We will often simply say that $T$ {\em codes} $\G$.
\end{defn}

For each copy of $\mathcal{H}_k$ with vertices indexed by $\mathbb{N}$, there is a  tree with coding nodes representing the graph.
In fact, this is true more generally for any graph, finite or infinite.

\begin{defn}[The tree $T_{\mathrm{G}}$ coding $\mathrm{G}$]\label{defn.T_H}
  Let $\mathrm{G}$ be any  graph with vertices ordered as $\{v_i:i\in N\}$.
  Define $T_{\mathrm{G}}$
 as follows:
 Let $c_0=\lgl\rgl$, the empty sequence.
For $n\ge 1$ with  $n\in N$,  given
 coding nodes $\{c_i:i<n\}$ coding $\mathrm{G}|\{v_i:i<n\}$, with each  $|c_i|=i$,
 define $c_n$ to be the node in $\Seq$ of length $n$ such that for each $i<n$, $c_n(i)=1\ \llra \ v_n\,  E \, v_i$.
 Let
 \begin{equation}
 T_{\mathrm{G}}=\{t\in\Seq:\exists n\in N,\ \exists l\le n\, (t=c_n\re l)\}.
\end{equation}
\end{defn}

\begin{observation}\label{obs_{T_H0}}
Let  $\mathrm{G}$ and $T_{\mathrm{G}}$ be  as in  Definition \ref{defn.T_H}.
Notice that
for each $n\ge 1$ with $n\in N$,  each  node in  $T_{\mathrm{G}}\re n$  (the nodes in  $T_{\mathrm{G}}$ of length $n$)
 represents a model-theoretic (incomplete)  $1$-type over the graph
$\mathrm{G}|\{v_i:i<n\}$.
Moreover, each such  $1$-type is represented by a unique node in  $T_{\mathrm{G}}\re n$.
In particular,  if $\mathrm{G}$ is a Rado graph or a Henson graph, then
  $T_{\mathrm{G}}$ has no maximal nodes and
 the coding nodes in
 $T_{\mathrm{G}}$  are dense.
\end{observation}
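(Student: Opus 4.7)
The observation splits into two essentially independent pieces: (a) the tree $T_{\mathrm{G}}$ encodes quantifier-free 1-types in a one-to-one fashion at each level, and (b) for $\mathrm{G}$ Rado or Henson, $T_{\mathrm{G}}$ has no maximal nodes and its coding nodes are dense. My plan is to handle (a) by directly unpacking Definition \ref{defn.T_H}, and (b) by invoking the extension property of $\mathrm{G}$, namely Henson's criterion $(A_k)'$ in the Henson case.

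For (a), fix $n\ge 1$ with $n\in N$ and take $t\in T_{\mathrm{G}}\re n$. By the definition of $T_{\mathrm{G}}$ there exists $m\in N$ with $m\ge n$ and $t=c_m\re n$; by the construction of $c_m$ the bit $t(i)=c_m(i)$ equals $1$ iff $v_m\,E\,v_i$, so $t$ records the edge pattern that $v_m$ has with $\{v_0,\dots,v_{n-1}\}$, which is exactly a quantifier-free 1-type over $\mathrm{G}|\{v_i:i<n\}$. Uniqueness is then immediate: two length-$n$ nodes encoding the same 1-type must agree coordinate-wise in $\{0,1\}^n$, hence are equal.

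For (b), assume $\mathrm{G}\cong\mathcal{H}_k$; the Rado case is completely parallel, using the standard extension axiom of $\mathcal{R}$ in place of $(A_k)'$. Fix $t\in T_{\mathrm{G}}$ of length $l$ and pick a witnessing $m\ge l$ with $t=c_m\re l$. Put $V_0=\{v_i:i<l,\ t(i)=1\}$. Since $v_m$ is joined in the $K_k$-free graph $\mathrm{G}$ to precisely $V_0$ among $\{v_0,\dots,v_{l-1}\}$, the induced subgraph $\mathrm{G}|V_0$ contains no copy of $K_{k-1}$ (else $v_m$ together with that copy would form a $K_k$). Applying $(A_k)'$ to $F=\{i<l:t(i)=1\}$, which appears infinitely often in the chosen enumeration of finite subsets, produces infinitely many indices $m'>l$ with $c_{m'}\re l=t$. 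Then $c_{m'}$ is a coding node of length $m'>l$ extending $t$, and $c_{m'}\re(l+1)\in T_{\mathrm{G}}$ is a proper extension of $t$. This simultaneously yields both remaining claims: $T_{\mathrm{G}}$ has no maximal nodes, and every node of $T_{\mathrm{G}}$ is an initial segment of some coding node, i.e.\ coding nodes are dense.

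The main obstacle to watch for is the passage from the existence of a single realizing vertex $v_m$ to a coding node of arbitrarily high index with the same restriction $t$; this is what forces the use of $(A_k)'$ rather than the bare $(A_k)$, since one needs the infinitely-many-realizations version to guarantee extensions of $t$ exist strictly above level $l$, and in particular to handle the case in which $t$ is itself already a coding node. Everything else amounts to a direct translation between binary strings and the edge data of $\mathrm{G}$.
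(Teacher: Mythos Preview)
Your proof is correct. The paper states this as an observation and does not supply any argument at all; you have filled in precisely the details one would expect, namely reading off the quantifier-free $1$-type from the bit string for part (a), and invoking the extension property (Henson's $(A_k)'$, resp.\ the Rado extension axiom) to realize each such type by coding nodes of arbitrarily large index for part (b). Your remark that one needs infinitely many realizations (to handle the case $t=c_l$) is a genuine point and is exactly why $(A_k)'$, with its repeating enumeration $\langle F_i\rangle$, is the right formulation to cite.
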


Our goal is to  develop   a means for  working with subtrees of trees like $T_{\mathcal{H}_k}$, where $\mathcal{H}_k$ is a $k$-clique-free Henson graph,  for  which we can prove Ramsey  theorems like the Halpern-\Lauchli\ Theorem \ref{thm.HL} and the Milliken Theorem \ref{thm.Milliken}.
There are several reasons why  the most na\"{i}ve approach does not work; these   will be pointed out as they arise.
In this and the next  two sections, we develop tools for
recognizing which    trees  coding $\mathcal{H}_k$ and which of their  subtrees
are able to carry a robust  Ramsey theory.
These can be interpreted model-theoretically  in terms of types over finite subgraphs, but the language of trees will be simpler and  easier to visualize.

In Definition \ref{defn.T_H}, we showed how to make a tree with coding nodes coding a particular
copy of $\mathcal{H}_k$; this is a ``top-down'' approach.
To develop Ramsey theory for colorings of finite trees,
we will need to consider all subtrees of a given  tree $T$ coding $\mathcal{H}_k$  which are  ``similar'' enough to $T$  to make a Ramsey theorem possible.
In order to prove the Ramsey theorems,
 we will further  need  criteria for  how and when we can
 extend a finite subtree   $A$ of a given tree  $S$, which is a subtree of some $T$, where $T$ codes a
copy of $\mathcal{H}_k$,  to a subtree of $S$ coding another
copy of $\mathcal{H}_k$.
This will provide a ``bottom-up'' approach for constructing trees coding $\mathcal{H}_k$.
The potential obstacles are   cliques   coded by coding nodes  in $T$, but which are not coded by coding nodes in  $S$.
To begin, we observe  exactly how  cliques are coded.

\begin{observation}\label{obs.k-cliquecoding}
For $a\ge 2$,
given  an index set $I$ of size $a$,
a  collection of coding  nodes
$\{c_{i}:i\in I\}$
 {\em codes an $a$-clique} if and only if for each pair $i<j$ in $I$,
$c_{j}(l_{i})=1$.
\end{observation}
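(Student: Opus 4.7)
The plan is to observe that this is essentially an unwinding of Definition \ref{def.rep} together with the definition of a clique. First I would recall that by Definition \ref{def.rep}, the coding nodes $\{c_i : i \in I\}$ represent a set of vertices $\{v_i : i \in I\}$ in some graph $\mathrm{G}$, and for any pair of indices $i < j$ in $I$, the edge relation is recovered from the passing number: $v_j \, E \, v_i$ if and only if $c_j(l_i) = 1$, where $l_i = |c_i|$ is well-defined because $i < j$ implies $|c_i| < |c_j|$ by the definition of tree with coding nodes (Definition \ref{defn.treewcodingnodes}).

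Next I would invoke the definition of an $a$-clique: the induced subgraph on $\{v_i : i \in I\}$ is a complete graph on $a$ vertices precisely when every pair $\{v_i, v_j\}$ with $i \ne j$ in $I$ is an edge. Since the edge relation is symmetric, it is equivalent to require the condition only for the ordered pairs $i < j$. Combining this with the preceding translation of the edge relation into passing numbers gives the stated biconditional.

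The proof is really just a definitional remark, so there is no genuine obstacle; the only thing worth being careful about is that the coding nodes have strictly increasing lengths (so that $c_j(l_i)$ is defined whenever $i < j$), which is exactly guaranteed by the injectivity-plus-monotonicity clause in Definition \ref{defn.treewcodingnodes}. I would therefore present the argument in two short sentences, one for each direction of the biconditional, making explicit that the index $a \ge 2$ ensures there is at least one pair to check and so the clique condition is nonvacuous.
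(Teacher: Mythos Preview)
Your proposal is correct and matches the paper's treatment: the paper states this observation without proof, as it is simply an unwinding of Definition~\ref{def.rep} together with the definition of a clique. Your careful note that strictly increasing lengths of coding nodes (from Definition~\ref{defn.treewcodingnodes}) guarantee $c_j(l_i)$ is defined is a nice touch, but no more is needed here.
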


\begin{defn}[$K_k$-Free   Criterion]\label{defn.trianglefreeextcrit}
Let $T\sse \Seq$ be a  tree with coding nodes $\lgl c_n:n<N\rgl$.
We say that
$T$
{\em satisfies the $K_k$-Free Criterion}
if the following holds:
For each $n\ge k-2$,
for all increasing sequences
$i_0<i_1<\dots<i_{k-2}=n$ such that $\{c_{i_j}:j<k-1\}$ codes  a $(k-1)$-clique,
for each $t\in T$ such that $|t|>l_n$,
\begin{equation}
(\forall j<k-2)\ t(l_{i_j})=1\ \ \Longrightarrow \ \ t(l_n)=0.
\end{equation}
\end{defn}

In words,  a tree  $T$ with coding  nodes $\lgl c_n:n\in N\rgl$ satisfies the  $K_k$-Free   Criterion  if  for each $n\in N$,  whenever a node $t$  in $T$ has the same length as the coding node $c_n$, the following holds:
If   $t$ and $c_n$ both
code edges with
some collection of  $k-2$ many coding nodes  in $T$ which themselves code a $(k-2)$-clique,
then $t$ does not split in $T$;  its only allowable extension in $\widehat{T}$ is
$t^{\frown}0$.

The next lemma  characterizes tree representations of $K_k$-free  graphs.
We say that the coding nodes in $T$ are {\em dense} in $T$, if
for each $t\in T$, there is some coding node $c_n\in T$ such that $t\sse c_n$.
Note that a  finite tree $T$ in which the coding nodes are dense will necessarily  have  coding nodes (of differing lengths) as its maximal nodes.

\begin{lem}\label{lem.trianglefreerep}
Let $T\sse\Seq$ be a tree
with coding nodes $\lgl c_n:n\in N\rgl$
 coding  a countable   graph $\G$  with  vertices $\lgl v_n:n\in N\rgl$.
Assume  that  the coding nodes in $T$ are dense in $T$.
Then
 $\G$ is a $K_k$-free graph  if and only if the tree
$T$ satisfies the $K_k$-Free Criterion.
\end{lem}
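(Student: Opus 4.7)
The plan is to prove the biconditional by contraposition in each direction, using Definition \ref{def.rep} and Observation \ref{obs.k-cliquecoding} as the dictionary between the edge relation on $\G$ and passing numbers of coding nodes in $T$.

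For the forward direction, I would assume the $K_k$-Free Criterion fails and produce a $k$-clique in $\G$. A failure of the criterion yields a witnessing sequence $i_0<i_1<\dots<i_{k-2}=n$ with $\{c_{i_j}:j<k-1\}$ coding a $(k-1)$-clique, together with a node $t\in T$ of length $|t|>l_n$ satisfying $t(l_{i_j})=1$ for all $j<k-2$ and $t(l_n)=1$. Since the coding nodes are dense in $T$, pick a coding node $c_m\in T$ with $t\sse c_m$. Because coding node lengths are strictly increasing in index (Definition \ref{defn.treewcodingnodes}) and $|c_m|\ge |t|>l_{i_{k-2}}$, we get $m>i_{k-2}$. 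For every $j\le k-2$, $c_m(l_{i_j})=t(l_{i_j})=1$, so by Definition \ref{def.rep}, $v_m$ is adjacent to each of $v_{i_0},\dots,v_{i_{k-2}}$. Combined with the $(k-1)$-clique already present on $\{v_{i_j}:j\le k-2\}$, this gives a $K_k$ in $\G$, contradicting $K_k$-freeness.

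For the backward direction, suppose $\G$ contains a $k$-clique on vertices $\{v_{i_0},\dots,v_{i_{k-1}}\}$ with $i_0<\dots<i_{k-1}$. By Observation \ref{obs.k-cliquecoding}, the coding nodes $\{c_{i_0},\dots,c_{i_{k-2}}\}$ code a $(k-1)$-clique. Now set $n=i_{k-2}$ and apply the (would-be) criterion to $t=c_{i_{k-1}}$: we have $|t|=l_{i_{k-1}}>l_{i_{k-2}}=l_n$, and $t(l_{i_j})=1$ for all $j<k-2$ because $v_{i_{k-1}}\,\E\, v_{i_j}$. The criterion would force $c_{i_{k-1}}(l_{i_{k-2}})=0$; but $v_{i_{k-1}}\,\E\, v_{i_{k-2}}$ gives $c_{i_{k-1}}(l_{i_{k-2}})=1$, a contradiction. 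Hence the criterion fails. No step is a serious obstacle: both directions are unpackings once the passing-number/edge dictionary is in hand. The only substantive hypothesis that enters is density of coding nodes, which is used precisely in the forward direction to promote the witnessing node $t$ to an actual coding node $c_m$ whose vertex $v_m$ completes the forbidden $k$-clique; in the backward direction the coding node $c_{i_{k-1}}$ is already available from the enumeration of $\G$.
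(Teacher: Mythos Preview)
Your proof is correct and follows essentially the same approach as the paper: both directions are proved by contraposition, using density of coding nodes to promote the witnessing $t$ to a coding node $c_m$ in the forward direction, and using the top coding node $c_{i_{k-1}}$ of a $k$-clique as the violating $t$ in the backward direction. Your write-up simply makes explicit a few details (e.g., why $m>i_{k-2}$) that the paper leaves implicit.
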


\begin{proof}
If $T$ does not satisfy the $K_k$-Free Criterion,
then there are $i_0<\dots<i_{k-2}$ in $N$
 and  $t\in T$ with $|t|>l_{i_{k-2}}$ such that
 $\{c_{i_j}:j<k-1\}$ codes a $(k-1)$-clique
 and $t(l_{i_j})=1$ for all $j<k-1$.
Since the coding nodes are dense in $T$,
there is an $n>i_{k-2}$ such that $c_n\contains t$.
Then    $\{c_{i_j}:j<k-1\}\cup\{c_n\}$ codes a $k$-clique.
On the other hand, if
$\G$ contains a $k$-clique,
then there are $i_0<\dots<i_{k-1}$ such that the coding nodes $\{c_{i_j}:j<k\}$ in $T$ code a $k$-clique, and these coding nodes  witness the failure of
the  $K_k$-Free Criterion  in $T$.
\end{proof}

\begin{defn}[$K_k$-Free Branching Criterion]\label{defn.kFSC}
A tree $T$ with coding nodes $\lgl c_n:n\in N\rgl$
satisfies the {\em $K_k$-Free Branching  Criterion ($k$-FBC)}
if   $T$  is maximally branching, subject to satisfying the $K_k$-Free Criterion.
\end{defn}

Thus,   $T$ satisfies the $K_k$-Free Branching Criterion if and only if
$T$  satisfies  the $K_k$-Free Criterion,  and
 given any $n\in N$ and  non-maximal node $t\in T$ of length $l_n$,
 (a)
there is a node $t_0\in T$  such that $t_0\contains t^{\frown}0$, and (b)
there is a $t_1\in T$ such that
$t_1\contains t^{\frown}1$ if and only
if for all sequences  $i_0<\dots<i_{k-2}=n$ such that $\{c_{i_j}:j<k-1\}$ codes a copy of $K_{k-1}$,
$t(l_{i_j})=0$ for at least one $j<k-1$.

As we move toward defining strong $K_k$-free  trees in Definition \ref{defn.stft},
we recall that
the
modified Henson criterion $(A_k)'$   is satisfied by an infinite  $K_k$-free  graph if and only if it is ultrahomogeneous and  universal for all countable $K_k$-free graphs.
The following reformulation translates $(A_k)'$ in terms of trees with coding nodes.
We say that a tree
$T\sse \Seq$ with coding  nodes $\lgl c_n:n\in \mathbb{N}\rgl$ {\em satisfies property $(A_k)^{\tt tree}$} if the following hold:\vskip.1in

\begin{enumerate}
\item[$(A_k)^{\tt{tree}}$]
\begin{enumerate}
\item[(i)]
$T$ satisfies the $K_k$-Free Criterion.
\item[(ii)]
Let  $\lgl F_n:n \in \mathbb{N}\rgl$   be any enumeration  of finite subsets of $\mathbb{N}$ such that
for each $n \in \mathbb{N}$, $\max(F_n)<n-1$, and
 each finite subset of $\mathbb{N}$ appears as $F_n$ for infinitely many indices $n$.
Given $n \in \mathbb{N}$,
if  for each subset $J\sse F_k$ of size $k-1$,
$\{c_j:j\in J\}$ does not code a $(k-1)$-clique,
then
there is some $m\ge n$ such that
for all $j<i$,
$c_m(l_j)=1$ if and only if $j\in F_n$.
\end{enumerate}
\end{enumerate}

\begin{observation}\label{obs.ttimplieshomog}
If $T$ satisfies $(A_k)^{\tt tree}$, then the coding nodes in $T$ code $\mathcal{H}_k$.
\end{observation}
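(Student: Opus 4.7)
The plan is to verify that the graph $\G$ with vertices $\lgl v_n : n \in \mathbb{N}\rgl$ encoded by the coding nodes $\lgl c_n : n \in \mathbb{N}\rgl$ of $T$ satisfies Henson's criterion $(A_k)'$ stated in Subsection \ref{subsection.HC}. Once that is done, Henson's characterization immediately yields $\G \cong \mathcal{H}_k$, completing the proof.

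First I would check clause (i) of $(A_k)'$: no $k$-clique is coded. Suppose for contradiction that indices $i_0 < i_1 < \cdots < i_{k-1}$ in $\mathbb{N}$ are such that $\{c_{i_0}, \dots, c_{i_{k-1}}\}$ codes a $k$-clique in $\G$. Then the subfamily $\{c_{i_0}, \dots, c_{i_{k-2}}\}$ codes a $(k-1)$-clique, and setting $n = i_{k-2}$ and $t = c_{i_{k-1}}$, we have $|t| = l_{i_{k-1}} > l_n$ and $t(l_{i_j}) = 1$ for every $j < k-2$, because $v_{i_{k-1}}$ is adjacent to each $v_{i_j}$ in the $k$-clique. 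The $K_k$-Free Criterion (clause (i) of $(A_k)^{\tt tree}$) then forces $t(l_n) = 0$, contradicting the edge between $v_{i_{k-1}}$ and $v_{i_{k-2}}$ demanded by the $k$-clique.

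Next I would verify clause (ii) of $(A_k)'$, the extension property. Let $V_0, V_1$ be disjoint finite vertex sets such that $\G|V_0$ contains no $(k-1)$-clique; set $F = \{j \in \mathbb{N} : v_j \in V_0\}$, a finite set. Using the enumeration $\lgl F_n : n \in \mathbb{N}\rgl$ from clause (ii) of $(A_k)^{\tt tree}$, the set $F$ occurs as $F_n$ for infinitely many $n$, so I may choose $n$ large enough that simultaneously $V_0 \cup V_1 \sse \{v_j : j < n\}$, $\max(F) < n - 1$, and $F_n = F$. Because $\{c_j : j \in F\}$ codes $\G|V_0$ and the latter contains no $(k-1)$-clique, the hypothesis of $(A_k)^{\tt tree}$(ii) is met, producing an $m \ge n$ with $c_m(l_j) = 1 \Llra j \in F$ for each $j < n$. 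Translating passing numbers back into edges, $v_m$ is adjacent to precisely the vertices of $V_0$ among $\{v_j : j < n\}$, hence adjacent to every vertex of $V_0$ and to no vertex of $V_1$, which is exactly Henson's extension property.

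There is no serious obstacle: the observation is essentially a direct dictionary between the tree-theoretic reformulation $(A_k)^{\tt tree}$ and Henson's graph-theoretic criterion $(A_k)'$. The only points requiring mild care are the index bookkeeping in the $K_k$-Free Criterion when ruling out $k$-cliques (matching $n$ with $i_{k-2}$ and applying the criterion to the coding node $c_{i_{k-1}}$ itself), and choosing $n$ in the extension step so that it simultaneously captures $V_0 \cup V_1$ below $l_n$ and realizes $F_n = F$, both of which are immediate from the stipulated properties of the enumeration $\lgl F_n : n \in \mathbb{N}\rgl$.
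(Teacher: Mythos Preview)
Your proof is correct and follows the same approach as the paper: verify that the graph coded by the coding nodes satisfies Henson's criterion. The paper's justification is a one-line assertion that the coded graph satisfies $(A_k)$, whereas you have spelled out the two halves—using the $K_k$-Free Criterion to rule out $k$-cliques, and using $(A_k)^{\tt tree}$(ii) together with the infinite repetition of each finite $F$ in the enumeration $\lgl F_n\rgl$ to realize the extension property—so your argument is simply a more detailed rendering of the same idea.
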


To see this,
suppose  that $T$ satisfies $(A_k)^{\tt tree}$, and
let $\mathcal{H}$ be the graph with vertices $\lgl v_n:n \in \mathbb{N}\rgl$ where for $m<n$, $v_n\ E\ v_m$ if and only if $c_n(l_m)=1$.
Then
 $\mathcal{H}$ satisfies Henson's property $(A_k)$, and hence is ultrahomogeneous and universal for countable $k$-clique-free graphs.

\begin{observation}\label{obs.anyHcoded}
Let $\mathcal{H}$ be  a  copy of $\mathcal{H}_k$
 with vertices ordered as $\lgl v_n:n\in\mathbb{N}\rgl$.
 Then   $T_{\mathcal{H}}$  (recall Definition \ref{defn.T_H})
 satisfies the $K_k$-Free Branching Criterion.
\end{observation}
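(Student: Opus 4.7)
The plan is to verify the two clauses packaged into Definition \ref{defn.kFSC}: that $T_{\mathcal{H}}$ satisfies the $K_k$-Free Criterion, and that it is maximally branching subject to this constraint. The only input from the graph will be that $\mathcal{H}$ is $K_k$-free and satisfies Henson's property $(A_k)$. First, by the very construction in Definition \ref{defn.T_H}, every node $t \in T_{\mathcal{H}}$ is an initial segment $c_n \re l$ of some coding node $c_n$, so the coding nodes are dense in $T_{\mathcal{H}}$. Since $\mathcal{H}$ is $K_k$-free, Lemma \ref{lem.trianglefreerep} (``if'' direction reversed) immediately yields that $T_{\mathcal{H}}$ satisfies the $K_k$-Free Criterion.

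For the maximal branching, fix a non-maximal $t \in T_{\mathcal{H}}$ of length $l_n = n$, and set $V_0 = \{v_i : i < n,\ t(i) = 1\}$ and $V_1 = \{v_i : i < n\} \setminus V_0$. Since $t \in T_{\mathcal{H}}$, some coding node $c_m$ with $m \ge n$ extends $t$, and $v_m$ is then adjacent in $\mathcal{H}$ to exactly the vertices of $V_0$ among $\{v_i : i < n\}$. Hence $\mathcal{H}|V_0$ has no $K_{k-1}$, for otherwise $V_0 \cup \{v_m\}$ would contain a $K_k$. For the existence of an extension of $t^\frown 0$, I apply $(A_k)$(ii) with the two disjoint sets $V_0$ and $V_1 \cup \{v_n\}$: since $\mathcal{H}|V_0$ is $K_{k-1}$-free, one obtains a vertex $v_{m'}$ (choosing $m' > n$) connected to all of $V_0$ and to none of $V_1 \cup \{v_n\}$; then $c_{m'} \re (n+1) = t^\frown 0$, so $c_{m'}$ witnesses the extension in $T_{\mathcal{H}}$.

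For the dichotomy at $t^\frown 1$, a coding node $c_{m'}$ of $T_{\mathcal{H}}$ extending $t^\frown 1$ exists iff there is $v_{m'}$ in $\mathcal{H}$ connected to $V_0 \cup \{v_n\}$ and disconnected from $V_1$. By $(A_k)$(ii) in one direction and the $K_k$-freeness of $\mathcal{H}$ in the other, such $v_{m'}$ exists iff $\mathcal{H}|(V_0 \cup \{v_n\})$ contains no $K_{k-1}$. Since $V_0$ itself is $K_{k-1}$-free, this reduces to saying that no $(k-2)$-element subset of $V_0$ forms a $K_{k-2}$ whose members are each joined to $v_n$, i.e., no increasing sequence $i_0 < \dots < i_{k-3} < n$ satisfies $t(l_{i_j}) = 1$ for all $j$ together with $\{c_{i_0},\dots,c_{i_{k-3}},c_n\}$ coding a $(k-1)$-clique, which is precisely the branching rule in Definition \ref{defn.kFSC}. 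The main (mild) obstacle is ensuring that the witnesses furnished by $(A_k)$(ii) can be taken with arbitrarily large indices so that they produce genuine coding nodes of $T_{\mathcal{H}}$ at levels beyond $n$; this follows from the standard fact that each realizable $1$-type over a finite subgraph of $\mathcal{H}_k$ is realized by infinitely many vertices, obtained by iterating $(A_k)$(ii) while successively forbidding the witnesses already found. Once this point is noted, the two cases combine to yield the $K_k$-Free Branching Criterion, completing the verification.
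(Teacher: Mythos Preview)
Your proof is correct. The paper states this result as an observation without supplying a proof, so there is no argument to compare against; your verification via Lemma~\ref{lem.trianglefreerep} for the $K_k$-Free Criterion together with Henson's property $(A_k)$(ii) for maximal branching is exactly the expected unpacking of why the observation holds.

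One small remark on presentation: the ``iff'' you state for the existence of an extension of $t^\frown 1$ implicitly requires $m' > n$, which you only justify later via the iteration argument. In fact no iteration is needed here, since the phrase ``another vertex'' in $(A_k)$(ii) already forces the witness to lie outside $V_0 \cup V_1 \cup \{v_n\} = \{v_0,\dots,v_n\}$, giving $m' > n$ directly. This streamlines the argument but does not affect its correctness.
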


The next lemma
shows that any finite tree with coding nodes satisfying the $k$-FBC, where  all  maximal nodes have height $l_{N-1}$,
has the property  that  every $1$-type over the graph represented by $\{c_i:i<N-1\}$ is represented by a  maximal node in the tree.
This
is the  vital step toward  proving
Theorem \ref{thm.A_3treeimpliestrianglefreegraph}:
Any tree  satisfying the $k$-FBC
with no maximal nodes and with
 coding nodes forming a dense subset
codes  the $k$-clique-free Henson graph.

\begin{lem}\label{lem.stftextension}
Let $T$ be a finite  tree with coding nodes $\lgl c_n:n\in N\rgl$,
where $N\ge 1$,
satisfying the $K_k$-Free Branching Criterion
with all maximal nodes of length $l_{N-1}$.
Given any $F\sse N-1$ for which the set $\{c_n:n\in F\}$ codes no $(k-1)$-cliques,
there is a maximal node $t\in T$ such that
for all $n\in N-1$,
\begin{equation}
t(l_n)=1\ \ \Longleftrightarrow \ \ n\in F.
\end{equation}
\end{lem}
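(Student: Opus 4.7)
The plan is to construct $t$ explicitly by prescribing its values at the coding-node levels $l_0,\dots,l_{N-2}$ according to $F$, and then to verify that this $t$ lies in $T$ using the $k$-FBC. Concretely, I will set $t(l_n)=1$ when $n\in F$, $t(l_n)=0$ when $n\in (N-1)\setminus F$, and leave $t(j)=0$ at the remaining positions $j<l_{N-1}$ that are not of the form $l_n$ for some $n<N-1$. The claim is that such a $t$ can be built level by level inside $T$, yielding a maximal node of length $l_{N-1}$ with the required property.

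I will proceed by induction on the length of the prefix. Starting from $\lgl\rgl$, suppose we have already constructed an initial segment $s$ of some length $m<l_{N-1}$ lying in $\widehat T$ whose restriction to positions $l_n$ (for those $n<N-1$ with $l_n<m$) realizes $\mathbf{1}_{n\in F}$. If $m$ is not of the form $l_n$, the $K_k$-Free Criterion imposes no restriction on extending by $0$ or $1$, so by maximal branching both extensions persist in $T$ and I take $s^{\frown}0$. If $m=l_n$ for some $n<N-1$, I extend by $\mathbf{1}_{n\in F}$: the $0$-extension is always available by clause (a) of the $k$-FBC, while the $1$-extension (needed precisely when $n\in F$) must be justified using clause (b).

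The key verification is this $1$-extension step at a coding-node level $l_n$ with $n\in F$. By clause (b), $s^{\frown}1$ extends into $T$ if and only if for every increasing sequence $i_0<\dots<i_{k-2}=n$ such that $\{c_{i_j}:j<k-1\}$ codes a $(k-1)$-clique, some $j<k-2$ satisfies $s(l_{i_j})=0$. By the invariant, $s(l_{i_j})=\mathbf{1}_{i_j\in F}$, so the condition requires some $i_j\notin F$. Were this to fail, all of $i_0,\dots,i_{k-3}$ would lie in $F$, and combined with $n\in F$ this yields $\{i_0,\dots,i_{k-2}\}\sse F$ with $\{c_{i_0},\dots,c_{i_{k-2}}\}$ forming a $(k-1)$-clique, contradicting the hypothesis on $F$. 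Hence the $1$-extension is always available when $n\in F$, and the inductive construction continues to length $l_{N-1}$, producing a maximal node $t$ with $t(l_n)=1\Llra n\in F$ for all $n\in N-1$.

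The base case $N=1$ is vacuous: $F\sse 0=\emptyset$, and any maximal node (for instance $c_0$) satisfies the empty condition. The main obstacle, as the third paragraph shows, is precisely the translation between the hypothesis that $\{c_n:n\in F\}$ codes no $(k-1)$-clique and the branching condition (b) of the $k$-FBC; once this translation is identified, the construction is entirely routine.
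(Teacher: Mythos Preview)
Your approach and key verification are the same as the paper's: build the node inductively and, at each coding-node level $l_n$ with $n\in F$, use the hypothesis that $\{c_n:n\in F\}$ codes no $(k-1)$-clique to confirm that clause~(b) of the $k$-FBC permits the $1$-extension. The paper organizes this as induction on $N$, passing from a node of length $l_{N-2}$ directly to a maximal node of length $l_{N-1}$; you instead step through every length.

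There is one small gap in your bit-by-bit version. At a non-coding length $m$ you assert that ``by maximal branching both extensions persist in $T$,'' but the elaborated $k$-FBC (the sentence immediately following Definition~\ref{defn.kFSC}) only constrains branching at nodes of length $l_n$; it is silent about other lengths, and indeed $m$ need not lie in $L(T)$ at all. The paper avoids this by never touching intermediate lengths: from a node $t$ of length $l_{N-2}$ it simply takes \emph{some} maximal node in $T$ extending $t^{\frown}0$ or $t^{\frown}1$, without prescribing the bits in between. Your argument is repaired the same way---either step only through the coding-node levels, or at non-coding lengths extend by whichever bit keeps you inside $\widehat T$ (at least one does, since you are strictly below a maximal node)---since the conclusion only concerns the values $t(l_n)$.
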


\begin{proof}
The proof is by induction on $N\ge 1$ over all  such trees with $N$ coding nodes.
For $N=1$,  $N-1=0$, so
  the lemma trivially holds.

Now suppose $N\ge 2$ and suppose the lemma holds for all trees with less than $N$ coding nodes.
Let $T$ be a tree  with  coding nodes $\lgl c_n:n\in N \rgl$ satisfying the $k$-FBC.
Let $F$ be a subset of $N-1$ such that $\{c_n:n\in F\}$ codes no $(k-1)$-cliques.
By the induction hypothesis,
$\{t\in T:|t|\le l_{N-2}\}$
 satisfies the lemma.
So
 there is a node $t$ in $T$ of length $l_{N-2}$ such that
for all $n\in N-2$, $t(l_n)=1$ if and only if $n\in F\setminus \{N-2\}$.
If  $N-2\not\in F$,
then
the maximal node  in $T$ extending $t^{\frown}0$
 satisfies the lemma; this node is guaranteed to exist
by the $k$-FBC.

Now  suppose $N-2\in F$.
We claim that there is a maximal node $t'$ in $T$ which extends $t^{\frown}1$.
If not, then $t^{\frown}1$ is not in $\widehat{T}$.
By the $k$-FBC, this implies that
there is some  sequence
$i_0<\dots<i_{k-2}= N-2$  such that
$\{c_{i_j}:j<k-1\}$ codes a $(k-1)$-clique
and $t(l_{i_j})=1$ for each $j<k-2$.
Since for all $i<N-2$,  $t(l_i)=1$ if and only if $i\in F\setminus\{N-2\}$,
it follows that $\{i_j:j<k-2\}\sse F$.
But then $F\contains \{i_j:j<k-1\}$,  which  contradicts that $F$ codes no $(k-1)$-cliques.
Therefore, $t^{\frown}1$ is in $\widehat{T}$.
Taking $t'$ to be maximal in $T$  and extending $t^{\frown}1$ satisfies the lemma.
\end{proof}

\begin{rem}
Lemma \ref{lem.stftextension} says the following:
Suppose
$T$ is a tree with coding nodes $\lgl c_n:n\in N\rgl$ satisfying the $K_k$-FBC, $m+1\in N$,
 and $\mathrm{G}$ is the graph represented by $\lgl c_n:n\in m\rgl$.
 Let $G'$ be any graph on $m+1$ vertices such that $G'\re m=G$.
 Then
there is a node $t\in T\re l_m$
such that letting $c'_{m}=t$,
the graph  represented by $\{c_n:n\in m\}\cup\{c'_{m}\}$
is isomorphic to $\mathrm{G}'$.
\end{rem}

\begin{thm}\label{thm.A_3treeimpliestrianglefreegraph}
Let $T$ be a tree  with infinitely many coding nodes
satisfying the $K_k$-Free Branching Criterion.
If $T$ has no maximal nodes and the coding nodes are dense in $T$,
then
 $T$ satisfies $(A_k)^{\tt tree}$, and hence codes $\mathcal{H}_k$.
\end{thm}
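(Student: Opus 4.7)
The plan is to verify the two clauses of $(A_k)^{\tt tree}$ and then apply Observation \ref{obs.ttimplieshomog} to conclude that $T$ codes $\mathcal{H}_k$. Clause (i) is immediate from the definition of the $K_k$-Free Branching Criterion, which has the $K_k$-Free Criterion built into its formulation. So the substance lies in clause (ii): given $n\in\bN$ with $F_n\sse\{0,\dots,n-2\}$ such that no $(k-1)$-element subset of $\{c_j:j\in F_n\}$ codes a $(k-1)$-clique, I must produce a coding node $c_m$ of $T$ with $m\ge n$ and $c_m(l_j)=1\iff j\in F_n$ for every $j<n$.

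My approach reduces (ii) to a single application of Lemma \ref{lem.stftextension}. First I would form the finite truncation $T':=\{t\in T:|t|\le l_{n-1}\}$, which carries exactly the coding nodes $c_0,\dots,c_{n-1}$. I would then check that $T'$ satisfies the hypotheses of Lemma \ref{lem.stftextension}, namely that its maximal nodes all lie at length $l_{n-1}$ and that it still satisfies the $K_k$-Free Branching Criterion. Both facts rest on the hypothesis that $T$ has no maximal nodes: any node of $T$ of length $l<l_{n-1}$ admits extensions in $T$ up to length $l_{n-1}$, and those extensions lie in $T'$ and realize whichever of $t^{\frown}0$ or $t^{\frown}1$ the $K_k$-Free Criterion permits. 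Applying Lemma \ref{lem.stftextension} to $T'$ with the set $F_n$ yields a node $t\in T'$ of length $l_{n-1}$ such that $t(l_j)=1\iff j\in F_n$ for every $j\in\{0,\dots,n-2\}$.

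To finish, I would promote $t$ to a coding node of $T$. Since $T$ has no maximal nodes, $t$ is non-maximal in $T$, and the $K_k$-Free Branching Criterion provides a node $t_0\in T$ with $t_0\contains t^{\frown}0$. Density of the coding nodes in $T$ then yields a coding node $c_m\contains t_0\contains t^{\frown}0$, necessarily with $m\ge n$ because $|c_m|>l_{n-1}$. For $j<n-1$ the value $c_m(l_j)=t(l_j)$ matches $F_n$ by construction, and $c_m(l_{n-1})=0$ matches the fact that $n-1\notin F_n$. Thus $c_m$ witnesses clause (ii).

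The one bit of bookkeeping that deserves care is verifying that the truncation $T'$ inherits the $K_k$-Free Branching Criterion cleanly; this is the only place where the hypothesis that $T$ has no maximal nodes does real work. Beyond that, the argument is essentially a transcription of Henson's criterion $(A_k)'$ into the language of trees with coding nodes, powered by Lemma \ref{lem.stftextension}. I do not foresee any genuinely hard step.
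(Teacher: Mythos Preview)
Your proposal is correct and follows essentially the same approach as the paper: verify clause (i) directly from the definition of the $K_k$-Free Branching Criterion, invoke Lemma \ref{lem.stftextension} on the truncation at level $l_{n-1}$ to produce the node $t$, and then use density of the coding nodes to promote $t$ to some $c_m$. Your extra care in routing through $t^{\frown}0$ to guarantee $c_m(l_{n-1})=0$, and in checking that the truncation inherits the $k$-FBC, fills in details that the paper's proof leaves implicit.
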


\begin{proof}
Since $T$ satisfies the $k$-FBC, it automatically satisfies (i) of $(A_k)^{\tt tree}$.
Let   $\lgl F_n:n\in\mathbb{N}\rgl$  be an enumeration of finite subsets of $\mathbb{N}$
where each set is repeated infinitely many times, and each $\max(F_n)<n-1$.
For $n=0$, $F_n$ is the emptyset, so every coding node in $T$ fulfills  (ii) of $(A_k)^{\tt tree}$.
Let $n\ge 1$ be given and suppose that for each subset
$J\sse F_n$ of size $k-1$, $\{c_j:j\in J\}$ does not code a
$(k-1)$-clique.
By Lemma \ref{lem.stftextension},
there is some node $t\in T$ of length $l_{n-1}$
such that for all $i<n-1$, $t(l_i)=1$ if and only if $i\in F_n$.
Since the coding nodes are dense in $T$, there is some
$m\ge n$ such that $c_m$ extends $t$.
This coding node $c_m$ fulfills (ii) of $(A_k)^{\tt tree}$.
\end{proof}

At this point, we  have developed enough ideas and terminology  to define strong $K_k$-free trees.
These will be special types of trees   coding copies of $\mathcal{H}_k$ with  additional properties which set the stage for their skew versions in Section \ref{sec.4} on which we will be able to develop a viable Ramsey theory.
We shall use {\em ghost coding nodes} for the first $k-3$ levels.
Coding nodes will start at length $k-2$, and all coding nodes of length at least $k-2$ will end in a sequence of $(k-2)$ many $1$'s.
The effect is that coding nodes will only be extendible by $0$; coding nodes will never split.  
This will serve to reduce the upper bound on the big Ramsey degrees for $\mathcal{H}_k$.

Recall that $\mathcal{S}_{\le n}$ is the set of all sequences of $0$'s and $1$'s of length $\le n$.

 \begin{notation}
Throughout this paper, we use the notation  $0^{(i)}$ and $1^{(i)}$ to denote sequences of length $i$ where all entries are $0$, or all entries are $1$, respectively.
\end{notation}

\begin{defn}[Strong $K_k$-Free Tree]\label{defn.stft}
A  {\em strong $K_k$-free tree} is  a  tree with coding nodes, $(T,\mathbb{N};\sse,<,c)$,  satisfying the following:
\begin{enumerate}
\item
$T$ has no maximal nodes, the coding nodes are dense in $T$, and no coding node splits in $T$.
\item
The first $k-2$ levels of $T$ are exactly $\Seq_{\le k-2}$,
and the least coding node $c_0$ is exactly $1^{(k-2)}$.
\item
For each $n\in \mathbb{N}$,
the $n$-th coding node $c_n$ has length $n+k-2$, and has as final segment a sequence of $k-2$ many $1$'s.
\item
$T$
satisfies the $K_k$-free Branching Criterion.
\end{enumerate}
Moreover, $T$ has ghost coding nodes $c_{-k+2},\dots,c_{-1}$ defined by $c_n=1^{(k+n-2)}$ for $n\in[-k+2,-1]$, where $1^{(0)}$ denotes the empty sequence. 
A
{\em finite strong $K_k$-free tree}  is the restriction
of a strong $K_k$-free tree to some finite level.
\end{defn}

By Theorem
\ref{thm.A_3treeimpliestrianglefreegraph},
each strong $K_k$-free tree codes $\mathcal{H}_k$.

\begin{rem}
Items (1)  and (4) in Definition
\ref{defn.stft} ensure that the tree represents a $K_k$-free Henson graph.
Items (2) and (3) serve to reduce our bounds on the big Ramsey degrees by ensuring that coding nodes never split.
For any  node $t$ in $T$   with all entries being $0$, the   subtree $S$ of all nodes in  $T$ extending $t$   codes a copy of $\mathcal{H}_k$, by Theorem
\ref{thm.A_3treeimpliestrianglefreegraph}.
Moreover, 
 the structure of the  first $k-2$  levels of   such a subtree  $S$ are tree isomorphic to $\Seq_{\le k-2}$.
 Thus, it makes sense to require (2) in Definition
\ref{defn.stft}.
The ghost coding nodes provide the  structure which subtrees  coding $\mathcal{H}_k$ in the same order as $T$ automatically inherit.
This will enable us to build the collection of all subtrees of a given tree $T$ which are isomorphic to $T$ in a strong way, to be made precise in the next section. 
\end{rem}

We now present a  method for constructing   strong $K_k$-free trees.
For $k=3$,
this construction method   simplifies the construction of a strong triangle-free tree coding $\mathcal{H}_3$ in
Theorem 3.14 of
 \cite{DobrinenJML20}  and accomplishes the same goals.
 The aim of Example \ref{thm.stftree}
 is  to
 build  the reader's understanding of the principal structural properties
 of the trees on which we will develop Ramsey theory, before defining their skew versions in the next section.

\begin{example}[Construction Method for  a Strong $K_k$-Free Tree, $\bS_k$]\label{thm.stftree}
Recall that by Theorem
\ref{thm.A_3treeimpliestrianglefreegraph},
each strong $K_k$-free tree codes $\mathcal{H}_k$.
Let
 $\lgl u_i: i\in \mathbb{N}\rgl$ be any enumeration of  $\Seq$
  such  that $|u_i|\le |u_j|$ whenever $i<j$.
 Notice that in particular, $|u_i|\le i$.
We will build a strong $K_k$-free tree $\bS_k\sse \Seq$
with the $n$-th coding node $c_n$ of length $l_n=n+k-2$ and 
 satisfying the following additional conditions for  $n\ge  k-2$:
\begin{enumerate}
\item[(i)]
If $n\equiv 0 \ (\mathrm{mod\ } k-1)$, $i=n/(k-1)$,
and
$u_i $ is in  $\bS_k\cap \Seq_{\le  i+1}$,
then $c_n\contains u_i$.
\item[(ii)]
Otherwise, 
$c_n={0^{(n)}}^{\frown}1^{(k-2)}$.
\end{enumerate}

The first $k-2$ levels of $\bS_k$ are exactly $\Seq_{\le k-2}$.
The ghost coding nodes are defined as in Definition \ref{defn.stft}, with $c_{-k+2}$ being the empty sequence and $c_{-1}$ being $1^{(k-3)}$.
The shortest coding node is $c_0=1^{(k-2)}$.
 Notice that since $u_0=\lgl\rgl$, $c_0$ extends $u_0$.

$\bS_k$ will have nodes of every finite length, so
 $\bS_k(n)=\bS_k \re n$ for each $n\in \bN$.
We shall let $r_n(\bS_k)$ denote $\bigcup_{m<n}\bS_k(m)$;
this notation comes from topological Ramsey space theory  in \cite{TodorcevicBK10} and will be used again in the next section.
Then   $r_{k-1}(\bS_k)=\Seq_{\le k-2}$.
  Extend the nodes of $\bS_k(k-2)$ according to the $k$-FBC with respect to
 $\{c_{-k+2},\dots, c_0\}$
 to form the next level
  $\bS_k(k-1)$.
 Let $c_{1}= \lgl 0\rgl^{\frown}1^{(k-2)}$.
 This node is in
 $\bS_k(k-1)$, since it codes no $k$-cliques with 
  $\{c_{-k+2},\dots, c_0\}$.
 Extend the nodes of $\bS_k(k-1)$  according to the $k$-FBC with respect to
  $\{c_{-k+2},\dots, c_1\}$
  to form  the next level $\bS_k(k)$.
So far,
  (1)--(4) of a Strong $K_k$-Free Tree and (i) and (ii)  above  are satisfied.

 Given $n\ge  k-1$,  suppose we
have defined
$r_{n+2}(\bS_k)$ and
$\{c_{-k+2},\dots,c_{n-k+2}\}$ so that  (1)--(4) and (i) and (ii) hold so far.
If $n\not\equiv 0 \ (\mathrm{mod\ } k-1)$,
or $n\equiv 0 \ (\mathrm{mod\ } k-1)$ but $u_i\not\in r_{i+1}(\bS_k)$, where $i=n/(k-1)$,
then  define
$c_n={0^{(n)}}^{\frown}1^{(k-2)}$.
This node is in $\bS_k(n+1)$ by the $k$-FBC, since
the only nodes
$c_n$  codes   edges with are
exactly  the coding nodes $c_{n-k+2},\dots, c_{n-1}$.

Now suppose that  $n\equiv 0 \ (\mathrm{mod\ } k-1)$ and 
$u_i$ is in
$r_{i+1}(\bS_k)$, where $i=n/(k-1)$.
Let $q$ denote $n- |u_i|$
and
define $v={u_i}^{\frown} {0^{(q)}}^{\frown}1^{(k-2)}$.
We claim that $v$ is in
$\bS_k(n+1)$.
Let $-k+2\le m_0<\dots<m_{k-2}\le n-1$ and suppose that
$\{c_{m_j}: j\in k-1\}$ is a set of coding nodes in $r_n(\mathbb{S}_k)$ coding a $(k-1)$-clique.
If $m_{k-2}< |u_i|$,  then  $v(m_j)=0$ for at least one $j\in  k-1$, since $u_i$ is in $r_{  |u_i|+1 }(\bS_k)$ which satisfies the $k$-FBC.
Let $w=
{u_i}^{\frown} {0^{(q)}}$.
If for some $j\in k-1$, $|u_i|\le m_j<|w|$,
then  $v(m_j)=0$.
For the following, it is important to notice that
$q\ge k-2$.
If for some $j<j'$, $ |c_{m_j}|<|u_i|$ and $|w|\le |c_{m_{j'}}|$,
then $c_{m_{j'}}(m_j)=0$, contrary to our assumption that $\{c_{m_j}: j\in k-1\}$ codes a  $(k-1)$-clique.
Lastly, suppose $m_0\ge |w|$.
Then  the nodes $c_{m_j}$, $j\in k-1$, are exactly the coding nodes
$c_{n- k+1},\dots, c_{n-1}$.
Thus, $v(c_{m_0})=0$.
Therefore, by the $k$-FBC, $v$ is in $\bS_k(n+1)$.
Let $c_n=v$ and split according to the $k$-FBC to construct $\bS_k(n+2)$.
This satisfies (1)--(4) and (i) and (ii).

This inductive process  constructs a tree  $\bS_k=\bigcup_{n<\om}\bS_k(n)$  which is a strong $K_k$-free tree satisfying (i) and (ii).
\end{example}

\begin{rem}
For $k=3$, the previous construction of a strong triangle-free tree produces a strong triangle-free tree in the sense of \cite{DobrinenJML20}, albeit in a more streamlined fashion.
\end{rem}

\begin{example}[A Strong $K_3$-Free Tree, $\bS_3$]\label{ex.stft}
In keeping with the construction method above,
we present the first several  levels of the construction of  
$\bS_3$.
Let $u_0$ denote the empty sequence, and suppose  $u_1=\lgl 1\rgl$ and  $u_2=\lgl 0\rgl$.
The ghost coding node is $c_{-1}=\lgl\rgl$, the empty sequence.
The  coding nodes $c_n$ where $n$ is odd will be $c_n={0^{(n)}}^{\frown}1$.
In particular,  $c_1=\lgl 0,1\rgl$, $c_3=\lgl 0,0,0,1\rgl$, $c_5=\lgl 0,0,0,0,0,1\rgl$, etc.
These nodes are in every tree satisfying the $K_3$-Free Branching Criterion.

Let  $c_0=\lgl 1\rgl$;
this node extends
$u_0=\lgl\rgl$.
Let $c_2=\lgl 1,0,1\rgl$; this node extends $u_1=\lgl 1\rgl$.
Let $c_4=\lgl 0,1,0,0,1\rgl$; this node extends $u_2=\lgl 0\rgl$.
If $u_3=\lgl 1,1\rgl$, since this node is not in  $\bS_3(7)$, we let $c_6={0^{(6)}}^{\frown}1$.
If $u_3=\lgl 1,0\rgl$, we
 can let $c_6$ be any node in $\bS_3(7)$ extending $u_3$.
For instance, if we care about making $\bS_3$ recursively definable with respect to the sequence $\lgl u_i:i\in \mathbb{N}\rgl$,  we  can let $c_6$ be the rightmost extension of $u_3$ in $\bS_3(7)$ which has last entry $1$, namely
$c_6= \lgl 1,0,1,0,1,0,1\rgl$.
In this manner, one constructs a  tree such as  in Figure 3.


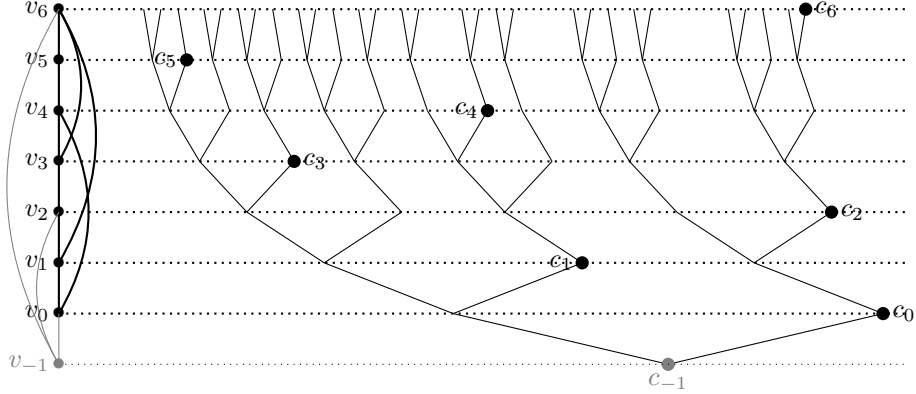
\begin{figure}\label{fig.bS3}
\begin{tikzpicture}[grow'=up,scale=.45]

\tikzstyle{level 1}=[sibling distance=5in]
\tikzstyle{level 2}=[sibling distance=3in]
\tikzstyle{level 3}=[sibling distance=1.8in]
\tikzstyle{level 4}=[sibling distance=1.1in]
\tikzstyle{level 5}=[sibling distance=.7in]
\tikzstyle{level 6}=[sibling distance=0.4in]
\tikzstyle{level 7}=[sibling distance=0.2in]

\node {} coordinate(t)
child{coordinate (t0)
			child{coordinate (t00)
child{coordinate (t000)
child {coordinate(t0000)
child{coordinate(t00000)
child{coordinate(t000000)
child{coordinate(t0000000)}
child{coordinate(t0000001)}
}
child{coordinate(t000001)
child{coordinate(t0000010)}
child{ edge from parent[draw=none]  coordinate(t0000011)}
}
}
child{coordinate(t00001)
child{coordinate(t000010)
child{coordinate(t0000100)}
child{coordinate(t0000101)}
}
child{ edge from parent[draw=none]  coordinate(t000011)
}
}}
child {coordinate(t0001)
child {coordinate(t00010)
child {coordinate(t000100)
child {coordinate(t0001000)}
child {coordinate(t0001001)}
}
child {coordinate(t000101)
child {coordinate(t0001010)}
child { edge from parent[draw=none]  coordinate(t0001011)}
}
}
child{coordinate(t00011) edge from parent[draw=none] }}}
child{ coordinate(t001)
child{ coordinate(t0010)
child{ coordinate(t00100)
child{ coordinate(t001000)
child{ coordinate(t0010000)}
child{ coordinate(t0010001)}
}
child{ coordinate(t001001)
child{ coordinate(t0010010)}
child{ edge from parent[draw=none] coordinate(t0010011)}
}
}
child{ coordinate(t00101)
child{ coordinate(t001010)
child{ coordinate(t0010100)}
child{ coordinate(t0010101)}
}
child{   edge from parent[draw=none]coordinate(t001011)
}
}}
child{  edge from parent[draw=none] coordinate(t0011)}}}
			child{ coordinate(t01)
child{ coordinate(t010)
child{ coordinate(t0100)
child{ coordinate(t01000)
child{ coordinate(t010000)
child{ coordinate(t0100000)}
child{   coordinate(t0100001)}
}
child{ edge from parent[draw=none] coordinate(t010001)
}
}
child{ coordinate(t01001)
child{ coordinate(t010010)
child{ coordinate(t0100100)}
child{ coordinate(t0100101)}
}
child{edge from parent[draw=none]  coordinate(t010011)}
}}
child{ coordinate(t0101)
child{ coordinate(t01010)
child{ coordinate(t010100)
child{ coordinate(t0101000)}
child{coordinate(t0101001)}
}
child{ edge from parent[draw=none]  coordinate(t010101)
}
}
child{  edge from parent[draw=none]  coordinate(t01011)
}}}
child{ edge from parent[draw=none]  coordinate(t011)}}}
		child{ coordinate(t1)
			child{ coordinate(t10)
child{ coordinate(t100)
child{ coordinate(t1000)
child{ coordinate(t10000)
child{ coordinate(t100000)
child{ coordinate(t1000000)}
child{ coordinate(t1000001)}
}
child{ coordinate(t100001)
child{ coordinate(t1000010)}
child{ edge from parent[draw=none] coordinate(t1000011)}
}
}
child{  coordinate(t10001)
child{coordinate(t100010)
child{coordinate(t1000100)}
child{coordinate(t1000101)}}
child{edge from parent[draw=none] coordinate(t100011)}
}}
child{edge from parent[draw=none] coordinate(t1001)}}
child{ coordinate(t101)
child{ coordinate(t1010)
child{ coordinate(t10100)
child{ coordinate(t101000)
child{ coordinate(t1010000)}
child{ coordinate(t1010001)}
}
child{ coordinate(t101001)
child{ coordinate(t1010010)}
child{   edge from parent[draw=none]   coordinate(t1010011)}
}
}
child{  coordinate(t10101)
child{  coordinate(t101010)
child{  coordinate(t1010100)}
child{  coordinate(t1010101)}
}
child{ edge from parent[draw=none]   coordinate(t101011)}
}}
child{ edge from parent[draw=none] coordinate(t1011)}}}
child{  edge from parent[draw=none] coordinate(t11)} };

\node[below] at (t) {${\color{gray}c_{-1}}$};
\node[right] at (t1) {$c_0$};
\node[left] at (t01) {$c_1$};
\node[right] at (t101) {$c_2$};
\node[right] at (t0001) {$c_3$};
\node[left] at (t01001) {$c_4$};
\node[left] at (t000001) {$c_5$};
\node[right] at (t1010101) {$c_6$};

\node[circle, fill=gray,inner sep=0pt, minimum size=5pt] at (t) {};
\node[circle, fill=black,inner sep=0pt, minimum size=5pt] at (t1) {};
\node[circle, fill=black,inner sep=0pt, minimum size=5pt] at (t01) {};
\node[circle, fill=black,inner sep=0pt, minimum size=5pt] at (t101) {};
\node[circle, fill=black,inner sep=0pt, minimum size=5pt] at (t0001) {};
\node[circle, fill=black,inner sep=0pt, minimum size=5pt] at (t01001) {};
\node[circle, fill=black,inner sep=0pt, minimum size=5pt] at (t000001) {};
\node[circle, fill=black,inner sep=0pt, minimum size=5pt] at (t1010101) {};

\draw[dotted] let \p1=(t) in (-18,\y1) node (v00) {${\color{gray}\bullet}$} -- (7,\y1);
\draw[thick, dotted] let \p1=(t1) in (-18,\y1) node (v0) {$\bullet$} -- (7,\y1);
\draw[thick, dotted] let \p1=(t01) in (-18,\y1) node (v1) {$\bullet$} -- (7,\y1);
\draw[thick, dotted] let \p1=(t001) in (-18,\y1) node (v2) {$\bullet$} -- (7,\y1);
\draw[thick, dotted] let \p1=(t0001) in (-18,\y1) node (v3) {$\bullet$} -- (7,\y1);
\draw[thick, dotted] let \p1=(t01001) in (-18,\y1) node (v4) {$\bullet$} -- (7,\y1);
\draw[thick, dotted] let \p1=(t000001) in (-18,\y1) node (v5) {$\bullet$} -- (7,\y1);
\draw[thick, dotted] let \p1=(t1010101) in (-18,\y1) node (v6) {$\bullet$} -- (7,\y1);

\node[left] at (v00) {${\color{gray}v_{-1}}$};
\node[left] at (v0) {$v_0$};
\node[left] at (v1) {$v_1$};
\node[left] at (v2) {$v_2$};
\node[left] at (v3) {$v_3$};
\node[left] at (v4) {$v_4$};
\node[left] at (v5) {$v_5$};
\node[left] at (v6) {$v_6$};

\draw[thick] (v0.center) to (v1.center) to (v2.center) to (v3.center);
\draw[thick] (v3.center) to (v4.center) to (v5.center) to  (v6.center) ;
\draw[thick] (v6.center) to [bend left] (v3.center);
\draw[thick] (v4.center) to [bend left] (v0.center);
\draw[thick] (v6.center) to [bend left] (v1.center);
\draw[gray] (v00.center) to [bend left] (v2.center);
\draw[gray] (v00.center) to [bend left] (v6.center);
\draw[gray] (v00.center) to (v0.center);

\end{tikzpicture}
\caption{A strong triangle-free tree $\bS_3$ densely coding $\mathcal{H}_3$}
\end{figure}

\end{example}


\begin{example}[A Strong $K_4$-Free Tree, $\bS_4$]\label{ex.S4}

The following tree $\bS_4$  in Figure 4.\ is an example of a strong $K_4$-free  tree.
Let $u_0$ denote the empty sequence, and suppose $u_1=\lgl 1\rgl$ and $u_2=\lgl 0\rgl$.
The ghost coding nodes are  $c_{-2}=\lgl \rgl$ and  $c_{-1}=\lgl 1\rgl$.
According to the construction in Example \ref{thm.stftree},
the first three coding nodes of $\bS_4$ are
$c_0=\lgl 1,1\rgl$,  which extends $u_0$,
$c_1=\lgl 0,1,1\rgl$, and $c_2=\lgl 0,0,1,1\rgl$,
each time splitting according to the $K_4$-Free Splitting Criterion ($4$-FSC) to construct a tree $r_5(\bS_4)$.
Split again according to the $4$-FSC to construct the next level of the tree, $\bS_4(5)$.
Since $u_1$ is in $r_3(\bS_4)$, 
letting $c_3=\lgl 1,1,0,1,1\rgl$ satisfies requirement (i) in Example \ref{thm.stftree}.
Let $c_4=\lgl 0,0,0,0,1,1\rgl$ and $c_5=\lgl 0,0,0,0,0,1,1\rgl$.
Since $u_2=\lgl 0\rgl$ is in $r_4(\bS_4)$,  taking $c_6$ to be  
$\lgl 0,1,1,0,0,0,1,1\rgl$ satisfies our requiremnts. 
One can check that this node is in $\bS_4(8)$.
(One could also simply let $c_6$ be ${0^{(6)}}^{\frown}\lgl 1,1\rgl$. 
Continue the construction according to Example \ref{thm.stftree}.

\begin{figure}\label{fig.bS4}
\begin{tikzpicture}[grow'=up,scale=.4]

\tikzstyle{level 1}=[sibling distance=6in]
\tikzstyle{level 2}=[sibling distance=3.2in]
\tikzstyle{level 3}=[sibling distance=1.6in]
\tikzstyle{level 4}=[sibling distance=.8in]
\tikzstyle{level 5}=[sibling distance=.4in]
\tikzstyle{level 6}=[sibling distance=0.2in]
\tikzstyle{level 7}=[sibling distance=0.1in]

\node {} coordinate(t)
child{coordinate (t0)
			child{coordinate (t00)
child{coordinate (t000)
child {coordinate(t0000)
child{coordinate(t00000)
child{coordinate(t000000)
child{coordinate(t0000000)}
child{coordinate(t0000001)}
}
child{coordinate(t000001)
child{coordinate(t0000010)}
child{ coordinate(t0000011)}
}
}
child{coordinate(t00001)
child{coordinate(t000010)
child{coordinate(t0000100)}
child{coordinate(t0000101)}
}
child{ coordinate(t000011)
child{ coordinate(t0000110)}
child{ coordinate(t0000111) edge from parent[draw=none] }
}
}}
child {coordinate(t0001)
child {coordinate(t00010)
child {coordinate(t000100)
child {coordinate(t0001000)}
child {  coordinate(t0001001)}
}
child {coordinate(t000101)
child {coordinate(t0001010)}
child { coordinate(t0001011)}
}
}
child{coordinate(t00011)
child{coordinate(t000110)
child{coordinate(t0001100)}
child{coordinate(t0001101)}
 }
child{coordinate(t000111)edge from parent[draw=none] }
 }}}
child{ coordinate(t001)
child{ coordinate(t0010)
child{ coordinate(t00100)
child{ coordinate(t001000)
child{ coordinate(t0010000)}
child{ coordinate(t0010001)}
}
child{ coordinate(t001001)
child{ coordinate(t0010010)}
child{ coordinate(t0010011)}
}
}
child{ coordinate(t00101)
child{ coordinate(t001010)
child{ coordinate(t0010100)}
child{ coordinate(t0010101)}
}
child{ coordinate(t001011)
child{ coordinate(t0010110)}
child{ coordinate(t0010111) edge from parent[draw=none] }
}
}}
child{  coordinate(t0011)
child{  coordinate(t00110)
child{  coordinate(t001100)
child{  coordinate(t0011000)}
child{  coordinate(t0011001)}
}
child{  coordinate(t001101)
child{  coordinate(t0011010)}
child{  coordinate(t0011011)}
}
}
child{  coordinate(t00111)edge from parent[draw=none]}
}}}
			child{ coordinate(t01)
child{ coordinate(t010)
child{ coordinate(t0100)
child{ coordinate(t01000)
child{ coordinate(t010000)
child{ coordinate(t0100000)}
child{  coordinate(t0100001)}
}
child{ coordinate(t010001)
child{ coordinate(t0100010)}
child{ coordinate(t0100011)}
}
}
child{ coordinate(t01001)
child{ coordinate(t010010)
child{ coordinate(t0100100)}
child{ coordinate(t0100101)}
}
child{ coordinate(t010011)
child{ coordinate(t0100110)}
child{ coordinate(t0100111)edge from parent[draw=none]  }
}
}}
child{ coordinate(t0101)
child{ coordinate(t01010)
child{ coordinate(t010100)
child{ coordinate(t0101000)}
child{ coordinate(t0101001)}
}
child{ coordinate(t010101)edge from parent[draw=none]}
}
child{  coordinate(t01011)
child{  coordinate(t010110)
child{  coordinate(t0101100)}
child{  coordinate(t0101101)}
}
child{  coordinate(t010111)edge from parent[draw=none]  }
}}}
child{ coordinate(t011)
child{ coordinate(t0110)
child{ coordinate(t01100)
child{ coordinate(t011000)
child{ coordinate(t0110000)}
child{ coordinate(t0110001)}
}
child{ coordinate(t011001)
child{ coordinate(t0110010)}
child{ coordinate(t0110011)}
}
}
child{ coordinate(t01101)
child{ coordinate(t011010)
child{ coordinate(t0110100)}
child{ coordinate(t0110101)}
}
child{ coordinate(t011011)
child{ coordinate(t0110110)}
child{ coordinate(t0110111)edge from parent[draw=none] }}
}
}
child{ coordinate(t0111)edge from parent[draw=none]
}
}}}
		child{ coordinate(t1)
			child{ coordinate(t10)
child{ coordinate(t100)
child{ coordinate(t1000)
child{ coordinate(t10000)
child{ coordinate(t100000)
child{ coordinate(t1000000)}
child{ coordinate(t1000001)}
}
child{ coordinate(t100001)
child{ coordinate(t1000010)}
child{ coordinate(t1000011)}
}
}
child{ coordinate(t10001)
child{ coordinate(t100010)
child{ coordinate(t1000100)}
child{ coordinate(t1000101)}
}
child{ coordinate(t100011)
child{ coordinate(t1000110)}
child{ coordinate(t1000111)edge from parent[draw=none]  }
}
}}
child{ coordinate(t1001)
child{ coordinate(t10010)
child{ coordinate(t100100)
child{ coordinate(t1001000)}
child{ coordinate(t1001001)}
}
child{ coordinate(t100101)
child{ coordinate(t1001010)}
child{ coordinate(t1001011)}
}
}
child{  coordinate(t10011)
child{  coordinate(t100110)
child{  coordinate(t1001100)}
child{  coordinate(t1001101)}
}
child{  coordinate(t100111)edge from parent[draw=none] }
}}}
child{ coordinate(t101)
child{ coordinate(t1010)
child{ coordinate(t10100)
child{ coordinate(t101000)
child{ coordinate(t1010000)}
child{ coordinate(t1010001)}
}
child{ coordinate(t101001)
child{ coordinate(t1010010)}
child{   coordinate(t1010011)}
}
}
child{  coordinate(t10101)
child{  coordinate(t101010)
child{  coordinate(t1010100)}
child{  coordinate(t1010101)}
}
child{  coordinate(t101011)
child{  coordinate(t1010110)}
child{  coordinate(t1010111)edge from parent[draw=none]}
}
}}
child{ coordinate(t1011)
child{ coordinate(t10110)
child{ coordinate(t101100)
child{ coordinate(t1011000)}
child{ coordinate(t1011001)}
}
child{ coordinate(t101101)
child{ coordinate(t1011010)}
child{ coordinate(t1011011)}
}
}
child{ coordinate(t10111)
edge from parent[draw=none]  }
}}}
child{ coordinate(t11)
child{coordinate (t110)
child{coordinate (t1100)
child{coordinate (t11000)
child{coordinate(t110000)
child{coordinate(t1100000)}
child{coordinate(t1100001)}
}
child{coordinate(t110001) edge from parent[draw=none]  }}
child{coordinate(t11001)
child{coordinate(t110010)
child{coordinate(t1100100)}
child{coordinate(t1100101)}}
child{coordinate(t110011) edge from parent[draw=none]  }
}}
child{coordinate (t1101)
child{coordinate(t11010)
child{coordinate(t110100)
child{coordinate(t1101000)}
child{coordinate(t1101001)}}
child{coordinate(t110101) edge from parent[draw=none]  }
}
child{coordinate(t11011)
child{coordinate(t110110)
child{coordinate(t1101100)}
child{coordinate(t1101101)}
}
child{coordinate(t110111) edge from parent[draw=none]  }
}}}
child{coordinate(t111) edge from parent[draw=none] }} };

\node[below] at (t) {${\color{gray} c_{-2}}$};
\node[right] at (t1) {$\ \ {\color{gray}c_{-1}}$};
\node[right] at (t11) {$c_0$};
\node[left] at (t011) {$c_1$};
\node[right] at (t0011) {$c_2$};
\node[right] at (t11011) {$c_3$};
\node[left] at (t000011) {$c_4$};

\node[circle, fill=gray,inner sep=0pt, minimum size=5pt] at (t) {};
\node[circle, fill=gray,inner sep=0pt, minimum size=5pt] at (t1) {};
\node[circle, fill=black,inner sep=0pt, minimum size=5pt] at (t11) {};
\node[circle, fill=black,inner sep=0pt, minimum size=5pt] at (t011) {};
\node[circle, fill=black,inner sep=0pt, minimum size=5pt] at (t0011) {};
\node[circle, fill=black,inner sep=0pt, minimum size=5pt] at (t11011) {};
\node[circle, fill=black,inner sep=0pt, minimum size=5pt] at (t000011) {};

\draw[dotted] let \p1=(t) in (-17,\y1) node (v02) {${\color{gray}\bullet}$} -- (13,\y1);
\draw[dotted] let \p1=(t1) in (-17,\y1) node (v01) {${\color{gray}\bullet}$} -- (13,\y1);
\draw[thick, dotted] let \p1=(t11) in (-17,\y1) node (v0) {$\bullet$} -- (13,\y1);
\draw[thick, dotted] let \p1=(t011) in (-17,\y1) node (v1) {$\bullet$} -- (13,\y1);
\draw[thick, dotted] let \p1=(t0011) in (-17,\y1) node (v2) {$\bullet$} -- (13,\y1);
\draw[thick, dotted] let \p1=(t11011) in (-17,\y1) node (v3) {$\bullet$} -- (13,\y1);
\draw[thick, dotted] let \p1=(t000011) in (-17,\y1) node (v4) {$\bullet$} -- (13,\y1);

\node[left, gray] at (v02) {$v_{-2}$};
\node[left, gray] at (v01) {$v_{-1}$};
\node[left] at (v0) {$v_0$};
\node[left] at (v1) {$v_1$};
\node[left] at (v2) {$v_2$};
\node[left] at (v3) {$v_3$};
\node[left] at (v4) {$v_4$};

\draw[gray] (v02.center) to (v01.center) to (v0.center) to [bend right] (v02.center);
\draw[thick] (v0.center) to (v1.center) ;
\draw[gray] (v1.center) to [bend right] (v01.center);
\draw[thick] (v1.center) to (v2.center)to [bend left] (v0.center) ;
\draw[thick] (v2.center) to (v3.center)to [bend left] (v1.center) ;
\draw[gray] (v3.center) to [bend right] (v01.center);
\draw[gray] (v3.center) to [bend right] (v02.center);
\draw[thick] (v3.center) to (v4.center)to [bend left] (v2.center) ;

\end{tikzpicture}
\caption{A strong $K_4$-free tree $\bS_4$ densely coding $\mathcal{H}_4$}
\end{figure}
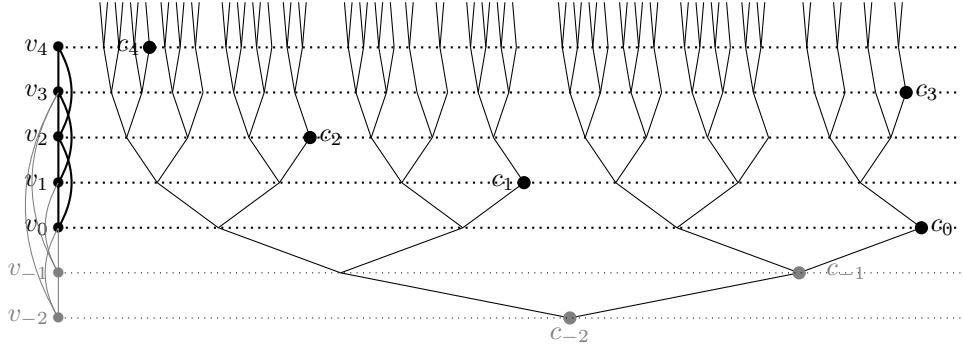



\end{example}

As in the case of $\mathcal{H}_3$ in \cite{DobrinenJML20},
the purpose  of not allowing coding nodes to split is to
reduce the   number of different  types of trees coding  a given finite $K_k$-free graph.
Having the coding nodes be dense in the tree  is necessary  development of Ramsey theory.
However,
the same example of a bad coloring as given in Example 3.18 of \cite{DobrinenJML20}
provides
 a bad coloring for any strong $K_k$-free tree, for any $k\ge 3$.

 \begin{example}[A bad coloring of vertices in $\mathcal{H}_k$]\label{ex.badcoloring}
 Let $k\ge 3$ be fixed.
 Color all coding nodes in $\bS_k$  extending $\lgl 1\rgl$ red.
 In particular, $c_0=\lgl 1\rgl$ is colored red.
 Given $n\ge 0$, suppose that for each $i\le n$, all coding nodes in $\bS_k$ extending ${0^{(i)}}^{\frown}1$ have been colored either red or blue.
  Look at the coding node $c_{n}$.
  This node has length $n+1$ and has already been assigned a color.
 If $c_{n}$ is red, then color every coding node extending
 ${0^{(n+1)}}^{\frown}1$ blue;
if  $c_{n}$ is blue, then color every coding node extending
 ${0^{(n+1)}}^{\frown}1$ red.
 Notice that any subtree of $\bS_k$ which is strongly similar to $\bS_k$ in the sense of Definition \ref{def.3.1.Sauer}
 where additionally coding nodes are sent to coding nodes, has nodes of both colors.
 (See Definition \ref{def.3.1.likeSauer} for the precise definition of {\em strongly similar} for trees with coding nodes.)
 Equivalently, any subtree of $\bS_k$ which is again a strong $K_k$-tree which represents a copy of $\mathcal{H}_k$ has coding nodes with both colors.
 \end{example}

 This coloring in  Example \ref{ex.badcoloring}  is equivalent to a coloring of the vertices of $\mathcal{H}_k$.
Recall that the work of Komj\'{a}th and \Rodl\ in \cite{Komjath/Rodl86}  for $k=3$ and work of El-Zahar and Sauer in \cite{El-Zahar/Sauer89}  for $k\ge 4$, shows that  for any  coloring of the vertices in
 $\mathcal{H}_k$  into  two colors, there is a subgraph $\mathcal{H}'$ which is again a $K_k$-free Henson graph in which all vertices have the same color.
 However, the previous example shows that we cannot expect the subgraph $\mathcal{H}'$ to have induced tree with coding nodes
 $T_{\mathcal{H}'}$ strongly similar to $\bS_k$.
 Since we are aiming to prove Ramsey theorems on collections of trees with coding nodes which are all strongly similar to each other,
  we immediately turn to the next section where we present  the skewed version of these trees  on which  the relevant Ramsey theory can be developed.


\section{Strong $\mathcal{H}_k$-coding trees}\label{sec.4}

The classes of trees coding Henson graphs on which we develop Ramsey theory are presented in this section.
For each $k\ge 3$,
fixing a   tree  $\bS_k$  constructed as  in Example \ref{thm.stftree}, we construct its skew version, denoted $\bT_k$, the skewing being necessary to avoid the  bad colorings seen  in Example
 \ref{ex.badcoloring}.
The coding nodes in $\bT_k$ code a $k$-clique-free Henson graph in the same way as the coding nodes of $\bS_k$.
 In Definition \ref{defn.T_pspace}, we present the space of strong $\mathcal{H}_k$-coding subtrees of $\bT_k$.
These are subtrees of $\bT_k$ which are isomorphic to $\bT_k$ in a strong way, and consequently code a copy of $\mathcal{H}_k$ in the same way that $\bT_k$ does.

By the end of Section \ref{sec.1SPOC},
these spaces of strong $\mathcal{H}_k$-coding trees will be shown to
have  Ramsey theorems similar to  the Milliken space of strong trees \cite{Milliken79}.
The added
 difficulty for $k>3$ 
 will be seen and addressed  from here throughout the rest of the paper. 
This section extends results of Section 4 in \cite{DobrinenJML20} to $\mathcal{H}_k$ for all $k\ge 4$, while providing  a new, more streamlined approach for the $k=3$ case.


\subsection{Definitions, notation, and  maximal strong $\mathcal{H}_k$-coding trees, $\bT_k$}\label{subsection.T_p}

The following terminology and notation will be used throughout, some of which is recalled from previous sections for  ease of reading. 
A subset $X\sse \Seq$ is a {\em level set} if all nodes in $X$ have the same length.
We continue to use the notions of {\em tree}
and {\em tree with coding nodes}
given  in Definitions  \ref{defn.tree}
 and
\ref{defn.treewcodingnodes}, respectively,  augmented to include {\em ghost coding nodes}, as was the case in the definition of $\bS_k$ in Example \ref{thm.stftree}. 
Recall  from equation (\ref{eq.That}) (just after Definition \ref{defn.tree})
that  for a tree $T$, we define
 $\widehat{T}$ to be the tree of all  nodes in  $ s\in \Seq$   
for which there is some $t\in T$ such that $s\sse t$.

Let  $T\sse \Seq$  be a finite or infinite  tree with coding nodes
$\lgl c^T_n:n\in N\rgl$, where either $N\in\mathbb{N}$ or $N=\mathbb{N}$.
If $T$ is to be a strong $\mathcal{H}_k$-coding tree, then  $T$ will also have 
ghost coding nodes $\lgl c^T_{-k+2},\dots,c^T_{-1}\rgl$. 
We  let  $l^T_n$ denote  $|c^T_n|$, the length of $c^T_n$.
Recall that $|c^T_n|$ is  the domain of $c^T_n$, as the sequence $c^T_n$ is a  function from some natural number into $\{0,1\}$.
We sometimes drop the superscript $T$ when it is clear from the context.
A node $s\in T$  is called  a {\em splitting node}  if
both $s^{\frown}0$ and $s^{\frown}1$  are in $\widehat{T}$;
equivalently,  $s$ is a splitting node in $T$ if there are nodes $s_0,s_1\in T$ such that
$s_0\contains s^{\frown}0$ and $s_1\contains s^{\frown}1$.
The {\em critical nodes} of $T$ is the set of all splitting and coding nodes, as well as any ghost coding nodes of $T$.
Given $t$ in  $T$,
let $T\re |t|$ denote
the set of all $s\in T$ such that $|s|=|t|$, and
 call $T\re |t|$ a {\em level} of $T$.

 We will say that a  tree
 $T$   is {\em skew} if
each level of $T$ has  exactly one of either a  coding node or a splitting node.
The set of {\em levels} of a skew tree $T\sse \Seq$, denoted $L(T)$, is the set  of those $l\in\mathbb{N}$ such that $T$ has either a splitting or a coding node
of length $l$.
A skew tree
$T$ is {\em strongly skew} if  additionally
for each splitting node $s\in T$,
every
 $t\in T$ such that $|t|>|s|$ and  $t\not\supset s$ also
satisfies
$t(|s|)=0$;
that is, the passing number  of any node  passing by, but not extending, a splitting node
is $0$.

Given a skew  tree $T$,
we let $\lgl d^T_m:m\in M\rgl$ denote 
the enumeration of  all   critical  nodes of $T$ in increasing order; 
$d^T_0$ will be the stem of $T$,  that is, the first splitting node of $T$.
Appropriating the standard notation  for Milliken's strong trees,
for each $m\in M$,
the {\em $m$-th level of $T$} is
\begin{equation}\label{eq.T(m)}
T(m)=\{s\in T:|s|=|d^T_m|\}.
\end{equation}
Then for any skew tree $T$,
\begin{equation}
T=\bigcup_{m\in M} T(m).
 \end{equation}
 For $m\in M$, the {\em $m$-th approximation of $T$} is defined to be
 \begin{equation}\label{eq.r_m(T)}
r_m(T)=\bigcup_{j<m}T(j).
 \end{equation}
Let $m_n$ denote the integer
such that $c^T_n\in T(m_n)$; thus,
 $d^T_{m_n}=c^T_n$.

For $n\ge 1$,
the {\em $n$-th interval} of $T$
is $\bigcup\{T(j): m_{n-1}<j\le m_n\}$; 
the {\em interval of $T$ between $c^T_{n-1}$ and $c^T_n$} is 
 $\bigcup\{T(j): m_{n-1}<j < m_n\}$.
The {\em $0$-th interval} of $T$ is 
 $\bigcup\{T(j): j\le m_0\}$.
 We call a skew tree $T$ {\em regular} if for each $n\in N$,
the lengths of the splitting nodes in the $n$-th interval of $T$ increase as their lexicographic order decreases.


In contrast to our approach in  \cite{DobrinenJML20} where we defined strong $\mathcal{H}_3$-coding trees via several structural properties,
in this paper we shall construct a  particular strong
$\mathcal{H}_k$-coding tree $\bT_k$ and then define a subtree to be a strong $\mathcal{H}_k$-coding tree if it is isomorphic to $\bT_k$ in a strong sense,  to be  made precise in  Definition
\ref{defn.T_pspace}.
The coding structure of $\bT_k$ is the same as that of the strong $K_k$-free tree $\bS_k$ given in Example \ref{thm.stftree}.  The best way to think about $\bT_k$ is that it is simply the strongly skew, regular version of $\bS_k$.


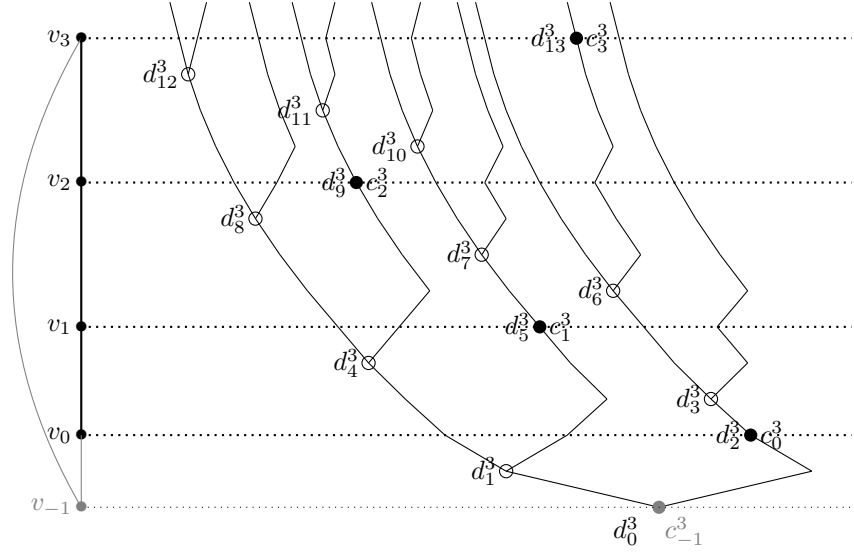
\begin{figure}\label{fig.bT}
\begin{tikzpicture}[grow'=up,scale=.32]
\tikzstyle{level 1}=[sibling distance=5in]
\tikzstyle{level 2}=[sibling distance=2in]
\tikzstyle{level 3}=[sibling distance=1.3in]
\tikzstyle{level 4}=[sibling distance=1.2in]
\tikzstyle{level 5}=[sibling distance=1in]
\tikzstyle{level 6}=[sibling distance=1in]
\tikzstyle{level 7}=[sibling distance=.9in]
\tikzstyle{level 8}=[sibling distance=.8in]
\tikzstyle{level 9}=[sibling distance=.7in]
\tikzstyle{level 10}=[sibling distance=.6in]
\tikzstyle{level 11}=[sibling distance=.5in]
\tikzstyle{level 12}=[sibling distance=.4in]
\tikzstyle{level 13}=[sibling distance=.3in]
\tikzstyle{level 14}=[sibling distance=.3in]
\node {} coordinate(t)
child{coordinate (t0)
			child{coordinate (t00)
child{coordinate (t000)
child{coordinate (t0000)
child{coordinate (t00000)
child{coordinate (t000000)
child{coordinate (t0000000)
child{coordinate (t00000000)
child{coordinate (t000000000)
child{coordinate (t0000000000)
child{coordinate (t00000000000)
child{coordinate (t000000000000)
child{coordinate (t0000000000000)
child{coordinate (t00000000000000)}
child{edge from parent[draw=none] coordinate (t00000000000001)}
}
child{coordinate (t0000000000001)
child{edge from parent[draw=none]  coordinate (t00000000000010)}
child{coordinate (t00000000000011)}
}
}
child{edge from parent[draw=none] coordinate (t000000000001)}
}
child{edge from parent[draw=none]  coordinate (t00000000001)}
}
child{edge from parent[draw=none]  coordinate (t0000000001)}
}
child{coordinate (t000000001)
child{edge from parent[draw=none]  coordinate (t0000000010)}
child{coordinate (t0000000011)
child{coordinate (t00000000110)
child{coordinate (t000000001100)
child{coordinate (t0000000011000)
child{coordinate (t00000000110000)}
child{edge from parent[draw=none] coordinate (t00000000110001)}
}
child{edge from parent[draw=none] coordinate (t0000000011001)}
}
child{edge from parent[draw=none]  coordinate (t000000001101)}
}
child{edge from parent[draw=none]   coordinate (t00000000111)}
}
}
}
child{edge from parent[draw=none] coordinate (t00000001)}
}
child{edge from parent[draw=none] coordinate (t0000001)}
}
child{edge from parent[draw=none]  coordinate (t000001)}
}
child{coordinate (t00001)
child{edge from parent[draw=none]  coordinate (t000010)}
child{coordinate (t000011)
child{coordinate (t0000110)
child{coordinate (t00001100)
child{coordinate (t000011000)
child{coordinate (t0000110000)
child{coordinate (t00001100000)
child{coordinate (t000011000000)
child{coordinate (t0000110000000)
child{coordinate (t00001100000000)}
child{edge from parent[draw=none] coordinate (t00001100000001)}
}
child{edge from parent[draw=none] coordinate (t0000110000001)}
}
child{coordinate (t000011000001)
child{coordinate (t0000110000010)
child{edge from parent[draw=none]  coordinate (t00001100000100)}
child{coordinate (t00001100000101)}
}
child{edge from parent[draw=none] coordinate (t0000110000011)}
}
}
child{edge from parent[draw=none] coordinate (t00001100001)}
}
child{edge from parent[draw=none]  coordinate (t0000110001)}
}
child{edge from parent[draw=none] coordinate (t000011001)}
}
child{edge from parent[draw=none]  coordinate (t00001101)}
}
child{edge from parent[draw=none]  coordinate (t0000111)}
}
}
}
child{edge from parent[draw=none] coordinate (t0001)}
}
child{ edge from parent[draw=none]coordinate(t001)}}
			child{ coordinate(t01)
child{ edge from parent[draw=none] coordinate(t010)}
child{coordinate(t011)
child{coordinate(t0110)
child{coordinate(t01100)
child{coordinate(t011000)
child{coordinate(t0110000)
child{coordinate(t01100000)
child{coordinate(t011000000)
child{coordinate(t0110000000)
child{coordinate(t01100000000)
child{coordinate(t011000000000)
child{coordinate(t0110000000000)
child{coordinate(t01100000000000)}
child{edge from parent[draw=none] coordinate(t01100000000001)}
}
child{ edge from parent[draw=none]coordinate(t0110000000001)}
}
child{ edge from parent[draw=none]  coordinate(t011000000001)}
}
child{coordinate(t01100000001)
child{coordinate(t011000000010)
child{coordinate(t0110000000100)
child{edge from parent[draw=none] coordinate(t01100000001000)}
child{coordinate(t01100000001001)}
}
child{edge from parent[draw=none] coordinate(t0110000000101)}
}
child{edge from parent[draw=none]   coordinate(t011000000011)}
}
}
child{ edge from parent[draw=none]  coordinate(t0110000001)}
}
child{ edge from parent[draw=none] coordinate(t011000001)}
}
child{coordinate(t01100001)
child{coordinate(t011000010)
child{edge from parent[draw=none] coordinate(t0110000100)}
child{coordinate(t0110000101)
child{coordinate(t01100001010)
child{coordinate(t011000010100)
child{coordinate(t0110000101000)
child{coordinate(t01100001010000)}
child{edge from parent[draw=none] coordinate(t01100001010001)}
}
child{edge from parent[draw=none]  coordinate(t0110000101001)}
}
child{edge from parent[draw=none]  coordinate(t011000010101)}
}
child{edge from parent[draw=none] coordinate(t01100001011)}
}
}
child{edge from parent[draw=none]  coordinate(t011000011)}
}
}
child{ edge from parent[draw=none] coordinate(t0110001)}
}
child{ edge from parent[draw=none]  coordinate(t011001)}
}
child{ edge from parent[draw=none]  coordinate(t01101)}
}
child{ edge from parent[draw=none]  coordinate(t0111)}
}}}
		child{ coordinate(t1)
			child{ coordinate(t10)
child{ coordinate(t100)
child{ coordinate(t1000)
child{ coordinate(t10000)
child{ coordinate(t100000)
child{ coordinate(t1000000)
child{ coordinate(t10000000)
child{ coordinate(t100000000)
child{ coordinate(t1000000000)
child{ coordinate(t10000000000)
child{ coordinate(t100000000000)
child{ coordinate(t1000000000000)
child{ coordinate(t10000000000000)}
child{ edge from parent[draw=none]   coordinate(t10000000000001)}
}
child{edge from parent[draw=none]   coordinate(t1000000000001)}
}
child{ edge from parent[draw=none]   coordinate(t100000000001)}
}
child{edge from parent[draw=none]  coordinate(t10000000001)}
}
child{ edge from parent[draw=none]  coordinate(t1000000001)}
}
child{edge from parent[draw=none]  coordinate(t100000001)}
}
child{   edge from parent[draw=none] coordinate(t10000001)}
}
child{ coordinate(t1000001)
child{ coordinate(t10000010)
child{ coordinate(t100000100)
child{ edge from parent[draw=none]  coordinate(t1000001000)}
child{ coordinate(t1000001001)
child{ coordinate(t10000010010)
child{ coordinate(t100000100100)
child{ coordinate(t1000001001000)
child{ coordinate(t10000010010000)}
child{ edge from parent[draw=none] coordinate(t10000010010001)}
}
child{edge from parent[draw=none]   coordinate(t1000001001001)}
}
child{edge from parent[draw=none]  coordinate(t100000100101)}
}
child{ edge from parent[draw=none] coordinate(t10000010011)}
}
}
child{  edge from parent[draw=none] coordinate(t100000101)}
}
child{   edge from parent[draw=none] coordinate(t10000011)}
}
}
child{  edge from parent[draw=none] coordinate(t100001)}
}
child{ edge from parent[draw=none]  coordinate(t10001)}
}
child{ coordinate(t1001)
child{ coordinate(t10010)
child{ edge from parent[draw=none]   coordinate(t100100)}
child{ coordinate(t100101)
child{ coordinate(t1001010)
child{ coordinate(t10010100)
child{ coordinate(t100101000)
child{ coordinate(t1001010000)
child{ coordinate(t10010100000)
child{ coordinate(t100101000000)
child{ coordinate(t1001010000000)
child{ coordinate(t10010100000000)}
child{edge from parent[draw=none] coordinate(t10010100000001)}
}
child{ edge from parent[draw=none]coordinate(t1001010000001)}
}
child{edge from parent[draw=none]  coordinate(t100101000001)}
}
child{edge from parent[draw=none]  coordinate(t10010100001)}
}
child{ edge from parent[draw=none]  coordinate(t1001010001)}
}
child{ edge from parent[draw=none] coordinate(t100101001)}
}
child{edge from parent[draw=none]   coordinate(t10010101)}
}
child{ edge from parent[draw=none]   coordinate(t1001011)}
}
}
child{  edge from parent[draw=none] coordinate(t10011)}
}
}
child{edge from parent[draw=none]  coordinate(t101)}}
child{edge from parent[draw=none]   coordinate(t11)
} };

\node[left] at (t0) {$d^3_1$};
\node[right] at (t10) {$c^3_0$};
\node[left] at (t10) {$d^3_2$};
\node[left] at (t100) {$d^3_3$};
\node[left] at (t0000) {$d^3_4$};
\node[left] at (t01100) {$d^3_5$};
\node[right] at (t01100) {$c^3_1$};
\node[left] at (t100000) {$d^3_6$};
\node[left] at (t0110000) {$d^3_7$};
\node[left] at (t00000000) {$d^3_8$};
\node[right] at (t000011000) {$c^3_2$};
\node[left] at (t000011000) {$d^3_9$};
\node[left] at (t0110000000) {$d^3_{10}$};
\node[left] at (t00001100000) {$d^3_{11}$};
\node[left] at (t000000000000) {$d^3_{12}$};
\node[left] at (t1000001001000) {$d^3_{13}$};
\node[right] at (t1000001001000)  {$c^3_3$};

\node[circle, draw,inner sep=0pt, minimum size=5pt] at (t0)  {};
\node[circle, draw,inner sep=0pt, minimum size=5pt] at (t100)  {};
\node[circle, draw,inner sep=0pt, minimum size=5pt] at (t0000)  {};
\node[circle, draw,inner sep=0pt, minimum size=5pt] at (t100000)  {};
\node[circle, draw,inner sep=0pt, minimum size=5pt] at (t0110000)  {};
\node[circle, draw,inner sep=0pt, minimum size=5pt] at (t00000000)  {};
\node[circle, draw,inner sep=0pt, minimum size=5pt] at (t0110000000)  {};
\node[circle, draw,inner sep=0pt, minimum size=5pt] at (t00001100000)  {};
\node[circle, draw,inner sep=0pt, minimum size=5pt] at (t000000000000)  {};

\node[below] at (t) {$d^3_0\ \ \ {\color{gray}c^3_{-1}}$};
\node[circle, fill=gray,inner sep=0pt, minimum size=5pt] at (t) {};
\node[circle, fill=black,inner sep=0pt, minimum size=5pt] at (t10) {};
\node[circle, fill=black,inner sep=0pt, minimum size=5pt] at (t01100) {};
\node[circle, fill=black,inner sep=0pt, minimum size=5pt] at (t000011000) {};
\node[circle, fill=black,inner sep=0pt, minimum size=5pt] at (t1000001001000) {};

\draw[dotted] let \p1=(t) in (-24,\y1) node (v01) {$\color{gray}{\bullet}$} -- (8,\y1);
\draw[thick, dotted] let \p1=(t10) in (-24,\y1) node (v0) {$\bullet$} -- (8,\y1);
\draw[thick, dotted] let \p1=(t01100) in (-24,\y1) node (v1) {$\bullet$} -- (8,\y1);
\draw[thick, dotted] let \p1= (t000011000) in (-24,\y1) node (v2) {$\bullet$} -- (8,\y1);
\draw[thick, dotted] let \p1=  (t1000001001000) in (-24,\y1) node (v3) {$\bullet$} -- (8,\y1);

\node[left, gray] at (v01) {$v_{-1}$};
\node[left] at (v0) {$v_0$};
\node[left] at (v1) {$v_1$};
\node[left] at (v2) {$v_2$};
\node[left] at (v3) {$v_3$};

\draw[gray] (v0.center) to (v01.center)  to [bend left] (v3.center);
\draw[thick] (v0.center) to (v1.center) to (v2.center) to (v3.center);

\end{tikzpicture}
\caption{Strong $\mathcal{H}_3$-coding tree $\bT_3$}
\end{figure}


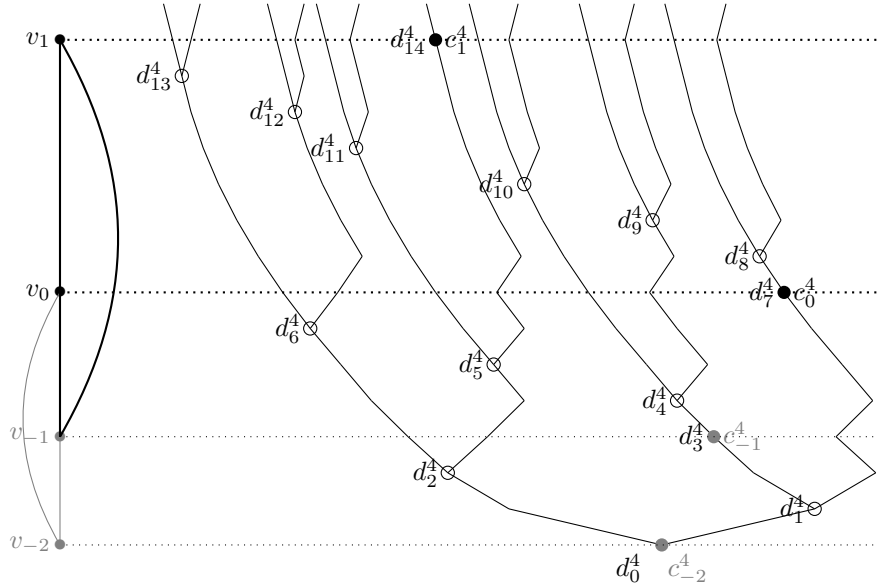
\begin{figure}\label{fig.bT4}
\begin{tikzpicture}[grow'=up,scale=.32]
\tikzstyle{level 1}=[sibling distance=5in]
\tikzstyle{level 2}=[sibling distance=2in]
\tikzstyle{level 3}=[sibling distance=1.3in]
\tikzstyle{level 4}=[sibling distance=1.2in]
\tikzstyle{level 5}=[sibling distance=1in]
\tikzstyle{level 6}=[sibling distance=1in]
\tikzstyle{level 7}=[sibling distance=.9in]
\tikzstyle{level 8}=[sibling distance=.8in]
\tikzstyle{level 9}=[sibling distance=.7in]
\tikzstyle{level 10}=[sibling distance=.6in]
\tikzstyle{level 11}=[sibling distance=.5in]
\tikzstyle{level 12}=[sibling distance=.4in]
\tikzstyle{level 13}=[sibling distance=.3in]
\tikzstyle{level 14}=[sibling distance=.3in]
\tikzstyle{level 15}=[sibling distance=.3in]
\node {} coordinate(t)
child{coordinate(t0)
child{coordinate(t00)
child{coordinate(t000)
child{coordinate(t0000)
child{coordinate(t00000)
child{coordinate(t000000)
child{coordinate(t0000000)
child{coordinate(t00000000)
child{coordinate(t000000000)
child{coordinate(t0000000000)
child{coordinate(t00000000000)
child{coordinate(t000000000000)
child{coordinate(t0000000000000)
child{coordinate(t00000000000000)
child{coordinate(t000000000000000)}child{coordinate(t000000000000001)edge from parent[draw=none]}
}
child{coordinate(t00000000000001)
child{coordinate(t000000000000010)edge from parent[draw=none]}
child{coordinate(t000000000000011)}
}}
child{coordinate(t0000000000001)edge from parent[draw=none]}
}
child{coordinate(t000000000001)edge from parent[draw=none]}
}
child{coordinate(t00000000001)
edge from parent[draw=none]}
}
child{coordinate(t0000000001)edge from parent[draw=none]}
}
child{coordinate(t000000001)edge from parent[draw=none]}
}
child{coordinate(t00000001)edge from parent[draw=none]}}
child{coordinate(t0000001)
child{coordinate(t00000010)edge from parent[draw=none]}
child{coordinate(t00000011)
child{coordinate(t000000110)
child{coordinate(t0000001100)
child{coordinate(t00000011000)
child{coordinate(t000000110000)
child{coordinate(t0000001100000)
child{coordinate(t00000011000000)
child{coordinate(t000000110000000)}
child{coordinate(t000000110000001)edge from parent[draw=none]}
}
child{coordinate(t00000011000001)edge from parent[draw=none]}
}
child{coordinate(t0000001100001)
child{coordinate(t00000011000010)
child{coordinate(t000000110000100)edge from parent[draw=none]}
child{coordinate(t000000110000101)}
}
child{coordinate(t00000011000011)edge from parent[draw=none]}
}}
child{coordinate(t000000110001)edge from parent[draw=none]}
}
child{coordinate(t00000011001)edge from parent[draw=none]}
}
child{coordinate(t0000001101)
edge from parent[draw=none]}
}
child{coordinate(t000000111)edge from parent[draw=none]}
}}}
child{coordinate(t000001)edge from parent[draw=none]}}
child{coordinate(t00001)edge from parent[draw=none]}
}
child{coordinate(t0001)edge from parent[draw=none]}
}
child{coordinate(t001)
child{coordinate(t0010) edge from parent[draw=none]}
child{coordinate(t0011)
child{coordinate(t00110)
child{coordinate(t001100)
child{coordinate(t0011000)
child{coordinate(t00110000)
child{coordinate(t001100000)
child{coordinate(t0011000000)
child{coordinate(t00110000000)
child{coordinate(t001100000000)
child{coordinate(t0011000000000)
child{coordinate(t00110000000000)
child{coordinate(t001100000000000)}
child{coordinate(t001100000000001)edge from parent[draw=none]}
}
child{coordinate(t00110000000001)edge from parent[draw=none]}
}
child{coordinate(t0011000000001)edge from parent[draw=none]}
}
child{coordinate(t001100000001)
child{coordinate(t0011000000010)
child{coordinate(t00110000000100)
child{coordinate(t001100000001000)edge from parent[draw=none]}
child{coordinate(t001100000001001)}
}
child{coordinate(t00110000000101)edge from parent[draw=none]}
}
child{coordinate(t0011000000011)edge from parent[draw=none]}
}}
child{coordinate(t00110000001)edge from parent[draw=none]}
}
child{coordinate(t0011000001)edge from parent[draw=none]}
}
child{coordinate(t001100001)edge from parent[draw=none]}
}
child{coordinate(t00110001)edge from parent[draw=none]}
}
child{coordinate(t0011001)edge from parent[draw=none]}
}
child{coordinate(t001101)
child{coordinate(t0011010)
child{coordinate(t00110100)edge from parent[draw=none]}
child{coordinate(t00110101)
child{coordinate(t001101010)
child{coordinate(t0011010100)
child{coordinate(t00110101000)
child{coordinate(t001101010000)
child{coordinate(t0011010100000)
child{coordinate(t00110101000000)
child{coordinate(t001101010000000)}
child{coordinate(t001101010000001)edge from parent[draw=none]}
}
child{coordinate(t00110101000001)edge from parent[draw=none]}
}
child{coordinate(t0011010100001)edge from parent[draw=none]}
}
child{coordinate(t001101010001)edge from parent[draw=none]}
}
child{coordinate(t00110101001)edge from parent[draw=none]}
}
child{coordinate(t0011010101)edge from parent[draw=none]}
}
child{coordinate(t001101011)edge from parent[draw=none]}
}}
child{coordinate(t0011011)edge from parent[draw=none]}
}}
child{coordinate(t00111)edge from parent[draw=none]}
}}}
child{coordinate(t01) edge from parent[draw=none]}
}
child{coordinate(t1)
child{coordinate(t10)
child{coordinate(t100)
child{coordinate(t1000)
child{coordinate(t10000)
child{coordinate(t100000)
child{coordinate(t1000000)
child{coordinate(t10000000)
child{coordinate(t100000000)
child{coordinate(t1000000000)
child{coordinate(t10000000000)
child{coordinate(t100000000000)
child{coordinate(t1000000000000)
child{coordinate(t10000000000000)
child{coordinate(t100000000000000)}
child{coordinate(t100000000000001)edge from parent[draw=none]}
}
child{coordinate(t10000000000001)edge from parent[draw=none]}
}
child{coordinate(t1000000000001)edge from parent[draw=none]}
}
child{coordinate(t100000000001)edge from parent[draw=none]}
}
child{coordinate(t10000000001)
child{coordinate(t100000000010)
child{coordinate(t1000000000100)
child{coordinate(t10000000001000)
child{coordinate(t100000000010000)edge from parent[draw=none]}
child{coordinate(t100000000010001)}
}
child{coordinate(t10000000001001)edge from parent[draw=none]}
}
child{coordinate(t1000000000101)edge from parent[draw=none]}
}
child{coordinate(t100000000011)edge from parent[draw=none]}
}}
child{coordinate(t1000000001)edge from parent[draw=none]}
}
child{coordinate(t100000001)edge from parent[draw=none]}
}
child{coordinate(t10000001)edge from parent[draw=none]}
}
child{coordinate(t1000001)edge from parent[draw=none]}
}
child{coordinate(t100001)edge from parent[draw=none]}
}
child{coordinate(t10001)
child{coordinate(t100010)
child{coordinate(t1000100)
child{coordinate(t10001000)edge from parent[draw=none]}
child{coordinate(t10001001)
child{coordinate(t100010010)
child{coordinate(t1000100100)
child{coordinate(t10001001000)
child{coordinate(t100010010000)
child{coordinate(t1000100100000)
child{coordinate(t10001001000000)
child{coordinate(t100010010000000)}
child{coordinate(t100010010000001)edge from parent[draw=none]}
}
child{coordinate(t10001001000001)edge from parent[draw=none]}
}
child{coordinate(t1000100100001)edge from parent[draw=none]}
}
child{coordinate(t100010010001)edge from parent[draw=none]}
}
child{coordinate(t10001001001)edge from parent[draw=none]}
}
child{coordinate(t1000100101)
child{coordinate(t10001001010)
child{coordinate(t100010010100)
child{coordinate(t1000100101000)
child{coordinate(t10001001010000)
child{coordinate(t100010010100000)edge from parent[draw=none]}
child{coordinate(t100010010100001)}
}
child{coordinate(t10001001010001)edge from parent[draw=none]}
}
child{coordinate(t1000100101001)edge from parent[draw=none]}
}
child{coordinate(t100010010101)edge from parent[draw=none]}
}
child{coordinate(t10001001011)edge from parent[draw=none]}
}}
child{coordinate(t100010011)edge from parent[draw=none]}
}}
child{coordinate(t1000101)edge from parent[draw=none]}
}
child{coordinate(t100011)edge from parent[draw=none]}
}}
child{coordinate(t1001)edge from parent[draw=none]}
}
child{coordinate(t101)edge from parent[draw=none]}
}
child{coordinate(t11)
child{coordinate(t110)
child{coordinate(t1100)edge from parent[draw=none]}
child{coordinate(t1101)
child{coordinate(t11010)
child{coordinate(t110100)
child{coordinate(t1101000)
child{coordinate(t11010000)
child{coordinate(t110100000)
child{coordinate(t1101000000)
child{coordinate(t11010000000)
child{coordinate(t110100000000)
child{coordinate(t1101000000000)
child{coordinate(t11010000000000)
child{coordinate(t110100000000000)}
child{coordinate(t110100000000001)edge from parent[draw=none]}
}
child{coordinate(t11010000000001)edge from parent[draw=none]}
}
child{coordinate(t1101000000001)edge from parent[draw=none]}
}
child{coordinate(t110100000001)edge from parent[draw=none]}
}
child{coordinate(t11010000001)edge from parent[draw=none]}
}
child{coordinate(t1101000001)edge from parent[draw=none]}
}
child{coordinate(t110100001)
child{coordinate(t1101000010)
child{coordinate(t11010000100)
child{coordinate(t110100001000)
child{coordinate(t1101000010000)
child{coordinate(t11010000100000)
child{coordinate(t110100001000000)edge from parent[draw=none]}
child{coordinate(t110100001000001)}
}
child{coordinate(t11010000100001)edge from parent[draw=none]}
}
child{coordinate(t1101000010001)edge from parent[draw=none]}
}
child{coordinate(t110100001001)edge from parent[draw=none]}
}
child{coordinate(t11010000101)edge from parent[draw=none]}
}
child{coordinate(t1101000011)edge from parent[draw=none]}
}}
child{coordinate(t11010001)edge from parent[draw=none]}
}
child{coordinate(t1101001)edge from parent[draw=none]}
}
child{coordinate(t110101)edge from parent[draw=none]}
}
child{coordinate(t11011)edge from parent[draw=none]}
}}
child{coordinate(t111)edge from parent[draw=none]}
}}
;

\node[below] at (t) {$d^4_0\ \ \ {\color{gray}c^4_{-2}}$};
\node[left] at (t1) {$d^4_1$};
\node[left] at (t00) {$d^4_2$};
\node[left] at (t100) {$d^4_3$};
\node[right] at (t100) {${\color{gray}c^4_{-1}}$};
\node[left] at (t1000) {$d^4_4$};
\node[left] at (t00110) {$d^4_5$};
\node[left] at (t000000) {$d^4_6$};
\node[left] at (t1101000) {$d^4_7$};
\node[right] at (t1101000) {$c^4_0$};
\node[left] at (t11010000) {$d^4_8$};
\node[left] at (t100010010) {$d^4_9$};
\node[left] at (t1000000000) {$d^4_{10}$};
\node[left] at (t00110000000) {$d^4_{11}$};
\node[left] at (t000000110000) {$d^4_{12}$};
\node[left] at (t0000000000000) {$d^4_{13}$};
\node[right] at (t00110101000000) {$c^4_1$};
\node[left] at (t00110101000000) {$d^4_{14}$};

\node[circle, fill=gray,inner sep=0pt, minimum size=5pt] at (t) {};
\node[circle, fill=gray,inner sep=0pt, minimum size=5pt] at (t100) {};
\node[circle, fill=black,inner sep=0pt, minimum size=5pt] at (t1101000) {};
\node[circle, fill=black,inner sep=0pt, minimum size=5pt] at (t00110101000000) {};

\node[circle, draw,inner sep=0pt, minimum size=5pt] at (t1)  {};
\node[circle, draw,inner sep=0pt, minimum size=5pt] at (t00)  {};
\node[circle, draw,inner sep=0pt, minimum size=5pt] at (t1000)  {};
\node[circle, draw,inner sep=0pt, minimum size=5pt] at (t00110)  {};
\node[circle, draw,inner sep=0pt, minimum size=5pt] at (t000000)  {};
\node[circle, draw,inner sep=0pt, minimum size=5pt] at (t11010000)  {};
\node[circle, draw,inner sep=0pt, minimum size=5pt] at (t100010010)  {};
\node[circle, draw,inner sep=0pt, minimum size=5pt] at (t1000000000)  {};
\node[circle, draw,inner sep=0pt, minimum size=5pt] at (t00110000000)  {};
\node[circle, draw,inner sep=0pt, minimum size=5pt] at (t000000110000)  {};
\node[circle, draw,inner sep=0pt, minimum size=5pt] at (t0000000000000)  {};

\draw[dotted] let \p1=(t) in (-25,\y1) node (v01) {$\color{gray}{\bullet}$} -- (9,\y1);
\draw[dotted] let \p1=(t100) in (-25,\y1) node (v0) {$\color{gray}{\bullet}$} -- (9,\y1);
\draw[thick, dotted] let \p1=(t1101000) in (-25,\y1) node (v1) {$\bullet$} -- (9,\y1);
\draw[thick, dotted] let \p1=(t00110101000000) in (-25,\y1) node (v2) {$\bullet$} -- (9,\y1);

\node[left, gray] at (v01) {$v_{-2}$};
\node[left, gray] at (v0) {$v_{-1}$};
\node[left] at (v1) {$v_0$};
\node[left] at (v2) {$v_1$};

\draw[gray] (v0.center) to (v01.center)  to [bend left] (v1.center);
\draw[thick] (v0.center) to (v1.center) to (v2.center) to [bend left] (v0.center) ;

\end{tikzpicture}
\caption{Strong $\mathcal{H}_4$-coding tree $\bT_4$}
\end{figure}



\begin{example}[Construction Method for  Maximal Strong $\mathcal{H}_k$-Coding Trees, $\bT_k$]\label{ex.bTp}

Fix $k\ge 3$, and let
 $\lgl u_i:i\in \mathbb{N}\rgl$ be any enumeration  of the nodes in $\Seq$ such that $|u_i|\le|u_j|$ whenever $i<j$.
 Let $\bS_k$ be a strong $K_k$-free tree  constructed in Example  \ref{thm.stftree}.
 Recall that the graph represented by the coding nodes $\lgl c^{\bS_k}_n:n\in\bN\rgl$  in
  $\bS_k$  is the $k$-clique-free Henson graph.
  (The graph represented by the coding nodes in $\bS_k$ along with the ghost coding nodes in $\bS_k$ is also a $k$-clique-free Henson graph, since  $(A)^{\tt tree}_k$ holds.)
Define  $\bT_k$ to  be  the tree 
obtained by stretching $\bS_k$ to make it strongly skew and regular while preserving the passing numbers so that $\bT_k$ represents the same  copy of the Henson graph $\mathcal{H}_k$ as $\bS_k$ does.
$\bT_k$ will have 
coding nodes  $\lgl c^k_n:n\in\mathbb{N}\rgl$ 
and ghost coding nodes $\lgl c^k_{-k+2},\dots, c^k_{-1}\rgl$.
For all pairs $j<n$, we will have
$c^k_n(|c^k_j|)=c^{\bS_k}_n(|c^{\bS_k}_j|)$.
The $m$-th critical node $d^k_m$ in $\bT_k$ will be a node of length $m$, so that
 $\bT_k$ will have nodes in every length in $\mathbb{N}$.
 The critical nodes consist of the splitting nodes, coding nodes, and ghost coding nodes. 
 In particular,  we will have
 $\widehat{\bT}_k=\bT_k$, and $\bT_k(m)=\bT_k\re m$ for each $m\in\bN$.

 We now show precisely how to construct $\bT_k$, given $\bS_k$.
Set $m_{-k+2}=0$ and $\bT_k(0)=\bS_k(0)$, which is the singleton $\{\lgl\rgl\}$.
 Let  the ghost coding node of $\bT_k$ be $c^k_{-k+2}=\lgl \rgl$.
 This node splits so that $\bT_k(1)=\bS_k(1)$, which has exactly two nodes,
 so
 there is a bijection between these level  sets of nodes.
 As in the previous section,
 let $r_m(\bT_k)$ denote $\bigcup_{j<m}\bT_k(j)$.
 Then 
 $r_2(\bT_k)=\bT_k(0)\cup\bT_k(1)$. 
 The $0$-th critical node $d^k_0$ equals the ghost coding node $c_{-k+2}^k$.

 Suppose that $n\ge -k+2$ and that  $r_{m_n+2}(\bT_k)$ has been constructed,
 $c^k_n\in \bT_k(m_n)$ has been fixed, and there is a bijection between
 $\bT_k(m_n+1)$ and $\bS_k(n+k-1)$.
 Let $J$ be the number of nodes in
 $\bS_k(n+k-1)$ which split into two extensions in $\bS_k(n+k)$.
  Let $m_{n+1}=m_n+J+1$.
 (Notice that
 for $k=3$,   $m_0=2$,  and
  for   $k=4$,  $m_{-1}=3$;
  more generally, for $k\ge 4$, $m_{-k+3}=3$.)
Let $\{\tilde{s}_j : m_n<j<m_{n+1}\}$ enumerate in reverse lexicographic order those members of
  $\bS_k(n+k-1)$ which split into two extensions in $\bS_k(n+k)$.
  Let $\varphi$ be the lexicographic preserving bijection from
    $\bS_k(n+k-1)$ onto
   $\bT_k(m_n+1)$.
For each     $m_n<j<m_{n+1}$, define $s_j=\varphi(\tilde{s}_j)$, and set
 \begin{equation}
 d^k_j={s_j}^{\frown}0^{(i_j)},
 \end{equation}
  where $i_j=j-m_n-1$.
 Thus, $ d^k_{m_n+1}=s_{m_n+1}$, $d^k_{m_n+2}={s_{m_n+2}}^{\frown}\lgl 0\rgl$, $d^k_{m_n+3}={s_{m_n+2}}^{\frown}\lgl 0,0\rgl$,
 and finally, $d^k_{m_{n+1}-1}={s_{m_{n+1}-1}}^{\frown} 0^{(J-1)}$.
 These are the splitting nodes in the interval of $\bT_k$ between $c^k_{n}$ and $c^k_{n+1}$.
For each   $i\in\{0,1\}$,
 define
 $t_j^i$
 to be the binary sequence of length
  $m_{n+1}$
  which  extends
 ${d^k_j}^{\frown}i$ by all $0$'s.
 Define
 \begin{equation}
 \bT_k(m_{n+1})=
 \{t_j^i: m_n<j<m_{n+1},\ i\in\{0,1\}\}\cup
 \{t_{\tilde{s}}:\tilde{s}\in S_{n+1}'\}
 \end{equation}
 where
  $S_{n+1}'=\bS_k(n+k)\setminus \{\tilde{s}_j:m_n<j<m_{n+1}\}$
  and for each $\tilde{s}\in S_{n+1}'$,
  $t_{\tilde{s}}$ is the
   extension
   of $\varphi(\tilde{s})$ by $0$'s to length $m_{n+1}$.
  Let $c^k_{n+1}$ be  the leftmost  extension    in   $\bT_k(m_{n+1})$ of $\varphi(\tilde{s}_c)$,
  where $\tilde{s}_c$
  is the leftmost immediate successor of
  $c_{n+1}^{\mathbb{S}_k}$
  in $\mathbb{S}_k(n+k-1)$.
 Define
  \begin{equation}
  \bT_k(m_{n+1}+1)=
 \{{t_j^i}^{\frown}i: m_n<j<m_{n+1},\ i\in\{0,1\}\}\cup
 \{{t_{\tilde{s}}}^{\frown}0:\tilde{s}\in S_{n+1}'\}.
 \end{equation}
 Notice that
 the level sets
 $\bS_k(n+k)$, $  \bT_k(m_{n+1})$,  and
 $  \bT_k(m_{n+1}+1)$  have the same cardinality.
 Moreover, 
  the lexicographic preserving bijection from
 $\bS_k(n+k)$ onto
 $  \bT_k(m_{n+1})$
 preserves passing numbers.
By our construction,  for each $m\in\mathbb{N}$, $|d^k_m|=m$.

 Let $\bT_k=\bigcup_{m\in\bN}\bT_k(m)$
 with coding nodes $\lgl c^k_n:n\in\bN\rgl$ and ghost coding nodes $\lgl c^k_{-k+2},\dots, c^k_{-1}\rgl$. 
$\bT_k$ is a    strongly skew, regular tree with coding nodes representing a copy of $\mathcal{H}_k$ and is {\em maximal} in the sense that
 $\widehat{\bT}_k=\bT_k$
\end{example}

\begin{rem}\label{rem.awesome!}
Notice that the
coding nodes (along with the  ghost coding nodes) in $\bT_k$  represent  the same ordered copy of $\mathcal{H}_k$ as the one represented by the coding nodes (along with the  ghost coding nodes)  in $\bS_k$.
That is, given $j<n$, $c^k_n(|c^k_j|)=c^{\mathbb{S}_k}_n(|c^{\mathbb{S}_k}_j|)$.
A simple  way to  think about $\bT_k$ is that it is the skew tree such that if  one ``zips up'' the splitting nodes in the $n$-th interval of $\bT_k$   to the length of $c^k_n$, then one recovers a tree which is strongly similar (in fact, strongly isomorphic--see Definition \ref{defn.stable})  to $\bS_k$, in the sense of the upcoming Definition \ref{def.3.1.likeSauer}.
\end{rem}


\subsection{The space of strong $\mathcal{H}_k$-coding trees}\label{subsec.T_k}

Let $k\ge 3$ be given, and fix  $\bT_k$ as constructed as in Example \ref{ex.bTp}.
In  preparation for defining  the space of strong $\mathcal{H}_k$-coding subtrees  of $\bT_k$,  we provide
the following  definitions.

A subset $A$ of $\bT_k$ is an {\em antichain} if
 for all $s,t\in A$,
$s\sse t$ implies $s=t$.
Recall that  a subset $X$ of $\bT_k$ is a {\em level set} if all members of $X$ have the same length.
Thus, each level set is an antichain.
Given a subset $S\sse \bT_k$,
recall that
 the {\em meet closure of $S$}, denoted $S^{\wedge}$,
is the set of all meets of (not necessarily distinct) pairs of nodes in $S$; in particular,  $S^{\wedge}$ contains $S$.
We say that  $S$ is {\em meet-closed} if $S=S^{\wedge}$.
Recall that for  $s\in\Seq$ and $l\in\mathbb{N}$ with $|s|\ge l$,
$s\re l$ denotes the initial segment of $s$ of length  $l$.
Similarly to  equation (\ref{eq.Trel}),
given a subset $S\sse \bT_k$ and any $l\in\mathbb{N}$,
we define
 \begin{equation}
 S\re l=\{s\re l:s\in S\mathrm{\ and\ }|s|\ge l\}.
 \end{equation}
That is, $S\re l$ is 
 the set of nodes in $\widehat{S}$ of length exactly $l$ 
which are initial segments of some node in $S$.
 Thus,
$S\re l= \widehat{S}\re l$,
whether or not $S$ has nodes of length $l$.

 If
 $s\in\bT_k$ has the same  length as some  (ghost) coding node $c^k_n$ in $\bT_k$, then $s$ has a unique immediate successor in $\bT_k$; denote this as $s^+$.
We  say that
$s$
{\em  has passing number
 $i$  at $c^k_n$} exactly when
$s^+(|c^k_n|)=i$.

The following is   Definition 4.9 in \cite{DobrinenJML20} augmented to include ghost coding nodes; it extends Definition \ref{def.3.1.Sauer} of Sauer.
It is important to note that a ghost coding node in some subset  $T\sse\bT_k$ is never a coding node in $\bT_k$.

\begin{defn}[Strong Similarity Map]\label{def.3.1.likeSauer}
Let $k\ge 3$ be fixed,  and let  $S,T\sse \bT_k$ be meet-closed subsets.
A function $f:S\ra T$ is a {\em strong similarity map} of $S$ to $T$ if for all nodes $s,t,u,v\in S$, the following hold:
\begin{enumerate}
\item
$f$ is a bijection.
\item
$f$ preserves lexicographic order: $s<_{\mathrm{lex}}t$ if and only if $f(s)<_{\mathrm{lex}}f(t)$.

\item
$f$ preserves meets, and hence splitting nodes:
$f(s\wedge t)=f(s)\wedge f(t)$.

\item
$f$ preserves relative lengths:
$|s\wedge t|<|u\wedge v|$ if and only if
$|f(s)\wedge f(t)|<|f(u)\wedge f(v)|$.

\item
$f$ preserves initial segments:
$s\wedge t\sse u\wedge v$ if and only if $f(s)\wedge f(t)\sse f(u)\wedge f(v)$.

\item $f$ preserves (ghost)  coding  nodes:
$s$ is a   coding node in $S$ if and only if $f(s)$ is a  coding node in $T$.
If $S$ has also ghost coding nodes, then 
$s$ is a ghost   coding node in $S$ if and only if $f(s)$ is a   ghost  coding node in $T$.

\item
$f$ preserves passing numbers at  (ghost) coding nodes:
If $c$ is a (ghost) coding  node in $S$ and $s$ is a node in $S$ with $|s|=|c|$,
then $(f(s))^+(|f(c)|)=s^+(|c|)$.
In words, the passing number of  $f(s)$ at $f(c)$  equals the passing number of  $s$ at $c$.
\end{enumerate}
We  say that $S$ and $T$ are {\em strongly similar}, and write $S\ssim T$, exactly when there is a strong similarity map between $S$ and $T$.
\end{defn}

It follows from  (3) that
 $s\in S$ is a splitting node in $S$ if and only if  $f(s)$ is a splitting node in $T$.
In all cases above,   it may be that $s=t$ and $u=v$,
 so in particular,
   (5) implies that  $s\sse u$ if and only if $f(s)\sse f(u)$.
Notice that strong similarity is an equivalence relation, since the inverse of a strong similarity map is a strong similarity map, and the composition of two strong similarity maps is a strong similarity map.
If $T'\sse T$ and $f$ is a strong similarity of $S$ to $T'$, then we say that  $f$ is a {\em strong similarity embedding} of $S$ into $T$.

Our goal in this subsection is to define a space of subtrees of
 $\bT_k$ for which the development of a Ramsey theory  akin to the Halpern-\Lauchli\ and Milliken Theorems is possible.
 The first potential obstacle arises due to the fact that $k$-cliques are forbidden in $\mathcal{H}_k$.
 This manifests in terms of trees in the following way:
 There are finite subtrees of $\bT_k$ which are strongly similar to an initial segment of $\bT_k$, and yet cannot be extended within $\bT_k$ to a subtree which is strongly similar to  $\bT_k$.
In this subsection we
make precise what the possible obstructions are.
We then define the set of strong $\mathcal{H}_k$-coding trees to be those trees which avoid  the possible obstructions.

The next definitions  are new to $k$-clique-free graphs  for  $k>3$,
and  are necessary for the work in this paper.
When $k=3$, the rest of this section  simply reproduces the concepts of sets of parallel $1$'s and the Parallel $1$'s Criterion used throughout \cite{DobrinenJML20}, though in a new and more streamlined manner.
Fix $k\ge 3$ throughout the rest of Section \ref{sec.4}.

If $X$ is a level subset of some  meet-closed  $S\sse \bT_k$, let $l_X$ denote the length of the members of $X$.
If the nodes in $X$ are not maximal in $S$,
  let
the set of immediate successors of $X$ in $\widehat{S}$ be denoted by $X^+$.
Thus, when $S$ is understood,
 \begin{equation}
 X^+=\{s^{\frown}i:s\in X,\ i\in 2,\mathrm{\ and \ } s^{\frown}i\in \widehat{S}\}.
 \end{equation}
 Note that if $l_X$ is the length of a (ghost) coding  node in $\bT_k$, then  each node in $X$  has a unique extension in $X^+$ which is determined by $\bT_k$, irregardless of $S$.

\begin{defn}[Pre-$a$-Clique and Witnesses]\label{defn.prepclique}
Let  $a\in [3,k]$, and
let  $X\sse \bT_k$
 be a  level subset of  $\bT_k$.  
 ($X$ is allowed to consist of  a single node.)
  Let $l_X$ denote the length of the nodes in $X$.
We say that $X$
{\em  has pre-$a$-clique}
 if there is an index set $I\sse[-k+2,-1]\cup\mathbb{N}$ of size $a-2$ such that,
letting  $i_*=\max(I)$ and $l=
|c^k_{i_*}|$,
the following hold:
\begin{enumerate}
\item
The set
$\{c^k_{i}:i\in I\}$ codes an $(a-2)$-clique; that is, 
for each pair $i<j$ in $I$,
 $c^k_{j}(|c^k_{i}|)=1$;

\item
For each node  $x\in X$ and  each $i \in I$,
$x^+(|c^k_{i}|)=1$.
\end{enumerate}
We say  that
$X$  has a pre-$a$-clique {\em at $l$}, and that $X\re l$ {\em is} a   pre-$a$-clique.
The set of nodes $\{c^k_{i}:i\in I\}$  is said to {\em witness} that  $X$ has a pre-$a$-clique at $l$.
 \end{defn}

\begin{notation}\label{notn.WP_a}
We write $\mbox{P}_a(X)$ exactly when $X$ has a  pre-$a$-clique.
\end{notation}

\begin{rem}\label{rem.central}
Whenever a level set $X$ has a pre-$k$-clique,  then
for  any set $C$  of coding (and/or ghost coding) nodes of  $\bT_k$ witnessing that
$\mbox{P}_k(X)$,
 the  set $C$  codes a $(k-2)$-clique,
and  each  $x\in X$ has passing number $1$ at each  $c\in C$.
It follows that, if $X$ has more than one node, then  for  any coding node  $c$ in $\bT_k$ extending some $x\in X$,
any extension of any $y\in X\setminus\{x\}$ to some node $y'$ of  length greater than $|c|$ must satisfy $y'(|c|)=0$.

Thus, the nodes in a  pre-$k$-clique are `entangled':
The splitting possibilities in the cone above one of these nodes depends  on  the cones above the other nodes  in the pre-$k$-clique.
If $X$ is contained in some finite subtree $A$ of $\bT_k$ and
$\mbox{P}_k(X)$ is not witnessed by coding nodes in $A$,
then the graph coded by $A$ has no knowledge
 that the cones above  $X$   in $\bT_k$ are  entangled.
Then no
extension of $A$ into $\bT_k$ can be strongly similar to $\bT_k$.
This is one of the main reasons  that developing Ramsey theory for Henson graphs is more complex than for the Rado graph.

Even if $a<k$, pre-$a$-cliques are also entangled.
In the set-up to the space of strong coding trees, we must consider pre-$a$-cliques for each $a\in [3,k]$;
it is  necessary to witness them in order to guarantee the existence of  extensions 
within   a given strong $\mathcal{H}_k$-coding tree  $T$ 
which are strongly similar to some 
tree which should exist according to the $K_k$-Free  Criterion. 
The guarantee of such extensions is the heart of  the extension lemmas in Section \ref{sec.ExtLem}.
\end{rem}

\begin{defn}[New Pre-$a$-Clique]\label{defn.newll1s}
Let $a\in [3,k]$, and let  $X\sse \bT_k$
be a level set.  ($X$ can consist of  a single coding node.)
We say that $X$
 {\em has a  new   pre-$a$-clique at $l$} if
$X\re l$ is
 a pre-$a$-clique and
for each $l'< l$ for which $X\re l$ and $X\re l'$ have the same number of nodes,
$X\re l'$
 is not a pre-$a$-clique.
 \end{defn}

The reasoning behind  the requirement that  $X\re l$ and $X\re l'$ have the same number of nodes will become more apparent in latter sections, when we want to color finite antichains of coding nodes coding a copy of some finite $K_k$-free graph.

\begin{defn}\label{defn.nonewpkc}
Let
$a\in[3, k]$, and   let $X, Y\sse \bT_k$  be   level sets with $l_Y>l_X$.
We say that    $Y$
 {\em   has no new pre-$a$-cliques over $X$}
if and only if the following holds:
 For each $j\in(l_X,l_Y]$
and each $Z\sse Y$,
if $Z\re j$ is a pre-$a$-clique,
 then
 $Z$ end-extends $Z\re l_X$,
  and
$Z\re l_X$ already has a pre-$a$-clique.
We say that $Y$ {\em  has  no new pre-cliques over $X$} if $Y$ has no new pre-$a$-cliques over $X$ for any $a\in[3,k]$.
\end{defn}

For example, suppose $X$ is a  singleton which is a new pre-$a$-clique for some $a\in[3,k]$ and $Z$ is a level set with at least two members such that $Z\re l_X=X$.
If for some 
$l_X<j\le l_Z$,  $Z\re j$ has at least two distinct nodes and for each $z\in Z$, $z(j)=1$, then $Z$ 
has at least  a new pre-$3$-clique over $X$.

The next definition
gives precise conditions under which a new  pre-$a$-clique at $l$ in a subtree $T$ of $\bT_k$ is maximal in the interval
of $T$ containing  $l$.

\begin{defn}[Maximal New Pre-$a$-Clique]\label{defn.newmpkc}
Let $T$ be a subtree of $\bT_k$,
let   $X\sse T$
be a level set,
and let $a\in[3,k]$.
We say that $X$
has   a {\em  maximal new   pre-$a$-clique  in $T$ at $l$}
if
$X\re l$
is   a new  pre-$a$-clique  in $T$ which  is  also maximal in $T$ in the following sense:
Let
$d$ denote  the critical  node  in $T$ of maximum length satisfying $|d|<l$.
If $m$ is  the index so that $d=d^T_m$,
let $e$ denote $d^T_{m+1}$ and
note that
  $l\le |e|$.
Then for any $l'\in (l,|e|]$ and any new  pre-$a$-clique  $Y\sse T\re l'$,
if $Y\re l$ contains $X\re l$
then  these sets are equal;  hence $l'=l$, since $T$ has no splitting nodes in the interval $(|d|,|e|)$.

We write  $\mbox{MP}_a(X;T)$ if
 $X$ has a maximal new pre-$a$-clique in $T$ in the interval of $T$ containing the length of the nodes in $X$.
Thus, if $l_X=|d^T_m|$,
then  $\mbox{MP}_a(X;T)$ means that  for some $l\in(l^T_{m-1},l^T_m]$,
$X$ has a maximal
 new pre-$a$-clique at $l$.
\end{defn}

In Definition \ref{defn.newmpkc},  for any level set $Z$ end-extending $X$, we say that $Z$ has a maximal new   pre-$a$-clique  in $T$ at $l$.
We will say that a  set $Y\sse T$ {\em contains} a maximal new   pre-$a$-clique at $l$  if
$\mbox{MP}_a(X;T)$ for some
 subset $X\sse Y\re l$.

\begin{defn}[Strong Isomorphism]\label{defn.stable}
Let  $S$ and $T$  be strongly similar subtrees of $\bT_k$ with $M$  many critical nodes.
The  strong similarity map  $f:T\ra S$ is a
{\em strong isomorphism}   if  for each positive
$m\in M$, the following hold:
For each $a\in [3,k]$,
a level subset $X\sse T\re |d^T_{m}|$ has a  maximal  new
pre-$a$-clique
 in $T$ in the interval $(|d^T_{m-1}|,|d^T_{m}|]$
if and only if $f[X]$ has a
 maximal  new  pre-$a$-clique
 in $S$ in the interval $(|d^S_{m-1}|,|d^S_{m}|]$.

When there is a  strong isomorphism between $S$ and $T$,
we say that $S$ and $T$ are {\em strongly isomorphic} and
 write $S\cong T$.
\end{defn}

\begin{observation}\label{obs.urcool}
The relation $\cong$ is an equivalence relation, since the
inverse of a strong isomorphism  is a strong isomorphism, and
 composition of two strong isomorphisms  is   again a strong isomorphism.
 Moreover, since   strong similarity maps preserve (ghost) coding nodes and passing numbers,
any strong isomorphism 
$f :T\ra S$  will map a set of  coding (and/or ghost coding) nodes in $T$ witnessing a  pre-$a$-clique $X$  in $T$  to a   set of coding (and/or ghost coding) nodes in $S$ witnessing that $f[X]$ is a   pre-$a$-clique $X$  in $S$.
\end{observation}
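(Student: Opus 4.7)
The plan is to verify the three equivalence-relation axioms for $\cong$, then prove the witness-transport claim. For reflexivity, the identity map on any $T \subseteq \bT_k$ is trivially a strong similarity map satisfying all seven clauses of Definition \ref{def.3.1.likeSauer} and preserves maximal new pre-$a$-cliques on each interval, so it is a strong isomorphism. Before tackling symmetry and transitivity, I would first record that, under any strong similarity map $f : T \to S$, clauses (3), (4), and (6) jointly imply that $f$ sends the $m$-th critical node of $T$ to the $m$-th critical node of $S$; hence intervals of the form $(|d^T_{m-1}|, |d^T_m|]$ correspond bijectively under $f$ to intervals $(|d^S_{m-1}|, |d^S_m|]$. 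This matching is what makes the interval-based condition in Definition \ref{defn.stable} robust under inversion and composition.

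For symmetry, given a strong isomorphism $f : T \to S$, I would verify that $f^{-1}$ inherits each clause of Definition \ref{def.3.1.likeSauer}: clauses (1)--(6) are framed as biconditionals and thus pass to the inverse immediately, while clause (7) on passing numbers transfers because the critical nodes of $S$ are the $f$-images of the critical nodes of $T$, so the passing number of $f^{-1}(y)$ at $f^{-1}(c)$ equals that of $y$ at $c$. The maximal new pre-$a$-clique condition of Definition \ref{defn.stable} is itself a biconditional, hence is preserved by $f^{-1}$. For transitivity, the composition $g \circ f$ of two strong isomorphisms inherits each of the seven clauses by routine composition of biconditionals, and a maximal new pre-$a$-clique transports from $T$ through $f$ to $S$ and through $g$ to $R$ within the matched intervals.

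For the witness-transport claim, fix a level set $X \subseteq T$ with a pre-$a$-clique witnessed by $C = \{c^k_{i_0}, \ldots, c^k_{i_{a-3}}\}$ (coding or ghost coding nodes of $T$). Clause (6) of Definition \ref{def.3.1.likeSauer} ensures each $f(c^k_{i_j})$ is a (ghost) coding node in $S$. To check that $f[C]$ codes an $(a-2)$-clique, I would apply clause (7) to each pair $j' < j$: since $c^k_{i_j}$ is a node of $T$ at length $|c^k_{i_j}|$ whose passing number at the shorter (ghost) coding node $c^k_{i_{j'}}$ equals $1$, clause (7) gives the same equality for $f(c^k_{i_j})$ at $f(c^k_{i_{j'}})$. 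Similarly, for each $x \in X$, clause (7) yields that $f(x)$ has passing number $1$ at each $f(c^k_{i_j})$. Thus $f[C]$ witnesses the pre-$a$-clique $f[X]$ in $S$ exactly in the sense of Definition \ref{defn.prepclique}.

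The only mildly delicate point—the sole potential obstacle—is that the nodes in $C$ are themselves (ghost) coding nodes of varying lengths, so clause (7), literally stated for one coding node and one same-length node, must be applied after truncating longer coding nodes to the relevant length. Since clause (5) on initial segments is also biconditional, this truncation is transported correctly by $f$, and the argument goes through without any further combinatorial input.
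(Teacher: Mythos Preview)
Your proposal is correct and follows essentially the same reasoning the paper sketches in the observation itself; the paper gives no separate proof, only the parenthetical justifications embedded in the statement. Your verification of reflexivity, symmetry, and transitivity via the biconditional form of Definition~\ref{def.3.1.likeSauer} and Definition~\ref{defn.stable}, together with your observation that critical nodes and their intervals are matched in order, is exactly the content the paper leaves implicit.

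One minor notational slip: in the witness-transport paragraph you write $C = \{c^k_{i_0}, \ldots, c^k_{i_{a-3}}\}$, using the superscript~$k$ which in this paper denotes coding nodes of the ambient tree~$\bT_k$. The observation concerns coding (or ghost coding) nodes \emph{in~$T$}, on which $f$ is actually defined; you should write $c^T_{i_j}$ or simply label them abstractly. Your handling of the truncation issue in the final paragraph is correct: restrictions of a longer coding node to the level of a shorter one lie in~$T$ (since these are critical levels), and clause~(5) ensures such restrictions are transported to the corresponding restrictions in~$S$, so clause~(7) applies as needed.
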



Strong isomorphisms preserve all relevant structure regarding the shape of the tree, (ghost) coding nodes and passing numbers,  and maximal new   pre-$a$-cliques and their witnesses.
They provide  the essential structure   of   our  strong 
$\mathcal{H}_k$-coding trees.

\begin{defn}[The Space of Strong $\mathcal{H}_k$-Coding Trees $(\mathcal{T}_k,\le, r)$]\label{defn.T_pspace}
A tree $T\sse\bT_k$ 
(with designated ghost coding nodes) is a member of $\mathcal{T}_k$  if  and only if there is  a  strong isomorphism, denoted  $f_T$, from   $\bT_k$ onto $T$.
Thus, $\mathcal{T}_k$ consists of all subtrees of $\bT_k$ which are strongly  isomorphic to $\bT_k$.
Call
the members of $\mathcal{T}_k$ {\em strong $\mathcal{H}_k$-coding trees}, or
 just   {\em strong coding trees} when $k$ is clear.
The partial ordering $\le$ on $\mathcal{T}_k$ is simply inclusion:
For $S,T\in\mathcal{T}_k$, we write 
$S\le T$ if and only if $S$ is a subtree of $T$.
If $T\in\mathcal{T}_k$, the notation $S\le T$ implies that $S$ is also a member of $\mathcal{T}_k$.

Given $T\in\mathcal{T}_k$, for $n\in[-k+2,-1]\cup\bN$,
we let $c^T_n$ denote $ f_T(c^k_n)$;  hence,
$\lgl c^T_{-k+2},\dots,c^T_{-1}\rgl$ enumerates the ghost coding nodes and 
 $\lgl c^T_n:n\in\mathbb{N}\rgl$   enumerates the coding nodes in $T$ in increasing order of length. 
  Letting 
$\lgl d^T_m:m\in\mathbb{N}\rgl$ enumerate  the  critical nodes of $T$ in order of increasing length, 
note that the $m$-th critical node in $T$, $d^T_m$, equals $f_T(d^k_m)$. 
In particular,   the {\em ghost coding node} $c^T_{-k+2}$ equals  $d^T_0$, the minimum splitting node in $T$.

It follows from the definition of strong isomorphism and the structure of $\bT_k$ that 
each coding node in $T$  is a singleton pre-k-clique, while  each ghost coding node is not.
Precisely,  $c_{-k+2}^T$ is not a pre-clique at all.
If $k\ge 4$, then $c_{-k+3}^T$ is a pre-$3$-clique, and in general, for $3\le i<k$, 
 the ghost coding node $c^T_{-k+i}$ is a  
pre-$i$-clique.
Thus, the ghost coding nodes in $T$ cannot be  coding nodes in $\bT_k$; hence the name.
It is the case, though,  that each ghost coding node  $c^T_n$ with $n>-k+2$ in $T$ has the same length as  some ghost  or  coding node in $\bT_k$.

Let $T(m)$ denote the set of nodes in $T$ of length $|d^T_m|$.
Recalling the notation in  (\ref{eq.T(m)}) and (\ref{eq.r_m(T)}) in Subsection \ref{subsection.T_p}, the $m$-th
 {\em finite approximation} of $T$  is
\begin{equation}\label{eq.r_m}
r_m(T)= \bigcup_{j<m} T(j),
\end{equation}
for $m\in\mathbb{N}$.
This is equal to $\{t\in T: |t|< |d^T_m|\}$, since $T$ is a tree.
Thus for $m<n$, $r_n(T)$ end-extends $r_m(T)$,
and $T=\bigcup_{m\in\mathbb{N}}r_m(T)$.
Notice that  for any tree $T$,   $r_0(T)$ is the emptyset  and $r_1(T)=\{d^T_0\}$,  the splitting node of smallest length in $T$.

For each $m\in\mathbb{N}$, define
\begin{equation}
\mathcal{AT}^k_m=\{r_m(T):T\in\mathcal{T}_k\},
\end{equation}
 and  let
\begin{equation}
\mathcal{AT}^k=\bigcup_{m\in\mathbb{N}}\mathcal{AT}^k_m.
\end{equation}
Given
$A\in\mathcal{AT}^k$ and
$T\in\mathcal{T}_k$,
define
\begin{equation}
[A,T]=\{S\in \mathcal{T}_k:  \exists m\, (r_m(S)=A)\mathrm{\ and\ }S\le T\}.
\end{equation}
Given $j<m$, $A\in\mathcal{AT}^k_j$ and $T\in\mathcal{T}_k$, define
\begin{equation}\label{eq.r_m}
r_m[A,T]=\{r_m(S):S\in [A,T]\}.
\end{equation}
For $A\in \mathcal{AT}^k$ and $B\in \mathcal{AT}^k\cup\mathcal{T}_k$,
if
for some $m$, $r_m(B)=A$, then we write $A\sqsubseteq B$ and say that $A$ is an {\em initial segment} of $B$.
If $A\sqsubseteq B$ and $A\ne B$, then we write $A\sqsubset B$ and say that $A$ is a {\em proper initial segment} of $B$.
\end{defn}

\begin{rem}
If a  subset  $A\sse \bT_k$ does not  contain sequences of $0$'s of unbounded length, then
there is an $n\ge 0$ such that each node in $A$  has passing number $1$ at  $c^k_i$, for some  $i\in [-k+2,n]$.
Such an  $A$ cannot satisfy  property $(A_k)^{\tt tree}$ so it  does not code $\mathcal{H}_k$;
hence it is not strongly similar to $\bT_k$.
Thus, the leftmost path through  any member of $\mathcal{T}_k$  is the infinite sequence of $0$'s.
It follows that
for $T\in\mathcal{T}_k$,
 the strong isomorphism  $f_T:\bT_k\ra T$
must take each splitting node in $\bT_k$  which is a sequence  of  $0$'s to a splitting node in $T$ which is a sequence  of  $0$'s.
In particular, $f_T$ takes
 $c_{-k+2}^k$ to the  ghost coding node $c_{-k+2}^T$ of $T$, which is a splitting node in $\bT_k$ consisting of a sequence of $0$'s.
\end{rem}

Next,  we define what it means for a pre-clique to be witnessed inside a given subtree  of $\bT_k$. 
This sets the stage for the various {\em Witnessing  Properties} to follow.
The Strong Witnessing Property  will be  a structural characteristic of strong $\mathcal{H}_k$-coding trees. 
The Witnessing Property 
will be   useful for extending a given finite subtree  of some $T\in\mathcal{T}_k$ as should be possible according to the $K_k$-free Branching Criterion.
Both will be utilized in following sections.

\begin{defn}[Witnessing new pre-cliques in $T$]\label{def.WinT}
Let   $T$ be a finite or infinite subtree of $\bT_k$ with 
some 
coding and/or ghost coding nodes.
Suppose that $X$ is a level subset of $T$ which has
 a new
 pre-$a$-clique at $l$, for some $a\in [3,k]$.
 We say that a  set of coding and/or ghost coding  nodes $\{c^T_{i}: i\in I\}$ in $T$,
$|I|=a-2$, {\em witnesses in $T$}
that $X$ has
 a new
 pre-$a$-clique at $l$
if, letting $i_*=\max(I)$,
\begin{enumerate}
\item
$\{c^T_{i}: i\in I\}$ codes an $(a-2)$-clique;
\item
$|c^T_{i_*}|\ge l$ and
$T$ has no critical  nodes in
the interval  $[l,|c^T_{i_*}|)$;
\item
 For each $x\in X$,
the node $y_x$ in
$T\re |c^T_{i_*}|$  extending $x\re l$
has passing number $1$ at
$c^T_{i}$, for  each $i\in I$.
\end{enumerate}
In this case, we write $WP_a(X;T)$.  
If  $X\re l$ is a maximal new pre-$a$-clique and   is witnessed  by a set of coding nodes in $T$, we write $\mbox{WMP}_a(X;T)$.
\end{defn}

\begin{rem}
Note that the set   $\{y_x:x\in X\}$  in (3) above is  well-defined, since by (2), $T$ has no critical  nodes  
 (in particular no splitting nodes) in
the interval  $[l,|c^T_{i_*}|)$.
The passing number  of $y_x$ at $c_{i_*}^T$
is uniquely determined by $\bT_k$ to be  the passing number of $(y_x)^+$ at  $c_{i_*}^T$.
We point out that  there may be other sets of coding and/or ghost coding  nodes in $\bT_k$ witnessing $WP_a(X;T)$. 
However,
   any such  set of witnesses in $T$ must contain $c^T_{i_*}$.
\end{rem}

In the following, given a finite or infinite subtree $T$ of $\bT_k$,
recall that $\lgl d^T_m:m\in M\rgl$ is the enumeration of the critical nodes in $T$ in order of increasing length. 
Here,  either $M\in\bN$ or $M=\bN$.
Enumerate the coding nodes in $T$ as $\lgl c^T_n:n\in N\rgl$, where $N\in\bN$ or $N=\bN$. 
If $T$ has ghost coding nodes, enumerate these as $c^T_n$, where $n\in [-k+2,j]$ for some $j\le  -1$.
The number $m_n$ is the index such that $d^T_{m_n}=c^T_n$.

\begin{defn}[Strong Witnessing Property]\label{defn.PrekCrit}
A subtree  $T$ of  $\bT_k$
 has the {\em Strong Witnessing Property}  if the following hold:
\begin{enumerate}
\item
Each     new pre-clique in $T$
takes  place in some interval  in $T$  of the form
$(|d^T_{m_n-1}|,|c^T_n|]$.
\item
Each new pre-clique in $T$  is witnessed in  $T$.
\end{enumerate}
\end{defn}

Notice that the coding node $c^T_n$  in Definition
\ref{defn.PrekCrit}
is obligated (by Definition \ref{def.WinT}) to be among the set of coding nodes witnessing $P_a(X;T)$.
Further,  in order to  satisfy Definition \ref{defn.PrekCrit},   it suffices that the maximal new pre-$a$-cliques are witnessed  in $T$,
as this automatically guarantees that every new pre-$a$-clique is  witnessed in $T$.

\begin{defn}[Witnessing Property]\label{defn.WP}
A subtree  $T$ of  $\bT_k$ has the {\em  Witnessing Property (WP)}  if
each     new pre-clique  in $T$ of size at least two
takes  place in some interval  in $T$  of the form
$(|d^T_{m_n-1}|,|c^T_n|]$ and 
 is witnessed by a set of coding nodes in $T$.
\end{defn}

The idea behind the Witnessing Property is that we want a property strong enough to guarantee that the finite subtree $T$ of $\bT_k$ can  extend  within $\bT_k$ according to the $k$-FBC.
However, 
each coding node is a pre-$k$-clique, and  if $T$ is an antichain of coding nodes in $\bT_k$ coding some finite $K_k$-free graph, we  cannot require that all singleton pre-$k$-cliques be witnessed in $T$.
The Witnessing Property will 
allow just the right amount of flexibility to achieve 
our Ramsey Theorem for antichains of coding nodes.

\begin{lem}\label{lem.concpresWP}
If $A$ is a  subtree of $\bT_k$ which  has the (Strong) Witnessing Property and $A\cong B$,
then $B$ has the (Strong) Witnessing Property.
\end{lem}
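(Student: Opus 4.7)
The plan is straightforward: a strong isomorphism preserves every piece of structure involved in the definition of the (Strong) Witnessing Property, so the property should transfer automatically. I would fix a strong isomorphism $f: A \to B$ witnessing $A \cong B$, and then argue that every new pre-clique of $B$ is witnessed in $B$ by translating back to $A$ via $f^{-1}$ and then pushing the $A$-witnesses forward along $f$.

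First, I would pick an arbitrary new pre-$a$-clique $Y$ in $B$ at some length $l_B$ (of size $\ge 2$ in the non-Strong case, of arbitrary size in the Strong case), and set $X = f^{-1}[Y]$. By the bijectivity and meet/length preservation of strong similarity (Definition \ref{def.3.1.likeSauer}, clauses (1)--(5)), $X$ is a level subset of $A$ of the same cardinality as $Y$ at some length $l_A$. Then I would use Observation \ref{obs.urcool}, together with the preservation of coding/ghost coding nodes and passing numbers (clauses (6)--(7)), to conclude that $X$ is itself a pre-$a$-clique in $A$. The \emph{newness} condition transfers by the same token: lengths $l' < l_A$ at which $X\re l'$ has the same cardinality as $X$ correspond bijectively, via the length-ordering preserved by $f$, to lengths $l'_B < l_B$ with the analogous property for $Y$, and for each such $l'$ the set $X\re l'$ is a pre-$a$-clique iff $Y\re l'_B$ is, again by the witness-transfer argument.

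Having confirmed that $X$ is a new pre-$a$-clique in $A$, I would invoke the Witnessing Property of $A$ to obtain a witnessing set $\{c^A_i : i \in I\}$ of coding (and, in the Strong case, possibly ghost coding) nodes in $A$, lying in the appropriate critical interval $(|d^A_{m_n-1}|, |c^A_n|]$. Applying $f$, clauses (6)--(7) produce a set $\{f(c^A_i) : i \in I\}$ of coding (and ghost coding) nodes in $B$ with identical passing-number behavior; since $f$ carries the $m$-th critical node of $A$ to the $m$-th critical node of $B$ and the $n$-th coding node of $A$ to the $n$-th coding node of $B$, this set lies in the corresponding interval $(|d^B_{m_n-1}|, |c^B_n|]$ of $B$, giving the required witness for $Y$. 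In the Strong Witnessing Property case, the extra maximality requirement on new pre-$a$-cliques is built directly into Definition \ref{defn.stable}, so maximal new pre-$a$-cliques in critical intervals of $A$ are carried by $f$ to maximal new pre-$a$-cliques in the corresponding intervals of $B$.

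I do not expect a genuine obstacle; the main point requiring care is the preservation of newness, which amounts to checking that no earlier level of $B$ accidentally becomes a pre-clique that did not already correspond to one in $A$. This reduces to the length-correspondence and passing-number preservation given in Definition \ref{def.3.1.likeSauer}, so the whole argument is a bookkeeping exercise citing the preservation clauses already built into the definitions of strong similarity, strong isomorphism, and Observation \ref{obs.urcool}.
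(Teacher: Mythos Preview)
There is a gap at the step where you conclude that $X = f^{-1}[Y]$ is itself a pre-$a$-clique. Being a pre-$a$-clique (Definition~\ref{defn.prepclique}) is witnessed by coding nodes of the ambient tree $\bT_k$, not by coding nodes of $A$ or $B$. Clauses (6)--(7) of strong similarity only control passing numbers at coding nodes \emph{of $A$ and $B$}, and Observation~\ref{obs.urcool} only transfers witnesses that already lie inside the domain subtree. Since you do not yet know that $Y$'s pre-clique is witnessed inside $B$---that is exactly the Witnessing Property you are trying to establish---neither tool lets you draw any conclusion about $X$. The same defect undercuts your newness argument: the assertion ``$X\re l'$ is a pre-$a$-clique iff $Y\re l'_B$ is'' simply does not follow from strong similarity, and there is no level-by-level correspondence between arbitrary lengths in $A$ and $B$, only between lengths of critical nodes.

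The fix is precisely what the paper does and what you mentioned only as an afterthought for the Strong case: reduce first to a \emph{maximal} new pre-$a$-clique. This is the one piece of structure that Definition~\ref{defn.stable} guarantees a strong isomorphism preserves beyond strong similarity, and it is needed in both the WP and Strong WP cases, not just the latter. So given a new pre-$a$-clique $Y$ in $B$, pass without loss of generality to a maximal one in its critical interval; Definition~\ref{defn.stable} then carries it under $f^{-1}$ to a maximal new pre-$a$-clique in $A$ in the corresponding interval. Now the (Strong) Witnessing Property of $A$ forces that interval to end at a coding node $c^A_n$ and supplies witnesses in $A$, and your push-forward of those witnesses via $f$ works exactly as you wrote. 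Since witnessing a maximal pre-clique automatically witnesses every subset by the same coding nodes, the original $Y$ is handled as well.
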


\begin{proof}
Given the hypotheses,
let $f:B\ra A$ be a  strong isomorphism from $B$ to $A$.
Suppose $X\sse B$ is a level set   which has a new pre-$a$-clique, for some $a\in [3,k]$.
Let $m$ be the index such that the new pre-$a$-clique in $X$ takes place in the interval 
$(|d^B_{m-1}|,|d^B_m|]$.
Without loss of generality, assume that $X$ has a maximal new pre-$a$-clique in $B$ in  this interval.
In the case for WP, assume that $X$ has at least two members. 
Since $f$ is a strong isomorphism,
$f[X]$ has a maximal new pre-$a$-clique in $A$ in  the interval $(|d^A_{m-1}|,|d^A_m|]$.
Since $A$ has the (Strong) Witnessing Property, $d^A_m$ must be a coding node in $A$;
moreover, 
this coding node must be among the set of coding nodes in $A$ witnessing that $f[X]$ has a new pre-$a$-clique.
Therefore, $d^B_m$ is  a coding node,
and $d^B_m$ 
is among the set of coding nodes witnessing that $X$ has
a new pre-$a$-clique,
 since $f$ being a strong similarity map implies $f$  preserves coding nodes and passing numbers.
Furthermore, 
 each new  pre-$a$-clique in $B$ (of size at least two in the case of WP) 
 is witnessed in $B$.
Hence, $B$ has the (Strong) Witnessing Property.
\end{proof}

\begin{lem}\label{lem.need}
Suppose $A,B$ are subtrees of $\bT_k$ and that $A$ has the Strong Witnessing Property.
Then $A\cong B$ if and only if
$A\ssim B$ and $B$ also has the  Strong Witnessing Property.
\end{lem}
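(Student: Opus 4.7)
Proof plan for Lemma \ref{lem.need}.

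The forward direction is essentially free. If $A\cong B$, then by Definition \ref{defn.stable} there is a strong isomorphism $f:A\to B$, which is in particular a strong similarity, giving $A\ssim B$. Since $A$ has the Strong Witnessing Property, Lemma \ref{lem.concpresWP} immediately yields that $B$ has the Strong Witnessing Property as well.

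For the reverse direction, assume $A\ssim B$ via a strong similarity $f:A\to B$ and both $A$ and $B$ have the Strong Witnessing Property. The task is to upgrade $f$ to a strong isomorphism, i.e.\ to verify the extra clause in Definition \ref{defn.stable}: for every positive index $m$, every $a\in[3,k]$, and every level set $X\sse A\re |d^A_m|$, the set $X$ has a maximal new pre-$a$-clique in the interval $(|d^A_{m-1}|,|d^A_m|]$ of $A$ iff $f[X]$ has a maximal new pre-$a$-clique in $(|d^B_{m-1}|,|d^B_m|]$ of $B$. The bijection $f$ sends critical nodes to critical nodes and preserves their indexing (since $\ssim$ preserves meets and relative lengths), and by clauses (6)--(7) of Definition \ref{def.3.1.likeSauer} it sends (ghost) coding nodes to (ghost) coding nodes and preserves passing numbers at them. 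Consequently, if $Y\sse A$ is a level set and $\{c^A_i:i\in I\}\sse A$ witnesses a pre-$a$-clique at some length $l$ in $A$, then $f[Y]$ and $\{f(c^A_i):i\in I\}$ witness a pre-$a$-clique at the image length $l^*$ in $B$, and vice versa; more generally, $f$ sets up a length-preserving (in the relative sense) correspondence between configurations that qualify as pre-$a$-cliques in $A$ and those that qualify in $B$.

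With this correspondence in hand, the argument reduces to transferring the quantifiers \emph{new} and \emph{maximal} across $f$, which is precisely where the Strong Witnessing Property of \emph{both} trees is used. Suppose $\mathrm{MP}_a(X;A)$ holds at some $l\in(|d^A_{m-1}|,|d^A_m|]$. The SWP on $A$ guarantees that this pre-$a$-clique is witnessed by coding nodes of $A$ sitting inside the interval, and $d^A_m$ must be a coding node of $A$. Applying $f$ gives a pre-$a$-clique for $f[X]$ at the corresponding length $l^*\in(|d^B_{m-1}|,|d^B_m|]$ witnessed by coding nodes of $B$. To see that this pre-$a$-clique in $B$ is \emph{new}, suppose for contradiction that $f[X]\re l''$ were a pre-$a$-clique for some strictly smaller $l''$ with the same number of distinct nodes as $f[X]\re l^*$. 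By SWP on $B$ this would be witnessed by coding nodes of $B$; pulling the entire configuration back through $f^{-1}$ (which is also a strong similarity and by symmetry preserves pre-$a$-cliques and their witnesses) produces a pre-$a$-clique for $X$ at the corresponding $l'<l$ with the same number of nodes, contradicting newness in $A$. Maximality is handled by the same pull-back argument: any pre-$a$-clique in $B$ strictly extending the one at $l^*$ within the interval $(|d^B_{m-1}|,|d^B_m|]$ would, by SWP on $B$, be witnessed in $B$, and pulling back through $f^{-1}$ would violate maximality in $A$. The converse implication is entirely symmetric, using SWP on $A$ in the pull-back step.

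The main obstacle is the bookkeeping in the newness clause: Definition \ref{defn.newll1s} requires the comparison to be made only at earlier levels with \emph{the same number of distinct nodes}, and strong similarity preserves the levels at which the nodes of $X$ merge (since it preserves meets and their relative lengths), so this condition transfers cleanly. Once one is convinced that SWP is what licenses the two-way pull-back of hypothetical pre-$a$-cliques through $f$ and $f^{-1}$, there is nothing further to do, and $f$ qualifies as a strong isomorphism, yielding $A\cong B$.
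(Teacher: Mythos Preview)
Your proposal is correct and follows essentially the same approach as the paper's proof: forward direction via Lemma \ref{lem.concpresWP}, and for the converse, use the Strong Witnessing Property on $A$ to obtain \emph{internal} witnesses (coding nodes of $A$) for a maximal new pre-$a$-clique $X$, transfer them through $f$ to coding nodes of $B$ witnessing a pre-$a$-clique in $f[X]$, and then verify newness and maximality by contradiction, using the Strong Witnessing Property on $B$ to manufacture internal witnesses in $B$ which pull back through $f^{-1}$. One small caution: your sentence ``more generally, $f$ sets up a \ldots correspondence between configurations that qualify as pre-$a$-cliques in $A$ and those that qualify in $B$'' is only justified \emph{because} both trees have the Strong Witnessing Property (pre-$a$-cliques are defined via coding nodes of $\bT_k$, and strong similarity alone only controls passing numbers at coding nodes of $A$ and $B$); since your concrete argument invokes SWP at exactly the right places, this is a matter of exposition rather than a gap.
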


\begin{proof}
For the  forward direction,
note that $A\cong B$ implies $A\ssim B$, by the definition of strongly isomorphic.
If moreover, $A$ has the  Strong Witnessing Property then Lemma
\ref{lem.concpresWP} implies $B$ also has the  Strong Witnessing Property.

Now suppose that $A\ssim B$ and both $A$ and $B$ have  the Strong Witnessing Property.
Let $f:A\ra B$ be the strong similarity map.
Suppose
 $X$ is a level set in $A$ which has a maximal new
pre-$a$-clique, for some $a\in [3,k]$.
Since $A$ has the Strong Witnessing Property,
there is a set of coding nodes $C\sse A$ witnessing that $X$  has a new
pre-$a$-clique.
Furthermore,
$l_X$ must be the length of some coding node in the set $C$.
Since
$f$ preserves  coding nodes and passing numbers,
it follows that $f[C]$ is a set of coding nodes in $B$ witnessing  that $f[X]$ has a pre-$a$-clique.
It remains to show that $f[X]$ has a new pre-$a$-clique and is maximal in $B$.

If $f[X]$ does not have  a new pre-$a$-clique in $B$,
then there is  some critical node $d$  in $B$ below  $f(c)$ such that $f[X]\re |d|$ has a new pre-$a$-clique in $B$, where $c$ is the longest coding node in $C$.
Since $B$ satisfies the Strong Witnessing Property, this new pre-$a$-clique in $f[X]$ appears at some coding node in $B$ below $d$.
Further, $f[X]$ must be witnessed by some set of coding nodes $D$ in $B$.
But then $f^{-1}[D]$ is a set of coding nodes in $A$ witnessing a pre-$a$-clique in $X$.
Since the longest length of a coding node in $f^{-1}[D]$ is shorter than $|c|$,
the pre-$a$-clique in $X$ occurs first at some coding node below $c$, a contradiction to $X$ having a new pre-$a$-clique.
Therefore, $f[X]$ is a new pre-$a$-clique in $B$.

If $f[X]$  is not maximal in $B$, then
there is some level set $Z$ of nodes of length $l_X$ properly containing $f[X]$ which has a new pre-$a$-clique in $B$.
Since $B$ has the Strong Witnessing Property,
 there is some set of coding nodes $D\sse B$ witnessing $Z$.
Then $f^{-1}[D]$ witnesses that $f^{-1}[Z]$ is a pre-$a$-clique in $A$ properly containing $X$, contradicting the maximality of $X$ in $A$.
Therefore, $f$ preserves maximal new pre-$a$-cliques, and hence is a strong isomorphism.
Hence, $A\cong B$.
\end{proof}

The $K_k$-Free Branching Criterion from Definition \ref{defn.kFSC}
 naturally gives rise to a  version for skew trees.
A tree  $T\sse\bT_k$  with (ghost) coding  nodes  $c_n$ for $n\in [-k+2,N)$, where $N\in\bN$ or $N=\bN$ and $c_{-k+2}$ is the stem of $T$,
satisfies the {\em $K_k$-Free Branching Criterion  ($k$-FBC)}
if and only if  the following holds:
For  $n\ge -k+2$,
letting $u$ denote the node $c_{n+1}\re |c_n|$, 
each node $t\in T$ of length $|c_n|$ branches  in $T$ before reaching length $|c_{n+1}|$ if and only if for each $I\sse [-k+2,n-1]$ of size $k-2$  for which the set $C=\{c_i:i\in I\}$ codes a $(k-2)$-clique
and such that $u(|c_i|)=1$ for each $i\in I$,
$t(|c_i|)=0$ for at least one $i\in I$.

Notice that if a skew tree $T$
satisfies the $k$-FBC, then
Theorem \ref{thm.A_3treeimpliestrianglefreegraph}
implies the following fact:

\begin{observation}\label{obs.kfbcH_k}
Any skew tree satisfying the $K_k$-Free Branching Criterion in which the coding nodes are dense  codes a copy of $\mathcal{H}_k$.
\end{observation}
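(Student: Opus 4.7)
The plan is to reduce this observation to Theorem \ref{thm.A_3treeimpliestrianglefreegraph}, which handles the non-skew case. The point is that the skew versus non-skew distinction concerns only where splitting nodes are placed within the tree, not which graph is ultimately coded. What determines the graph coded by $\lgl c_n^T : n \in \bN \rgl$ is the sequence of passing numbers $c_n^T(|c_m^T|)$ for $m<n$, and this data is preserved by ``zipping up'' the tree.

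First I would construct from the skew tree $T$ a non-skew tree $T'$ as follows: for each $n \ge -k+2$ in the range of (possibly ghost) coding node indices of $T$, take level $n$ of $T'$ to be $T \re |c_n^T|$, equipped with the induced (ghost) coding node $c_n^{T'} = c_n^T$. Thus $T'$ has nodes only at lengths $\{|c_n^T| : n \ge -k+2\}$, and branching occurs in $T'$ precisely at those levels immediately preceding a new coding node in $T$ where branching has occurred in $T$.

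The next step is to verify that $T'$ satisfies the (non-skew) $K_k$-Free Branching Criterion of Definition \ref{defn.kFSC} and that its coding nodes are dense. For the $k$-FBC, observe that a node $u \in T'$ of length $|c_n^T|$ splits in $T'$ to reach level $n+1$ if and only if some $t \in T$ of length $|c_n^T|$ with $t \re |c_n^T| = u$ branches in $T$ at some length $\le |c_{n+1}^T|$. The skew $k$-FBC assumption on $T$ translates directly into the required characterization for $T'$, since the conditions (pre-clique configurations witnessed by earlier coding nodes) depend only on the values $u(|c_i|)$ for $i<n$, which are preserved under the truncation. Density of coding nodes in $T'$ is immediate, since every node of $T'$ has length $|c_n^T|$ for some $n$, and the next coding node $c_{n+1}^{T'}$ extends a node at level $n+1$. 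Finally, $T'$ has no maximal nodes because $T$ has coding nodes of arbitrarily large length.

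With these verifications in place, Theorem \ref{thm.A_3treeimpliestrianglefreegraph} applies to $T'$ and yields that the coding nodes of $T'$ code a copy of $\mathcal{H}_k$. Since $c_n^{T'} = c_n^T$ and the passing number of $c_n^T$ at $c_m^T$ depends only on the nodes $c_n^T, c_m^T$ themselves (not on the intermediate structure of the tree), the graphs coded by the coding nodes of $T$ and of $T'$ are the same. Hence $T$ codes $\mathcal{H}_k$. I do not anticipate a serious obstacle here; the only care needed is the bookkeeping for ghost coding nodes in the first $k-2$ levels, which is handled uniformly by indexing from $-k+2$ throughout and observing that the ghost coding nodes of $T$ serve as the corresponding ghost coding nodes of $T'$.
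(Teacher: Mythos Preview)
Your proposal is correct and is precisely the natural elaboration of what the paper does: the paper simply asserts that Theorem~\ref{thm.A_3treeimpliestrianglefreegraph} implies the observation, leaving the reduction from the skew to the non-skew setting implicit (this is the ``zipping up'' alluded to in Remark~\ref{rem.awesome!}). Your construction of $T'$ by restricting $T$ to the coding-node levels makes that reduction explicit. One small point: your argument for density of coding nodes in $T'$ is a bit garbled---the correct reason is simply that coding nodes are dense in $T$ by hypothesis, and every node of $T'$ is already a node of $T$, hence is extended by some coding node of $T$, which in turn is a coding node of $T'$.
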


\begin{lem}\label{lem.internalcharacterization}
(1) If $T\sse \bT_k$ is strongly similar to $\bT_k$,
then $T$ satisfies the $K_k$-Free Branching Criterion.

(2) If $T\sse \bT_k$ is strongly similar to $\bT_k$ and
 has the Strong Witnessing  Property,
then  the strong similarity map from  $\bT_k$ to  $T$ is a strong isomorphism,  and hence $T$ is a member of $\mathcal{T}_k$.
\end{lem}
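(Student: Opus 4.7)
The strategy is to handle the two parts separately, with Part (1) being a direct transfer of a property from $\bT_k$ to $T$ along the strong similarity map, and Part (2) reducing to a single application of Lemma~\ref{lem.need} after first verifying that $\bT_k$ itself enjoys the Strong Witnessing Property.

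For part (1), I would begin by observing that $\bT_k$ satisfies the skew version of the $K_k$-Free Branching Criterion: by Example~\ref{ex.bTp}, the tree $\bT_k$ is obtained from the strong $K_k$-free tree $\bS_k$ by a length-preserving skewing that preserves passing numbers at coding nodes, and $\bS_k$ satisfies the $k$-FBC by Definition~\ref{defn.stft}. Given a strong similarity $f : \bT_k \to T$ and a node $t\in T$ with $|t| = |c^T_n|$, set $s = f^{-1}(t)$, so $|s| = |c^k_n|$. Since $f$ preserves meets and splitting nodes (Definition~\ref{def.3.1.likeSauer}(3)), $t$ branches in $T$ before length $|c^T_{n+1}|$ iff $s$ branches in $\bT_k$ before length $|c^k_{n+1}|$. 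The right-hand side of the $k$-FBC biconditional—the existence of a $(k-2)$-clique $\{c_i : i\in I\}$ of previous coding/ghost coding nodes such that $u = c_{n+1}\re |c_n|$ satisfies $u(|c_i|)=1$ for all $i\in I$, with $t$ having passing number $0$ at some $c_i$—is expressed entirely in data preserved by $f$: lengths, the indexing of (ghost) coding nodes, and passing numbers at (ghost) coding nodes. Hence the condition holds at $s$ in $\bT_k$ iff it holds at $t$ in $T$, so $T$ satisfies the $k$-FBC.

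For part (2), I would apply Lemma~\ref{lem.need} with $A = \bT_k$ and $B = T$, which requires checking that $\bT_k$ itself has the Strong Witnessing Property. Granting this, Lemma~\ref{lem.need} gives $\bT_k \cong T$, i.e.\ there is a strong isomorphism $g : \bT_k \to T$. But any strong similarity map between meet-closed subsets of $\bT_k$ is uniquely determined by lexicographic order, meet structure, and relative lengths (items (2)--(5) of Definition~\ref{def.3.1.likeSauer}), so $g$ must coincide with the given strong similarity $f : \bT_k \to T$. Thus $f$ is itself a strong isomorphism and $T$ is, by definition, a member of $\mathcal{T}_k$.

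The main technical obstacle is verifying that $\bT_k$ has the Strong Witnessing Property; once this is in hand, the rest is largely bookkeeping. I would argue this by direct inspection of the construction in Example~\ref{ex.bTp}. Condition (2) of SWP is easy: by Definition~\ref{defn.prepclique} any new pre-$a$-clique $X\re l$ in $\bT_k$ is witnessed by a set $\{c^k_i : i\in I\}$ of (ghost) coding nodes of $\bT_k$ itself, with $l = |c^k_{i_*}|$ for $i_* = \max(I)$, and these same nodes constitute a witness in $\bT_k$ in the sense of Definition~\ref{def.WinT}. Condition (1) is the delicate point: I must show that every new pre-clique lands in an interval of the form $(|d^{\bT_k}_{m_n-1}|, |c^{\bT_k}_n|]$ with $n \ge 0$. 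This amounts to showing that $i_*$ is not a ghost coding index. Since the ghost coding nodes $c^k_{-k+2},\dots,c^k_{-1}$ arise from the initial fully branching segment $\Seq_{\le k-2}$ of $\bS_k$ (stretched), they lie below $c^k_0$ in $\bT_k$, and any configuration of passing numbers making the singleton $\{c^k_i\}$ or a larger level set into a pre-clique witnessed entirely by ghost coding nodes is already displayed at strictly smaller lengths where the same set has the same cardinality, so it is not \emph{new} in the sense of Definition~\ref{defn.newll1s}. Consequently $i_*\ge 0$, placing the new pre-clique in an interval ending at a genuine coding node, as required.
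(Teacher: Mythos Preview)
Your approach is essentially the same as the paper's: for (1) you transfer the $k$-FBC across the strong similarity map, and for (2) you invoke Lemma~\ref{lem.need} using that $\bT_k$ has the Strong Witnessing Property. The paper's proof is nearly identical but simply asserts that $\bT_k$ has the SWP without further comment, and does not spell out the uniqueness of the strong similarity map (which you correctly note is needed to conclude that \emph{the} given map $f$, not merely some map, is a strong isomorphism).

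One caution: in your SWP verification for $\bT_k$, you impose the restriction $n\ge 0$ in condition~(1) and then argue that $i_*\ge 0$. This restriction is not part of Definition~\ref{defn.PrekCrit}---the index $n$ there ranges over ghost coding indices as well---and your justification (that pre-cliques witnessed entirely by ghost coding nodes are ``already displayed at strictly smaller lengths'') is not convincing as stated; for instance, a singleton $\{t\}$ with $t(|c^k_{-k+2}|)=\dots=t(|c^k_{-2}|)=1$ is a genuinely new pre-$(k-1)$-clique at level $|c^k_{-1}|$. Fortunately this does not damage your argument: since ghost coding indices are permitted, condition~(1) of SWP holds trivially for $\bT_k$ (every new pre-clique at $l$ has $l=|c^k_{i_*}|$ for some $i_*$, placing it in the interval ending at $c^k_{i_*}$), and you can simply drop the $n\ge 0$ discussion.
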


\begin{proof}
(1)  follows in a straightforward manner from the definitions of $k$-FBC  and strong similarity map, along with the structure of $\bT_k$, as we now show.
Suppose $T\sse \bT_k$  is strongly similar to $T$, and let  $f:\bT_k\ra T$ be the strong similarity map.
Note that  for each integer $n\ge -k+2$, $c^T_n=f(c^k_n)$.
Fix $n\ge -k+2$ and a  node
$t\in T\re l^T_n$ which does not extend to $c^T_{n+1}$.
Then $s:=f^{-1}(t)$ is in $\bT_k\re l^k_n$.
Since $f$ is a strong similarity map, $s$ does not extend to the coding node $c^k_{n+1}$ in $\bT_k$.
Since $\bT_k$ satisfies the $k$-FBC,
$s$ splits in $\bT_k$ before reaching the level of $c^k_{n+1}$ if and only if,
letting  $u=c^k_{n+1}\re (l_n^k+1)$,
for each subset $I\sse [-k+2,n]$  of size $k-2$ such that
$C=\{c^k_i:i\in I\}$ codes a $(k-2)$-clique
and $u$ has passing number $1$ at each
$c\in C$,
there is some $c\in C$ at  which $s^+$ has passing number $0$.
Since $t=f(s)$ and $f$ is a strong similarity map,
$t$ splits in $T$ before reaching the level of $c^T_{n+1}$ if and only if,
letting  $v=c^T_{n+1}\re (l^T_n+1)$,
for each subset $I\sse [-k+2,n]$  of size $k-2$  for which
$D=\{c^T_i:i\in I\}$ codes a $(k-2)$-clique
and $v$ has passing number $1$ at each
$c\in D$,
there is some $c\in D$ at  which $t^+$ has passing number $0$.

For
(2), if $T\sse\bT_k$ is strongly similar to $\bT_k$ and has the Strong Witnessing Property, then
it  follows from Lemma \ref{lem.need}
 that $T\cong \bT_k$
  since $\bT_k$ has  the Strong Witnessing Property.
\end{proof}

\begin{lem}\label{lem.psim.properties}
Every $T\in\mathcal{T}_k$ has
 the following properties:
\begin{enumerate}
\item
$T\ssim \bT_k$.

\item
$T$ satisfies the $K_k$-Free Branching Criterion.

\item
$T$ has  the  Strong Witnessing  Property.
\end{enumerate}
\end{lem}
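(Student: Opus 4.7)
The plan is to derive all three properties directly from the definition of $\mathcal{T}_k$ (membership means existence of a strong isomorphism $f_T : \bT_k \to T$) together with the lemmas already proved in this section, essentially pushing everything back to the reference tree $\bT_k$.

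For (1), I would note that by Definition \ref{defn.T_pspace}, $T \in \mathcal{T}_k$ gives a strong isomorphism $f_T : \bT_k \to T$. By Definition \ref{defn.stable}, a strong isomorphism is in particular a strong similarity map between $T$ and $\bT_k$ (viewed as meet-closed subsets of $\bT_k$), so $T \ssim \bT_k$ is immediate. For (2), (1) combined with Lemma \ref{lem.internalcharacterization}(1) yields that $T$ satisfies the $K_k$-Free Branching Criterion, since that lemma already does exactly this transfer using only strong similarity and the structure of $\bT_k$.

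The real content is in (3). The plan is to first verify that $\bT_k$ itself has the Strong Witnessing Property, and then invoke Lemma \ref{lem.concpresWP} to transfer it to $T$, using the fact that $T \cong \bT_k$. To check the Strong Witnessing Property for $\bT_k$, I would argue directly from the construction in Example \ref{ex.bTp}: the critical nodes of $\bT_k$ are arranged in intervals $(|d^k_{m_n-1}|,|c^k_n|]$ each capped by a coding node $c^k_n$, and the lexicographically preserving bijection between $\bT_k(m_{n+1})$ and $\bS_k(n+k)$ preserves passing numbers. So a level set $X \sse \bT_k$ has a new pre-$a$-clique at some length $l$ exactly when the corresponding level set in $\bS_k$ has one at the corresponding length. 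Since $\bS_k$ satisfies $(A_k)^{\tt tree}$ and coding nodes in $\bS_k$ are dense and code every possible $1$-type over initial segments, whenever a new pre-$a$-clique arises in an interval of $\bT_k$ that interval must be capped by a coding node, and moreover the witnessing $(a-2)$-clique of coding nodes needed in Definition \ref{def.WinT} exists among the coding nodes of $\bT_k$ at or below that length (this is exactly the content ensured by how $\bS_k$ was built in Example \ref{thm.stftree}, which in turn ensured each new coding configuration is available).

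The main obstacle I anticipate is the bookkeeping for the Strong Witnessing Property of $\bT_k$ itself: one has to check both that every new pre-clique lies in an interval of the form $(|d^k_{m_n-1}|,|c^k_n|]$ (so that the interval is capped by a coding node, not merely a splitting node), and that there is a genuine set of $a-2$ many coding (or ghost coding) nodes in $\bT_k$, containing $c^k_n$, that forms an $(a-2)$-clique and witnesses the pre-$a$-clique in the sense of Definition \ref{def.WinT}. Once this is established, Lemma \ref{lem.concpresWP} applied to $f_T^{-1} : T \to \bT_k$ (or equivalently $f_T : \bT_k \to T$, since $\cong$ is symmetric by Observation \ref{obs.urcool}) transfers the property to $T$, completing (3). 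All three properties together then give the lemma.
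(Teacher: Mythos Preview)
Your argument is correct and essentially coincides with the paper's own proof: parts (1) and (2) are derived identically, and for (3) you invoke Lemma~\ref{lem.concpresWP} together with the fact that $\bT_k$ has the Strong Witnessing Property, whereas the paper invokes Lemma~\ref{lem.need} (whose forward direction is exactly Lemma~\ref{lem.concpresWP}) together with the same fact about $\bT_k$. The paper, like you, takes the Strong Witnessing Property of $\bT_k$ as established (it is asserted without proof in the argument for Lemma~\ref{lem.internalcharacterization}(2)); your sketch of how to verify it from the construction in Example~\ref{ex.bTp} is a reasonable elaboration of what the paper leaves implicit.
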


\begin{proof}
(1) is immediate from the definition of $\mathcal{T}_k$.
(2) follows from  Lemma \ref{lem.internalcharacterization} part (1).
(3) follows from (1) and Lemma \ref{lem.need}.
\end{proof}


\section{Extension Lemmas}\label{sec.ExtLem}

Unlike Milliken's strong trees,
not every finite subtree of a strong $\mathcal{H}_k$-coding tree can be extended  within that ambient tree
to another member of $\mathcal{T}_k$, nor necessarily even  to another finite tree of a desired configuration.
This section provides   structural properties of finite subtrees
which are necessary and sufficient  to extend to a larger tree of a particular strong similarity type.
The first subsection lays the groundwork for these properties and the second subsection proves extension lemmas which are fundamental  to developing Ramsey theory on strong $\mathcal{H}_k$-coding trees.
The extension lemmas extend and streamline similar lemmas in \cite{DobrinenJML20}, taking care of new issues that arise when $k\ge 4$.
Furthermore, these lemmas lay new groundwork for general extension principles, with the benefit of a simpler
 proof of Theorem \ref{thm.matrixHL} than the proof of its  instance for  $\mathcal{H}_3$ in \cite{DobrinenJML20}.

\subsection{Free level sets}\label{subsec.valid}

In this subsection, we provide criteria which will aid in the extension lemmas in Subsection \ref{subsec.extlemmas}.
These requirements will guarantee that a finite subtree of a strong coding tree $T$  can be extended {\em within $T$} to another strong coding tree.

Recall that given a tree $T\in\mathcal{T}_k$ and $m\in\bN$, $T(m)$ denotes the set of nodes in $T$ of length equal to $|d^T_m|$, the length of the $m$-th critical node in $T$.
The length of a (ghost)  coding node $c^T_n$ is denoted $l^T_n$, which equals $|d^T_{m_n}|$.
Thus, $T(m_n)=T\re l^T_n$.
Throughout this section,   $n \ge -k+2$, and when we write ``coding node'' we are including ghost coding nodes.

\begin{defn}[Free]\label{defn.nonewppc}
Let $T\in\mathcal{T}_k$ be fixed.
We say that a level set $X\sse \widehat{T}$ with length $l$
is {\em free} in $T$
if
given $n$ least such that $l^T_n\ge  l$,
letting  $Y$ consist of the leftmost extensions
 of members of $X$
in $T\re l^T_n$,
then
$Y$ has no new pre-$a$-cliques over $X$, for any $a\in [3,k]$.
\end{defn}

In particular, any level set $X\sse T$ with length  that of some   coding node in $T$ is free in $T$.

\begin{rem}
For $k=3$, this is equivalent to the concept of ``$X$ has no pre-determined new parallel $1$'s in $T$" in \cite{DobrinenJML20}.
\end{rem}

\begin{term}\label{term.pc}
For a level set $Y$ end-extending a level set $X$,
we  say that
$Y$ has {\em no new pre-cliques over $X$}
if $Y$ has no new pre-$a$-cliques over $X$, for any $a\in [3,k]$.
\end{term}

\begin{lem}\label{lem.leftmostfree}
Let $T\in\mathcal{T}_k$ be fixed,
$X\sse \widehat{T}$  be a level set
which is free in $T$.
Then for  any $l>l_X$, 
the set of leftmost extensions in $T$ of the nodes in $X$ to $T\re l$ contains  no new pre-cliques over $X$.
Furthermore,  for any $n$ such that $l^T_n> l_X$, 
 the leftmost extensions of $X$ in $T\re l^T_n$
 have passing numbers   $0$ at  $c^T_n$.
 It follows that any set of leftmost extensions of $X$ is free in $T$. 
\end{lem}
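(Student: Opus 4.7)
The plan is to reduce both assertions to a single structural fact about leftmost extensions in the ambient tree $\bT_k$: at each coding-node level $l^T_n>l_X$, any leftmost extension of $x\in X$ to $T\re l^T_n$ has passing number $0$ at $c^T_n$. I would verify this by unpacking the construction of $\bT_k$ in Example \ref{ex.bTp}: at each coding-node level $m_{n+1}$ the nodes of $\bT_k(m_{n+1})$ are of two kinds, nodes $t^i_j$ originating from splits (whose immediate successors append the bit $i$), and nodes $t_{\tilde s}$ originating from non-splits (whose immediate successors append $0$). A leftmost extension takes the $0$-branch at every split and the unique successor elsewhere, so it arrives at either a $t^0_{j'}$ or a $t_{\tilde s'}$, both of which extend by $0$. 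Transferring via the strong isomorphism $f_T\colon\bT_k\to T$, which preserves lexicographic order, coding nodes, and passing numbers, the same conclusion holds in $T$.

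The no-new-pre-cliques statement then drops out. A pre-$a$-clique at level $j$ requires $j=|c^T_{i_*}|$ for its maximal witness, so new pre-cliques over $X$ can only arise at coding-node levels $j=l^T_n\in(l_X,l]$. Suppose for contradiction that $Z\sse Y$ has $Z\re j$ a pre-$a$-clique at such a $j$. The pre-clique condition forces $z^+(j)=1$ for every $z\in Z\re j$; but each such $z$ is the leftmost extension of some $x\in X$ to level $j$, so by the passing-number claim $z^+(j)=0$, a contradiction. The freeness hypothesis on $X$ itself anchors the base case $l=l^T_{n_0}$, after which the above argument extends the conclusion to all $l>l_X$.

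The ``furthermore'' claim follows by the same propagation idea. Any set $Y$ of leftmost extensions of $X$ to some level $l_Y\geq l_X$ has the property that further leftmost extensions of $Y$ coincide with leftmost extensions of $X$ to the longer level. By the main claim, such an extension has no new pre-cliques over $X$; moreover, a pre-clique at a level $\leq l_X$ automatically persists as a pre-clique at level $l_Y$ with the same witnesses, since those witnesses have length $\leq l_X$ and the corresponding passing numbers depend only on the initial segment $Z\re l_X$. Thus a hypothetical new pre-clique over $Y$ would yield a contradiction via the main claim applied to $X$, and $Y$ is free. The main technical obstacle is simply the concrete passing-number analysis inside $\bT_k$; once that is in hand the remaining deductions are formal.
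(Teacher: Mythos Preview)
Your overall strategy—analyze leftmost extensions in $\bT_k$, then transfer via $f_T$—matches the paper's, and your structural analysis of $\bT_k(m_{n+1})$ is fine. But the deduction of the no-new-pre-cliques claim has a genuine gap.

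You write that ``a pre-$a$-clique at level $j$ requires $j=|c^T_{i_*}|$'', and conclude that new pre-cliques can only arise at coding-node levels $l^T_n$ of $T$. That is not what Definition~\ref{defn.prepclique} says: the witnesses $\{c^k_i:i\in I\}$ are coding nodes of the ambient tree $\bT_k$, and the level $l$ at which a pre-$a$-clique appears is $|c^k_{i_*}|$, a $\bT_k$-coding-node length. Since $T$ is a proper subtree of $\bT_k$, there are many $\bT_k$-coding-node levels in any interval $(l_X,l]$ that are not of the form $l^T_n$. A new pre-clique in $Y$ over $X$ could perfectly well appear at such a level, and your passing-number claim (which only concerns $c^T_n$) says nothing about the value $y^+(|c^k_m|)$ there. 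Concretely: the leftmost extension of $x$ inside $T$ is not the sequence of all $0$'s in $\bT_k$; it threads through $T$ and may well pick up $1$'s at $\bT_k$-coding-node levels between consecutive critical nodes of $T$.

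What closes this gap is exactly the extra content of \emph{strong isomorphism} over strong similarity (Definition~\ref{defn.stable}): $f_T$ preserves maximal new pre-$a$-cliques in each interval $(|d^T_{m-1}|,|d^T_m|]$, not merely passing numbers at $T$-coding nodes. The paper's proof uses this directly: $f_T^{-1}[Y]$ is the set of leftmost extensions in $\bT_k$ (since $f_T^{-1}$ preserves lexicographic order), and in $\bT_k$ these visibly have no new pre-cliques; then one pulls the conclusion back to $Y$ using the pre-clique-preservation clause of strong isomorphism. Your argument lists only the strong-similarity properties of $f_T$ (lex order, coding nodes, passing numbers) when transferring, and that is not enough. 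Once you invoke the pre-clique-preservation clause, your passing-number argument becomes superfluous for the first assertion, though it still yields the ``passing number $0$ at $c^T_n$'' statement directly.
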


\begin{proof}
This lemma  follows from the fact that $T\cong \bT_k$.
To see this,
let $f:\bT_k\ra T$ be the strong isomorphism
 witnessing that $T\in\mathcal{T}_k$,
and let $j$ be least such that $l^T_j\ge l_X$.
Let $n\ge j$ and $a\in[3,k]$ be given, and let $Y$ be the
end-extension of $X$ in
$T\re l^T_n$
consisting of the nodes
which are leftmost extensions in $T$ of the nodes in $X$.
Since $X$ is free in $T$, $Y$ has no new pre-$a$-cliques over $X$.
Since
$f^{-1}$  is a strong similarity map, $f^{-1}[Y]$
is the collection of leftmost extensions in $\bT_k\re l^k_n$ of the level set $f^{-1}[X]$.
In particular, $f^{-1}[Y]$ has no new  pre-$a$-cliques  in the interval $[l^k_j,l^k_n]$.
In particular, the passing numbers of members of $f^{-1}[Y]$  in this interval $[l^k_j,l^k_n]$ are all $0$.
Since $f$ is a strong isomorphism,
$Y=f\circ f^{-1}[Y]$ has no new pre-$a$-cliques over $Y\re l_X$, and all passing numbers of the leftmost extensions of $X$ in $T$ are $0$.
In particular, any set of leftmost extensions of $X$ in $T$ is free in $T$. 
\end{proof}

An important property of $\mathcal{T}_k$ is that
 all of its members contain
 unbounded sequences of $0$'s.

\begin{lem}
Suppose $T\in\mathcal{T}_k$ and $s$ is a node in the leftmost branch of $T$.
Then  $s$ is a sequence of $0$'s.
\end{lem}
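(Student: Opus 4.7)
The plan is to locate the infinite all-$0$s branch inside $T$ and then identify it with the leftmost branch via lex-minimality. First, I would invoke Lemma \ref{lem.psim.properties} to obtain $T\ssim\bT_k$; in particular, $T$ codes a copy of $\mathcal{H}_k$ and hence satisfies $(A_k)^{\tt tree}$. As recorded in the remark preceding Definition \ref{def.WinT}, this forces $T$ to contain all-$0$s sequences of arbitrarily large length: otherwise every node of $T$ beyond some bounded length would carry a $1$ at a fixed position $|c^T_i|$, so no coding node of $T$ past that level could realize the $1$-type corresponding to $F=\emptyset$ demanded by clause (ii) of $(A_k)^{\tt tree}$.

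Next, I would promote this to the statement that $0^{(l)}\in T$ for every $l\in L(T)$. Given such an $l$, pick any $s\in T$ with $|s|=l$ together with an all-$0$s node $t\in T$ satisfying $|t|\ge l$; by the closure condition in Definition \ref{defn.tree}, the initial segment $t\re l = 0^{(l)}$ lies in $T$. In particular, the stem $d_0^T$ itself equals $0^{(|d_0^T|)}$, since every node of $T$ of length $\ge |d_0^T|$ extends $d_0^T$ (otherwise the meet of such a node with $d_0^T$ would be a splitting node strictly shorter than the stem), and this class of nodes includes all-$0$s sequences of unbounded length.

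Finally, I would identify the leftmost branch. At each length $l\in L(T)$, the sequence $0^{(l)}$ is lex-minimal in $\Seq_l$ and therefore lex-minimal in $T\re l$. Since the leftmost branch of $T$ takes the $0$-direction at every splitting node and is uniquely extended between consecutive critical levels, its node at level $l$ must be the lex-smallest element of $T\re l$ extending the previous branch node, and this is forced to be $0^{(l)}$. Hence the leftmost branch of $T$ is exactly $\{0^{(l)}:l\in L(T)\}$, and every $s$ on that branch satisfies $s=0^{(|s|)}$.

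The only real obstacle is the bootstrap in the first step, namely extracting from $(A_k)^{\tt tree}$ that $T$ contains all-$0$s sequences of unbounded length; once that is in hand, the lex-minimality argument is routine. Fortunately this consequence of $(A_k)^{\tt tree}$ is already noted in the remark preceding Definition \ref{def.WinT}, so the proof amounts to assembling these observations.
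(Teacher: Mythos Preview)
Your overall strategy---locate arbitrarily long all-$0$ sequences in $T$ via $(A_k)^{\tt tree}$ with $F=\emptyset$, then use tree closure and lex-minimality---is sound, and the second and third steps are fine. But there is a real gap in the first step. You write that every long node of $T$ would carry a $1$ at position $|c^T_i|$, yet the remark you cite only guarantees a $1$ at some $|c^k_i|$, a coding-node level of the ambient tree $\bT_k$. These need not be coding-node levels of $T$, and clause (ii) of $(A_k)^{\tt tree}$ for $T$ constrains passing numbers only at $T$-coding-node levels. A node could in principle have a $1$ at some $|c^k_i|$ while having passing number $0$ at every $c^T_j$, and then your $F=\emptyset$ contradiction does not fire.

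The gap is fixable: invoke part (3) of Lemma~\ref{lem.psim.properties}, the Strong Witnessing Property of $T$. A node with a $1$ at some $|c^k_i|$ is a singleton pre-$3$-clique, and by SWP this must be witnessed in $T$, so the node has passing number $1$ at some $T$-coding node $c^T_j$ whose index $j$ is bounded once the bound on $i$ is fixed. With that addition, your $(A_k)^{\tt tree}$ argument goes through.

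The paper handles this point differently, without SWP: it splits into two cases according to whether each $c\in C$ has a ``proxy'' $T$-coding node $e_c$ inducing the same set of $1$'s at some level. If so, every vertex coded by $T$ has an edge with one of finitely many $e_c$'s, and $T$ does not code $\mathcal{H}_k$. If not, some node $t$ has $t(|c|)=1$ but passing number $0$ at all $T$-coding nodes below; by the $K_k$-Free Branching Criterion of $T$, such a $t$ can be extended to a $(k-1)$-clique of $T$-coding nodes, which together with $c$ would code a $k$-clique in $\bT_k$---contradicting $K_k$-freeness. Your route, once patched with SWP, is arguably cleaner; the paper's avoids appealing to SWP at this stage.
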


\begin{proof}
If not, then 
for some $n$, no node of $T$ extends $0^{(n)}$.
Let $n$ be the least such integer. 
Then for each $t\in T\re n$, there is some coding node 
$c\in r_n(\bT_k)$ such that $t(|c|)=1$.

If there is a $t\in T\re n$ such that for all coding nodes $c^T_i$ in $T$ of length less than $n$, $t(|c^T_i|)=0$,
then by the $K_k$-Free Branching Criterion of $T$,
$t$ extends in $T$ to $k-1$ many coding nodes forming a $(k-1)$-clique.
But this  $(k-1)$-clique
forms a $k$-clique with the vertex represented by any 
 coding node in $r_n(\bT_k)$ for which  $t(|c|)=1$, contradicting that $T\in\mathcal{T}_k$.

Otherwise, for each $t\in T\re n$, there is a coding node $c\in T$ with $|c|<n$ such that $t(|c|)=1$.
But then $T$ cannot code a copy of $\mathcal{H}_k$, contradicting that $T\in\mathcal{T}_k$.
\end{proof}

\begin{notation}\label{notn.maxl_A}
Let $A$ be a finite subtree of $\bT_k$.
We let  $l_A$  denote the   maximum of the lengths of the nodes in $A$, and  let
\begin{equation}
\max(A)=\{t\in A:|t|=l_A\}.
\end{equation}
Thus, $\max(A)$ is a level set.  
If the maximal nodes in $A$ do not all have the same length, then $\max(A)$ will not contain all the maximal nodes in $A$. 
\end{notation}

\begin{defn}[Finite strong coding  tree]\label{defn.fsct}
A finite subtree $A\sse\bT_k$ is a
 {\em finite strong coding  tree} if and only if
$A\in\mathcal{AT}^k_{m+1}$
 for  $m$ such that   either $d^k_m$ is a  coding node or else $m=0$.
\end{defn}

\begin{lem}\label{lem.fsctvalid}
Given $T\in\mathcal{T}_k$, each finite strong coding tree $A$ contained in $T$
has the Strong Witnessing Property, and $\max(A)$ is free
 in $T$.
\end{lem}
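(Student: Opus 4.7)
The plan is to verify the two conclusions separately, using the fact that by Definition \ref{defn.fsct} we may write $A = r_{m+1}(S)$ for some $S \in \mathcal{T}_k$, where either $m = 0$ or else $d^S_m$ is a coding node of $S$, and in either case $l_A = |d^S_m|$.

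For the Strong Witnessing Property of $A$: by Lemma \ref{lem.psim.properties}, $S$ itself has the SWP. Suppose $X \subseteq A$ is a level set with a new pre-$a$-clique at some length $l$, for some $a \in [3, k]$. Because being a new pre-$a$-clique is a property intrinsic to $X$, the set $X$ also has a new pre-$a$-clique at $l$ in $S$, and by the SWP of $S$ there is an interval $(|d^S_{m_n-1}|, |c^S_n|]$ containing $l$ together with a witnessing set of coding nodes $C \subseteq S$ whose longest member is $c^S_n$. The key point I would stress is that since $l \leq l_A = |d^S_m| = |c^S_{n_A}|$ for some index $n_A$, the index $n$ of the next coding node of $S$ at or past $l$ satisfies $|c^S_n| \leq |c^S_{n_A}| = l_A$, so $c^S_n$ lies in $A$; since the other coding nodes of $C$ are shorter, they also lie in $A$, and $X$ is witnessed inside $A$. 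Thus $A$ inherits the SWP from $S$. In the exceptional case $m = 0$, the tree $A = \{c^S_{-k+2}\}$ is a singleton containing the ghost coding node at the root, which by the construction of $\bT_k$ is not a pre-clique, so the SWP holds vacuously.

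For the freeness of $\max(A)$ in $T$: the nodes of $\max(A)$ have common length $l_A = |d^A_m|$. Since $d^A_m$ is a coding node of $A$ (matching a coding node of $T$, because $A \subseteq T$ with compatible coding-node structure), $l_A$ equals the length $l^T_n$ of a coding node in $T$; the $m = 0$ case is analogous with $l_A = 0 = l^T_{-k+2}$. Then in the notation of Definition \ref{defn.nonewppc}, the smallest index with $l^T_n \geq l_{\max(A)}$ yields $l^T_n = l_{\max(A)}$, so the leftmost extensions of $\max(A)$ in $T \re l^T_n$ are the nodes of $\max(A)$ themselves. The interval $(l_{\max(A)}, l^T_n]$ is empty, so the condition for having no new pre-cliques over $\max(A)$ is vacuously satisfied, and $\max(A)$ is free in $T$. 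The only subtle point, and the one I expect to require the most care, is confirming in the SWP argument that the witnessing set $C$ actually lies inside $A$: this is precisely where the hypothesis that $A$ ends at a coding node level is used, since were $A$ instead to terminate at a splitting node, a new pre-clique inside $A$ might demand witnesses at coding nodes of $S$ beyond $l_A$, and the SWP of $A$ would fail.
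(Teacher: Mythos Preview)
Your proof is correct and takes a somewhat different route for the Strong Witnessing Property than the paper. The paper argues via strong isomorphism: since $A\cong r_{m+1}(\bT_k)$ and $r_{m+1}(\bT_k)$ has the SWP, Lemma~\ref{lem.need} transfers the SWP to $A$. You instead work directly with the ambient $S\in\mathcal{T}_k$ for which $A=r_{m+1}(S)$: you use that $S$ has the SWP (Lemma~\ref{lem.psim.properties}) and that witnesses for any new pre-clique at a level $l\le l_A$ must lie at or below the coding node $c^S_{n_A}$ of height $l_A$, hence inside $A$. Your argument is more hands-on in that it avoids invoking the transfer lemma and makes explicit precisely why terminating $A$ at a coding-node level is what forces all needed witnesses to remain in $A$; the paper's version is terser but tacitly relies on the same reason for why $r_{m+1}(\bT_k)$ itself has the SWP.

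For freeness, both you and the paper appeal to $l_A$ being a coding-node length of $T$; your phrase ``compatible coding-node structure'' matches the paper's implicit reading of ``contained in $T$'' as containment of trees with coding nodes. One small slip: in the $m=0$ case you write $l_A=0=l^T_{-k+2}$, but for general $S,T\in\mathcal{T}_k$ the stems $d^S_0$ and $d^T_0$ are sequences of $0$'s of positive length, not the empty sequence. The conclusion still holds because $d^S_0$ is a sequence of $0$'s, so its leftmost extension in $T$ stays a sequence of $0$'s and acquires no pre-cliques; but that is the correct justification, not $l_A=0$.
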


\begin{proof}
Fix $T\in \mathcal{T}_k$ and
let $A$ be  a finite strong coding tree contained in $T$.
If $A\in \mathcal{AT}_0^k$, then
$A$ is the empty set, so the lemma vacuously holds.
Otherwise,
by Definition \ref{defn.fsct},
$\max(A)$ contains a coding node, so
$\max(A)$ is  free in $T$.
Further,  $A\cong r_{m+1}(\bT_k)$ for some $m$ such that $d^k_m$ is a coding node.
As $r_{m+1}(\bT_k)$ has the Strong Witnessing Property,  Lemma \ref{lem.need} implies 
that $A$ also has the Strong Witnessing Property.
\end{proof}


\subsection{Extension Lemmas}\label{subsec.extlemmas}

The  next series of lemmas
 will be used extensively throughout the rest of the paper.
 As  consequences, 
  these lemmas ensure that  every tree in $\mathcal{T}_k$  contains infinitely many subtrees  which are also members of $\mathcal{T}_k$,
and that  for any  $A\in \mathcal{AT}_m^k$ 
with 
$A\sse T\in\mathcal{T}_k$ and $\max(A)$ free in $T$,
the set $r_j[A,T]$, defined in  (\ref{eq.r_m}) of Definition   \ref{defn.T_pspace}, is infinite for each $j> m$.

\begin{lem}\label{lem.poc}
Suppose $T\in \mathcal{T}_k$ and
 $X\sse T $   is a level set  with  $X$  free in $T$. 
Fix a    subset  $X'\sse X$. 
Let
$Y'\sse T$   
be any level  set 
 end-extending  $X'$ such that $Y'$ is free in $T$.
Let $Y''$ denote the set of leftmost extensions of $X\setminus X'$ in $T \re l_{Y'}$.
Then  $Y=Y'\cup Y''$ is free in $T$, and any new pre-cliques in $Y$ occur in $Y'$. 
In particular,  if 
$Y'$ has no new pre-cliques over $X'$,
 then $Y$ has no new pre-cliques over $X$.
\end{lem}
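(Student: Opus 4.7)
The plan is to work via the strong isomorphism $f_T\colon\bT_k\to T$ and to exploit the sharpening of Lemma~\ref{lem.leftmostfree} already implicit in its proof: a leftmost extension in $T$ of a free level set has passing number $0$ at \emph{every} coding node of $\bT_k$ lying in the extension interval, not merely at the final $c^T_n$ appearing in the lemma's statement. This rests on the fact that the leftmost branch above any node in $\bT_k$ carries $0$ at every coding-node position, which transfers through $f_T$ to $T$. As a preliminary, I would note that any sub-level-set of a free set is free: if $X^*\sse X$ and $X$ is free, the leftmost extension of $X^*$ in $T\re l^T_n$ is a sub-level-set of the leftmost extension of $X$, and any new pre-clique in the former is also one in the latter, forcing (by freeness of $X$) a witnessing pre-clique at $X$ that then restricts to $X^*$. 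In particular, $X\setminus X'$ is free in $T$.

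For the second (containment) assertion, suppose $Z\sse Y$ and $Z\re l'$ is a pre-$a$-clique for some $a\in[3,k]$ and some $l'\in(l_X,l_Y]$, witnessed in $\bT_k$ by $\{c^k_i:i\in I\}$ with maximal witness $c^k_{i^*}$ of length $l'$. If some $z\in Z$ lies in $Y''$, then $z$ is a leftmost extension in $T$ of some $x\in X\setminus X'$ to length $l_{Y'}$, and the sharpened Lemma~\ref{lem.leftmostfree} applied to the free set $X\setminus X'$ forces the passing number of $z$ at $c^k_{i^*}$ to be $0$. This contradicts the defining condition of the pre-clique, which demands this passing number to equal $1$. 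Hence $Z\sse Y'$, so every new pre-clique in $Y$ over $X$ is contained in $Y'$. The ``in particular'' clause then follows: since $l_{X'}=l_X$ and $Z\sse Y'$, the notions ``new over $X$'' and ``new over $X'$'' coincide for such a $Z$, so the absence of new pre-cliques over $X'$ in $Y'$ precludes any new pre-clique over $X$ in $Y$.

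To establish the first assertion---that $Y$ is free in $T$---let $n$ be least with $l^T_n\ge l_Y$ and write the leftmost extension of $Y$ in $T\re l^T_n$ as $W=W'\cup W''$, where $W'$ and $W''$ are the leftmost extensions of $Y'$ and $Y''$, respectively. By hypothesis $Y'$ is free, so $W'$ has no new pre-cliques over $Y'$. Repeating the preceding paragraph with $Y''$ (which by Lemma~\ref{lem.leftmostfree} is itself free, being a set of leftmost extensions of $X\setminus X'$) in the role of $Y''$ and $W''$ in the role of the fresh leftmost extension, any pre-clique $Z\sse W$ at a level $l''\in(l_Y,l^T_n]$ must satisfy $Z\sse W'$; freeness of $Y'$ in $T$ then ensures that $Z$ end-extends $Z\re l_Y$ and that $Z\re l_Y$ already has a pre-clique, which is precisely the condition ``no new pre-cliques over $Y$''.

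The principal obstacle is justifying the sharpened form of Lemma~\ref{lem.leftmostfree}---that passing number $0$ holds at every intermediate coding node of $\bT_k$, not only at the terminal $c^T_n$ recorded in the lemma's statement. This is most cleanly secured by pulling the configuration back to $\bT_k$ via $f_T^{-1}$, where the construction of $\bT_k$ makes it transparent that leftmost paths take value $0$ at every coding-node position above the base, and then pushing the conclusion forward through $f_T$; the additional bookkeeping for coding nodes of $\bT_k$ that are not images of coding nodes of $T$ is handled uniformly by this pass-through.
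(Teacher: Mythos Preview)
Your argument turns on what you call the ``sharpened'' Lemma~\ref{lem.leftmostfree}: that a leftmost extension in $T$ of a free node has passing number $0$ at \emph{every} coding node $c^k_i$ of $\bT_k$ in the extension interval. This is the step that fails. The strong isomorphism $f_T\colon\bT_k\to T$ only relates passing numbers at corresponding coding nodes---that is, at $c^T_n=f_T(c^k_n)$---not at the many $\bT_k$-coding-node positions lying strictly between levels of $T$. Your proposed ``pass-through'' does not handle these: pulling back to $\bT_k$ shows $f_T^{-1}(y)$ is an all-$0$'s extension, but pushing forward through $f_T$ only recovers the passing numbers of $y$ at the $c^T_n$, which is exactly what Lemma~\ref{lem.leftmostfree} already records. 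Concretely, if $x$ already has a $1$ at some $|c^k_j|\le l_X$ (so $\{x\}$ is already a singleton pre-$3$-clique), then its leftmost extension $y$ in $T$ may acquire further $1$'s at $\bT_k$-coding-node positions in $(l_X,l_Y]$ without creating any \emph{new} pre-clique over $\{x\}$; Lemma~\ref{lem.leftmostfree} therefore does not forbid this, and the bits between critical levels of $T$ are determined by the embedding $T\hookrightarrow\bT_k$, not by $f_T$.

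The paper's proof closes this gap by invoking the Strong Witnessing Property of $T$ (Lemma~\ref{lem.psim.properties}(3)). If $Z\sse Y$ has a new pre-$a$-clique at level $l\in(l_X,l_Y]$, then since $Z$ is a level subset of $T$, the SWP forces $l$ to lie in an interval of $T$ ending at a coding node $c^T_j$, and the witnessing in $T$ forces every $z\in Z$ to have passing number $1$ at $c^T_j$. Since $l_X$ and $l_Y$ are both levels of $T$, one has $l_X<|c^T_j|\le l_Y$, and now Lemma~\ref{lem.leftmostfree} \emph{as stated} applies: any $z\in Y''$, being a leftmost extension in $T$, has passing number $0$ at $c^T_j$. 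This is the contradiction. The essential move you are missing is the transfer, via SWP, from a witness at an arbitrary $c^k_{i^*}$ of $\bT_k$ to a witness at a coding node $c^T_j$ of $T$; once you insert that step, your argument becomes the paper's.
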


\begin{proof}
Since   $Y'$ is free in $T$ and
 Lemma \ref{lem.leftmostfree} implies  that $Y''$ is free in $T$,  it follows that $Y$ is free in $T$.
Suppose that for  some $a\in[3,k]$  there is a 
   $Z\sse Y$  
such that $Z$ has a  new  pre-$a$-clique in the interval $(l_X,l_Y]$.
Let $X''=X\setminus X'$,  $Z'=Z\cap Y'$, and $Z''=Z\cap Y''$.
 By Lemma \ref{lem.leftmostfree},
$Y''$ has no new pre-cliques over $X''$, 
so $Z'$ must be non-empty.

Suppose toward a contradiction that also  $Z''$ is  non-empty. 
Let $l$ be the minimal  length at which
 this new  pre-$a$-clique occurs, and
let $m$ be the least integer such that
$|d^T_m|<l \le |d^T_{m+1}|$.
Since $T$ has the Strong Witnessing Property (recall Lemma \ref{lem.psim.properties}),
 $d^T_{m+1}$ must be  a  coding node in $T$, say $c^T_j$.
 In the case that $l_Y<l^T_j$,
 by Lemma \ref{lem.leftmostfree} we may extend the nodes in $Z$ leftmost  to nodes in $T\re l^T_j$ without adding any new pre-cliques.
 Thus,
 without loss of generality, assume that $l_Y\ge l^T_j$. 
Since the new  pre-$a$-clique $Z\re l$  must be witnessed  in $T$ at the level of $c^T_j$,
it follows that all nodes in $Z$ have passing number $1$ at $c^T_j$.
Let $f$ be the strong isomorphism from $T$ to $\bT_k$ and
let $V$ denote $f[Z\re l^T_j]$.
Then  $V$ is a level set in $\bT_k$ of length $l^k_j$.
Since $f$ is a strong similarity map,
it  preserves passing numbers.
Hence,
all nodes in $V$ have passing number $1$ at $c^k_j$ in $\bT_k$.

However,
since
$Y''$ consists of the leftmost extensions in $T$
of the nodes in  $X''$,
it follows that
$f[Y'']$ is the set of
 leftmost extensions in $\bT_k$ of $f[X'']$.
Thus, each member of $f[Y'']$ has passing number $0$ at $c_j^k$.
Since $Z''\ne\emptyset$,
also
$f[Z'']\ne\emptyset$, so $V$ has at least one node with passing number $0$ at $c^k_j$, contradicting the previous paragraph.
Therefore,  $Z''$ must be empty, so $Z$ resides entirely within $Y'$.
In particular, if  $Y'$ has no new pre-cliques over $X'$, then  $Y$ has no new  pre-cliques over $X$. 
\end{proof}

\begin{lem}\label{lem.perfect}
Suppose $s$ is a node in a strong coding tree $T\in\mathcal{T}_k$.
If $n$ is least such that $|s|\le l^T_n$,
then there is splitting node $t$ in $T$  extending $s$ such that
 $|t|\le  l^T_{n+k}$.
In particular,
every strong coding tree is perfect.
\end{lem}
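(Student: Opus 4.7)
The plan is to reduce to the concrete tree $\bT_k$ via the strong isomorphism $f_T\colon\bT_k\to T$ that comes with $T\in\mathcal{T}_k$, and then to exploit the rigid form of most coding nodes in the construction of $\bS_k$ in Example~\ref{thm.stftree}. First, the singleton $\{s\}$ is trivially free in $T$, so Lemma~\ref{lem.leftmostfree} lets us extend $s$ leftmost in $T$ to a node $t_0$ of length $l^T_n$ with $t_0(l^T_i)=0$ for every $i$ satisfying $|s|\le l^T_i\le l^T_n$. Writing $V_P=\{c^T_j : j<n,\ t_0(l^T_j)=1\}$, the passing-$1$ set of $t_0$ among real coding nodes sits entirely at indices strictly below $n$.

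Next, inductively let $t_{i-1}$ be the unique extension of $t_0$ at length $l^T_{n+i-1}$, for $i=1,\dots,k$, as long as no splitting extension of $t_0$ has yet appeared. If some $t_{i-1}$ branches inside its interval $(l^T_{n+i-1},l^T_{n+i}]$, the resulting splitting node has length at most $l^T_{n+i}\le l^T_{n+k}$ and is the desired $t$. Otherwise the $K_k$-Free Branching Criterion (Lemma~\ref{lem.psim.properties}(2)) produces, for each $i$, a $(k-2)$-clique $C_i\sse V_P\cup\{c^T_{-k+2},\dots,c^T_{-1}\}$ such that $c^T_{n+i}$ is adjacent to every element of $C_i$ in the coded graph; here we use that the passing-$1$ set of $t_{i-1}$ among real coding nodes coincides with that of $t_0$, since each inductive step only adjoins a $0$ at the freshly encountered coding node.

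The heart of the argument is to rule out this cascade of $k$ consecutive failures. By Example~\ref{thm.stftree}, whenever $m\ge k-2$ and $m\not\equiv 0\pmod{k-1}$, the coding node $c^k_m$ has the rigid form ${0^{(m)}}^{\frown}1^{(k-2)}$; such a node has passing number $1$ precisely at the real coding lengths $l^k_{m-k+2},\dots,l^k_{m-1}$ and at no ghost coding level. Because $f_T$ preserves passing numbers, the same holds for $c^T_m$. For $i\ge k-2$ with $n+i\not\equiv 0\pmod{k-1}$, the indices where a rigid $c^T_{n+i}$ passes $1$ form a subset of $\{n+i-k+2,\dots,n+i-1\}\sse[n,\infty)$, which is disjoint from $V_P\cup\{-k+2,\dots,-1\}$; hence no $(k-2)$-clique $C_i$ witnessing the FBC failure can exist, forcing $t_{i-1}$ to branch. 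Among the three consecutive indices $\{n+k-2,n+k-1,n+k\}$, at most $\lceil 3/(k-1)\rceil$ (i.e.\ at most $2$ when $k=3$ and at most $1$ when $k\ge 4$) are divisible by $k-1$, so at least one of these indices is rigid, delivering the required splitting node of length $\le l^T_{n+k}$.

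The main technical subtlety is the pigeonhole at $k=3$, where up to two of the three relevant indices may simultaneously be even; nonetheless, one rigid coding node always survives, so the argument is uniformly tight. The `in particular' clause that every strong coding tree is perfect then follows at once: given any $s\in T$, choose $n$ minimal with $|s|\le l^T_n$ and apply the main claim to produce a splitting node of $T$ extending $s$.
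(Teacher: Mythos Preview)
Your strategy—using the rigid form $c^k_m = {0^{(m)}}^{\frown}1^{(k-2)}$ for $m \not\equiv 0 \pmod{k-1}$—is the same as the paper's, but your cascade has a small gap that the paper's single-step argument avoids.

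The issue is the claim that each inductive step adjoins only a $0$. For $i \ge 2$ this is fine: if $t_{i-2}$ did not branch then $t_{i-1}$ indeed has passing number $0$ at $c^T_{n+i-1}$. But the very first bit, the passing number $t_0^+(l^T_n)$ at $c^T_n$, is not controlled by your construction: when $|s| = l^T_n$ you have $t_0 = s$, Lemma~\ref{lem.leftmostfree} does not apply (it needs $l^T_n > l_X$ strictly), and $s^+(l^T_n)$ may be $1$. Then $c^T_n$ lies in the passing-$1$ set of every $t_{i-1}$ without being in $V_P$, and at $i = k-2$ the rigid window $[n, n+k-3]$ meets this set at $\{n\}$. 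For $k \ge 4$ the size constraint $|C_i| = k-2 \ge 2$ still rules this out, but for $k = 3$ the singleton $C_1 = \{c^T_n\}$ is not excluded by your disjointness argument. The repair is easy: restrict to $i \ge k-1$, where the rigid window lies in $[n+1, \infty)$ and so misses index $n$, and note that one of $n+k-1, n+k$ is always $\not\equiv 0 \pmod{k-1}$.

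The paper avoids this altogether by extending further before arguing: it takes $p > n$ least with $p \equiv 0 \pmod{k-1}$, extends $s$ leftmost in $\bT_k$ to $s'$ of length $l^k_p + 1$, and since $p > n$ strictly, Lemma~\ref{lem.leftmostfree} does guarantee $s'$ has passing number $0$ at $c^k_p$. Then $c^k_{p+1}$ is rigid (as $p+1 \equiv 1 \pmod{k-1}$), the common passing-$1$ set of $s'$ and $c^k_{p+1}$ has at most $k-3$ elements, and a single application of the $k$-FBC yields a split before $c^k_{p+1}$, with $p+1 \le n+k$.
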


\begin{proof}
It suffices to  work with $\bT_k$, since each member of $\mathcal{T}_k$ is strongly isomorphic to
$\bT_k$.
We make use here of  the
particular construction of $\bT_k$ from Example
\ref{ex.bTp}.

Let $s$ be a node in $\bT_k$, and let $n$ be least such that $l^k_n\ge |s|$.
Let $p> n$ be least such that $p=i(k-1)$ for some $i\ge 1$,
and let $s'$ be the leftmost extension of $s$ in
$\bT_k\re (l^k_p +1)$.
Note that $p< n+k$ and
 that $s'$ has passing number $0$ at
$c^k_p$.
By the construction of $\bT_k$,
 $c^k_{p+1}$ in $\bT_k$
will have passing number $1$ at precisely  $c^k_{p-k+3},\dots, c^k_p$, and at no others.
Let $v$ denote the truncation of $c^k_{p+1}$ to length  $l^k_p+1$.
The number of coding nodes in $\bT_k$ at which both
 $s'$ and $v$ have passing number $1$ is at most $k-3$.
Therefore, $s'$ and $v$ do not code a pre-$k$-clique.
So by the $k$-FBC,
$s'$ extends to a splitting node  $t$ in $\bT_k$ before reaching the level of $c^k_{p+1}$.
\end{proof}

Given a set of nodes $Z\sse \bT_k$,
let $Z^{\wedge}$ denote the {\em meet-closure} of $Z$,
which is the set of nodes $\{s\wedge t:s,t\in Z\}$.
By
the {\em tree induced by $Z$} we mean
 the set of nodes
$\{t\re |v| :t\in Z,\  v\in Z^{\wedge}\}$.

\begin{lem}\label{lem.factssplit}
Suppose $A$ is a finite   subtree of some  strong coding tree  $T\in \mathcal{T}_k$ with $\max(A)$ free in $T$. 
Let $X$ be any nonempty   subset of $ \max(A)$, and
 let $Z$ be any subset of
 $\max(A)\setminus X$.
Let
$\{s_i:i<\tilde{i}\}$
be  any enumeration
 of $X$ and suppose
$l\ge l_A$ is given.
Then
 there exist $l_*>l$
 and extensions
$t_i^0,t_i^1\supset s_i$ ($i<\tilde{i}$)
and $t_z\supset z$ ($z\in Z$), each  in $T\re l_*$,
such that letting
\begin{equation}
Y=\{t_i^j:i<\tilde{i},\ j<2\}\cup\{t_z:z\in Z\},
\end{equation}
and  letting $B$ denote the tree induced by $A\cup Y$,
 the following hold:
\begin{enumerate}
\item
The splitting in $B$ above level $l_A$ occurs in the order of the enumeration of $ X$.
Thus, for $i<i'<\tilde{i}$,
$|t_i^0\wedge t_i^1|<|t_{i'}^0\wedge t_{i'}^1|$.
\item
$Y$ has no new  pre-cliques   over $\max(A)$ and is free in $T$.
\end{enumerate}
\end{lem}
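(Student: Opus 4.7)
The plan is to construct $Y$ by processing the nodes of $X$ one at a time in the prescribed enumeration, at each stage producing a splitting of one $s_i$ via Lemma~\ref{lem.perfect} while holding every other node of $\max(A)$ (including the not-yet-processed members of $X$ and every $z\in Z$) on a leftmost path in $T$. The three ingredients will be Lemma~\ref{lem.perfect} to guarantee the existence of splitting extensions, Lemma~\ref{lem.leftmostfree} to control the leftmost extensions, and Lemma~\ref{lem.poc} to combine free level sets with leftmost extensions while tracking new pre-cliques.

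Concretely, I would maintain, by induction on $i\le\tilde i$, a level set $W^{(i)}$ in $T$ at an increasing height $l^{(i)}$, with $W^{(0)}$ being the leftmost extension of $\max(A)$ in $T$ to some height exceeding $l$. This $W^{(0)}$ is free in $T$ and has no new pre-cliques over $\max(A)$ by Lemma~\ref{lem.leftmostfree}. At stage $i<\tilde i$, let $u_i\in W^{(i)}$ be the current (leftmost) extension of $s_i$; apply Lemma~\ref{lem.perfect} to locate a splitting node $v_i\supset u_i$ in $T$, set $t_i^0=v_i{}^{\frown}0$ and $t_i^1=v_i{}^{\frown}1$, and define $W^{(i+1)}$ at height $|v_i|+1$ by including $t_i^0,t_i^1$ and extending all other members of $W^{(i)}$ leftmost to height $|v_i|+1$. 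Item~(1) of the conclusion follows at once from the construction since $|v_0|<|v_1|<\cdots$. For item~(2), I would invoke Lemma~\ref{lem.poc} with $X':=\{u_i\}$ and $Y':=\{t_i^0,t_i^1\}$; the remaining members of $W^{(i)}$ play the role of $X\setminus X'$ in that lemma and are extended leftmost by construction. Chaining ``no new pre-cliques over $W^{(i)}$'' with the inductive ``no new pre-cliques over $\max(A)$'' is routine from the definition, and freeness is propagated by the final clause of Lemma~\ref{lem.poc}. After all $\tilde i$ stages, if $l^{(\tilde i)}\le l$, I extend once more leftmost to reach a height $l_*>l$, invoking Lemma~\ref{lem.leftmostfree} a final time.

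The main obstacle will be verifying the two single-node claims that make Lemma~\ref{lem.poc} applicable at each inductive step, namely that $\{t_i^0,t_i^1\}$ is free in $T$ and has no new pre-cliques over $\{u_i\}$. This is where singleton pre-$a$-cliques for $3\le a\le k$ must be handled carefully, an issue that did not arise for $k=3$ in \cite{DobrinenJML20}. The key observations are: (a) $t_i^0$ and $t_i^1$ agree with $u_i$ on passing numbers at every coding node of length less than $|u_i|$, and both have passing number $0$ at every coding node of length in $[|u_i|,|v_i|]$, because $v_i$ is reached from $u_i$ by following the leftmost path of $T$ (which consists of $0$'s, as in the proof of Lemma~\ref{lem.perfect}); and (b) leftmost extensions in $T$ have passing number $0$ at every subsequent coding node. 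From (a), any pre-$a$-clique formed by a subset $Z\sse\{t_i^0,t_i^1\}$ must be witnessed by coding nodes of length strictly less than $|u_i|$, which therefore already witness that $\{u_i\}=Z\re|u_i|$ is a pre-$a$-clique; hence no new pre-cliques arise over $\{u_i\}$. From (b), freeness of $\{t_i^0,t_i^1\}$ in $T$ reduces to the same observation applied to its leftmost extension up to the next coding-node level, completing the verification.
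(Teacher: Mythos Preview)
Your plan matches the paper's: process the $s_i$ one at a time, split each via Lemma~\ref{lem.perfect}, carry everything else along leftmost, and control pre-cliques via Lemmas~\ref{lem.leftmostfree} and~\ref{lem.poc}. You are also right that the delicate step is the two claims for $\{t_i^0,t_i^1\}$. But your verification of them has a real gap.

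The assertion that the path from $u_i$ to $v_i$ ``consists of $0$'s, as in the proof of Lemma~\ref{lem.perfect}'' is a misreading. If $v_i$ is the splitting node of minimal length above $u_i$ in $T$, then the path from $u_i$ to $v_i$ is unique in $T$ and hence coincides with the leftmost extension in $T$; but leftmost extensions in $T$ are \emph{not} all-$0$ sequences in general, since at each $\bT_k$-coding-node level the successor is forced and may well be $1$. (The passage of the proof of Lemma~\ref{lem.perfect} you cite works in $\bT_k$, and even there the extension to $s'$ is only all-$0$'s through splitting-node levels.) So observation~(a) fails if ``coding node'' means $\bT_k$-coding node, which is what matters for pre-cliques. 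More seriously, your freeness reduction via~(b) cannot work: by the construction of $\bT_k$ in Example~\ref{ex.bTp}, the right-successor $v_i{}^{\frown}1$ of a splitting node, when extended leftmost to the next coding-node level, acquires passing number~$1$ there. Thus $\{v_i{}^{\frown}1\}$ is not free, so $Y'=\{t_i^0,t_i^1\}$ is not free at height $|v_i|+1$, and Lemma~\ref{lem.poc} does not apply as you invoke it. This breaks the induction on~$i$.

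The paper avoids this by never stopping at height $|v_i|+1$: at each stage it extends to the next $T$-coding-node level $l^T_{n_p}$, where every level set is automatically free (remark after Definition~\ref{defn.nonewppc}), and only then invokes Lemma~\ref{lem.poc}. Your argument repairs immediately with that single modification. Separately, the ``no new pre-cliques over $\{u_i\}$'' conclusion for the pair $\{v_i{}^{\frown}0,v_i{}^{\frown}1\}$ at height $|v_i|+1$ is correct but does not need your~(a): it follows from Lemma~\ref{lem.leftmostfree} applied to the free singleton $\{u_i\}$ (yielding no new pre-cliques for $\{v_i\}$) together with the observation that $|v_i|$, being a splitting-node level of $T$ and therefore of $\bT_k$, is not a $\bT_k$-coding-node level, so the single extra bit cannot witness any pre-clique.
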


\begin{proof}
If $l_A$ is not the level of some coding node in
$T$, begin by extending each member of $X$ leftmost in $T$ to the level of the
very next coding node in $T$.
In this case, abuse notation and
let  $X$  denote this set of extensions.
Since $\max(A)$ is free  in $T$,
this adds no new pre-cliques over $\max(A)$.

By Lemma \ref{lem.perfect},
every node in $X$ extends to a splitting node in $T$.
Let $s_0^*$ be the splitting node of least length in $T$ extending $s_0$,
and let $c^T_{n_0}$  be the coding node in $T$ of least length above $|s_0^*|$.
Extend all nodes in $\{s_i:1\le i<\tilde{i}\}$ leftmost  in $T$ to length $l^T_{n_0}$, and label their extensions $\{s^1_i:1\le i<\tilde{i}\}$.
Given $1\le p<\tilde{i}$ and the nodes $\{s^p_i:p\le i<\tilde{i}\}$,
let $s^*_p$ be the splitting node of least length in $T$ extending $s_p^p$,
and let $c^T_{n_p}$  be the coding node in $T$ of least length above $|s_p^*|$.
If $p<\tilde{i}-1$, then
extend all nodes in $\{s^p_i:p+1\le i<\tilde{i}\}$ leftmost  in $T$ to length $l^T_{n_p}$, and label these $\{s^{p+1}_i:p+1\le i<\tilde{i}\}$.

When  $p=\tilde{i}-1$, let $n=n_{\tilde{i}-1}$ and
for each $i<\tilde{i}$ and $j<2$, let $t_i^j$ be the leftmost extension in $T$
of ${s^*_i}^{\frown}j$ to length $l^T_n$.
For each $z\in Z$, let $t_z$ be the leftmost extension in $T$ of $z$ to length
$l^T_n$.
This collection of nodes composes the desired set $Y$.
By Lemma \ref{lem.poc},
$Y$ has no new pre-cliques over $\max(A)$.
$Y$   is free in $T$ since the nodes in $T$ have the length of a coding node in $T$.
\end{proof}

\begin{conv}\label{conv.POC}
Recall that when working within a strong coding tree $T\in\mathcal{T}_k$,
the passing numbers at coding nodes in  $T$ are completely determined by $T$; in fact, they are determined by $\bT_k$.
For a finite subset $A$ of  $T$ such that
 $l_A$ equals $l_n^T$ for some $n<\om$,
we shall say that
  $A$  {\em has  the (Strong) Witnessing Property} if and only if    the extension
$A\cup\{s^+:s\in\max(A)\}$ has the (Strong) Witnessing Property.
\end{conv}

The notion of  a valid subtree is central to the constructions in this paper.

\begin{defn}[Valid]\label{defn.valid}
Suppose  $T\in \mathcal{T}_k$  and  let $A$ be a finite subtree of $T$.
We say that $A$
is {\em valid} in $T$ if and only if
$A$ has the Witnessing Property and
$\max(A)$  is free in $T$.
\end{defn}

The next
Lemma \ref{lem.pnc} shows that given a valid subtree of a strong coding tree $T$,
any of its maximal nodes can be extended to some coding node $c_n^T$  in $T$ while the rest of the maximal nodes can be extended to length $l_n^T$ so that their passing numbers are anything desired,  subject only to the $K_k$-Free Criterion.

\begin{lem}[Passing Number Choice]\label{lem.pnc}
Fix  $T\in\mathcal{T}_k$  and
 a finite  subtree $A$  with $\max(A)$ free in $T$.
Let $\{s_i:i<\tilde{i}\}$ be any enumeration of
 $\max(A)$,
and
fix some  $d<\tilde{i}$.
To  each $i\in \tilde{i}\setminus\{d\}$ associate an $\varepsilon_i\in\{0,1\}$, with the stipulation that $\varepsilon_i$ must equal $0$ if $\{s_i,s_d\}$ has  a pre-$k$-clique.
In particular,  $\varepsilon_d=0$.

Then given any  $j$,
there is an $n\ge j$ such that the
coding node $c^T_n$
extends $s_d$,
 and
there are
 extensions $u_i\contains s_i$, $i\in\tilde{i}\setminus \{d\}$, in  $T\re l_n^T$
such that,
letting  $u_d$ denote $ c_n^T$ and letting $Y=\{u_i:i<\tilde{i}\}$,
the following hold:
\begin{enumerate}
\item
Each $u_i$ has
passing number   $\varepsilon_i$ at $u_d$.
\item

Any new  pre-cliques among  subsets of $Y$ (except possibly  for the singleton $\{s_d\}$)
have their first instances occurring in the interval
$(|d^T_{m_n-1}|,l^T_n]$.

\item
If $A$ has the Witnessing Property, then so does 
 $A\cup Y$.
 Thus, if $A$ is valid, then  $A\cup Y$ is also valid.
 \item
 If $A$ has the Strong Witnessing Property and $s_d$ has a pre-$(k-1)$-clique  witnessed by  coding nodes in $A$,
 then $A\cup Y$ has the Strong Witnessing Property. 
\end{enumerate}
\end{lem}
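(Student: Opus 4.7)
The plan is to locate a coding node $c^T_n \supset s_d$ via property $(A_k)^{\tt tree}$ of $\bT_k$, applied through the strong isomorphism $f_T\colon \bT_k \to T$, then choose appropriate extensions $u_i$ of $s_i$ at level $l^T_n$, and finally verify the four conclusions.

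First, since $\max(A)$ is free in $T$, Lemma~\ref{lem.leftmostfree} permits extending each $s_i$ leftmost in $T$ to the level $l^T_p$ of some coding node of $T$ with $p \ge j$, without introducing new pre-cliques and while preserving freeness; so I may assume without loss of generality that $l_A = l^T_p$. Pass to $\bT_k$ via $f_T^{-1}$, setting $\tilde s_i = f_T^{-1}(s_i)$ for each $i$. The stipulation that $\varepsilon_i = 0$ whenever $\{s_i, s_d\}$ has a pre-$k$-clique is precisely the condition under which the $K_k$-Free Branching Criterion admits an extension $\tilde u_i \supset \tilde s_i$ with passing number $\varepsilon_i$ at a coding node extending $\tilde s_d$: the only obstruction to $\varepsilon_i = 1$ is the existence of $k-2$ coding nodes forming a $(k-2)$-clique at which both $\tilde s_i$ and $\tilde s_d$ have passing number $1$, which is exactly a pre-$k$-clique of $\{s_i, s_d\}$. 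With this consistency checked, property $(A_k)^{\tt tree}$ of $\bT_k$ (Observation~\ref{obs.ttimplieshomog}) produces arbitrarily long coding nodes $c^k_n \supset \tilde s_d$ compatible with the prescribed passing pattern; pick one with $n \ge j$, let $\tilde u_i$ be the extension of $\tilde s_i$ taken leftmost up to the critical level $|d^k_{m_n - 1}|$ and then chosen at length $l^k_n$ so as to realize passing number $\varepsilon_i$ at $c^k_n$, and set $c^T_n = f_T(c^k_n)$, $u_i = f_T(\tilde u_i)$.

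Conclusion (1) is immediate because $f_T$ preserves passing numbers. For (2), the intermediate extensions from $\max(A)$ up to $|d^T_{m_n - 1}|$ are all leftmost, so Lemmas~\ref{lem.leftmostfree} and~\ref{lem.poc} yield no new pre-cliques in that interval; any new pre-clique among non-singleton subsets of $Y$ thus first appears in $(|d^T_{m_n - 1}|, l^T_n]$. For (3), any new pre-$a$-clique among subsets of $Y$ at level $l^T_n$ is witnessed by $c^T_n$ together with the coding nodes in $A$ inherited from the type realization, so if $A$ has the Witnessing Property then so does $A \cup Y$. For (4), if additionally $s_d$ has a pre-$(k-1)$-clique witnessed by a set $C \sse A$ of coding nodes, then $C \cup \{c^T_n\}$ witnesses the singleton pre-$k$-clique $\{c^T_n\}$, preserving the Strong Witnessing Property. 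The main obstacle is the compatibility verification in the previous paragraph: the key simplification, which makes the pairwise stipulation sufficient, is that the $u_i$ for $i \ne d$ are not themselves coding nodes, so no edges are created among them in the coded graph; the only new adjacencies lie between $c^T_n$ and each $u_i$ (governed by $\varepsilon_i$) or between $c^T_n$ and coding nodes of $A$ (determined by $s_d$'s extension path to $c^T_n$ and ruled out by the $K_k$-Free Criterion of $T$), so the analysis reduces to pairwise interactions already covered by the stipulation.
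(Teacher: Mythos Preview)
Your construction is essentially the paper's: extend leftmost to a coding-node level, pick a coding node above $s_d$, then extend the remaining $s_i$ leftmost to the penultimate coding-node level and finally rightmost or leftmost according to $\varepsilon_i$. The detour through $\bT_k$ via $f_T^{-1}$ is valid but unnecessary; the paper works directly in $T$ using density of coding nodes and the $K_k$-Free Branching Criterion of $T$ (Lemma~\ref{lem.psim.properties}). Your invocation of $(A_k)^{\tt tree}$ is slightly off: that property locates coding nodes with prescribed passing numbers at \emph{earlier} coding nodes, whereas here you need (i) density of coding nodes to find $c^T_n\supset s_d$ and (ii) the $k$-FBC to split each $t_i$ with $\varepsilon_i=1$ in the final interval.

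The real gap is in your verification of (3). Saying the new pre-$a$-cliques are ``witnessed by $c^T_n$ together with the coding nodes in $A$ inherited from the type realization'' asserts the conclusion without argument. The paper's proof is substantive: given a new pre-$a$-clique $Z\subseteq Y\setminus\{u_d\}$ with $a\ge 4$, one first uses the Strong Witnessing Property of $T$ to see that every $z\in Z$ has passing number $1$ at $u_d$; hence $Z\cup\{u_d\}$ is a pre-$(a-1)$-clique, and since the interval $(l_A,|d^T_{m_n-1}|]$ was traversed by leftmost extensions (Lemma~\ref{lem.poc}), this pre-$(a-1)$-clique must already be present at some level $\le l_A$. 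As $(Z\cup\{u_d\})\re l_A$ has size $\ge 2$ and lies in $A$, the Witnessing Property of $A$ supplies witnesses $C'\subseteq A$, and then $C'\cup\{u_d\}$ witnesses $Z$ in $A\cup Y$. You also omit the separate verification that $u_d$ cannot participate in any new pre-clique of size $\ge 2$ over $A$: this uses that in the interval $(l^T_{n-1}+1,l^T_n)$ the coding node $u_d$ has no new pre-cliques with other nodes of $T$ (by SWP of $T$), and below $l^T_{n-1}$ the other $u_i$ are leftmost extensions. Without these two pieces, (3) is unproved.
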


\begin{proof}
Assume the hypotheses  in the first paragraph of the lemma.
Let $m$ be  least such that
$l^T_m\ge l_A$, and
for each $i<\tilde{i}$, let $s'_i$ be the leftmost extension of $s_i$  in $T$ of length $l_m^T$.
Since  $\max(A)$ is free in $T$, the set   $\{s'_i:i< \tilde{i}\}$    has
no new pre-cliques  over
 $A$.
Given
$j$,
take $n$  minimal above $\max(j,m+1)$  such that
$c_n^T\contains s'_d$, and let $u_d=c_n^T$.
Such an $n$ exists, as the coding nodes in
any strong coding tree are dense in that tree, by its strong similarity to $\bT_k$.
For  $i\ne d$,
extend $s'_i$ via its leftmost extension in $T$ to  length
$l_{n-1}^T$ and label it $t_i$.
By Lemma \ref{lem.poc},
$\{t_i:i\in\tilde{i}\setminus\{d\}\}\cup\{u_d\re l^T_{n-1}\}$ has
no  new pre-cliques   over $\{s'_i:i< \tilde{i}\}$, with the possible exception of the singleton $u_d\re l^T_{n-1}$,  so (2) of the Lemma holds.

For  $i\in \tilde{i}\setminus\{d\}$ with  $\varepsilon_i=0$,
let $u_i$ be the leftmost extension of $t_i$ of length $l_n^T$.
For $i<\tilde{i}$ with
 $\varepsilon_i=1$, by our assumption,
 $\{t_i,u_d\re l^T_{n-1}\}$ has no  pre-$k$-cliques,  and their extensions to length
  $l^T_{n-1}$ have no new pre-cliques  by  Lemma \ref{lem.poc}.
  By the $k$-Free Branching Criterion of $T$,
  $t_i$ splits  in $T$ before reaching the level of $c^T_n$.
 Let $u_i$ be the rightmost extension of $t_i$ to length
 $l_n^T$.
Then  for each $i<\tilde{i}$,
the passing number
of $u_i$ at  $u_d$  is
 $\varepsilon_i$.
 Thus, (1) holds.

 Suppose now that $A$ has the Witnessing Property.
Let $Y=\{u_i:i<\tilde{i}\}$.
Since the nodes in $Y$ have the length of the coding node $u_d$,  $Y$ is free in $T$.
By construction, any new pre-cliques in $Y$ over $A$  occur in the interval $(l^T_{n-1},l^T_n]$.
Since $T$ has the Strong Witnessing Property, any new pre-cliques of $Y$ in the interval   
$(l^T_{n-1},l^T_n]$ must actually occur in the interval
$(|d^T_{m_{n-1}}|,l^T_n]$.
It remains to show that  any new pre-cliques of size at least two in $Y$ over $A$   in the interval $(|d^T_{m_{n-1}}|,l^T_n]$ are witnessed by coding nodes in $A\cup Y$.
We now show  slightly more.

Suppose
 $I\sse\tilde{i}$, where $d\not\in I$,  and
$\{u_i:i\in I\}$
has a new  pre-$a$-clique over $A$, for
 some $a\in[3,k]$.
Let $Z$ denote $\{u_i:i\in I\}$  and let $l$ be
 least
such that  $Z\re l$ is a  pre-$a$-clique, and note that
$l$ must be in  the interval $(|d^T_{m_n-1}|,l_n^T]$.
Since $T$ has the Strong Witnessing Property, 
 there is some set of coding nodes 
$C\sse T$ witnessing $Z\re l$.
As
$u_d$  is the least coding node in $T$ above $Z\re l$, $u_d$ must be in $C$, again by the Strong Witnessing Property of $T$.
It follows that each node in $Z$ must have passing number $1$ at $u_d$.

If $a=3$, then  the coding node  $u_d$, which  is contained in  $ Y$, witnesses the pre-$3$-clique in $Z$.
Now suppose that $a\ge 4$.
Then $C\setminus \{u_d\}$ witnesses that
$Z'=Z\cup\{u_d\}$ has a pre-$(a-1)$-clique.
The $l'$ at which $Z'\re l'$ is a new
pre-$(a-1)$-clique must be below $|d^T_{m_n-1}|$, since
$T$ cannot witness it at the level of $u_d$.
Since $Y\setminus\{u_d\}$ has no new pre-cliques over $A$ in  the interval $(l_A, |d^T_{m_n-1}|]$,
it must be that $l'\le l_A$.
Since $Z'\re l_A$  has size at least two and is contained in $A$, the Witnessing Property of $A$ implies that 
there  is a set of coding nodes $C'$ contained in $A$ witnessing the pre-$(a-1)$-clique $Z\re l'$.
Then $C'\cup\{u_d\}$ is contained in $A\cup Y$ and witnesses the pre-$a$-clique $Z$.

Now, suppose that  $d\in I\sse\tilde{i}$,  $I$ has size at least two, and $\{u_i:i\in I\}$ has a new pre-clique over $A$. 
We will show that this is impossible. 
We point out that in the interval 
$(l^T_{n-1} +1, l_n^T)$,
the coding node   $u_d$  has no   new pre-cliques with any other node in $T$ of length $l^T_n$,
(for $T$ has the Strong Witnessing Property and  such a new pre-clique could not be witnessed in $T$).
Since for each 
 $i\ne d$,  the node $u_i\re l^T_{n-1}$   is the leftmost extension of $s_i$ in $T$,  it follows that 
for each $l\in (l_A,  l^T_{n-1}]$, the set 
$\{u_i\re l:i\in I\}$ has 
no new pre-cliques of size two or more.
Thus, any new pre-clique occuring among   $\{u_i:i\in I\}$ above $A$ must exclude $u_d$. 
It follows that $A\cup Y$ has the Witnessing Property, so (3) holds.

We have already shown that, assuming that $A$ has the  Witnessing Property,   $u_d$  is not 
in any subset of $Y$ of size at least two which has a new pre-clique over $A$, and that for $i\ne d$, any new pre-clique in  $\{u_i\}$  over $A$ is witnessed in $A\cup Y$. 
Assuming the premise of (4),  $s_d$ has a pre-$(k-1)$-clique  witnessed by coding nodes in $A$.
Thus, the Strong Witnessing Property of $A$ carries over to $A\cup Y$.
\end{proof}

The next lemma  provides conditions under which a
subtree of a strong coding tree can be extended to another  subtree with a prescribed strong similarity type.
This will be central to the constructions involved in proving the Ramsey theorems for strong coding trees as well as in further sections.

\begin{lem}\label{lem.facts}
Suppose  $A$ is  a  finite  subtree of a strong coding tree  $T\in\mathcal{T}_k$ with $\max(A)$ free in $T$.
Fix any member $u\in\max(A)$.
Let $X$ be any subset of $\max(A)$ such that for each $s\in X$,
the pair $\{s,u\}$ has no pre-$k$-cliques,
and let $Z$ denote $\max(A)\setminus(X\cup\{u\})$.
Let $l\ge l_A$ be given.

Then there is an $l_*>l$
 and there are extensions $u_*\supset u$,
$s_*^0,s_*^1\supset s$ for all $s\in X$,
and $s_*\supset s$ for all $s\in Z$, each of length $l_*$,
such that, letting
\begin{equation}
Y=\{u_*\}\cup\{s_*^i:s\in X,\ i \le 1\}\cup\{s_*:s\in Z\},
\end{equation}
and letting $B$ be the tree induced by $A\cup Y$,
 the following hold:
\begin{enumerate}
\item
$u_*$ is a coding node.
\item
For each $s\in X$ and $i\le 1$, the passing number of $s_*^i$ at $u_*$ is $i$.
\item
For each $s\in  Z$,
the passing number of
 $s_*$ at $u_*$ is $0$.
\item
Splitting among the extensions of the $s\in X$ occurs in reverse lexicographic order:
For $s$ and $t$ in $X$,
 $|s_*^0\wedge s_*^1|<|t^0_*\wedge t^1_*|$
if and only if $s_*>_{\mathrm{lex}}t_*$.
\item
There are no new  pre-cliques
among  the  nodes  in $X$
below the length of  the longest splitting node in $B$  below $u_*$.
\end{enumerate}
If moreover $A$ has the (Strong) Witnessing Property, then $B$ also has the (Strong) Witnessing Property.
\end{lem}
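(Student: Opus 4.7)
The plan is to build the extension $B$ in two stages: first arrange the splittings among extensions of the nodes in $X$ in the correct order via Lemma \ref{lem.factssplit}, and then apply the Passing Number Choice Lemma (Lemma \ref{lem.pnc}) to extend further to a level at which $u$ has a coding node extension $u_*$ with prescribed passing numbers.

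Stage 1. Enumerate $X$ in reverse lexicographic order as $\lgl s_i:i<\tilde{i}\rgl$, so that $s_i>_{\mathrm{lex}}s_{i'}$ whenever $i<i'$. Apply Lemma \ref{lem.factssplit} to $A$, taking the ``$X$'' of that lemma to be our $X$ with this enumeration, taking the ``$Z$'' of that lemma to be $Z\cup\{u\}$, and taking any $l'\ge l$. This produces a length $l'_*>l$, a pair of extensions $t_i^0,t_i^1\supset s_i$ for each $i<\tilde{i}$, and single extensions $u'\supset u$ and $t_z\supset z$ for each $z\in Z$, all of length $l'_*$. By clause (1) of Lemma \ref{lem.factssplit}, the splittings occur in our chosen order, i.e.\ $|t_i^0\wedge t_i^1|<|t_{i'}^0\wedge t_{i'}^1|$ for $i<i'$, which is exactly the reverse lexicographic order required for clause (4) of the present lemma. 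By clause (2) of Lemma \ref{lem.factssplit}, the level set $Y'=\{t_i^j:i<\tilde{i},\ j<2\}\cup\{u'\}\cup\{t_z:z\in Z\}$ is free in $T$ and has no new pre-cliques over $\max(A)$; in particular, clause (5) of the present lemma is secured since below the longest of the splitting lengths $|t_i^0\wedge t_i^1|$ the only extensions of nodes in $X$ present are leftmost extensions, and these carry no new pre-cliques.

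Stage 2. Let $A'$ denote the tree induced by $A\cup Y'$. Since $Y'$ is free in $T$, we may apply Lemma \ref{lem.pnc} to $A'$ with $u'$ playing the role of the distinguished node $s_d$, assigning $\varepsilon=1$ to each $t_i^1$ and $\varepsilon=0$ to each $t_i^0$ and each $t_z$. The stipulation in Lemma \ref{lem.pnc} requires $\varepsilon_i=0$ whenever $\{s_i,s_d\}$ has a pre-$k$-clique; the only potential issue concerns the $t_i^1$'s, for which we need $\{t_i^1,u'\}$ to admit no pre-$k$-clique. Since $\{s,u\}$ has no pre-$k$-clique for every $s\in X$, and Stage 1 introduced no new pre-cliques over $\max(A)$, this condition is met. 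Lemma \ref{lem.pnc} then yields $n$, a length $l_*=l^T_n$, a coding node extension $u_*=c^T_n\supset u'\supset u$, and extensions $s_*^0,s_*^1\supset t_i^0,t_i^1$ and $s_*\supset t_z$ of the required length, with passing numbers at $u_*$ exactly as prescribed. This gives clauses (1), (2), (3). The splitting structure created in Stage 1 is preserved since Stage 2 only extends nodes leftmost or through a single split followed by a rightmost run, so clause (4) still holds, and all of the Stage 1 splitting lengths remain $<l_*$.

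Preservation of the Witnessing Property follows directly from clause (3) of Lemma \ref{lem.pnc}: Stage 1 produces no new pre-cliques over $\max(A)$, and any new pre-clique of size at least two in $A\cup Y$ over $A$ must, by clause (2) of Lemma \ref{lem.pnc}, first appear in the interval $(|d^T_{m_n-1}|,l^T_n]$ and hence be witnessed by coding nodes in $A\cup Y$ thanks to the Strong Witnessing Property of $T$ (Lemma \ref{lem.psim.properties}(3)) combined with the witnessing already present in $A$. For the Strong Witnessing Property I expect the main obstacle: the singleton pre-$k$-clique $\{u_*\}$ (present because every coding node is a singleton pre-$k$-clique) must likewise be witnessed inside $B$. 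The argument mirrors clause (4) of Lemma \ref{lem.pnc}: assuming $A$ has the Strong Witnessing Property and reading off, from the structure of $T$ and the location of $u$ in $\max(A)$, the set of coding nodes of $A$ that witness the corresponding pre-$(k-1)$-clique at $u$, one sees that together with $u_*$ these form a witness in $A\cup Y$ for the singleton pre-$k$-clique at $u_*$. Since every other new pre-clique created above $A$ lies in the final interval $(|d^T_{m_n-1}|,l^T_n]$ which terminates at the coding node $u_*$, both conditions of Definition \ref{defn.PrekCrit} are verified, completing the preservation of Strong WP and finishing the proof.
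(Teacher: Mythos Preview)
Your proof is correct and follows exactly the paper's approach: first apply Lemma~\ref{lem.factssplit} to arrange the splittings in the required (reverse lexicographic) order without introducing new pre-cliques, then apply Lemma~\ref{lem.pnc} (Passing Number Choice) to extend to a coding node with the prescribed passing numbers, invoking clauses (3) and (4) of that lemma for preservation of the (Strong) Witnessing Property. The paper's own proof is a two-sentence sketch invoking these same two lemmas in the same order; you have simply supplied the details and correctly flagged that the Strong WP case rests on the extra hypothesis of Lemma~\ref{lem.pnc}(4).
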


\begin{proof}
Since $\max(A)$ is free in $T$,
apply Lemma \ref{lem.factssplit}
to extend $\max(A)$ to have splitting nodes in the desired order
without adding any new  pre-cliques  and so that this extension is free in $T$.
Then apply Lemma
\ref{lem.pnc} to extend to a level with a coding node and passing numbers as prescribed,  with the extension being  valid in $T$.
 Lemma
\ref{lem.pnc} also guarantees that  we can construct such a $B$ with  the (Strong) Witnessing Property, provided that  $A$ has the  (Strong) Witnessing Property. 
\end{proof}

This immediately yields  the main extension theorem of this section.

\begin{thm}\label{thm.GOODnonempty}
Suppose  $T\in\mathcal{T}_k$, $m<\om$, 
and $A$ is a member of $\mathcal{AT}_m$  with $\max(A)$ free in   $T$.
Then the set
$r_{m+1}[A,T]$ is infinite.
In particular,
for each $l<\om$, there is a member $B\in r_{m+1}[A,T]$ with  $\max(B)$ free in  $T$ and  $l_B\ge l$.
Furthermore,  $[A,T]$ is  infinite, and
 for each strictly increasing sequence of integers $(l_j)_{j>m}$, there  is a member $S\in [A,T]$ such that
$|d^S_j|>l_j$ and $\max(r_j(S))$ is free in $T$, for each $j>m$.
\end{thm}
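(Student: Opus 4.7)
The plan is to prove the one-step extension claim first, namely that $r_{m+1}[A,T]$ is infinite whenever $A \in \mathcal{AT}_m^k$ sits inside $T \in \mathcal{T}_k$ with $\max(A)$ free in $T$; the remaining assertions will follow by iteration and a careful choice of levels. Since $A \in \mathcal{AT}_m^k$, fix a strong isomorphism $\varphi$ from $r_m(\bT_k)$ onto $A$ identifying critical nodes. The structure of $\bT_k$ determines the type of the next critical node $d^k_m$: either (Case S) $d^k_m$ is a splitting node, in which case it sits above a distinguished node $u = \varphi^{-1}$-image of $d^k_m\re |d^k_{m-1}|$ in $\max(r_m(\bT_k))$, or (Case C) $d^k_m$ is a coding (or ghost coding) node, in which case it is the extension of a distinguished $u' \in \max(r_m(\bT_k))$. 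In either case I would transfer this choice through $\varphi$ to single out a distinguished $\tilde u \in \max(A)$ inside $T$.

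In Case S I would apply Lemma \ref{lem.facts} inside $T$ with $u = \tilde u$, with $X$ being the subset of $\max(A) \setminus \{\tilde u\}$ that reverses $\varphi$ back to the nodes above which $\bT_k$ eventually splits before reaching $d^k_m$, and with $Z$ the remainder; using clause (4) of that lemma to arrange the splitting order as in $\bT_k$, and clause (5) together with the final sentence to preserve the (Strong) Witnessing Property, I obtain an extension $B$ whose $m$-th critical node is a splitting node and which is strongly isomorphic to $r_{m+1}(\bT_k)$, so $B \in r_{m+1}[A,T]$. In Case C I would apply Lemma \ref{lem.pnc} with $d = $ the index of $\tilde u$ and with the $\varepsilon_i$ dictated by the passing numbers of $\max(r_{m+1}(\bT_k))$ at $d^k_m$; the hypothesis on $\varepsilon_i$ (that $\varepsilon_i = 0$ whenever $\{s_i, s_d\}$ forms a pre-$k$-clique) is forced by $\bT_k$ satisfying the $k$-FBC and by $\varphi$ being a strong isomorphism, so the lemma applies. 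Clauses (3) and (4) of Lemma \ref{lem.pnc}, together with the fact that $A$ inherits the Strong Witnessing Property from $r_m(\bT_k)$ via $\varphi$ and Lemma \ref{lem.need}, deliver $B \in r_{m+1}[A,T]$.

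For infiniteness of $r_{m+1}[A,T]$ and the parameter $l$, observe that in Case C the choice of $n$ in Lemma \ref{lem.pnc} can be made arbitrarily large (``given any $j$, there is an $n \ge j$''), so $l_B$ can be pushed above any prescribed $l$; in Case S the analogous freedom comes from the proof of Lemma \ref{lem.factssplit}, which only requires choosing the splitting nodes of least length above some threshold, and that threshold can be taken arbitrarily large by first extending leftmost to a coding level past $l$. Distinct choices of $l$ (or of $n$) produce distinct $B$'s, hence the set is infinite.

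For $[A,T]$ infinite and the final ``furthermore'' clause, I would iterate the one-step construction. Given a strictly increasing sequence $(l_j)_{j > m}$, inductively build $A_m = A, A_{m+1}, A_{m+2}, \dots$ with $A_j \in r_j[A,T]$, $\max(A_j)$ free in $T$, $A_j$ having the Strong Witnessing Property, and $|d^{A_j}_{j-1}| > l_{j-1}$, by applying the one-step extension proved above to $A_{j-1}$ with the parameter $l = l_{j-1}$. Setting $S = \bigcup_{j \ge m} A_j$ produces the desired element of $[A,T]$; choosing different $(l_j)$'s at each step yields infinitely many such $S$. The main potential obstacle is verifying at each step that the newly added nodes do not create unwitnessed new pre-$a$-cliques for some $a \in [3,k]$; this is exactly what the Witnessing-Property clauses of Lemmas \ref{lem.pnc} and \ref{lem.facts} are designed to rule out, using critically that $\bT_k$ (and hence $A_{j-1}$, via the strong isomorphism with $r_{j-1}(\bT_k)$) witnesses the relevant pre-cliques, so the inductive preservation goes through.
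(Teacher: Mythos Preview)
Your overall strategy matches the paper's: the one-step extension is obtained by invoking the extension lemmas of Section~\ref{sec.ExtLem}, and the full tree $S$ is built by iterating. Your Case~C is correct: Lemma~\ref{lem.pnc} applies exactly as you describe, and clause~(4) of that lemma is available because in $\bT_k$ every coding node carries a pre-$(k-1)$-clique witnessed by the preceding $k-2$ (ghost) coding nodes, a fact transported by the strong isomorphism $\varphi$.

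Case~S, however, is misapplied. In Lemma~\ref{lem.facts} the distinguished node $u$ is extended to a \emph{coding} node $u_*$, while the nodes in $X$ are the ones that split. You set $u=\tilde u$, but in Case~S the node $\tilde u$ is precisely the one that must \emph{split} (it sits below the splitting node $d^k_m$), not extend to a coding node. Your description of $X$ as ``the subset of $\max(A)\setminus\{\tilde u\}$ that reverses $\varphi$ back to the nodes above which $\bT_k$ eventually splits before reaching $d^k_m$'' is then empty, since in the single interval $(|d^k_{m-1}|,|d^k_m|]$ the only node that splits is the one below $d^k_m$ itself---which you have excluded. The output of Lemma~\ref{lem.facts} with these inputs would have a coding node as its new critical node, hence would not be strongly similar to $r_{m+1}(\bT_k)$.

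The fix is immediate: in Case~S invoke Lemma~\ref{lem.factssplit} (not Lemma~\ref{lem.facts}) with $X=\{\tilde u\}$ and $Z=\max(A)\setminus\{\tilde u\}$. That lemma produces a single splitting node above $\tilde u$, extends the remaining nodes leftmost, adds no new pre-cliques (clause~(2)), and leaves the result free in $T$; strong similarity to $r_{m+1}(\bT_k)$ is then clear, and Lemma~\ref{lem.need} upgrades this to strong isomorphism since both sides have the Strong Witnessing Property. The freedom in choosing the length of the splitting node (via Lemma~\ref{lem.perfect}) gives the parameter $l$, and your iteration argument then goes through unchanged. This is exactly how the paper's terse proof reads: Lemma~\ref{lem.factssplit} for the splitting step, Lemma~\ref{lem.pnc} for the coding step, and Lemma~\ref{lem.facts} (which packages an entire interval at once) for the ``furthermore'' clause.
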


\begin{proof}
Recall that every member of $\mathcal{AT}_m$ has the Strong Witnessing Property. 
The first part of the theorem follows from Lemmas \ref{lem.factssplit} and \ref{lem.pnc}.
The second part 
 follows  from  Lemma \ref{lem.facts}.
\end{proof}

The final lemmas of this section set up for constructions in    the main theorem of Section \ref{sec.5}.

\begin{lem}\label{lem.HLCasebtruncate}
Suppose $T\in \mathcal{T}_k$,
$X\sse T$ is a level set containing a coding node $c^T_j$, and $X'\cup X''$ is a partition of $X$  with
$c^T_j\in X'$.
Suppose further that $j<n$, $c^T_n$ extends $c^T_j$,  and $c_n^T\in Y'\sse T\re l^T_n$
end-extends $X'$ with the following properties:
$Y'$ has no new pre-cliques over $X'$,
and each node in $Y'$ has the same passing number at $c^T_n$ as it does at $c^T_j$.
Then there is a level set $Y''\sse T\re l^T_n$  end-extending $X''$ such that each node in $Y''$ has the same passing number at $c^T_n$
as it does at $c^T_j$,
 and $Y=Y'\cup Y''$ has no new pre-cliques over $X$.
\end{lem}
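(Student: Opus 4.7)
\smallskip

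The plan is to partition $X''$ according to the passing numbers at $c^T_j$, handling each part separately. Write $X'' = X''_0 \cup X''_1$, where $X''_i = \{s \in X'' : s^+(l^T_j) = i\}$ for $i\in\{0,1\}$. For each $s \in X''_0$, I would take $t_s$ to be the leftmost extension of $s$ in $T \re l^T_n$; by Lemma \ref{lem.leftmostfree} this has passing number $0$ at every coding node between $l^T_j$ and $l^T_n$, in particular at $c^T_n$, matching $s$'s passing number $0$ at $c^T_j$. Moreover, the full set of these leftmost extensions is free in $T$ and contributes no new pre-cliques.

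For each $s \in X''_1$, I need to produce $t_s \in T\re l^T_n$ extending $s$ with $t_s^+(l^T_n)=1$. The idea is to build $t_s$ so that its bits above $l^T_j$ mirror those of $c^T_n$: set $t_s \re (l^T_j+1) = s^+$ and $t_s(i) = c^T_n(i)$ for $l^T_j < i < l^T_n$. To verify $t_s \in T$ (equivalently, that $\bT_k$ contains the $f_T^{-1}$-image of $t_s$ and its extensions up through $l^T_n$), I would check the $K_k$-Free Branching Criterion at each intermediate coding-node level: the crucial point is that any $(k-1)$-clique of coding nodes in $T$ at or below $c^T_n$ which would force $t_s$ to have passing number $0$ at $c^T_n$ (or at an intermediate coding node) would, by virtue of $s$ and $c^T_j$ sharing the same passing-number behaviour at all positions $c^T_n$ sees, equally obstruct $c^T_n$'s extension in $T$, contradicting the fact that $c^T_n$ actually lies in $T$.

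Having produced $Y'' = \{t_s : s\in X''\}$, the remaining task is to show $Y = Y' \cup Y''$ has no new pre-cliques over $X$. I would argue in the style of Lemma \ref{lem.poc}: suppose for contradiction that some $Z \sse Y$ has a new pre-$a$-clique over $X$ at some length $l \in (l_X, l^T_n]$. If $Z \sse Y'$ we contradict the hypothesis on $Y'$; if $Z$ is contained in the leftmost-extension part of $Y''$ we contradict Lemma \ref{lem.leftmostfree}; the remaining mixed case is handled by pulling back under $f_T$ to $\bT_k$ and using the Strong Witnessing Property (Lemma \ref{lem.psim.properties}) together with the mirroring construction, which ensures that any new pre-clique involving a mirrored $t_s$ would yield the same new pre-clique with $c^T_n$ in place of $t_s$---contradicting the no-new-pre-cliques hypothesis for $Y'$.

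The main obstacle I expect is the existence verification for the mirrored $t_s$ when $s \in X''_1$. For $k=3$ this reduces to a single triangle-freeness check, but for $k\ge 4$ one must simultaneously control pre-$a$-cliques for every $a \in [3,k]$ along the sequence of intermediate coding nodes $c^T_{j+1},\dots,c^T_{n-1}$. The argument hinges on the symmetry between $s$ and $c^T_j$ at $c^T_j$'s level (both having passing number $1$ there) together with the Strong Witnessing Property of $T$, which guarantees that any obstruction to $t_s$'s passing number at $c^T_n$ is witnessed by coding nodes already present in $T$ and therefore simultaneously obstructs $c^T_n$ itself---the desired contradiction.
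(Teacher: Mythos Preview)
Your construction for $s\in X''_1$ has a genuine gap. Defining $t_s$ bitwise by $t_s(i)=c^T_n(i)$ for $l^T_j<i<l^T_n$ typically does not produce a node of $T$, and checking the $K_k$-Free Branching Criterion at coding-node levels is not sufficient. The obstruction is strong skewness: if $d^T_m$ is a splitting node with $l^T_j<|d^T_m|<l^T_n$ through which $c^T_n$ passes taking the right branch (so $c^T_n\re|d^T_m|=d^T_m$ and $c^T_n(|d^T_m|)=1$), then $d^T_m\supseteq c^T_j$; since $s\ne c^T_j$ are incomparable, $t_s$ cannot extend $d^T_m$, and strong skewness forces $t_s(|d^T_m|)=0$, contradicting the mirroring. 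This situation does occur---in $\bT_3$, for instance, $c^3_3$ extends $c^3_0$ and passes through the splitting node $d^3_6$ with $c^3_3(6)=1$. A second obstruction appears even at coding levels for $k=3$: since $c^T_n(l^T_{n-1})=1$ always and the hypothesis $c^T_n\supset c^T_j$ forces $n\ge j+2$ (consecutive coding nodes are never comparable), the mirrored $t_s$ has $t_s(l^T_{n-1})=1$, whence the $K_3$-Free Criterion forces $t_s^+(l^T_n)=0$. Your rebuttal---that the same clique would obstruct $c^T_n$---misfires because $c^T_n$ has passing number $0$ at itself, so no obstruction arises for it.

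The paper's argument sidesteps all of this by first extending $X''$ \emph{leftmost} in $T$ to level $l^T_{n-1}$ (no new pre-cliques, by Lemma~\ref{lem.poc}) and then invoking the Passing Number Choice Lemma~\ref{lem.pnc} to reach level $l^T_n$ with the prescribed passing numbers; existence in $T$ is handled inside that lemma via the $k$-FBC, using only that $\{s,c^T_j\}$ has no pre-$k$-clique when $s$ has passing number $1$ at $c^T_j$. The no-new-pre-cliques verification is then a clean case split on $a=3$ versus $a\ge4$: any putative new pre-$a$-clique in $Y$ must involve $c^T_n$ among its witnesses (by the Strong Witnessing Property of $T$), and the matching passing numbers at $c^T_n$ and $c^T_j$ push the pre-clique down to level $l^T_j$, where it is already present in $X$. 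No bitwise relationship between $t_s$ and $c^T_n$ is needed.
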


\begin{proof}
If $n>j+1$,
first extend the nodes in $X''$ leftmost in $T$ to length $l^T_{n-1}$, and label this set of nodes $Y''\re l^T_{n-1}$.
By
Lemma \ref{lem.poc}
$Y''\re l^T_{n-1} \cup Y'\re l^T_{n-1}$  has no new pre-cliques over $X$.
Apply Lemma \ref{lem.pnc} to extend the nodes in $Y''\re l^T_{n-1}$ to $Y''\in T\re l^T_n$ such that
for each node $t\in Y''$,
$t$ has the same passing number at $c^T_n$ as it does at $c^T_j$.
Let $Y=Y'\cup Y''$.

Suppose towards a contradiction that for some $a\in [3,k]$, there is a new pre-$a$-clique
 $Z\sse Y$
 above $X$.
 If $a=3$, then $c^T_n$ witnesses this pre-$3$-clique.
 Since each node in $Y$ has the same passing number at $c^T_n$ as it does at $c^T_j$,
 it follows that $Z\re l^T_j$ has a pre-$3$-clique which is witnessed by $c^T_j$.
 Thus, $Z$ was not new over $X$.

 Now suppose that $a\ge 4$.
 Then $Z\cup \{c^T_n\}$ has a pre-$b$-clique, where $b=a-1$.
 Since $Z$ is a new pre-$a$-clique and $T\cong \bT_k$,
 it must be that the
 the level where the pre-$b$-clique in $Z\cup \{c^T_n\}$ is new must be at some $l\le l^T_{n-1}$.
 Since $Y$ has no new pre-cliques in the interval
 $(l^T_j,l^T_{n-1}]$,
 this $l$ must be less than or equal to  $l^T_j$.
 Since the passing numbers of members in $Y$ are the same at $c^T_n$ as they are at $c^T_j$,
 it follows that $Z\cup\{c^T_j\}$ has a pre-$b$-clique.
 This pre-$b$-clique must occur at some level strictly below $l^T_j$, since the passing number of the coding node $c^T_j$ at itself is $0$.
 Hence, $Z\re l^T_j\cup\{c^T_j\}$ is a pre-$a$-clique.
 Therefore, $Z$ is not a  new  pre-$a$-clique over $X$, a contradiction.
\end{proof}


\begin{lem}\label{lem.HLconstruction}
Suppose $T\in \mathcal{T}_k$, $m\in\bN$,  and $B\in r_{m+1}[0,T]$ with $\max(B)$ free in $T$.
 Let $x_*$ be the critical node  of $\max(B)$,
let  $X\sse \max(B)$ with $x_*\in X$,
 and  let  $X'=\max(B) \setminus X$.
Suppose that $Y$ end-extends $X$ into $T$ so that $Y$ has no new pre-cliques over $X$, $Y$  is free in $T$,
and the critical node $x_*$ is extended to the same type of critical node $y_*$ in $Y$.
If $x_*$ is a coding node,  assume that for each
$y\in Y$, the passing number of $y$ at $y_*$ is the same as the passing number of $y$ at $x_*$.
Then there is a level set $Y'$ end-extending  $X'$  in $T$  to length  $l_Y$ such that
 $r_m(B)\cup (Y\cup Y')$
is a member of $r_{m+1}[r_m(B),T]$.
\end{lem}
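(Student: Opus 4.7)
The plan is to construct $Y'$ by cases on whether the critical node $x_*$ of $\max(B)$ is a coding node or a splitting node, and then verify that the natural map fixing $r_m(B)$ and sending each $x \in \max(B)$ to its extension in $Y \cup Y'$ is a strong isomorphism onto a member of $r_{m+1}[r_m(B), T]$.

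If $x_*$ is a coding node, write $x_* = c^T_j$ and $y_* = c^T_n$ for some $n > j$. The assumptions on $Y$ in the lemma match precisely the hypotheses of Lemma \ref{lem.HLCasebtruncate} when $(X, X', X'')$ there is replaced by $(\max(B), X, X')$ and $(Y', Y'')$ there is replaced by $(Y, Y')$. Applying that lemma directly produces a level set $Y' \sse T\re l^T_n$ end-extending $X'$ such that each node in $Y'$ has the same passing number at $c^T_n$ as its initial segment in $X'$ has at $c^T_j$, and such that $Y \cup Y'$ has no new pre-cliques over $\max(B)$. If instead $x_*$ is a splitting node, let $Y'$ be the set of leftmost extensions in $T$ of the nodes of $X'$ to length $l_Y$. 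By Lemma \ref{lem.leftmostfree}, $Y'$ is free in $T$ and has no new pre-cliques over $X'$; since $Y$ is also free with no new pre-cliques over $X$, Lemma \ref{lem.poc} gives that $Y \cup Y'$ is free in $T$ and has no new pre-cliques over $\max(B)$. In either case, no passing-number constraint at a coding node of $\max(B)$ is at stake (there is none in the splitting case, and it is handled by Lemma \ref{lem.HLCasebtruncate} in the coding case).

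In both cases, the map $f$ from $B$ onto $r_m(B) \cup (Y \cup Y')$ that is the identity on $r_m(B)$ and sends each $x \in \max(B)$ to its (unique) extension in $Y \cup Y'$ is a bijection preserving lexicographic order, meets, and relative meet-heights (since $r_m(B)$ is shared and the extensions are end-extensions within $T$). It preserves critical-node type because $y_*$ is of the same type as $x_*$ by hypothesis, and preserves passing numbers at every coding node of $r_m(B)$ automatically (both structures live in $T$) and at $y_*$ versus $x_*$ by the hypothesis on $Y$ (in the coding case) or vacuously (in the splitting case). Because no new pre-cliques are introduced over $\max(B)$, the maximal new pre-cliques of $r_m(B) \cup (Y \cup Y')$ correspond exactly to those of $B$ and are witnessed by the same coding nodes in $r_m(B)$; hence $f$ is a strong isomorphism, and $r_m(B) \cup (Y \cup Y')$ inherits the Strong Witnessing Property via Lemma \ref{lem.concpresWP}. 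Since $\max(Y \cup Y') = Y \cup Y'$ is free in $T$, Theorem \ref{thm.GOODnonempty} supplies an extension to an $S \in [r_m(B), T]$ with $r_{m+1}(S) = r_m(B) \cup (Y \cup Y')$, completing the proof. The main subtlety is the coding-node case, in which the pre-clique structure and the passing numbers at $y_*$ must be controlled simultaneously; that bookkeeping is exactly what Lemma \ref{lem.HLCasebtruncate} was designed to perform.
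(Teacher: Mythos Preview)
Your proof is correct and follows essentially the same two-case strategy as the paper. The main difference is packaging: in the coding-node case you invoke Lemma~\ref{lem.HLCasebtruncate} directly (the paper re-derives its content inline), and you obtain the Strong Witnessing Property via the strong isomorphism $f$ and Lemma~\ref{lem.concpresWP} rather than by the paper's explicit witnessing argument (note that your parenthetical ``witnessed by the same coding nodes in $r_m(B)$'' is slightly imprecise, since in the coding case one witness may be $y_*$ replacing $x_*$, but this is inessential to your actual logical chain).
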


\begin{proof}
Suppose first that $x_*$ is a splitting node.
By  Lemma \ref{lem.poc},
letting $Y'$ be the level set of leftmost nodes in $T\re |y_*|$,
we see  that $Y\cup Y'$ is free in $T$ and has no new pre-cliques over $\max(B)$.
In particular, $r_m(B)\cup(Y\cup Y')$ is a member of $r_{m+1}[r_m(B),T]$.

Now suppose that $x_*$ is a coding node.
Let $n$ be the integer such that $y_*=c^T_n$.
Then $l_X\le l^T_{n-1}$.
Let $W'$ denote the leftmost extensions  of the nodes in $X'$  in $T\re  l^T_{n-1}$.
Again by  Lemma \ref{lem.poc}, 
the set $W'\cup (Y\re l^T_{n-1})$ has no new pre-cliques over $B$.
For $i\in 2$, let  $W'_i$ be the set of those $w\in W'$ which have passing number  $i$ at $x_*$.
Note that for each $w\in W'_1$,
the set $\{w\re l_X,x_*\}$
has no pre-$k$-clique, and since no new pre-cliques are added between the levels of $l_X$ and $l^T_{n-1}$,
the set $\{w,y_*\re l^T_{n-1}\}$ has no pre-$k$-clique.
Since $T$ satisfies the $K_k$-Free Branching Criterion,
each $w\in W'_1$ can be extended to a node $y\in T\re l_Y$ with passing number $1$ at $y_*$.
Extend each node in $W'_0$ leftmost in $T$ to length $l_Y$.
Let $Y'=W'_0\cup W'_1$.
Then  $Y'$ end-extends $W'$ which end-extends $X'$,
and each $y\in Y'$ has the same passing number at $y_*$ as it does at $x_*$.

We claim that $Y\cup Y'$ has no new pre-cliques over $B$.
Suppose towards a contradiction that $Z\sse Y\cup Y'$ is a new pre-$a$-clique above $B$, for some $a\in [3,k]$.
Since $Z\re  l^T_{n-1}$ has no new pre-cliques over $B$,
this new pre-$a$-clique must take place at some level
$l\in (l^T_{n-1}, l_Y]$.
Since $T$ has the Strong Witnessing Property, $l$ must be in the interval $(|d|,l_Y]$, where $d$ is the longest splitting node in $T$ of length less than $l_Y$, and $y_*$ must be among the set of witnessing coding nodes in $T$ for this new pre-$a$-clique.
If $a=3$, then $y_*$ witnesses the pre-$3$-clique $Z$.
But then  $Z\re l_X$ must also be a pre-$3$-clique, since the passing numbers at $y_*$ are the same as at $x_*$, and $x_*$ witnesses the pre-$3$-clique $Z\re l_X$.
Hence, $Z$ is not new over $B$.
Now suppose that $a\ge 4$.
Then $Z\cup\{y_*\}$  has a new pre-$(a-1)$-clique at some level $l'<l$.
Since $T$ has the Strong Witnessing Property
this new pre-$(a-1)$-clique must be witnessed in $T$.
Since  $T$ has no new pre-cliques in  $(l^T_{n-1},|d|]$
and $Z\re l^T_{n-1}$ has no new pre-cliques over $B$,
 it must be that  $l'\le l_X$.
Therefore, the minimal level of a  pre-$(a-1)$-clique  in $Z\cup\{y_*\}$ is   at some level in $B$.
Since $B$ has the Strong Witnessing Property, this is witnessed in $B$.
Since $y_*\contains x_*$ and each $z\in Z$ has the same passing number at $y_*$ as at $x_*$,
$x_*$ cannot be a witness of the
 pre-$(a-1)$-clique  in $Z\cup\{y_*\}$.
 Therefore,  $Z\cup\{y_*\}$ must be witnessed in $r_m(B)$, say by coding nodes $\{c^B_{i_j}:j<a-3\}$, where $i_{a-4}<l^B_m$.
But then $\{x_*\}\cup \{c^B_{i_j}:j<a-3\}$ witnesses  the pre-$a$-clique $Z$.
Hence, $Z$ is not new over $B$.

Now we will show that $Y\cup Y'$ has no new pre-cliques over $r_m(B)$.
Suppose $Z\sse Y\cup Y'$ has  a pre-$a$-clique, for some $a\in[3,k]$.
Since this pre-$a$-clique is not new over $B$, there is some $l\le l_X$ where $Z\re l$ is a new pre-$a$-clique in $B$.
Since $B$  has the Strong Witnessing Property,  there are some coding nodes $c_{i_0}^B,\dots c_{i_{a-3}}^B$ in $B$ witnessing $Z\re l$.
If $i_{a-3}<m$, then these witnesses are in $r_m(B)$.
Now suppose that  $i_{a-3}=m$.
Note that $y_*\contains x_*=c^B_m$.
Thus, $\{y_*\}\cup\{c^B_{i_j}:j<a-3\}$ forms a
pre-$(a-1)$-clique which  witnesses
$Z$.
Therefore, $r_m(B)\cup Y'\cup Y$ has the Strong Witnessing Property.
Since it is strongly similar to $B$,
$r_m(B)\cup Y'\cup Y$ is a member of $r_{m+1}[r_m(B),T]$ by Lemma \ref{lem.need}.
\end{proof}

\begin{rem}
As was remarked  for $\mathcal{T}_3$ in \cite{DobrinenJML20},
each
space $(\mathcal{T}_k,\le, r)$, $k\ge 3$,
satisfies  Axioms \bf A.1\rm, \bf A.2\rm, and \bf A.3(1) \rm of Todorcevic's   axioms  in Chapter 5 of \cite{TodorcevicBK10} guaranteeing a  topological Ramsey space, and it is routine to check this.
However,
Axiom
\bf A.3(2) \rm does not hold.
The pigeonhole principle, Axiom \bf A.4\rm,  holds exactly when
 the finite  subtree  is  valid inside  the  given strong coding tree;  this  will follow from Theorems in Section \ref{sec.5} and \ref{sec.1SPOC}.
\end{rem}


\section{Halpern-\Lauchli-style Theorems for strong coding trees}\label{sec.5}

The
Ramsey theory content for strong coding trees
begins in this section.
The  ultimate goal
is to obtain Theorem \ref{thm.mainRamsey}.
This is a
 Ramsey theorem for colorings of strictly similar (Definition \ref{defn.ssimtype}) copies of any given finite antichain of coding nodes, as these are the structures which will code finite triangle-free graphs.

Phase II of the paper takes place in this and the next section.
 Theorem \ref{thm.matrixHL} is a Halpern-\Lauchli-style theorem for colorings of level sets extending a finite  valid subtree of some strong coding tree.
 Its proof begins  with Harrington's forcing proof of Theorem \ref{thm.HL} as a rough template, but  involves new forcings and new arguments  necessitated by  the $k$-clique-free
 nature of Henson graphs.
 This is a major step toward proving a Milliken-style theorem for strong coding trees, but it is not enough: in  the case when there is a coding node in the level sets being colored,
 Theorem \ref{thm.matrixHL} proves
  homogeneity on level sets extending some fixed set of nodes, but does not obtain homogeneity overall.
We will have a third Halpern-\Lauchli-style theorem
 in Section  \ref{sec.1SPOC}, which involves ideas unprecedented to our knowledge.
 This Lemma \ref{lem.Case(c)} will use a third type of forcing.
 Then using much induction on Theorem \ref{thm.matrixHL}
 and
 Lemma  \ref{lem.Case(c)},
 we will prove the Main Ramsey Theorem for Strictly Witnessed (see Definition \ref {defn.SWP}) finite subtrees of a given strong coding tree.
 This Theorem
\ref{thm.MillikenSWP}
 is the main theorem of Phase II of the paper.

Theorem  \ref{thm.matrixHL}  encompasses colorings of two different types  of level set  extensions of a fixed finite tree: The  level set  either contains a splitting node (Case (a)) or a coding node (Case (b)).
In Case (a), we obtain a direct analogue of the
Halpern-\Lauchli\ Theorem.
In Case (b), we obtain a weaker version of the
Halpern-\Lauchli\ Theorem, which is later strengthened to
the direct analogue in Lemma \ref{lem.Case(c)}.
The proof given here essentially  follows the  outline
of the proof 
of Theorem 5.2 
in \cite{DobrinenJML20}, but our argument  is now  more streamlined, due to having proved more general extension lemmas
in Section \ref{sec.4}.

Let  $k\ge 3$ be fixed, and
fix the following terminology and notation.
Given subtrees  $U,V$ of $\bT_k$  with $U$ finite,
we write $U\sqsubseteq V$ if and only if
$U=\{v\in V:|v|\le l_U\}$;
in this case we say that $V$ {\em extends} $U$, or that $U$ is an {\em initial subtree of} $V$.
We write $U\sqsubset V$ if $U$ is a proper initial subtree of $V$.
Recall the following notation from Definition \ref{defn.T_pspace} of the space $(\mathcal{T}_k,\le, r)$:
$S\le T$ means that  $S$ and $T$ are members of
$\mathcal{T}_k$ and  $S$ is a subtree of $T$.
Given  $A\in\mathcal{AR}_m$ for some $m$,
 $[A,T]$ denotes the set of all   $S\le T$ such that  $S$ extends $A$.
We now begin setting up for the two possible  cases before stating the theorem.
\vskip.1in

\noindent\underline{\bf{The Set-up for Theorem \ref{thm.matrixHL}.}}
Let
$T\in\mathcal{T}_k$ be given,
and
 let
$A$ be a finite valid subtree of $T$.
$A$  is allowed to  have terminal  nodes at different levels.
In order to simplify notation in  the proof, without loss of generality, we
assume that $0^{(l_A)}$ is in $A$.
Let  $A^+$ denote the set of   immediate extensions in            $\widehat{T}$ of the members of $\max(A)$; thus,
\begin{equation}
A^+=\{s^{\frown}i : s\in \max(A),\ i\in\{0,1\},\mathrm{\ and\ } s^{\frown}i\in \widehat{T}\}.
\end{equation}
Note that   $A^+$ is a level set of nodes of length $l_A+1$.
Let  $A_e$ be a   subset of $A^+$
containing $0^{(l_A+1)}$ and of size at least two.
(If $A^+$ has only one member, then
$\max(A)$ consists of one non-splitting node of the form $0^{(l)}$ for some $l$, and
the theorem in this section does not apply.)
Suppose that  $\tilde{X}$ is a level set of nodes in $T$ extending $A_e$ so that $A\cup\tilde{X}$
 is a  finite valid subtree of $T$.
Assume moreover that  $0^{(l_{\tilde{X}})}$ is a member of $\tilde{X}$, so that the node $0^{(l_A)}$ in $A_e$ is extended by $0^{(l_{\tilde{X}})}$ in $\tilde{X}$.
There are two possibilities:

\begin{enumerate}
\item[]
\begin{enumerate}
\item[\bf{Case (a).}]
  $\tilde{X}$ contains a splitting node.
\end{enumerate}
\end{enumerate}

\begin{enumerate}
\item[]
\begin{enumerate}
\item[\bf{Case (b).}]
 $\tilde{X}$ contains a coding  node.
\end{enumerate}
\end{enumerate}
In both cases,
define
\begin{align}\label{eq.ExtTAC}
\Ext_T(A,\tilde{X})= \{X\sse T: \ &
 X\sqsupseteq \tilde{X} \mathrm{\ is\ a\ level\ set}, \
A\cup X\cong A\cup\tilde{X}, \cr
&\mathrm{\ and\ } A\cup X \mathrm{\  is\ valid\ in\ } T\}.
\end{align}

The next lemma
 shows that seemingly weaker properties suffice to guarantee that a level set is in $\Ext_T(A,
\tilde{X})$.

\begin{lem}\label{lem.alternate}
Let $X$ be a level set in $T$  extending $\tilde{X}$. Then
  $X\in\Ext_T(A,\tilde{X})$ if and only if
$X$ is free in $T$,
 $A\cup X$ is strongly similar to $A\cup \tilde{X}$, and
$X$ has no new pre-cliques over $A\cup \tilde{X}$.
Moreover, $X\in\Ext_T(A,\tilde{X})$ implies that $A\cup X$ has the Witnessing Property. 
\end{lem}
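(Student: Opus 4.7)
The plan is to verify both directions of the biconditional; the ``moreover'' clause then follows immediately from the forward direction, since by definition validity of $A\cup X$ includes the Witnessing Property. For the forward implication, suppose $X\in\Ext_T(A,\tilde{X})$; unpacking the definition in (\ref{eq.ExtTAC}), we have $A\cup X\cong A\cup\tilde{X}$ and $A\cup X$ is valid in $T$. Strong isomorphism immediately yields $A\cup X\ssim A\cup\tilde{X}$, and validity gives both the Witnessing Property for $A\cup X$ and $\max(A\cup X)$ free in $T$; since $X$ is the topmost level set of $A\cup X$, we have $\max(A\cup X)=X$, so $X$ is free in $T$. The remaining claim that $X$ has no new pre-cliques over $A\cup\tilde{X}$ is the substantive part: above $A$, both $A\cup X$ and $A\cup\tilde{X}$ have a single critical level (at $l_X$ and $l_{\tilde{X}}$ respectively), and the strong isomorphism matches these top intervals. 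A hypothetical new pre-$a$-clique $Z\re l$ in $X$ at level $l\in(l_{\tilde{X}},l_X]$ would, under the isomorphism, correspond to a maximal new pre-$a$-clique in the top interval of $A\cup\tilde{X}$; but that interval terminates at $l_{\tilde{X}}$, forcing the projection $Z\re l_{\tilde{X}}$ already to carry a pre-$a$-clique, contradicting the newness of $Z\re l$.

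For the backward direction, assume all three listed conditions and produce $X\in\Ext_T(A,\tilde{X})$. The freeness of $\max(A\cup X)=X$ is immediate. The crux is to upgrade the given strong similarity $f\colon A\cup X\to A\cup\tilde{X}$ to a strong isomorphism. Since the critical structure inside $A$ is shared and above $A$ both trees have a single critical node, the only verification needed concerns maximal new pre-cliques in the top interval. The no-new-pre-cliques hypothesis says exactly that any new pre-$a$-clique in the top interval of $A\cup X$ already appears at $\tilde{X}$, and strong similarity preserves pre-cliques at the top level via preservation of coding nodes and passing numbers; hence $A\cup X\cong A\cup\tilde{X}$. For the Witnessing Property of $A\cup X$, given a new pre-$a$-clique $Z\re l$ of size at least two in $A\cup X$, the no-new-pre-cliques hypothesis forces this pre-clique already to appear at some level of $A\cup\tilde{X}$; applying WP of $A\cup\tilde{X}$ (which holds since $A\cup\tilde{X}$ is valid by the setup) yields a witnessing set $C$ of coding nodes there, and $f^{-1}[C]$ witnesses $Z$ in $A\cup X$ since strong similarities preserve coding nodes and passing numbers.

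The main obstacle is the careful tracking of the interval $(l_{\tilde{X}},l_X]$ under the stretching permitted by strong similarity: strong similarity alone does not preserve absolute levels, so the correspondence of maximal new pre-cliques between the two trees is not automatic. The no-new-pre-cliques hypothesis is precisely the ingredient that rules out new maximal pre-cliques in the stretched top interval of $A\cup X$ absent from $A\cup\tilde{X}$; it is the common thread driving both the upgrade from strong similarity to strong isomorphism and the transfer of the Witnessing Property.
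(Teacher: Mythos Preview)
Your proof is correct and follows essentially the same approach as the paper's: unpack validity and strong isomorphism in the forward direction, and in the backward direction use the no-new-pre-cliques hypothesis to show that the new pre-cliques in the top interval of $A\cup X$ coincide with those of $A\cup\tilde{X}$, hence the strong similarity upgrades to a strong isomorphism and the Witnessing Property transfers.

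One small point of precision in your backward argument: you write that ``strong similarity preserves pre-cliques at the top level via preservation of coding nodes and passing numbers,'' but pre-cliques are defined relative to coding nodes of $\bT_k$, not of the subtree, so strong similarity alone does not preserve them. What actually carries that step is the end-extension $X\sqsupseteq\tilde{X}$: for every $l\le l_{\tilde{X}}$ one has $X\re l=\tilde{X}\re l$ literally, so any new pre-$a$-clique in the top interval of $A\cup\tilde{X}$ is verbatim a new pre-$a$-clique in the top interval of $A\cup X$, and by the no-new-pre-cliques hypothesis there are no others. This is exactly the mechanism the paper relies on as well.
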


\begin{proof}
The forward direction follows from the definition of
$\Ext_T(A,\tilde{X})$.
If $X\in \Ext_T(A,\tilde{X})$, then there 
is a strong isomorphism, say
$f:A\cup \tilde{X}\ra A\cup X$.
Then 
$f$ is a strong similarity map and moreover, 
 $f$ takes $\tilde{X}$ to $X$.
 Since $X$ extends $\tilde{X}$ and $f$ maps the new pre-cliques of $\tilde{X}$ over $A$ to the new pre-cliques of $X$ over $A$,
 $X$ must have no new pre-cliques over $A\cup \tilde{X}$. 
 Note that $X$ is free in $T$, since $A\cup X$ is valid in $T$.

Now suppose that
$X\sqsupseteq \tilde{X}$  is as in the second part of the statement.
Since $X$ is  free in $T$,
$A\cup X$ is valid in $T$.
 $A\cup X$ being  strongly similar to $A\cup \tilde{X}$ 
 implies that the strong similarity map $g:A\cup\tilde{X}\ra A\cup X$ takes $\tilde{X}$ to $X$.
Since $X$ has no new pre-cliques over $A\cup\tilde{X}$,
any new pre-cliques in $X$ over $A$ are already in $A\cup \tilde{X}$ and hence witnessed by coding nodes in $A$ along possibly with the coding node $c_*$ in $\tilde{X}$ (in Case (b)).
If this is the case, then the coding node $f(c_*)$ in $X$ along with those same coding nodes in $A$ witness the new pre-clique in $X$ over $A$.  Therefore, $f$ is a strong isomorphism. 
It follows that since  $A\cup\tilde{X}$ has the  Witnessing Property,  so does $A\cup X$.
Also note that if moreover $A\cup\tilde{X}$ has the  Strong Witnessing Property, then $A\cup X$ does as well.
\end{proof}

In the following, for a finite subtree $A$ of some  $T\in\mathcal{T}_k$, recall that $\max(A)$ denotes the set $\{t\in A:|t|=l_A\}$,
the set of all nodes in $A$ of the maximum length, and
 $A^+$ denotes the set of all immediate successors of $\max(A)$ in $\widehat{T}$.
We now prove the analogue of the Halpern-\Lauchli\ Theorem for strong coding trees.

\begin{thm}\label{thm.matrixHL}
Fix  $T\in\mathcal{T}_k$  and  $B$ a finite  valid subtree of $T$ such that $B\in \mathcal{AT}^k_m$, for some $m\ge 1$.
Let  $A$ be a  subtree of  $B$  with  $l_A=l_B$ and  $0^{(l_A)}\in A$
such that
$A$  is valid in $T$.
Let $A_e$ be a subset of $A^+$ of size at least two such that
$0^{(l_A+1)}$ is in $A_e$.
Let $\tilde{X}$ be a level set in $T$ end-extending $A_e$
with  at least two members, one of which is the node $0^{(l_{\tilde{X}})}$
such that
$A\cup\tilde{X}$ is a finite valid subtree of $T$.

Given any coloring
$h:\Ext_T(A,\tilde{X})\ra 2$,
 there is a strong coding tree $S\in [B,T]$ such that
$h$ is monochromatic on $\Ext_S(A,\tilde{X})$.
If $\tilde{X}$ has a coding node, then
the strong coding tree $S$ is, moreover, taken to be in $[r_{m_0-1}(B'),T]$,
where
$m_0$ is the integer
for which there is a $B'\in r_{m_0}[B,T]$ with
$\tilde{X}\sse\max(B')$.
\end{thm}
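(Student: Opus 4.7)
The plan is to adapt Harrington's forcing proof of the Halpern-\Lauchli\ Theorem (presented in Subsection \ref{subsec.HLHarrington}) to strong coding trees, with the essential novelty being that the forcing must respect both the skew structure of $\bT_k$ and the constraints imposed by forbidden $k$-cliques. Two different forcings will be required for the two cases, because the rigidity of coding nodes (which carry passing-number and pre-clique data) differs fundamentally from that of splitting nodes.

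In both cases, I would define a forcing $\bP$ consisting of finite approximations to a matrix of potential level-set extensions of $A_e$ into $T$, indexed by $d \times \vec{\delta}_p$ where $d = |A_e|$ and $\vec{\delta}_p$ is a finite subset of $\kappa = \beth_{2d-1}(\aleph_0)^+$. A condition $p$ will specify, for each $(i,\delta) \in d \times \vec{\delta}_p$, a node in $T$ extending the $i$-th member of $A_e$, arranged so that the resulting level set is a valid extension witnessing membership in $\Ext_T(A,\tilde{X})$, as characterized by Lemma \ref{lem.alternate}. Forcing with $\bP$ adds $\kappa$-many parallel branches through $T$ above $A_e$, and I let $\dot{\mathcal{U}}$ name a non-principal ultrafilter on the set of appropriate levels in $T$. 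In Case (a), where $\tilde{X}$ contains a splitting node, the extensions produced by the forcing are constrained only by freeness and the absence of new pre-cliques, so the usual Harrington-style density arguments go through once Lemmas \ref{lem.factssplit} and \ref{lem.poc} are invoked to amalgamate compatible conditions into genuine subsets of $T$. In Case (b), the forcing is modified so that one coordinate is fixed to extend along a coding node of $T$, while the other coordinates must be extended with passing numbers prescribed by the strong similarity type of $A \cup \tilde{X}$; the pigeonhole provided by $\dot{\mathcal{U}}$ then operates over the sequence of coding node levels in $T$ above $B$, which is why the conclusion in Case (b) is phrased relative to $[r_{m_0-1}(B'),T]$.

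Given the data $\{\lgl p_{\vec{\al}}, \varepsilon_{\vec{\al}}\rgl : \vec{\al} \in [\kappa]^d\}$ where each $p_{\vec{\al}}$ forces $h(\dot{b}_{\vec{\al}} \re l) = \varepsilon_{\vec{\al}}$ for $\dot{\mathcal{U}}$-many $l$, I apply the \Erdos-Rado Theorem \ref{thm.ER} to a coloring $f$ modeled on equation (\ref{eq.fseq}) to obtain a homogeneous $K \sse \kappa$ producing a fixed color $\varepsilon^*$ and fixed representative nodes $\lgl t^*_{i,j}\rgl$, yielding the analogues of Lemmas \ref{lem.HLonetypes} through \ref{lem.HLcompat}. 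The construction of $S$ then proceeds by induction on its levels, paralleling the closing induction of the proof of Theorem \ref{thm.HL}: at each stage, given $r_n(S)$ already constructed, I pass to a stronger condition $q$ below the relevant finite family $\{p_{\vec{\al}}\}$, use Lemma \ref{lem.HLconstruction} (along with Lemma \ref{lem.HLCasebtruncate} in Case (b)) to extend the non-distinguished maximal nodes leftmost while preserving the strong isomorphism type of the currently constructed approximation of $B$, and finally absorb a decision about a level $l_n$ at which $h$ takes the value $\varepsilon^*$ on all $\vec{\al}$ currently under consideration. The union $S = \bigcup_n r_n(S)$ then lies in $[B,T]$ (or $[r_{m_0-1}(B'),T]$ in Case (b)), and $h \equiv \varepsilon^*$ on $\Ext_S(A,\tilde{X})$ by construction.

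The main obstacle will be the design of the Case (b) forcing and the compatibility argument that follows it: the coding node in $\tilde{X}$ carries a rigidity---via prescribed passing numbers and the witnessing of any pre-$a$-cliques it creates---that has no analogue in the classical Halpern-\Lauchli\ setting. One must ensure that when the pairwise compatible conditions $\{p_{\vec{\al}}\}$ are amalgamated into a single $q$, the resulting level extension genuinely lies in $\Ext_T(A,\tilde{X})$; in particular, no unintended new pre-$a$-clique for $a \in [3,k]$ may be introduced by the amalgamation, and the Witnessing Property of $A\cup\tilde{X}$ must be transferred to the realized extension. This is precisely where the Witnessing Property hypothesis on $A \cup \tilde{X}$ and the passing-number control of Lemma \ref{lem.pnc}, together with the freeness machinery of Lemma \ref{lem.poc}, do the bulk of the work, and it explains why Case (b) of the present theorem only achieves monochromaticity relative to a fixed coding node of $B'$---the further strengthening to all coding-node extensions is the subject of Lemma \ref{lem.Case(c)}, which will require yet a third forcing argument.
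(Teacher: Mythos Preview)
Your plan is essentially correct and tracks the paper's proof closely: the two-case forcing, the Erd\H{o}s--Rado application to the coloring $f$, the compatibility lemmas, and the level-by-level construction of $S$ via Lemmas \ref{lem.HLconstruction} and \ref{lem.HLCasebtruncate} are all as in the paper. One point to sharpen: in the paper the critical-node coordinate is singled out in \emph{both} cases, not just Case (b) --- the forcing domain is $(d\times\vec{\delta}_p)\cup\{d\}$ with $d+1=|\tilde{X}|$, so that $p(d)$ is a single branch (the splitting node in Case (a), the coding node in Case (b)) while only the remaining $d$ coordinates carry $\kappa$ many parallel branches; this asymmetry is what makes the set $L$ of admissible levels and the ultrafilter $\dot{\mathcal{U}}$ on $\dot{L}_d$ well-defined, and the ordering on $\bP$ additionally requires that $\ran(q\re\vec{\delta}_p)$ have no new pre-cliques over $\ran(p)$ (your ``validity'' of $q$ over $p$), which is what drives Claim \ref{claim.extensiongood} and the analogue of Lemma \ref{lem.HLqbelowpal}.
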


\begin{proof}
Let $T,A,A_e,B,\tilde{X}$ be given satisfying the hypotheses,
and
let $h$ be a coloring of the members of $\Ext_T(A,\tilde{X})$ into two colors, $\{0,1\}$.
Fix the following notation:
Let $d+1$ equal the number of  nodes in  $\tilde{X}$,
and enumerate the nodes in    $\tilde{X}$ as
 $s_0,\dots, s_d$
so that $s_d$  is the critical node in $\tilde{X}$.
Let $i_0$ denote the integer such that
 $s_{i_0}$ is  the node  which is a sequence of $0$'s.
Notice that  $i_0$ can equal $d$ only if we are  in  Case (a)  and
the  splitting node in $\tilde{X}$ is a sequence of $0$'s.
In Case (b), let $I_{0}$ denote  the set of all $i<d$ such that $s^+_i(l_{\tilde{X}})=0$
and let
$I_{1}$ denote  the set of all $i<d$ such that $s^+_i(l_{\tilde{X}})=1$.

Let $L$ denote the collection of all  $l\in\bN$ such that there is a member of
 $\Ext_T(A,\tilde{X})$ with  nodes of length $l$.
 In Case (a),
 since $B$ is valid in $T$,
it follows from 
 Lemmas \ref{lem.poc} and \ref{lem.perfect} that 
$L$  consists of those $l\in\bN$ for which
 there is a  splitting node  of length $l$ extending $s_d$ and  that $L$ 
is infinite.
 In Case (b),
 since $\tilde{X}$ contains a coding node, it follows from   Lemma \ref{lem.pnc} that
 $L$  is exactly the set of  all $l\in\bN$ for which
 there is a coding  node  of length $l$ extending $s_d$ and that $L$ is infinite.

For each
  $i\in (d+1)\setminus\{i_0\}$,   let  $T_i=\{t\in T:t\contains s_i\}$.
  Let $\Seq[0]$ denote the set of all sequences of $0$'s of finite length.
  Let $T_{i_0}=\{t\in T:t\contains s_{i_0}$ and $t\in \Seq[0]\}$, the collection of all leftmost nodes in $T$ extending $s_{i_0}$.
Let $\kappa=\beth_{2d}$.
The following forcing notion $\bP$    adds $\kappa$ many paths through  $T_i$, for each  $i\in d\setminus\{i_0\}$,
and one path through $T_d$.
If $i_0\ne d$, then $\bP$ will add one
 path through $T_{i_0}$, but with $\kappa$ many ordinals labeling this path.
 We allow this in order to simplify notation.
\vskip.1in

$\bP$ is the set of conditions $p$ such that
$p$ is a  function
of the form
$$
p:(d\times\vec{\delta}_p)\cup\{d\}\ra T\re l_p,
$$
where $\vec{\delta}_p$ is a finite subset of $\kappa$,
 $l_p\in L$,
 $\{p(i,\delta) : \delta\in  \vec{\delta}_p\}\sse  T_i\re l_p$ for each $i<d$, and the following hold:
\vskip.1in

\noindent  \underline{Case (a)}.
(i) $p(d)$ is {\em the} splitting  node extending $s_d$ of length  $l_p$;
\begin{enumerate}
\item [(ii)]
$\{p(i,\delta):(i,\delta)\in d\times\vec{\delta}_p\}\cup \{p(d)\}$  is free in $T$.
\end{enumerate}
\vskip.1in

\noindent  \underline{Case (b)}.  (i)
$p(d)$ is {\em the} coding  node extending $s_d$ of length $l_p$;
\begin{enumerate}
\item [(ii)]
For each $\delta\in\vec{\delta}_p$,
 $j\in \{0,1\}$,
and  $i\in I_j$, the passing number of $p(i,\delta)$ at $p(d)$ is $j$.
\end{enumerate}
\vskip.1in

Given $p\in\bP$,
 the {\em range of $p$} is defined as
$$
\ran(p)=\{p(i,\delta):(i,\delta)\in d\times \vec\delta_p\}\cup \{p(d)\}.
$$
If also $q\in \bP$ and $\vec{\delta}_p\sse \vec{\delta}_q$, then we let $\ran(q\re \vec{\delta}_p)$ denote
$\{q(i,\delta):(i,\delta)\in d\times \vec{\delta}_p\}\cup \{q(d)\}$.
 In both Cases (a) and (b), the partial
 ordering on $\bP$  is defined as follows:
$q\le p$ if and only if
$l_q\ge l_p$, $\vec{\delta}_q\contains \vec{\delta}_p$,  and the following hold:
\begin{enumerate}
\item[(i)]
$q(d)\contains p(d)$,
and
$q(i,\delta)\contains p(i,\delta)$ for each $(i,\delta)\in d\times \vec{\delta}_p$, and

\item[(ii)]
$\ran(q\re\vec{\delta}_p)$ has no new
pre-cliques over $\ran(p)$.
\end{enumerate}

Since all conditions in $\bP$ have ranges which are free in $T$,
we shall  say that  {\em $q$ is valid over $p$}
to mean that  (ii) holds.

The  theorem  will be proved in two main  parts.
In Part I,
we  check that    $\bP$ is an atomless partial order and then  prove  the main  Lemma \ref{lem.compat}.
In Part II, we apply Lemma \ref{lem.compat} to build
 the tree $S$
 such that
 $h$ is monochromatic on
 $\Ext_S(A,\tilde{X})$.
\vskip.1in

\noindent \underline{\bf {Part I.}}

\begin{lem}\label{lem.atomlesspo}
$(\bP,\le)$ is an atomless partial ordering.
\end{lem}

\begin{proof}
The  order  $\le$ on $\bP$ is clearly  reflexive and antisymmetric.
Transitivity follows from the fact that the requirement (ii) in the definition of the partial order on $\bP$ is a transitive property.
To see this, suppose that
 $p\ge q$ and $q\ge r$.
Then $\vec{\delta}_p\sse\vec{\delta}_q\sse \vec{\delta}_r$, $l_p\le l_q\le l_r$,
$r$ is valid over $q$, and $q$ is valid over $p$.
Since
$\ran(r\re \vec{\delta}_p)$ is   contained in $\ran(r\re \vec{\delta}_q)$ which
 has no new pre-cliques over $\ran(q)$,
  it follows that
$\ran(r\re \vec{\delta}_p)$ has  no new pre-cliques over $\ran(q\re \vec{\delta}_p)$.
Since
 $\ran(q\re \vec{\delta}_p)$ has  no new pre-cliques over $\ran(p)$,
 it follows that $\ran(r\re \vec{\delta}_p)$
  has  no new pre-cliques over $\ran(p)$.
Therefore, $r$ is valid over $p$, so
$p\ge r$.

\begin{claim}\label{claim.densehigh}
For each $p\in\bP$ and   $l>l_p$, there
are  $q,r\in\bP$ with $l_q,l_r> l$  such that  $q,r<p$ and $q$ and $r$ are incompatible.
\end{claim}

\begin{proof}
Let $p\in \bP$ and $l>l_p$ be given, and
let
$\vec{\delta}$ denote $\vec{\delta}_p$ and  let
$\vec{\delta}_r=\vec{\delta}_q=\vec{\delta}$.
In Case (a),
take $q(d)$ and $r(d)$ to be incomparable splitting nodes in $T$ extending $p(d)$ to some lengths greater than $l$.
Such splitting nodes exist  by Lemma \ref{lem.perfect},
showing that  strong coding trees are perfect.
Let $l_q=|q(d)|$ and $l_r=|r(d)|$.
For each $(i,\delta)\in d\times\vec{\delta}$,
let $q(i,\delta)$ be the leftmost extension in $T$ of $p(i,\delta)$ to length $l_q$,
and  let
$r(i,\delta)$ be the leftmost extension of $p(i,\delta)$ to length $l_r$.
Then $q$ and $r$ are members of $\bP$.
Since $\ran(p)$ is free in $T$,
both $\ran(q)$ and $\ran(r)$ are free in $T$ and
$\ran(q\re \vec{\delta}_p)$ and $\ran(r\re \vec{\delta}_p)$
have no new pre-cliques over $\ran(p)$, by Lemma \ref{lem.poc}.
 It follows
that $q$ and $r$ are both  valid over $p$.
Since neither of $q(d)$ and $r(d)$ extends the other,
$q$ and $r$ are incompatible.

In Case (b),
let $s$ be a splitting node in $T$ of length greater than $l$ extending $p(d)$.
Let $k$ be minimal such that  $|c^T_k|\ge |s|$.
Let $u,v$ extend $s^{\frown}0,s^{\frown}1$,  respectively,  leftmost in $T\re l^T_k$.
For each $(i,\delta)\in d\times\vec\delta_p$,
let $p'(i,\delta)$ be the leftmost extension
of $p(i,\delta)$
 in $T\re l^T_k$.
 By  Lemma \ref{lem.pnc},
there are  $q(d)\contains u$ and   $q(i,\delta)\contains p'(i,\delta)$, $(i,\delta)\in d\times \vec{\delta}_p$,
such that
\begin{enumerate}
\item
 $q(d)$ is a coding node;
\item
$q$  is valid over  $p$;
\item
 For each $j<2$,
$i\in I_j$ if and only if the immediate extension of $q(i,\delta)$ is $j$.
\end{enumerate}
Then $q\in \bP$ and $q\le p$.
Likewise  by  Lemma \ref{lem.pnc},
there is a condition $r\in\bP$ which extends
$\{p'(i,\delta): (i,\delta)\in d\times \vec{\delta}_p\}\cup\{v\}$ such that
$r\le p$.
Since the coding nodes $q(d)$ and $r(d)$ are incomparable, $q$ and $r$  are incompatible conditions in $\bP$.
\end{proof}

It follows from Claim \ref{claim.densehigh} that $\bP$ is atomless.
\end{proof}

From now on, whenever
 ambiguity will  not arise by doing so, for a condition $p\in\bP$,
we will use the terminology
{\em critical node} of $p$ to
 refer to $p(d)$, which is
a splitting node  in Case (a) and a coding node  in Case (b).
Let $\dot{b}_d$ be a $\bP$-name for the generic path through $T_d$;
that is, $\dot{b}_d=\{\lgl p(d),p\rgl:p\in\bP\}$.
Note that for each $p\in \bP$, $p$ forces that $\dot{b}_d\re l_p= p(d)$.
By Claim \ref{claim.densehigh}, it is dense to force a critical node in $\dot{b}_d$ above any given level in $T$,  so $\mathbf{1}_{\bP}$ forces that the set of levels of critical nodes in $\dot{b}_d$ is infinite.
Thus, given any generic  filter  $G$  for $\bP$, $\dot{b}_d^G=\{p(d):p\in G\}$ is a cofinal path of critical nodes in $T_d$.
Let $\dot{L}_d$ be a $\bP$-name for the set of lengths of critical  nodes in $\dot{b}_d$.
Note that $\mathbf{1}_{\bP}\forces \dot{L}_d\sse L$.
Let $\dot{\mathcal{U}}$ be a $\bP$-name for a non-principal ultrafilter on $\dot{L}_d$.
For  $i<d$ and $\al<\kappa$,  let $\dot{b}_{i,\al}$ be a $\bP$-name for the $\al$-th generic branch through $T_i$;
that is, $\dot{b}_{i,\al}=\{\lgl p(i,\al),p\rgl:p\in \bP$ and $\al\in\vec{\delta}_p\}$.
Then
for any $p\in \bP$ ,
\begin{equation}
 p\forces  (\forall i<d\  \forall \al\in \vec\delta_p\, (\dot{b}_{i,\al}\re l_p= p(i,\al)) )\wedge
( \dot{b}_d\re l_p=p(d)).
\end{equation}

For $j\in\bN$, we let $[\kappa]^j$ denote the collection of all $j$-element  subsets of $\kappa$.
We  shall write sets $\{\al_i:i< d\}$ in $[\kappa]^d$ as vectors $\vec{\al}=\lgl \al_0,\dots,\al_{d-1}\rgl$ in strictly increasing order.
For $\vec{\al}\in[\kappa]^d$,
we use the following abbreviation:
\begin{equation}
\dot{b}_{\vec{\al}}\mathrm{\ \ denotes \  \ }
\lgl \dot{b}_{0,\al_0},\dots, \dot{b}_{d-1,\al_{d-1}},\dot{b}_d\rgl.
\end{equation}
Since the branch $\dot{b}_d$ is  unique, this abbreviation introduces no ambiguity.
For any $l<\om$,
\begin{equation}
\mathrm{\ let\ \ }\dot{b}_{\vec\al}\re l
\mathrm{\ \ denote \  \ }
\lgl \dot{b}_{0,\al_0}\re l,\dots, \dot{b}_{d-1,\al_{d-1}}\re l,\dot{b}_d\re l\rgl.
\end{equation}
Using the abbreviations just defined,
$h$ is a coloring on sets of nodes of the form $\dot{b}_{\vec\al}\re l$
whenever this is
 forced to be a member of $\Ext_T(A,\tilde{X})$.
Given $\vec{\al}\in [\kappa]^d$ and a condition $p\in \bP$ with $\vec\al\sse\vec{\delta}_p$,
let
\begin{equation}
X(p,\vec{\al})=\{p(i,\al_i):i<d\}\cup\{p(d)\}.
\end{equation}

We now set up to prove
Lemma \ref{lem.compat}.
For each $\vec\al\in[\kappa]^d$,
choose a condition $p_{\vec{\al}}\in\bP$ such that
\begin{enumerate}
\item
 $\vec{\al}\sse\vec{\delta}_{p_{\vec\al}}$.

\item
$X(p_{\vec\al},\vec{\al})\in\Ext_T(A,\tilde{X})$.
\item
There is an $\varepsilon_{\vec{\al}}\in 2$
 such that
$p_{\vec{\al}}\forces$
``$h(\dot{b}_{\vec{\al}}\re l)=\varepsilon_{\vec{\al}}$
for $\dot{\mathcal{U}}$ many $l$ in $\dot{L}_d$''.
\item
$h(X(p_{\vec\al},\vec{\al}))=\varepsilon_{\vec{\al}}$.
\end{enumerate}

Properties (1) -  (4) can be guaranteed  as follows.
Recall that $\{s_i:i\le d\}$ enumerates $\tilde{X}$ and that $s_d$ is the critical node in $\tilde{X}$.
For each  $\vec{\al}\in[\kappa]^d$, define
$$
p^0_{\vec{\al}}=\{\lgl (i,\delta), t_i\rgl: i< d, \ \delta\in\vec{\al} \}\cup\{\lgl d,t_d\rgl\}.
$$
Then $p^0_{\vec{\al}}$ is a condition in $\bP$ with
$\ran(p^0_{\vec{\al}})=\tilde{X}$, and
$\vec\delta_{p_{\vec\al}^0}= \vec\al$ which implies (1) holds for any $p\le p^0_{\vec{\al}}$.
The following fact will be used many times.

\begin{claim}\label{claim.extensiongood}
Given $\vec{\al}\in[\kappa]^d$,
for any $p\le p_{\vec\al}^0$, the set of nodes
$X(p,\vec{\al})$
 is a member of
  $\Ext_T(A,\tilde{X})$.
\end{claim}

\begin{proof}
Suppose   $p\le p_{\vec\al}^0$.
Then
$p$ is valid over $ p_{\vec\al}^0$, so
$X(p,\vec{\al})$ has no new  pre-cliques over  $\tilde{X}$.
Since  $p$ is a condition of  $\bP$, $X(p,\vec{\al})$ is free in $T$ and
$A\cup X(p,\vec{\al})$ is strongly similar to $A\cup\tilde{X}$.
It follows  from Lemma \ref{lem.alternate}
that $X(p,\vec{\al})$ is in
$\Ext_T(A,\tilde{X})$.
\end{proof}

Thus, (2) holds for any $p\le p_{\vec\al}^0$.
Take  an extension $p^1_{\vec{\al}}\le p^0_{\vec{\al}}$ which
forces  $h(\dot{b}_{\vec{\al}}\re l)$ to be the same value for
$\dot{\mathcal{U}}$  many  $l\in \dot{L}_d$.
Since $\bP$ is a forcing notion, there is a $p^2_{\vec{\al}}\le p_{\vec{\al}}^1$ deciding a value $\varepsilon_{\vec{\al}}$ for which $p^2_{\vec{\al}}$ forces that $h(\dot{b}_{\vec{\al}}\re l)=\varepsilon_{\vec{\al}}$
for $\dot{\mathcal{U}}$ many $l$ in $\dot{L}_d$.
Then (3) holds for any $p\le p_{\vec\al}^2$.
If $ p_{\vec\al}^2$ satisfies (4), then let $p_{\vec\al}=p_{\vec\al}^2$.
Otherwise,
take  some  $p^3_{\vec\al}\le p^2_{\vec\al}$
which decides
some $l\in\dot{L}_d$
such that
$l_{p^2_{\vec\al}}< l_n^T< l\le l_{p^3_{\vec\al}}$,
 for some $n$,
and  $p^3_{\vec\al}$ forces
$h(\dot{b}_{\vec\al}\re l)=\varepsilon_{\vec\al}$.
Since $p^3_{\vec\al}$ forces ``$\dot{b}_{\vec\al}\re l=
\{p^3_{\vec\al}(i,\al_i)\re l:i<d\}   \cup\{p^3_{\vec\al}(d)\re l\}$'' and $h$ is defined in the ground model,
 this means that  $p^3_{\vec\al}(d)\re l$ is a splitting node in Case (a) and a coding node in Case (b), and
\begin{equation}\label{eq.hrest}
h(X(p^3_{\vec\al},\vec\al)\re l)
=\varepsilon_{\vec\al},
\end{equation}
where
$X(p^3_{\vec\al},\vec\al)\re l$ denotes
$\{p^3_{\vec\al}(i,\al_i)\re l:i<d\}   \cup\{p^3_{\vec\al}(d)\re l\}$.
If $l=l_{p^3_{\vec\al}}$, let $p_{\vec\al}=p_{\vec\al}^3$, and note that $p_{\vec\al}$ satisfies (1) - (4).

Otherwise,  $l<l_{p^3_{\vec\al}}$.
In Case (a),  let
$p_{\vec\al}$ be  defined as follows:
Let  $\vec\delta_{\vec\al}=\vec\delta_{p_{\vec\al}^2}$ and
 \begin{equation}
\forall (i,\delta)\in d\times\vec\delta_{\vec\al}, \mathrm{\ let\ }
p_{\vec\al}(i,\delta)=p^3_{\vec\al}(i,\delta)\re l\mathrm{\ \ and\  let\  }
p_{\vec\al}(d)=p^3_{\vec\al}(d)\re l.
\end{equation}
Since $p^3_{\vec\al}$ is a condition in $\bP$,
$\ran(p^3_{\vec\al})$ is free in $T$.
Furthermore, $p^3_{\vec\al}\le p^2_{\vec\al}$
implies that $\ran(p_{\vec\al}^3\re \vec\delta_{p^2_{\vec\al}})$ has no new pre-cliques over $\ran(p^2_{\vec\al})$.
Therefore, leftmost extensions of $\ran(p_{\vec\al})$ have no new pre-cliques, so $\ran(p_{\vec\al})$ is free in $T$.
Therefore,
$p_{\vec\al}$ is a condition in $\bP$ and  $p_{\vec\al}\le p_{\vec\al}^2$.
Thus, $p_{\vec\al}$  satisfies (1) - (3), and (4) holds  by equation (\ref{eq.hrest}).

In Case (b),
we  construct $p_{\vec\al}\le p^2_{\vec\al}$ as follows:
As in Case (a), let
 $\vec{\delta}_{\vec\al}=\vec\delta_{p^2_{\vec\al}}$.
For each $i<d$, define
$p_{\vec\al}(i,\al_i)=p^3_{\vec\al}(i,\al_i)\re l$,
and  let
$p_{\vec\al}(d)=p^3_{\vec\al}(d)\re l$.
Then $X(p_{\vec\al},\vec\al)=\{p^3_{\vec\al}(i,\al_i)\re l:i<d\}\cup\{p^3_{\vec\al}(d)\re l\}$,
 so   $h(X(p_{\vec\al},\vec\al))=\varepsilon_{\vec\al}$.
Let $U$ denote $X( p_{\vec\al}^2,\vec\al)$ and
let $U'=\ran (p_{\vec\al}^2)\setminus U$.
Let $X$ denote $X(p_{\vec\al},\vec\al)$ and note that $X$ end-extends $U$,  and $X$ is free in $T$ and has no new pre-cliques over $U$.
By Lemma
\ref {lem.HLCasebtruncate},
there is an $X'$ end-extending $U'$ to nodes in $T\re l$ so that  the following hold:
$X\cup X'$ is free in $T$ and has no new pre-cliques over $U\cup U'$;
furthermore, each node in $X'$ has the same passing  number at $l$ as it does at $l_{p_{\vec\al}^2}$.
Let $\ran(p_{\vec\al})$ be this set of nodes $X\cup X'$,
where
 for each $i<d$
and $(i,\delta)\in d\times\vec{\delta}_{p_{\vec\al}^3}$ with $\delta\ne\al_i$,
we
let $p_{\vec\al}(i,\delta)$ be the node in $Y'$ extending
 $p_{\vec\al}^3(i,\delta)$.
 This defines a condition $p_{\vec\al}\le p_{\vec\al}^2$
  satisfying  (1) - (4).

The rest of Part I follows by  arguments  in \cite{DobrinenJML20} for  the case $k=3$, with no modifications.
It is included here for the reader's convenience.
We are assuming $\kappa=\beth_{2d}$ so  that  $\kappa\ra(\aleph_1)^{2d}_{\aleph_0}$, by the  \Erdos-Rado Theorem (Theorem \ref{thm.ER}).
Given two sets of ordinals $J,K$ we shall write $J<K$ if every member of $J$ is less than every member of $K$.
Let $D_e=\{0,2,\dots,2d-2\}$ and  $D_o=\{1,3,\dots,2d-1\}$, the sets of  even and odd integers less than $2d$, respectively.
Let $\mathcal{I}$ denote the collection of all functions $\iota: 2d\ra 2d$ such that
$\iota\re D_e$
and $\iota\re D_o$ are strictly  increasing sequences
and $\{\iota(0),\iota(1)\}<\{\iota(2),\iota(3)\}<\dots<\{\iota(2d-2),\iota(2d-1)\}$.
Thus, each $\iota$ codes two strictly increasing sequences $\iota\re D_e$ and $\iota\re D_o$, each of length $d$.
For $\vec{\theta}\in[\kappa]^{2d}$,
$\iota(\vec{\theta}\,)$ determines the pair of sequences of ordinals $(\theta_{\iota(0)},\theta_{\iota(2)},\dots,\theta_{\iota(2d-2))}), (\theta_{\iota(1)},\theta_{\iota(3)},\dots,\theta_{\iota(2d-1)})$,
both of which are members of $[\kappa]^d$.
Denote these as $\iota_e(\vec\theta\,)$ and $\iota_o(\vec\theta\,)$, respectively.
To ease notation, let $\vec{\delta}_{\vec\al}$ denote $\vec\delta_{p_{\vec\al}}$,
 $k_{\vec{\al}}$ denote $|\vec{\delta}_{\vec\al}|$,
and let $l_{\vec{\al}}$ denote  $l_{p_{\vec\al}}$.
Let $\lgl \delta_{\vec{\al}}(j):j<k_{\vec{\al}}\rgl$
denote the enumeration of $\vec{\delta}_{\vec\al}$
in increasing order.

Define a coloring  $f$ on $[\kappa]^{2d}$ into countably many colors as follows:
Given  $\vec\theta\in[\kappa]^{2d}$ and
 $\iota\in\mathcal{I}$, to reduce the number of subscripts,  letting
$\vec\al$ denote $\iota_e(\vec\theta\,)$ and $\vec\beta$ denote $\iota_o(\vec\theta\,)$,
define
\begin{align}\label{eq.fiotatheta}
f(\iota,\vec\theta\,)= \,  &
\lgl \iota, \varepsilon_{\vec{\al}}, k_{\vec{\al}}, p_{\vec{\al}}(d),
\lgl \lgl p_{\vec{\al}}(i,\delta_{\vec{\al}}(j)):j<k_{\vec{\al}}\rgl:i< d\rgl,\cr
& \lgl  \lgl i,j \rgl: i< d,\ j<k_{\vec{\al}},\ \mathrm{and\ } \delta_{\vec{\al}}(j)=\al_i \rgl, \cr
& \lgl \lgl j,k\rgl:j<k_{\vec{\al}},\ k<k_{\vec{\beta}},\ \delta_{\vec{\al}}(j)=\delta_{\vec{\beta}}(k)\rgl\rgl.
\end{align}
Let $f(\vec{\theta}\,)$ be the sequence $\lgl f(\iota,\vec\theta\,):\iota\in\mathcal{I}\rgl$, where $\mathcal{I}$ is given some fixed ordering.
Since the range of $f$ is countable,
apply the \Erdos-Rado Theorem
to  obtain a subset $K\sse\kappa$ of cardinality $\aleph_1$
which is homogeneous for $f$.
Take $K'\sse K$ such that between each two members of $K'$ there is a member of $K$.
Take subsets $K_i\sse K'$ such that  $K_0<\dots<K_{d-1}$
and   each $|K_i|=\aleph_0$.

\begin{lem}\label{lem.onetypes}
There are $\varepsilon^*\in 2$, $k^*\in\om$, $t_d$,
and $ \lgl t_{i,j}: j<k^*\rgl$, $i< d$,
 such that
for all $\vec{\al}\in \prod_{i<d}K_i$ and  each $i< d$,
 $\varepsilon_{\vec{\al}}=\varepsilon^*$,
$k_{\vec\al}=k^*$,  $p_{\vec{\al}}(d)=t_d$, and
$\lgl p_{\vec\al}(i,\delta_{\vec\al}(j)):j<k_{\vec\al}\rgl
=
 \lgl t_{i,j}: j<k^*\rgl$.
\end{lem}

\begin{proof}
Let  $\iota$ be the member in $\mathcal{I}$
which is the identity function on $2d$.
For any pair $\vec{\al},\vec{\beta}\in \prod_{i<d}K_i$, there are $\vec\theta,\vec\theta'\in [K]^{2d}$
such that
$\vec\al=\iota_e(\vec\theta\,)$ and $\vec\beta=\iota_e(\vec\theta'\,)$.
Since $f(\iota,\vec\theta\,)=f(\iota,\vec\theta'\,)$,
it follows that $\varepsilon_{\vec\al}=\varepsilon_{\vec\beta}$, $k_{\vec{\al}}=k_{\vec{\beta}}$, $p_{\vec{\al}}(d)=p_{\vec{\beta}}(d)$,
and $\lgl \lgl p_{\vec{\al}}(i,\delta_{\vec{\al}}(j)):j<k_{\vec{\al}}\rgl:i< d\rgl
=
\lgl \lgl p_{\vec{\beta}}(i,\delta_{\vec{\beta}}(j)):j<k_{\vec{\beta}}\rgl:i< d\rgl$.
Thus, define  $\varepsilon^*$, $k^*$, $t_d$, $\lgl \lgl t_{i,j}:j<k^*\rgl:i<d\rgl$ to be
$\varepsilon_{\vec\al}$, $k_{\vec\al}$,
$p_{\vec\al}(d)$,
$\lgl \lgl p_{\vec{\al}}(i,\delta_{\vec{\al}}(j)):j<k_{\vec{\al}}\rgl:i< d\rgl$
 for any $\vec\al\in \prod_{i<d}K_i$.
\end{proof}

Let $l^*$ denote the length of $t_d$.
Then all the nodes  $t_{i,j}$,  $i< d$, $j<k^*$,   also  have  length $l^*$.

\begin{lem}\label{lem.j=j'}
Given any $\vec\al,\vec\beta\in \prod_{i<d}K_i$,
if $j,k<k^*$ and $\delta_{\vec\al}(j)=\delta_{\vec\beta}(k)$,
 then $j=k$.
\end{lem}

\begin{proof}
Let $\vec\al,\vec\beta$ be members of $\prod_{i<d}K_i$   and suppose that
 $\delta_{\vec\al}(j)=\delta_{\vec\beta}(k)$ for some $j,k<k^*$.
For each $i<d$, let  $\rho_i$ be the relation from among $\{<,=,>\}$ such that
 $\al_i\,\rho_i\,\beta_i$.
Let   $\iota$ be the member of  $\mathcal{I}$  such that for each $\vec\gamma\in[K]^{d}$ and each $i<d$,
$\theta_{\iota(2i)}\ \rho_i \ \theta_{\iota(2i+1)}$.
Then there is a
$\vec\theta\in[K']^{2d}$ such that
$\iota_e(\vec\theta)=\vec\al$ and $\iota_o(\vec\theta)= \vec\beta$.
Since between any two members of $K'$ there is a member of $K$, there is a
 $\vec\gamma\in[K]^{d}$ such that  for each $i< d$,
 $\al_i\,\rho_i\,\gamma_i$ and $\gamma_i\,\rho_i\, \beta_i$,
and furthermore, for each $i<d-1$,
$\{\al_i,\beta_i,\gamma_i\}<\{\al_{i+1},\beta_{i+1},\gamma_{i+1}\}$.
Given that  $\al_i\,\rho_i\,\gamma_i$ and $\gamma_i\,\rho_i\, \beta_i$ for each $i<d$,
there are  $\vec\mu,\vec\nu\in[K]^{2d}$ such that $\iota_e(\vec\mu)=\vec\al$,
$\iota_o(\vec\mu)=\vec\gamma$,
$\iota_e(\vec\nu)=\vec\gamma$, and $\iota_o(\vec\nu)=\vec\beta$.
Since $\delta_{\vec\al}(j)=\delta_{\vec\beta}(k)$,
the pair $\lgl j,k\rgl$ is in the last sequence in  $f(\iota,\vec\theta)$.
Since $f(\iota,\vec\mu)=f(\iota,\vec\nu)=f(\iota,\vec\theta)$,
also $\lgl j,k\rgl$ is in the last  sequence in  $f(\iota,\vec\mu)$ and $f(\iota,\vec\nu)$.
It follows that $\delta_{\vec\al}(j)=\delta_{\vec\gamma}(k)$ and $\delta_{\vec\gamma}(j)=\delta_{\vec\beta}(k)$.
Hence, $\delta_{\vec\gamma}(j)=\delta_{\vec\gamma}(k)$,
and therefore $j$ must equal $k$.
\end{proof}

For any $\vec\al\in \prod_{i<d}K_i$ and any $\iota\in\mathcal{I}$, there is a $\vec\theta\in[K]^{2d}$ such that $\vec\al=\iota_o(\vec\theta)$.
By homogeneity of $f$ and  by the first sequence in the second line of equation  (\ref{eq.fiotatheta}), there is a strictly increasing sequence
$\lgl j_i:i< d\rgl$  of members of $k^*$ such that for each $\vec\al\in \prod_{i<d}K_i$,
$\delta_{\vec\al}(j_i)=\al_i$.
For each $i< d$, let $t^*_i$ denote $t_{i,j_i}$.
Then  for each $i<d$ and each $\vec\al\in \prod_{i<d}K_i$,
\begin{equation}
p_{\vec\al}(i,\al_i)=p_{\vec{\al}}(i, \delta_{\vec\al}(j_i))=t_{i,j_i}=t^*_i.
\end{equation}
Let $t_d^*$ denote $t_d$.

\begin{lem}\label{lem.compat}
For any finite subset $\vec{J}\sse \prod_{i<d}K_i$,
the set of conditions $\{p_{\vec{\al}}:\vec{\al}\in \vec{J}\,\}$ is  compatible.
Moreover,
$p_{\vec{J}}:=\bigcup\{p_{\vec{\al}}:\vec{\al}\in \vec{J}\,\}$
is a member of $\bP$ which is below each
$p_{\vec{\al}}$, $\vec\al\in\vec{J}$.
\end{lem}

\begin{proof}
For any $\vec\al,\vec\beta\in \prod_{i<d}K_i$,
whenver
 $j,k<k^*$ and
 $\delta_{\vec\al}(j)=\delta_{\vec\beta}(k)$, then $j=k$, by Lemma  \ref{lem.j=j'}.
It then follows from Lemma \ref{lem.onetypes}
that for each $i<d$,
\begin{equation}
p_{\vec\al}(i,\delta_{\vec\al}(j))=t_{i,j}=p_{\vec\beta}(i,\delta_{\vec\beta}(j))
=p_{\vec\beta}(i,\delta_{\vec\beta}(k)).
\end{equation}
Thus, for each $\vec\al,\vec\beta\in\vec{J}$ and each
$\delta\in\vec{\delta}_{\vec\al}\cap
\vec{\delta}_{\vec\beta}$,
for all $i<d$,
\begin{equation}
p_{\vec\al}(i,\delta)=p_{\vec\beta}(i,\delta).
\end{equation}
Thus,
$p_{\vec{J}}:=
\bigcup \{p_{\vec{\al}}:\vec\al\in\vec{J}\}$
is a  function.
Let $\vec\delta_{\vec{J}}=
\bigcup\{
\vec{\delta}_{\vec\al}:
\vec\al\in\vec{J}\,\}$.
For each $\delta\in
\vec{\delta}_{\vec{J}}$ and   $i<d$,
$p_{\vec{J}}(i,\delta)$ is defined,
and it is exactly  $p_{\vec\al}(i,\delta)$, for any $\vec\al\in\vec{J}$ such that $\delta\in \vec\delta_{\vec\al}$.
Thus, $p_{\vec{J}}$ is a member of $\bP$, and $p_{\vec{J}}\le p_{\vec\al}$ for each $\vec\al\in\vec{J}$.
\end{proof}

The final  lemma of Part I  will be used in the next section.

\begin{lem}\label{lem.subclaimA}
If $\beta\in \bigcup_{i<d}K_i$,
$\vec{\al}\in\prod_{i<d}K_i$,
and $\beta\not\in\vec\al$,
 then
$\beta$ is not  a member of   $\vec{\delta}_{\vec{\al}}$.
\end{lem}

\begin{proof}
Suppose toward a contradiction that $\beta\in\vec{\delta}_{\vec{\al}}$.
Then there is a $j<k^*$ such that $\beta=\delta_{\vec{\al}}(j)$.
Let $i$ be such that $\beta\in K_i$.
Since $\beta\ne\al_i=\delta_{\vec{\al}}(j_i)$, it must be that $j\ne j_i$.
However,
letting $\vec\beta$ be any member of $\prod_{i<d}K_i$ with $\beta_i=\beta$,
then
$\beta=\delta_{\vec{\beta}}(j_i)=\delta_{\vec{\al}}(j)$, so Lemma  \ref{lem.j=j'}
implies that $j_i=j$, a contradiction.
\end{proof}
\vskip.1in

\noindent \underline{\bf{Part II.}}
In this last part of the proof,
we build a strong coding tree $S$ valid in $T$ on which the coloring $h$ is homogeneous.
Cases (a) and  (b) must be handled separately.
\vskip.1in

\noindent\underline{\bf{Part II Case (a).}}
Recall that $\{s_i:i\le d\}$ enumerates the members of $A_e$, which is a subset of $B^+$.
Let $m$ be the   integer such that
$B\in\mathcal{AT}_{m}^k$.
Let $M=\{ m_j:j\in\bN\}$ be the strictly increasing enumeration of those $m'> m$
 such that the splitting node in  $\max(r_{m'}(T))$ extends $s_d$.
By induction on $j$ we will  construct the following:
The base case splits into two subcases depending on whether $m_0>m+1$ or $m_0=m+1$.
In the first case, 
 we will find some $U_{m_{0}-1}\in r_{m_{0}-1}[B,T]$ which is valid in $T$. 
In the second, we will find some 
$U_{m_{1}-1}\in r_{m_{1}-1}[B,T]$ which is valid in $T$
 and such that 
$h$ takes color $\varepsilon^*$ on $\Ext_{U_{m_1-1}}(A,\tilde{X})$.
In general, 
given $U_{m_j-1}$, 
 we will use
the forcing 
to 
 find some $U_{m_{j}}\in r_{m_j}[U_{m_j-1},T]$ which is valid in $T$ such that  $h$ takes color $\varepsilon^*$ on $\Ext_{U_{m_j}}(A,\tilde{X})$.
Then
we will apply Theorem \ref{thm.GOODnonempty}  and  Lemma \ref{lem.HLconstruction}  to find an extension $U_{m_{j+1}-1}\in r_{m_{j+1}-1}[U_{m_{j}}, T]$ which is valid in $T$ and continue the induction.
Setting $S=\bigcup_{j\in\bN} U_{m_j}$ will yield $S$ to be a member of $[B,T]$ for which  $\Ext_S(A,\tilde{X})$ is homogeneous for $h$, with color $\varepsilon^*$.

First extend each node in $B^+$  to level $l^*$ as follows.
The set $\{t^*_i:i\le d\}$ end-extends
 $A_e$,
 has no new  pre-cliques
 over $A_e$, and is free in $T$.
For  each node $u$ in $B^+\setminus A_e$, let $u^*$ denote
 its  leftmost extension in $T\re l^*$.
Then the set
\begin{equation}
U^*=\{t^*_i:i\le d\}\cup\{u^*:u\in B^+\setminus A_e\}
\end{equation}
end-extends   $B^+$, is free in $T$, and has no new pre-cliques over $B$, by
Lemma \ref{lem.poc}.
Thus,
$U^*$ is free in $T$, and
 $B\cup U^*$
 satisfies the Witnessing Property
so   is valid in $T$.
If $m_0>m+1$,
apply Lemma \ref{lem.HLconstruction} and Theorem \ref{thm.GOODnonempty} to
extend above $U^*$  to construct
a member $U_{m_0-1}\in r_{m_0-1}[B,T]$ which is valid in $T$.
In this case, note that $\max(r_{m+1}(U_{m_0}))$ is not
$ U^*$, but rather  $\max(r_{m+1}(U_{m_0}))$
end-extends $U^*$.

If $m_0=m+1$,
then $B\cup U^*$ is a member of $r_{m_0}[B,T]$, by Lemma \ref{lem.HLconstruction}.
In this case,  we can simply let $U_{m_0}=B\cup U^*$.
Then $h$ takes color $\varepsilon^*$ on 
$\Ext_{U_{m_0}}(A,\tilde{X})$, since $U^*$ is the only member of that set.
Using Theorem \ref{thm.GOODnonempty},
 extend $U_{m_0}$ to a member $U_{m_1-1}\in r_{m_1-1}[U_{m_0},T]$ which is valid in $T$.

Assume  $j<\om$ and
 we have constructed $U_{m_j-1}$, valid in $T$,  so that every member of $\Ext_{U_{m_j-1}}(A,\tilde{X})$ is colored $\varepsilon^*$ by $h$.
Fix some  $C\in r_{m_j}[U_{m_j -1} ,T]$ with $C$ valid in $T$, and let $Z=\max(C)$.
The nodes in $Z$ will not be in the tree $S$ we are constructing;
rather,
we will extend the nodes in $Z$ to construct
$U_{m_j}\in r_{m_j}[U_{m_j-1},T]$.

We now start to construct a condition $q$ which will satisfy
Lemma \ref{lem.qbelowpal}, below.
Let $q(d)$ denote  the splitting node in $Z$ and let $l_q=|q(d)|$.
For each $i<d$,
let  $Z_i$ denote the set of those $z\in T_i\cap Z$
such that $z\in X$ for some $X\in\Ext_{Z}(A,\tilde{X})$.
For each $i<d$,
take  a set $J_i\sse K_i$ of cardinality $|Z_i|$
and label the members of $Z_i$ as
$\{z_{\al}:\al\in J_i\}$.
Notice that each member of $\Ext_T(A,\tilde{X})$ above $Z$ extends some set $\{z_{\al_i}:i<d\}\cup\{q(d)\}$, where each $\al_i\in J_i$.
Let $\vec{J}$ denote the set of those $\lgl \al_0,\dots,\al_{d-1}\rgl\in \prod_{i< d}J_i$ such that  the set $\{z_{\al_i}:i< d\}\cup\{q(d)\}$ is in $\Ext_T(A,\tilde{X})$.
Then for each $i<d$,
$J_i=\{\al_i:\vec\al\in\vec{J}\}$.
 It follows from  Lemma \ref{lem.compat} that
the set $\{p_{\vec\al}:\vec\al\in\vec{J}\}$ is compatible.
The fact that
$p_{\vec{J}}$ is a condition in $\bP$ will be used
to make  the construction of $q$ very precise.

Let
 $\vec{\delta}_q=\bigcup\{\vec{\delta}_{\vec\al}:\vec\al\in \vec{J}\}$.
For each $i<d$ and $\al\in J_i$,
define $q(i,\al)=z_{\al}$.
Notice that for each
$\vec\al\in \vec{J}$ and $i<d$,
\begin{equation}
q(i,\al_i)\contains t^*_i=p_{\vec\al}(i,\al_i)=p_{\vec{J}}(i,\al_i),
\end{equation}
and
\begin{equation}
q(d)\contains t^*_d=p_{\vec\al}(d)=p_{\vec{J}}(d).
\end{equation}
For each  $i<d$ and $\gamma\in\vec{\delta}_q\setminus
J_i$,
there is at least one $\vec{\al}\in\vec{J}$ and some $k<k^*$ such that $\delta_{\vec\al}(k)=\gamma$.
Let $q(i,\gamma)$ be the leftmost extension
 of $p_{\vec{J}}(i,\gamma)$ in $T$ of length $l_q$.
Define
\begin{equation}
q=\{q(d)\}\cup \{\lgl (i,\delta),q(i,\delta)\rgl: i<d,\  \delta\in \vec{\delta}_q\}.
\end{equation}
Since $C$ is valid in $T$ and $Z=\max(C)$, it follows that  $Z$ is free in $T$.
Since $\ran(q)$ consists of $Z$ along with leftmost extensions of nodes in $\ran(p_{\vec{J}}(i,\gamma))$,
all of which are free,
$\ran(q)$ is free.
Therefore, $q$ is a condition in $\bP$.

\begin{lem}\label{lem.qbelowpal}
For all $\vec\al\in\vec{J}$,
$q\le p_{\vec{\al}}$.
\end{lem}

\begin{proof}
Given  $\vec\al\in\vec{J}$,
it follows from the  definition of $q$ that
$\vec{\delta}_q\contains \vec{\delta}_{\vec{\al}}$,
$q(d)\contains p_{\vec{\al}}(d)$,
and
for each pair $(i,\gamma)\in d\times \vec{\delta}_{\vec\al}$,
$q(i,\gamma)\contains p_{\vec{\al}}(i,\gamma)$.
So it only remains to show that  $q$
is valid over $p_{\vec{\al}}$.
It follows from  Lemma \ref{lem.subclaimA}
that
 $\vec{\delta}_{\vec\al}\cap
\bigcup_{i<d}K_i=\vec\al$; so
for  each $i<d$ and $\gamma\in\vec{\delta}_{\vec\al}\setminus \{\al_i\}$,
 $q(i,\gamma)$ is the leftmost extension of $p_{\vec\al}(i,\gamma)$.
Since $\vec\al$ is in $\vec{J}$,
$ X(q,\vec\al)$ is in $\Ext_T(A,\tilde{X})$.
This  implies that $ X(q,\vec\al)$
has no new pre-cliques
 over $A$, and hence, none over $X(p_{\vec\al},\vec\al)$.
It follows that
$\ran(q\re \vec{\delta}_{\vec\al})$
is valid over $\ran(p_{\vec\al})$, by Lemma \ref{lem.poc}.
Therefore, $q\le p_{\vec\al}$.
 \end{proof}

\begin{rem}
Notice that
we did not prove that $q\le p_{\vec{J}}$; in fact that is generally false.
\end{rem}

To construct $U_{m_j}$,
take an $r\le q$ in  $\bP$ which  decides some $l_j$ in $\dot{L}_d$ for which   $h(\dot{b}_{\vec\al}\re l_j)=\varepsilon^*$, for all $\vec\al\in\vec{J}$.
This is possible since for all $\vec\al\in\vec{J}$,
$p_{\vec\al}$ forces $h(\dot{b}_{\vec\al}\re l)=\varepsilon^*$ for $\dot{\mathcal{U}}$ many $l\in \dot{L}_d$.
By the same argument as in creating the conditions $p_{\vec\al}\le p_{\vec\al}^2$ to satisfy (4)
 in Part I,
 we may assume that
 the nodes in the image of $r$ have length  $l_j$.
Since
$r$ forces $\dot{b}_{\vec{\al}}\re l_j=X(r,\vec\al)$
for each $\vec\al\in \vec{J}$,
and since the coloring $h$ is defined in the ground model,
it follows that
$h(X(r,\vec\al))=\varepsilon^*$ for each $\vec\al\in \vec{J}$.
Extend the splitting node $q(d)$ in $Z$
to $r(d)$.
For each $i<d$ and $\al_i\in J_i$,
extend $q(i,\al_i)$ to $r(i,\al_i)$.
Let
\begin{equation}
Z_0=\{q(i,\al_i):i<d,\ \al_i\in J_i\}\cup \{q(d)\}
\end{equation}
and let $Z_1= Z\setminus Z_0$.
Let
\begin{equation}
Y=\{r(i,\al_i):i<d,\ \al_i\in J_i\}\cup \{r(d)\}.
\end{equation}
Then $Y$ extends  $Z_0$ and has no new pre-cliques over $Z_0$, since $r\le q$. 
By Lemma \ref{lem.HLconstruction},
 there is a $U_{m_j}\in    r_{m_j}[U_{m_j-1},T]$ which is valid in $T$ 
such that 
$\max(U_{m_j})$ end-extends $Z$ and in particular, 
$Y\sse\max(U_{m_j})$.
Notice that every $X\in\Ext_{U_{m_j}}(A,\tilde{X})$
 with $X\sse\max(U_{m_j})$
 satisfies $h(X)=\varepsilon^*$.
 This holds since for each such $X$, 
the truncation  $ X \re l_q$
is a member of $\Ext_{Z}(A,\tilde{X})$.
So  there corresponds a sequence $\vec\al\in\vec{J}$ such that
$X\re l_q=  X(q,\vec\al)$.
Then
 $X= X(r,\vec\al)$,
 which  has $h$-color $\varepsilon^*$.

To finish the induction step, 
apply  Lemma \ref{lem.HLconstruction} and Theorem \ref{thm.GOODnonempty}
to extend $U_{m_{j}}$ to some $U_{m_{j+1}-1}\in r_{m_{j+1}-1}[U_{m_{j}},T]$ which is valid in $T$.
Note that every member of $\Ext_{U_{m_{j+1}-1}}(A,\tilde{X})$ is colored $\varepsilon^*$ by $h$, since 
$\Ext_{U_{m_{j+1}-1}}(A,\tilde{X})=\Ext_{U_{m_{j}}}(A,\tilde{X})$.

Let $S=\bigcup_{j\in\bN}U_{m_j}$.
Then for each $X\in\Ext_{S}(A,\tilde{X})$, there corresponds a $j$ such that $X\in\Ext_{U_{m_j}}(A,\tilde{X})$, and hence,
$h(X)=\varepsilon^*$.
Thus, $S\in [B,T]$ and satisfies the theorem.
This concludes the proof of the theorem for Case (a).
\vskip.1in

\noindent\underline{\bf{Part II Case (b).}}
Let  $m_0$ be the integer such that there is a $B'\in r_{m_0}[B,T]$ with
$\tilde{X}\sse \max(B')$.
Let $U_{m_0-1}$ denote $r_{m_0-1}(B')$.
Since $\tilde{X}\sse \max(B')$, it follows that
$l^*\ge l_{B'}$.
Let  $V=\{t^*_i:i\le d\}$, and recall that
 this set  has no new pre-cliques over $\tilde{X}$.
By Lemma \ref{lem.HLconstruction}
there is  a set of nodes $V'$ end-extending $\max(B')\setminus V$ such that
$U_{m_0-1}\cup V\cup V'$ is a member of $r_{m_0}[U_{m_0-1},T]$;
label  this $U_{m_0}$.
Since $\max(U_{m_0})$ is at the level of  the coding node $t^*_d$, $\max(U_{m_0})$  is free in $T$.
Moreover,  $U_{m_0}\in r_{m_0}[U_{m_0-1},T]$ implies that $U_{m_0}$
 satisfies the Strong Witnessing Property.
Therefore, $U_{m_0}$ is valid in $T$.
Notice that $\{t^*_i:i\le d\}$ is the only member of
$\Ext_{U_{m_0}}(A,\tilde{X})$,
and it has $h$-color $\varepsilon^*$.

Let $M=\{m_j:j\in\bN\}$ enumerate  the set of $m\ge m_0$
such that the coding node $c^T_{m}\contains c^T_{m_0}$.
Assume  that  $j\ge 1$ and
 we have constructed $U_{m_{j-1}}\in \mathcal{AT}^k_{m_{j-1}}$ valid in $T$ so that every member of $\Ext_{U_{m_{j-1}}}(A,\tilde{X})$ is colored $\varepsilon^*$ by $h$.
 By Theorem  \ref{thm.GOODnonempty},
we may  fix some
 $U_{m_j-1}\in r_{m_j-1}[U_{m_{j-1}},T]$ which is valid in $T$.
 Take some  $C\in r_{m_j}[U_{m_j-1} ,T]$, and
let $Z$ denote $\max(C)$.
The nodes in $Z$ will not be in the tree $S$ we are constructing;
rather,
we will construct
$U_{m_j}\in r_{m_j}[U_{m_j-1},T]$
so that
$\max(U_{m_j})$ extends
 $Z$.
Let $q(d)$ denote the coding   node in $Z$ and let $l_q=|q(d)|$.
Recall that for $e\in\{0,1\}$, $I_e$ denotes the set of
 $i<d$ for which $t^*_i$ has passing number $e$ at $t^*_d$.
For each pair $e\in\{0,1\}$ and
$i\in I_e$, let
$Z_i$ be the set
of nodes $z$ in $T_i\cap Z$ such that $z$ has passing number $e$ at  $q(d)$.

We now construct a condition $q$ similarly to, but not exactly as in,   Case (a).
For each $i<d$,
let $J_i$ be a subset of $K_i$ with the same size as $Z_i$.
For each $i< d$, label the nodes in $Z_i$ as
$\{z_{\al}:\al\in J_i\}$.
Let $\vec{J}$ denote the set of those $\lgl \al_0,\dots,\al_{d-1}\rgl\in \prod_{i< d}J_i$ such that  the set
$\{z_{\al_i}:i< d\}\cup\{q(d)\}$ is in $\Ext_T(A,\tilde{X})$.
Notice that for each $i<d$ and
$\vec\al\in \vec{J}$, $z_{\al_i}\contains t^*_i=p_{\vec{\al}}(i,\al_i)$, and $q(d)\contains t^*_d=p_{\vec{\al}}(d)$.
Furthermore, for each $i<d$ and   $\delta\in J_i$,
there is an $\vec\al\in\vec{J}$ such that $\al_i=\delta$.
Let
 $\vec{\delta}_q=\bigcup\{\vec{\delta}_{\vec\al}:\vec\al\in \vec{J}\,\}$.
For each pair $(i,\gamma)\in d\times\vec{\delta}_q$ with  $\gamma\in J_i$,
define $q(i,\gamma)=z_{\gamma}$.

Let $\mathcal{J}=\{(i,\gamma)\in
d\times\vec{\delta}_q : i<d$ and $
\gamma\in\vec{\delta}_q\setminus J_i\}$.
For  each pair $(i,\gamma)\in \mathcal{J}$,
there is at least one $\vec{\al}\in\vec{J}$ and some $k<k^*$ such that $\delta_{\vec\al}(k)=\gamma$.
By Lemma \ref{lem.compat},
$p_{\vec\beta}(i,\gamma)=p_{\vec{\al}}(i,\gamma)=t^*_{i,k}$,
for any  $\vec\beta\in\vec{J}$ for which $\gamma\in\vec{\delta}_{\vec\beta}$.
For each pair $(i,\gamma)\in\mathcal{J}$ with $i\in I_0$,
take $q(i,\gamma)$ to be the leftmost extension of $t^*_{i,k}$ in $T\re l_q$.
For  each pair $(i,\gamma)\in\mathcal{J}$ with $i\in I_1$,
let  $q(i,\gamma)$ be the node which extends $t^*_{i,k}$ leftmost until length of the longest coding node in $T$ strictly below $q(d)$, and then takes the rightmost path to length $l_q$.
Note that  $q(i,\gamma)$ has passing number $e$, where $e\in\{0,1\}$ is the number such that $i\in I_e$.
By similar arguments to those in 
Lemma \ref{lem.HLCasebtruncate},
the set $\{q(i,\gamma):(i,\gamma)\in\mathcal{J}\}$ has no new pre-cliques over
$\ran(p_{\vec{\al}})$ for $\vec\al\in\vec{J}$ (recall, these all have the same range);
moreover,
any new pre-cliques in the set
$\{q(i,\gamma):i< d,\ \gamma\in\vec{\delta}_q\}\cup\{q(d)\}$ over $\ran(p_{\vec{\al}})$ (for any $\vec\al\in\vec{J}$)
must occur among 
$\{q(i,\gamma): i<d,\ \gamma\in J_i\}\cup\{q(d)\}$.

Define
\begin{equation}
q=\{q(d)\}\cup \{\lgl (i,\delta),q(i,\delta)\rgl: i<d,\  \delta\in \vec{\delta}_q\}.
\end{equation}
By the construction, $q$ is a member of $\bP$.

\begin{claim}\label{claim.qbelowpal}
For each $\vec\al\in \vec{J}$,
$q\le p_{\vec\al}$.
\end{claim}

\begin{proof}
By construction, 
$q(i,\delta)\contains p_{\vec{\al}}(i,\delta)$ for all $(i,\delta)\in d\times \vec{\delta}_{\vec\al}$; so 
it suffices to show that for each $\vec\al\in\vec{J}$,
$\ran(q\re \vec\delta_{\vec\al})$ has no new pre-cliques  over $\ran(p_{\vec\al})$.

Let $\vec\al\in\vec{J}$ be given.
Then
\begin{equation}
\ran(q\re \vec\delta_{\vec\al})\sse
\{q(i,\gamma):(i,\gamma)\in\mathcal{J}\}\cup
X(q,\vec\al),
\end{equation}
recalling that $X(q,\vec\al)=\{q(i,\al_i):i<d\}\cup\{q(d)\}$.
By definition of $\vec{J}$,
$\vec{\al}\in\vec{J}$ implies that
$X(q,\vec\al)$  is a member of $\Ext_T(A,\tilde{X})$.
Thus,
$X(q,\vec\al)$
has no new pre-cliques over  $A\cup\tilde{X}$, by
Lemma \ref{lem.alternate}.
Since $\{t^*_i:i\le d\}$ end-extends $\tilde{X}$,
it follows that $X(q,\vec\al)$  has no new pre-cliques over  $\{t^*_i:i\le d\}$.
Since the set $\{q(i,\gamma):i\le d,\ \gamma\in\vec{\delta}_{\vec{\al}}, \ \gamma\ne \al_i\}$ has no new pre-cliques with $X(q,\vec\al)$ over  
$\ran(p_{\vec\al})$, 
it follows that 
$q\le p_{\vec\al}$.
\end{proof}

To construct $U_{m_j}$,
take an $r\le q$ in  $\bP$ which decides  $l_r\in \dot{L}_d$ such that
$h(\dot{b}_{\vec\al}\re l_r)=\varepsilon^*$ for all $\vec\al\in\vec{J}$,
using
the same ideas as in the construction of the $p_{\vec\al}$'s.
Let $Y=\bigcup\{ X(r,\vec\al):\vec\al\in\vec{J}\}$, and let $Z^*=\{r(d)\}\cup\bigcup_{i<d}Z_i$.
Since  $\ran(r\re \vec\delta_q)$ has no new pre-cliques over $\ran(q)$,
it follows that $Y$ has no new pre-cliques over $Z^*$.
Apply Lemma \ref{lem.HLCasebtruncate} to
extend the nodes in $Z\setminus Z^*$ to a set $Y'\sse T\re  l_r$ so that
each node in $Y'$ has the same passing number at $r(d)$ as it does at $q(d)$, and such that $Y\cup Y'$ has no new pre-cliques over $Z$.
Then  $U_{m_j-1}\cup Y\cup Y'$ is a member of
$r_{m_j}[U_{m_j-1},T]$ which is valid in $T$.


To finish the proof of the theorem for Case (b),
Define  $S=\bigcup_{j\in\bN}U_{m_j}$.
Then $S\in [B',T]$, and
for each $Z\in\Ext_{S}(A,\tilde{X})$, there is a $j\in\bN$ such that $Z\in\Ext_{U_{m_j}}(A,\tilde{X})$, so $h(Z)=
\varepsilon^*$.

This concludes the proof of the theorem.
\end{proof}


\section{Ramsey Theorem for  finite  trees with the Strict Witnessing Property}\label{sec.1SPOC}

The main theorem of this section is
Theorem \ref{thm.MillikenSWP}, which is
 an  analogue of Milliken's Theorem \ref{thm.Milliken}
for
 colorings of finite trees with  the following strong version of the Witnessing Property.

\begin{defn}[Strict Witnessing Property]\label{defn.SWP}
A  subtree $A$ of a strong coding tree satisfies the {\em Strict Witnessing Property (SWP)}
if $A$
 satisfies the Witnessing Property and
 the following hold:
 \begin{enumerate}
 \item
 For each interval  $(|d_m^A|,|d^A_{m+1}|]$,
 $A$  has  at most  one new pre-clique of size at least two, or a singleton with some new pre-cliques in $(|d_m^A|,|d^A_{m+1}|]$, but not both.
 \item
 If $X$ is a new pre-$a$-clique of size at least three 
 in $(|d_m^A|,|d^A_{m+1}|]$, 
 then every proper subset of $X$ has a new pre-$a$-clique in  an  interval $(|d_j^A|,|d^A_{j+1}|]$, for some  $j<m$. 
 \end{enumerate}
\end{defn}

\begin{lem}\label{lem.copy}
If $A\sse\bT_k$ has the Strict Witnessing Property and
$B\cong A$,
then $B$ also has the Strict Witnessing Property.
\end{lem}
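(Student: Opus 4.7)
The plan is to show that $B$ inherits each clause of the Strict Witnessing Property from $A$ by transporting the relevant pre-clique data across the strong isomorphism $f : A \to B$. First, Lemma \ref{lem.concpresWP} immediately gives that $B$ has the Witnessing Property, so the only new work is to verify the two additional conditions (1) and (2) from Definition \ref{defn.SWP}. The key structural fact to be exploited is that strong isomorphisms preserve new pre-$a$-cliques between corresponding intervals: for each $m$, $f$ carries the interval $(|d_m^A|, |d_{m+1}^A|]$ to $(|d_m^B|, |d_{m+1}^B|]$, and if $X \subseteq A$ is a level set with a new pre-$a$-clique at level $l$ in that interval, then $f[X]$ has a new pre-$a$-clique at the corresponding level $l'$ of $B$, and vice versa.

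I would first record this preservation claim carefully. Clauses (1) and (2) of Definition \ref{defn.prepclique} transfer verbatim, because strong similarity maps preserve (ghost) coding nodes, passing numbers at them, and coding-node witnesses (Observation \ref{obs.urcool}); hence the property of being a pre-$a$-clique at length $l$ is preserved in both directions. The newness condition from Definition \ref{defn.newll1s} — that no shorter length $l''$ at which $X \restriction l$ and $X \restriction l''$ have the same number of nodes is already a pre-$a$-clique — is a purely relative-length condition, and so is respected by any strong similarity map (clauses (4) and (5) of Definition \ref{def.3.1.likeSauer}). The same applies to membership in a given interval between consecutive critical nodes.

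With this preservation in hand, condition (1) of SWP for $B$ is argued by contrapositive: any interval of $B$ carrying two distinct new pre-cliques of size at least two, or a singleton new pre-clique together with another new pre-clique in the same interval, would, upon applying $f^{-1}$, yield the same configuration in the corresponding interval of $A$, violating SWP of $A$. Condition (2) is proved directly: given a new pre-$a$-clique $X \subseteq B$ of size at least three sitting in $(|d_m^B|, |d_{m+1}^B|]$, apply $f^{-1}$ to obtain a new pre-$a$-clique $f^{-1}[X]$ in the corresponding interval of $A$; SWP of $A$ then supplies, for each proper subset $Y \subsetneq f^{-1}[X]$, a new pre-$a$-clique in some earlier interval $(|d_j^A|, |d_{j+1}^A|]$ with $j < m$; applying $f$ again produces the required new pre-$a$-clique for $f[Y] \subsetneq X$ in the corresponding earlier interval of $B$. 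The only point that merits pausing on is the preservation of newness for non-maximal pre-$a$-cliques, since strong isomorphism is defined via maximal ones in Definition \ref{defn.stable}; but Observation \ref{obs.urcool} already upgrades that definition to arbitrary pre-cliques and their witnesses, and the relative-length clauses of strong similarity then handle the newness criterion, so no genuine obstacle arises.
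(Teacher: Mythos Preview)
Your overall strategy is sound and close to the paper's, but the justification of your key preservation claim has a genuine gap. You assert that strong isomorphisms preserve the property ``being a (new) pre-$a$-clique at length $l$'' in both directions, invoking Observation~\ref{obs.urcool}. But being a pre-$a$-clique (Definition~\ref{defn.prepclique}) is defined via the coding nodes $c^k_i$ of the ambient $\bT_k$, not the coding nodes of $A$ or $B$. Observation~\ref{obs.urcool} only says that a set of witnesses \emph{in $A$} maps to a set of witnesses \emph{in $B$}; it does not upgrade strong isomorphism to preserve arbitrary pre-$a$-cliques that may be witnessed only by $\bT_k$-coding nodes lying outside $A$ (or outside $B$). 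Similarly, ``newness'' is not a purely relative-length condition: it asserts the \emph{absence} of a pre-$a$-clique at every lower length, and absence of a $\bT_k$-defined property is not obviously transported by $f$. Your dismissal of this concern in the last paragraph is exactly where the argument breaks.

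The paper avoids this by using only what Definition~\ref{defn.stable} actually guarantees: preservation of \emph{maximal} new pre-$a$-cliques. Since $A$ has SWP, each new pre-clique of size $\ge 2$ in $A$ is the unique one in its interval and hence maximal, and by clause~(2) every proper subset of it is itself a maximal new pre-$a$-clique in some earlier interval. Now suppose $Y\sse B$ of size $\ge 2$ is a new pre-$a$-clique in interval $m$ of $B$ that is not maximal; enlarge it to a maximal one $Y^*$. Then $f^{-1}[Y^*]$ is the unique (maximal) new pre-clique in interval $m$ of $A$, and $f^{-1}[Y]$ is a proper subset of it, hence by SWP~(2) for $A$ already a maximal new pre-$a$-clique in some interval $j<m$ of $A$; applying $f$ back, $Y$ already has a pre-$a$-clique in interval $j$ of $B$, contradicting newness. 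Thus every new pre-clique of size $\ge 2$ in $B$ is maximal, and uniqueness per interval follows from uniqueness in $A$. Singletons are handled by the same maximality preservation. Your contrapositive and direct transfers of (1) and (2) become valid once routed through maximality in this way, rather than through an unjustified general preservation of pre-$a$-cliques.
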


\begin{proof}
If $B\cong A$ and $A$ has the WP,
then $B$ also has the WP
 by Lemma \ref{lem.concpresWP}.
Let $f:A\ra B$ be the strong isomorphism between them.
Since $A$ has the SWP,  each new pre-clique of size at least two in $A$ is the only new pre-clique occuring in  that interval of $A$, hence it is maximal in that interval.
By (2) of Definition \ref{defn.SWP}, 
each proper subset of a new pre-clique in a given interval of $A$ occurs as a maximal new pre-clique and is witnessed in some lower interval of $A$. 
Since $f$ preserves maximal new pre-cliques, 
each new pre-clique of size at least two in $B$ is a maximal  new pre-clique in $B$, and is the only new pre-clique of $B$ in the interval in which it  occurs. 
Thus, $B$ satisfies (2).
Furthermore, for any $t\in A$, $f(t)$ is a new singleton pre-$a$-clique in $B$ iff $t$ is a  is a new singleton pre-$a$-clique in $A$.
Therefore, $B$ has the SWP.
\end{proof}

Given a finite tree $A$ with  the SWP,
we say that $B$ is a {\em copy} of $A$ if $A\cong B$.
The main theorem of this section,
Theorem \ref{thm.MillikenSWP},
will  guarantee a Ramsey Theorem for  colorings of copies of a finite tree with the SWP inside a strong coding tree.

\begin{thm}\label{thm.MillikenSWP}
Let $T\in\mathcal{T}_k$
 be a strong coding tree
and
let $A$ be a finite subtree  of $T$ satisfying the Strict Witnessing Property.
Then for any coloring of the  copies of $A$ in $T$ into finitely many colors,
 there is a strong coding subtree $S\le T$ such that all  copies of $A$ in $S$ have the same color.
\end{thm}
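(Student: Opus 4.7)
The plan is to proceed by induction on $m$, the number of critical nodes of $A$, with a fusion construction at each inductive step driven by Theorem \ref{thm.matrixHL} together with the forthcoming Lemma \ref{lem.Case(c)} (the strengthened Case (b) analogue of Halpern--L\"auchli). The base cases $m\le 1$ reduce either to a one-dimensional application of Theorem \ref{thm.matrixHL} or to the known indivisibility of $\mathcal{H}_k$. For the inductive step, I would write $A^-=\{t\in A:|t|<l_A\}$; this initial segment inherits the Strict Witnessing Property from $A$ (maximality of new pre-cliques in each interval is preserved, and the witnessing coding nodes of $A^-$ remain in $A^-$) and has strictly fewer critical nodes.

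First I would decompose a copy of $A$ inside $T$ as a pair $(B,\tilde X)$, where $B\cong A^-$ sits inside $T$ and $\tilde X$ is a level set end-extending a subset of $\max(B)^+$ of the correct strict similarity type so that $B\cup\tilde X\cong A$. Fixing $B$, the restriction of the coloring to the extensions $\tilde X$ of $B$ is precisely a coloring of $\Ext_T(B,\tilde X_0)$ for a chosen ``minimal'' representative $\tilde X_0$ of the appropriate type. This is exactly the setting of Theorem \ref{thm.matrixHL} Case (a), if the topmost critical level of $A$ carries a splitting node, or of Lemma \ref{lem.Case(c)}, if it carries a coding node.

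Next I would carry out a fusion along the critical levels of $T$. Starting with $S_0=T$, at stage $n$ enumerate the finitely many copies $B\cong A^-$ contained in $r_n(S_{n-1})$ whose maximum lies on level $n$, and for each such $B$ and each strict similarity type $\tau$ of top-extension, apply Theorem \ref{thm.matrixHL} or Lemma \ref{lem.Case(c)} to shrink to $S_n\le S_{n-1}$ on which the coloring is constant across all $\tilde X$ of type $\tau$ end-extending $B$ inside $S_n$. The fusion $S^{\ast}$ defined by $r_n(S^{\ast})=r_n(S_n)$ is then a strong coding subtree of $T$ in which the color of a copy of $A$ depends only on the underlying copy $B$ of $A^-$ and on the strict similarity type $\tau$ of its top extension. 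Finitely many nested applications of the inductive hypothesis to $S^{\ast}$, one for each type $\tau$, now yield $S\le T$ on which the derived coloring of copies of $A^-$ is constant, so that all copies of $A$ in $S$ share a single color.

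The principal obstacle will be establishing Lemma \ref{lem.Case(c)} itself: Theorem \ref{thm.matrixHL} Case (b) gives homogeneity only across extensions of one particular $\tilde X$, while the induction needs homogeneity across all level sets of a prescribed similarity type above $B$. Closing this gap will require a third Harrington-style forcing in which the top coding node of $\tilde X$ is also generic rather than fixed, together with a new \Erdos--Rado absorption argument to coordinate it with the $d$ branches below. A secondary technical burden is preserving the Strict Witnessing Property at every finite stage of the fusion; this is where the Extension Lemmas of Section \ref{sec.ExtLem}, particularly Lemmas \ref{lem.pnc} and \ref{lem.facts}, carry the load, guaranteeing that each $S_n$ still contains copies of $A^-$ with correctly witnessed new pre-cliques, so that the derived coloring of copies of $A^-$ on $S^{\ast}$ is genuinely well-defined and the inductive hypothesis can be invoked.
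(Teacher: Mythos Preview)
Your inductive scheme---peel off the top level of $A$, apply Theorem~\ref{thm.matrixHL} Case (a) or the upgraded Case (b) (Lemma~\ref{lem.Case(c)}), and fuse over all copies of $A^-$---is exactly the paper's strategy; the paper packages the fusion as Lemmas~\ref{lem.Case(b)} and~\ref{lem.fusionsplit}, preceded by the end-homogeneity step Lemma~\ref{lem.endhomog}. You have also correctly located the main technical burden in the third forcing needed for Lemma~\ref{lem.Case(c)}.

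There is, however, a genuine gap for $k\ge 4$. By item (1) of Definition~\ref{defn.SWP}, an interval of $A$ topped by a splitting node may still carry a \emph{singleton} with new pre-$a$-cliques for some $a\in[4,k]$. In that situation your dichotomy fails: Theorem~\ref{thm.matrixHL} Case (a) only homogenizes level sets end-extending a fixed $\tilde X$, so it pins down the height at which that singleton pre-clique first appears and cannot range over all such heights; your fusion would therefore miss copies of $A$ whose singleton pre-clique sits at a different level within the interval. The paper's remedy is to induct on a \emph{finer} notion of critical node that counts each new singleton pre-$a$-clique (for $a\in[4,k]$) as a separate level, and to handle those levels by a third case, Lemma~\ref{lem.Case(bb)} (Case (b$'$)), whose proof parallels the Case (b)/(c) machinery with the singleton pre-clique playing the role of the coding node. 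Two smaller corrections: the base case (a single node, which after the paper's reduction to $0^{(l_A)}\in\max(A)$ must be a leftmost splitting node) is dispatched by ordinary Ramsey on that branch rather than by indivisibility of $\mathcal{H}_k$; and the general statement is obtained at the end by adjoining $0^{(l_A)}$ to $\max(A)$ and transferring the coloring.
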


 Theorem \ref{thm.MillikenSWP} will be proved via four  lemmas and an induction argument.
 The main difficulty is that
  Case (b) of Theorem \ref{thm.matrixHL}
 provides homogeneity for
 $\Ext_S(A,\tilde{X})$ for some strong coding tree $S$;  in particular, homogeneity  only holds for  level sets   $X$
 end-extending $\tilde{X}$.
The issue of new singleton pre-$a$-cliques will be handled similarly to how we handle  the case when $\tilde{X}$ has a coding node. 
We need a strong coding tree in which  {\em every} $X$  satisfying $A\cup X\cong A\cup\tilde{X}$ has the same color.
  This will be addressed by the following:
 Lemma \ref{lem.endhomog} will build a fusion sequence to obtain an $S\le T$ which is homogeneous
 on $\Ext_S(A,Y)$ for each
  minimal level set $Y$ extending $A_e$  such that $A\cup Y\cong A\cup\tilde{X}$.
Lemma \ref{lem.Case(c)} will use a new forcing and  arguments from the proof of Theorem \ref{thm.matrixHL} to
obtain a strong coding tree $S\in [B,T]$ in which every $X$
satisfying
 $A\cup X\cong A\cup\tilde{X}$  has the same color.
The last two lemmas involve fusion to construct a strong coding  subtree which is homogeneous for the induced color on
copies of  $A$.
The theorem then follows  by induction and an application of Ramsey's Theorem.

The following basic assumption, similar to  but stricter than Case (b) of Theorem \ref{thm.matrixHL},  will be  used in much of this section.

\begin{assumption}\label{assumption.6}
Let $A$  and $C$ be fixed non-empty finite valid subtrees
 of a strong coding tree $T\in\mathcal{T}_k$ such that
 \begin{enumerate}
 \item
  $A$ and $C$ both  satisfy  the Strict Witnessing Property; and
  \item
  $C\setminus A$ is a level set  containing both  a coding node and the sequence $0^{(l_C)}$.
  \end{enumerate}
Let $\tilde{X}$ denote $C\setminus A$, and
let  $A_e$ be the  subset of
$A^+$ which is extended to $\tilde{X}$.
Let $d+1$ be the number of nodes
 in $\tilde{X}$.
 List the nodes
 in $A_e$  as $\lgl s_i:i\le d\rgl$
and the nodes of $\tilde{X}$ as $\lgl t_i:i\le d\rgl$ so that each $t_i$ extends $s_i$ and  $t_d$ is the coding node in $\tilde{X}$.
For $j\in\{0,1\}$,
let
 $I_j$ denote the set of $i\le d$ such that  $t_i$
 has passing number $j$ at $t_d$.
If $\tilde{X}$ has a new pre-clique over $A$,
let $I_*$ denote the  set of $i\in I$ such that
$\{t_i:i\in I_*\}$ is the  new pre-clique in $\tilde{X}$ over $A$.
Note that $I_*\sse I_1$ and $t_d$ must be among the coding nodes in $C$ witnessing this new pre-cliqe.
\end{assumption}

For
any $X$ such that $A\cup X\cong C$,
let
 $\Ext_T(A,X)$
be defined as in equation (\ref{eq.ExtTAC}) of Section \ref{sec.5}.
Thus,
$\Ext_T(A,X)$ is the collection of  level sets $Y\sse T$ such that $Y$ end-extends $X$ and $A\cup Y\cong A\cup X$,
(equivalently, $A\cup Y\cong C$),
and $A\cup Y$ is valid in $T$.
Recall that, since $\tilde{X}$ contains a coding node,
 $A\cup X\cong A\cup \tilde{X}$ implicitly includes that
the strong isomorphism from $A\cup \tilde{X}$ to $A\cup X$ preserves  passing numbers between  $\tilde{X}^+$  and  $X^+$.
We hold to the convention that given  $Y$ such that $A\cup Y\cong C$,
the nodes in $Y$
 are labeled $y_i$, $i\le d$, where  each $y_i\contains s_i$.
In particular, $y_d$ is the coding node in $Y$.

In this section, we want to consider all copies of $C$ extending $A$.
To that end let
\begin{equation}
\Ext_T(A,C)=\bigcup\{\Ext_T(A,X): A\cup X\cong C\}.
\end{equation}
Now we define the notion of minimal pre-extension, which
 will be used in the next lemma.
For $x\in T$,
define  $\splitpred_T(x)$
to be $x\re l$ where $l< |x|$ is maximal such that $x\re l$ is a splitting node in $T$.

\begin{defn}[Minimal pre-extension of $A$ to a copy of $C$]\label{defn.mpe}
Given $A$, $\tilde{X}$, and $C$ as in Assumption  \ref{assumption.6},
for
 $X=\{x_i:i\le d\}$  a   level set extending $A_e$ such that $x_i\contains s_i$ for each $i\le d$
and such that
$l_X$
is the length of some coding node in $T$,
we say that  $X$ is  a
{\em minimal pre-extension in $T$ of $A$ to a copy of $C$} if the following hold:
\begin{enumerate}
\item[(i)]
$\{i\le d: $ the passing number of  $x_i$ at $x_d$ is $1\}=I_1$.
\item[(ii)]
$A\cup \SP_T(X)$
satisfies the Strict Witnessing Property,
 where
 \begin{equation}\label{eq.SP}
 \SP_T(X)=\{\splitpred_T(x_i):i\in I_1\}\cup\{x_i:i\in I_0\}.
 \end{equation}
 \item[(iii)]
 If $X$ has a new pre-clique over $A$,
 then $X$ has only one new maximal pre-clique over $A$  which is exactly  $\{x_i:i\in I_*\}\re l$, for some $l\in (l_A, l_X]$.
\end{enumerate}
\end{defn}

 Notice that 
 for (ii) to hold, $\SP_T(X)$  must have no new pre-cliques over $A$.
 Let $\MPE_T(A,C)$ denote the set of minimal pre-extenions in $T$ of $A$  to a copy of $C$.
  When $A$ and $C$ are clear,
 we call members of  $\MPE_T(A,C)$ simply
 {\em minimal pre-extensions}.
Minimal pre-extensions
are exactly the level sets  in $T$ which
 can be extended to a member of $\Ext_T(A,\tilde{X})$.

For $X\in\MPE_T(A,C)$,
define
\begin{equation}
\Ext_T(A,C;X)=\{Y\sse T: A\cup Y\cong C\mathrm{\  and\ }Y \mathrm{\ extends\ } X\}.
\end{equation}
Then
\begin{equation}
\Ext_T(A,C)=\bigcup\{\Ext_T(A,C;X): X\in \MPE_T(A,C)\},
\end{equation}

\begin{defn}\label{defn.endhomog}
A coloring on $\Ext_T(A,C)$ is {\em end-homogeneous} if  for each minimal pre-extension $X$,
 every member  of $\Ext_T(A,C;X)$ has the same color.
\end{defn}

The following lemma is a slightly modified version of Lemma 6.7 in \cite{DobrinenJML20}.

\begin{lem}[End-homogeneity]\label{lem.endhomog}
Assume \ref{assumption.6}, and
let $m$ be the integer  such that $\max(A)\sse  r_{m}(T)$.
Then for any coloring $h$
 of $\Ext_T(A,C)$ into  two  colors,
 there is a $T'\in[r_{m}(T),T]$ such that  $h$ is
 end-homogeneous  on $\Ext_{T'}(A,C)$.
\end{lem}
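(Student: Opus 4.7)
The plan is to construct $T'$ by a fusion argument that iteratively applies Case (b) of Theorem \ref{thm.matrixHL} to handle one minimal pre-extension at a time, while stabilizing longer and longer initial segments. Since $T$ is countable, the set $\MPE_T(A,C)$ is countable; enumerate its members as $X_0, X_1, X_2, \dots$ so that $l_{X_i} \le l_{X_j}$ whenever $i < j$. For each fixed $X_n$, the restriction of $h$ to $\Ext_T(A,C;X_n)$ is a two-coloring of the set $\Ext_T(A,\tilde X)$ appearing in the hypothesis of Theorem \ref{thm.matrixHL} (with $\tilde X = X_n$), so Case (b) of that theorem produces a strong coding tree on which this restriction is constant. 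The task is to perform countably many such applications coherently.

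Set $T_0 = T$ and let $m_0 = m$. Inductively, assume we have produced $T_n \in [r_m(T),T]$ and integers $m_n \ge m$ with the following properties: (i) $X_n \sse r_{m_n+1}(T_n)$, if $X_n$ still survives as a minimal pre-extension in $T_n$; (ii) for every $i < n$ for which $X_i$ is a minimal pre-extension in $T_n$, the coloring $h$ is constant on $\Ext_{T_n}(A,C;X_i)$. If $X_n$ is not a minimal pre-extension in $T_n$, simply let $T_{n+1} = T_n$ and choose $m_{n+1} > m_n$ so that $r_{m_{n+1}+1}(T_{n+1})\sqsupseteq X_{n+1}$ whenever the latter survives. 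Otherwise, let $B' = r_{m_n+1}(T_n)$, so that $X_n \sse \max(B')$, and apply Theorem \ref{thm.matrixHL} Case (b) to the coloring $h\re \Ext_{T_n}(A,C;X_n)$. The theorem yields a strong coding tree $T_{n+1} \in [r_{m_n}(T_n),T_n]$, with $r_{m_n}(T_{n+1}) = r_{m_n}(T_n)$ and $h$ constant on $\Ext_{T_{n+1}}(A,C;X_n)$. Passing to a further $[r_{m_n+1}(T_{n+1}),T_{n+1}]$-extension using Theorem \ref{thm.GOODnonempty} if necessary, we can arrange also that $r_{m_n+1}(T_{n+1}) = r_{m_n+1}(T_n)$. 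Finally, pick $m_{n+1} > m_n$ large enough to capture $X_{n+1}$ in $r_{m_{n+1}+1}(T_{n+1})$ whenever it survives.

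Define $T' = \bigcup_{n\in\bN} r_{m_n+1}(T_n)$. The sequence $\bigl(r_{m_n+1}(T_n)\bigr)_n$ is $\sqsubseteq$-increasing by construction, and since each $T_n \in \mathcal{T}_k$, each finite approximation of $T'$ is strongly isomorphic to the corresponding approximation of $\bT_k$. The individual strong isomorphisms can be chosen compatibly so as to amalgamate into a strong isomorphism $T' \to \bT_k$, witnessing $T' \in \mathcal{T}_k$; moreover $T'\sse T$ and $r_m(T')=r_m(T)$, so $T' \in [r_m(T),T]$. End-homogeneity on $\Ext_{T'}(A,C)$ follows: any $X \in \MPE_{T'}(A,C)$ equals some $X_n$, and since $T' \le T_{n+1}$ we have $\Ext_{T'}(A,C;X) \sse \Ext_{T_{n+1}}(A,C;X_n)$, which is monochromatic by construction.

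The main obstacle is the bookkeeping required to make Theorem \ref{thm.matrixHL} Case (b) cooperate with the fusion, namely guaranteeing that after applying Case (b) to $X_n$ we can arrange $r_{m_n+1}(T_{n+1}) = r_{m_n+1}(T_n)$ so that all color decisions made at earlier stages $i < n$ are preserved. This works because Case (b) returns $S \in [r_{m_0-1}(B'),T]$, which one can further refine using Theorem \ref{thm.GOODnonempty} to a tree whose $(m_n+1)$-th approximation coincides with that of $T_n$; consequently $\Ext_{T_{n+1}}(A,C;X_i) \sse \Ext_{T_n}(A,C;X_i)$ for every $i<n$, and the earlier monochromaticity is inherited. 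One must also verify that whenever $X \in \MPE_{T'}(A,C)$, this $X$ indeed appears in the enumeration as some $X_n$ with $X_n\sse T_{n+1}$; this is automatic because $T' \sse T$ and the enumeration exhausts $\MPE_T(A,C) \supseteq \MPE_{T'}(A,C)$.
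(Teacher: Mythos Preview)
Your fusion is organized around a global enumeration of $\MPE_T(A,C)$, handling one $X_n$ per stage; the paper instead works level by level, processing \emph{all} minimal pre-extensions contained in $\max(r_{m_j}(T_{j-1}))$ in a single stage before moving on.  This difference is not merely cosmetic: your global enumeration does not survive the shrinking of the tree.

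Concretely, the step ``passing to a further $[r_{m_n+1}(T_{n+1}),T_{n+1}]$-extension $\dots$ we can arrange also that $r_{m_n+1}(T_{n+1}) = r_{m_n+1}(T_n)$'' cannot be carried out.  Case (b) of Theorem \ref{thm.matrixHL} returns $S\in[r_{m_0-1}(B'),T_n]$, so only $r_{m_n}(T_{n+1})=r_{m_n}(T_n)$ is guaranteed; the construction in that proof builds $U_{m_0}$ with maximal level $l^*$ strictly above $l_{B'}$, so $r_{m_n+1}(T_{n+1})$ is already different from $r_{m_n+1}(T_n)$ and no further refinement inside $T_{n+1}$ can restore equality.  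Consequently, if $X_n$ and $X_{n+1}$ sit at the same coding-node level of $T$, then after you homogenize for $X_n$ the set $X_{n+1}$ is no longer at a level of $T_{n+1}$ at all: it simply gets skipped.  Your final claim $\MPE_T(A,C)\supseteq\MPE_{T'}(A,C)$ then fails, since minimal pre-extensions of $T'$ live at levels of $T'$ (and use $\splitpred_{T'}$ in condition (ii) of Definition \ref{defn.mpe}), neither of which need match those of $T$.

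The paper's level-by-level organization sidesteps this: at stage $j$ it enumerates the finitely many minimal pre-extensions $X_0,\dots,X_n$ lying in $\max(r_{m_j}(T_{j-1}))$, applies Case (b) to each in turn while holding $U_{j-1}=r_{m_j-1}(T_{j-1})$ fixed, and then observes that every minimal pre-extension $Z\sse\max(r_{m_j}(T_j))$ in the \emph{new} tree end-extends a unique $X_i$, so $\Ext_{T_j}(A,C;Z)\sse\Ext_{S_i}(A,C;Z_i)$ inherits homogeneity.  That end-extension argument is precisely what allows the minimal pre-extensions of the final $T'$ (which were never in the original $T$) to be covered.
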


\begin{proof}
Let $(m_j)_{j\in\bN}$ enumerate those integers greater  than  $m$ such that there is a minimal pre-extension of $A$ to a copy of  $C$ from among  the maximal nodes in
$r_{m_j}(T)$.
Notice that for each $j\in\bN$,
 $\max(r_{m_j}(T))$ contains a coding node,
although there can  be  members of $\MPE_T(A,C)$
contained in $\max(r_{m_j}(T))$ not containing that coding node.

Let $T_{-1}$ denote $T$.
Suppose that $j\in\bN$ and $T_{j-1}$ is given  so that
the coloring $h$ is homogeneous on
$\Ext_{T_{j-1}}(A,C;X)$ for each minimal pre-extension $X$ in $r_{m_j-1}(T_{j-1})$.
Let  $U_{j-1}$ denote $r_{m_j-1}(T_{j-1})$.
Enumerate the   minimal pre-extensions contained in
 $\max(r_{m_j}(T_{j-1}))$ as $X_0,\dots, X_n$.
By induction on  $i\le n$, we will obtain
$T_j\in [U_{j-1},T_{j-1}]$ such that $\max(r_{m_j}(T_j))$ end-extends $\max(r_{m_j}(T_{j-1}))$
and   $\Ext_{T_j}(A,C;Z)$ is homogeneous
for each minimal pre-extension $Z$ in $\max(r_{m_j}(T_{j-1}))$.

Let $l$ denote the length of the  nodes in $\max(r_{m_j}(T_{j-1}))$, and 
let  $S_{-1}=T_{j-1}$.
Suppose $0\le i\le n$ and 
we have 
 strong coding trees 
$S_{-1},\dots, S_{i-1}$ such that 
for each $0\le  i'\le i-1$, 
 $S_{i'}\in [U_{j-1},S_{i'-1}]$
and
 $h$ is homogeneous on $\Ext_{S_{i'}}(A,C;X_{i'})$.
Note that
$X_i$ is contained in $r_{m_j}(S_{i-1})\re l$,
though $l$ does not have to  be  the length of any node in $S_{i-1}$.
The point is that the set of nodes $Y_i$ in $\max(r_{m_j}(S_{i-1}))$ end-extending $X_i$
is again a minimal pre-extension.
Extend the nodes in $Y_i$ to some $Z_i\in \Ext_{ S_{i-1}}(A,C;Y_i)$,
and let $l'$ denote the length of the nodes in $Z_i$.
Note that $Z_i$ has no new pre-cliques over $Y_i$.
Let $W_i$ consist of the nodes in $Z_i$ along with the leftmost extensions   of
the nodes in $\max(r_{m_j}(S_{i-1}))\setminus Y_i$
 to the length  $l'$ in $S_{i-1}$.

Let $S'_{i-1}$ be a strong coding tree in $[U_{j-1},S_{i-1}]$ such that $\max(r_{m_j}(S'_{i-1}))$ extends $W_i$.
Such an $S'_{i-1}$ exists by
Lemmas  \ref{lem.poc} and \ref{lem.pnc} and
Theorem \ref{thm.GOODnonempty}.
Apply
 Case (b)  of Theorem \ref{thm.matrixHL}
to obtain  a strong coding tree
 $S_i\in [U_{j-1},S'_{i-1}]$ such that the coloring on $\Ext_{S_i}(A,C;Z_i)$ is homogeneous.
At the end of this process, let $T_j=S_n$.
Note that for each minimal pre-extension $Z\sse\max(r_{m_j}(T_j))$,
there is a unique $i\le n$ such that
$Z$ extends $X_i$,
since each node in  $\max(r_{m_j}(T_j))$ is a unique  extension of one node in $\max(r_{m_j}(T_{j-1}))$,
and hence
$\Ext_{T_j}(A,C;Z)$ is homogeneous.

Having chosen each $T_j$ as above,
let $T'=\bigcup_{j\in\bN}r_{m_j}(T_j)$.
Then $T'$ is a strong coding tree which is a member of
 $[r_{m}(T),T]$,
and for each minimal pre-extension $Z$ in $T'$,
$\Ext_{T'}(A,C;Z)$ is homogeneous for $h$.
Therefore,  $h$ is end-homogeneous on $\Ext_{T'}(A,C)$.
\end{proof}

The next lemma provides a means for uniformizing the  end-homogeneity from the previous lemma
to obtain one color for all
members of  $\Ext_S(A,C)$.
The arguments are often similar to those of
Case (a) of
Theorem \ref{thm.matrixHL}, but sufficiently different to warrant a proof.

\begin{lem}\label{lem.Case(c)}
Assume \ref{assumption.6},  and suppose that $B$ is a finite strong coding tree valid in $T$ and  $A$ is a subtree of $B$
such that $\max(A)\sse\max(B)$.
Suppose that $h$ is end-homogeneous on $\Ext_{T}(A,C)$.
Then there is an $S\in[B,T]$ such that $h$ is homogeneous on
 $\Ext_S(A,C)$.
\end{lem}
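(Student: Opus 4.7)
The plan is to reduce the lemma, via end-homogeneity, to a Ramsey statement about a single coloring $\tilde h : \MPE_T(A, C) \to r$. Define $\tilde h(X) = h(Y)$ for any $Y \in \Ext_T(A, C; X)$; this is well defined by the assumed end-homogeneity. It will suffice to find $S \in [B, T]$ with $\tilde h$ constant on $\MPE_S(A, C)$, since end-homogeneity then forces $h$ to be constant on $\Ext_S(A, C)$.

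To produce such an $S$, I would set up a third Harrington-style forcing $\bP$ blending the two templates of Theorem~\ref{thm.matrixHL}. Conditions $p$ would be finite partial functions $p : (d \times \vec\delta_p) \cup \{d\} \to T \re l_p$, where $l_p$ is the length of some coding node of $T_d$ extending $t_d$, $p(d)$ is that coding node, each $p(i, \delta) \in T_i \re l_p$ respects the passing number pattern prescribed by $I_0, I_1$ at $p(d)$, and $\ran(p)$ forms a minimal pre-extension in the sense of Definition~\ref{defn.mpe}---so its $\SP_T$-reduction together with $A$ satisfies the Strict Witnessing Property, and when $I_* \ne \emptyset$ the pre-clique $\{p(i, \al_i) : i \in I_*\}\re l$ is exhibited at some $l \in (l_A, l_p]$ and witnessed inside $A \cup \ran(p)$. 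The ordering $q \le p$ would demand $l_q \ge l_p$, $\vec\delta_q \contains \vec\delta_p$, pointwise extensions, and that $\ran(q \re \vec\delta_p)$ have no new pre-cliques over $\ran(p)$. Density of coding extensions (Lemmas~\ref{lem.perfect}, \ref{lem.pnc}, \ref{lem.facts}) would make $\bP$ atomless.

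The remainder of the argument would proceed in close parallel to the proof of Theorem~\ref{thm.matrixHL}. For each $\vec\al \in [\kappa]^d$ with $\kappa = \beth_{2d}$, I would select $p_{\vec\al}$ with $\vec\al \sse \vec\delta_{\vec\al}$ that decides a color $\varepsilon_{\vec\al}$ and forces the generic minimal pre-extensions $\dot X_{\vec\al} \re \dot l$ to receive $\tilde h$-color $\varepsilon_{\vec\al}$ at $\dot{\mathcal{U}}$-many coding levels $\dot l$, then truncate so that $\tilde h(X(p_{\vec\al}, \vec\al)) = \varepsilon_{\vec\al}$ outright, exactly as in Part~I of the earlier proof. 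The \Erdos-Rado Theorem~\ref{thm.ER} applied to a countable coloring of $[\kappa]^{2d}$ patterned on equation~(\ref{eq.fiotatheta}) would then yield subsets $K_0 < \dots < K_{d-1}$ of $\kappa$ of size $\aleph_0$ and fixed data $\varepsilon^*, k^*, t^*_d, \lgl t^*_i : i < d \rgl$ such that the family $\{p_{\vec\al} : \vec\al \in \prod_{i<d} K_i\}$ is pairwise compatible, by the analogues of Lemmas~\ref{lem.onetypes}, \ref{lem.j=j'}, and~\ref{lem.compat}. Finally, $S \in [B, T]$ is built level-by-level as in Part~II Case~(a) of Theorem~\ref{thm.matrixHL}: at each successive coding level $m_j$ past $B$, enumerate the subsets of the current maximal level that could serve as minimal pre-extensions, index them by coordinates in $\prod_i K_i$, amalgamate the associated $p_{\vec\al}$'s into a single condition $q_j$, strengthen to some $r_j \le q_j$ deciding a level $l_j$ where $\tilde h$ equals $\varepsilon^*$ along the generic, and fill in the remaining nodes using Lemmas~\ref{lem.HLCasebtruncate} and~\ref{lem.HLconstruction} to yield $U_{m_j} \in r_{m_j}[U_{m_j-1}, T]$ valid in $T$. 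Setting $S = \bigcup_j U_{m_j}$ delivers $\tilde h \equiv \varepsilon^*$ on $\MPE_S(A, C)$, and hence $h$ is constant on $\Ext_S(A, C)$.

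The principal obstacle will be the $I_* \ne \emptyset$ case: the coding node $p(d)$ is itself one of the witnesses of the new pre-clique in $\ran(p)$, so any change of $p(d)$ during truncation, amalgamation, or the final fusion must come with a matched passing-number adjustment that neither destroys the required pre-clique nor introduces spurious new ones at intermediate levels. Controlled use of Lemmas~\ref{lem.poc} and~\ref{lem.HLCasebtruncate}, together with the Strict Witnessing Property of $A \cup C$ guaranteed by Assumption~\ref{assumption.6}, is what keeps the minimal pre-extension structure legal throughout; verifying this bookkeeping alongside the usual atomlessness and density checks for $\bP$ is what I expect to be the most technical part of the proof.
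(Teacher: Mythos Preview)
Your reduction to a coloring $\tilde h$ on $\MPE_T(A,C)$ via end-homogeneity is correct and is exactly how the paper begins. The gap is in the forcing you propose. By taking conditions of the form $p:(d\times\vec\delta_p)\cup\{d\}\to T\re l_p$ with $p(d)$ a single coding node extending $s_d$, you are essentially reproducing the Case~(b) forcing of Theorem~\ref{thm.matrixHL}. But that forcing is precisely what yielded only end-homogeneity in the first place; it cannot be used to upgrade end-homogeneity to full homogeneity. Concretely, in Part~II, at a coding level of the tree $S$ under construction there will typically be several nodes in $T_d$ (each with immediate extension $0$), and each can serve as the $d$-th coordinate $x_d$ of some minimal pre-extension. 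Your single generic branch $p(d)$ can control at most one such choice at a time, so the amalgamated condition $q$ cannot sit below the $p_{\vec\al}$ for all relevant $\vec\al$ simultaneously. Your indexing of minimal pre-extensions by $\prod_{i<d}K_i$ has no slot for the $d$-th coordinate.

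The paper's key move is to stop working with minimal pre-extensions directly and instead pass to their splitting-predecessor sets $P=\SP_T(X)$, defining $h'(P)=\tilde h(X)$. Since $d\in I_0$, in $\SP_T(X)$ the $d$-th coordinate is merely a node of length $l_X$ with immediate extension $0$, no longer a distinguished coding node. This makes all $d{+}1$ coordinates symmetric, so the forcing $\bQ$ has conditions $p:(d{+}1)\times\vec\delta_p\to T$ adding $\kappa$ many branches through each of $T_0,\dots,T_d$ (hence $\kappa=\beth_{2d+2}$ and \Erdos--Rado is applied to $[\kappa]^{2d+2}$). The construction then runs much closer to Case~(a) than to Case~(b): in Part~II one chooses $J_i\sse K_i$ for every $i\le d$, including $i=d$, and the difficulty you flag concerning $I_*$ and the moving witness $p(d)$ simply does not arise, because the coding-node structure has been factored out by passing to $\SP_T$.
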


\begin{proof}
Given any $U\in[B,T]$,
recall that  $\MPE_U(A,C)$ denotes  the set of all minimal pre-extensions of $A$ to a copy of $C$ in $U$.
We are under Assumption  \ref{assumption.6}.
Let $i_0\le d$ be such that $t_{i_0}= 0^{(l_C)}$, and note that $i_0$ is a member of  $I_0$.
Each member $Y$ of $\MPE_T(A,C)$
will be enumerated as $\{y_i:i\le d\}$ so that $y_i\contains s_i$ for each $i\le d$.
Recall notation (\ref{eq.SP})
of
$\SP_T(Y)$.

Since $C$ satisfies the SWP,
$\tilde{X}$ is in $\MPE_T(A,C)$.
Let  $P$ denote $\SP_T(\tilde{X})$.
Since  $\tilde{X}$ is contained in an interval of $T$ above the interval containing $\max(A)$,
each node of $P$ extends exactly one node of $A_e$.
For any $U\in[B,T]$, define
$\Ext_{U}(A,P)$ to consist of those $X$ such that 
\begin{equation}
X=\SP_U(Y) \mathrm{\ for\  some\ }
Y\in\MPE_U(A,C).
\end{equation}

By assumption,
the coloring $h$ on $\Ext_{T}(A,C)$  is end-homogeneous.
This
induces a coloring  $h$ on $\MPE_T(A,C)$
 by defining, for $Y\in\MPE_T(A,C)$,
 $h(Y)$ to be
 {\em the}  $h$-color that all members of
 $\Ext_T(A,C;Y)$ have.
This further
induces a coloring $h'$  on $\Ext_{T}(A,P)$
as follows:
For $Q\in \Ext_{T}(A,P)$, for the $Y\in \MPE_T(A,C)$ such that $\SP_T(Y)=Q$,
let
 $h'(Q)=h(Y)$.
Given $Q\in \Ext_{T}(A,P)$, the  extensions of the $q_i\in Q$ such that $i\in I_1$  to the level of next coding node in $T$, with passing number $1$ at  that coding node, recovers 
$Y$.
Thus, $h'$ is well-defined.

Let $L$ denote the collection of all  $l\in\bN$ such that there is a member of
 $\Ext_{T}(A,P)$ with maximal  nodes of length $l$.
For each
  $i\in (d+1)\setminus\{i_0\}$,   let  $T_i=\{t\in T:t\contains s_i\}$.
Let $T_{i_0}$ be  the collection of all leftmost nodes in $T$ extending $s_{i_0}$.
Let $\kappa=\beth_{2d+2}$.
The following forcing notion $\bQ$  will  add $\kappa$ many paths through each $T_i$, $i\in (d+1)\setminus\{i_0\}$ and
 one path through $T_{i_0}$, though with $\kappa$ many labels.
The present case is handled similarly to Case (a) of Theorem \ref{thm.matrixHL}.

Let
$\bQ$ be  the set of conditions $p$ such that
$p$ is a  function
of the form
$$
p:(d+1)\times\vec{\delta}_p\ra  T,
$$
where $\vec{\delta}_p$ is a finite subset of $\kappa$,
$l_p\in L$,
$\{p(i,\delta):\delta\in\vec\delta_p\}\sse T_i$ for each $i<d$,
and
\begin{enumerate}
\item[(i)]
There  is  
some coding node
$c^{T}_{n(p)}$ in $T$
 such that $l^{T}_{n(p)}=l_p$,
 and
 $l^{T}_{n(p)-1}< |p(i,\delta)|\le l_p$ for each
 $(i,\delta)\in (d+1)\times
\vec{\delta}_p$.
\item [(ii)]
\begin{enumerate}
\item[($\al$)]
If  $i\in I_1$,
 then
 $p(i,\delta)=\splitpred_{T}(y)$ for some $y\in T_i\re l_p$.
\item[$(\beta)$]
If $i\in I_0$,  then
$p(i,\delta)\in T_i\re l_p$ and has immediate extension $0$ in $T$.
\end{enumerate}
\end{enumerate}
It follows from the definition  that for $p\in \bQ$,
 $\ran(p):=\{p(i,\delta):(i,\delta)\in (d+1)\times\vec{\delta}_p\}$  is free in $T$: leftmost extensions add no new pre-cliques.
Furthermore, all nodes in $\ran(p)$ are contained in the $n(p)$-th interval of $T$.
We point out that  $\ran(p)$ may or may not contain a coding node.
If it does, then that coding node must appear as $p(i,\delta)$ for some $i\in I_0$; this $i$ may or may not equal $d$.

The partial ordering on $\bQ$ is defined as follows:
$q\le p$ if and only if
$l_q\ge l_p$, $\vec{\delta}_q\contains \vec{\delta}_p$,
\begin{enumerate}
\item[(i)]
$q(i,\delta)\contains p(i,\delta)$  for each $(i,\delta)\in (d+1)\times\vec{\delta}_p$; and

\item[(ii)]
$\ran(q\re \vec\delta_p):=\{q(i,\delta):(i,\delta)\in (d+1)\times\vec{\delta}_p\}$
has no new pre-cliques over
 $\ran(p)$.
\end{enumerate}

By arguments similar to those  in the proof of Theorem \ref{thm.matrixHL},
 $(\bQ,\le)$ is an atomless partial order, and any condition in $\bQ$ can be extended by two incompatible conditions of length greater than any given $l\in\bN$.

Let $\dot{\mathcal{U}}$ be a $\bQ$-name for a non-principal ultrafilter on $L$.
For each $i\le d$ and $\al<\kappa$,  let
$\dot{b}_{i,\al}=\{\lgl p(i,\al),p\rgl:p\in \bQ$ and $\al\in\vec{\delta}_p\}$,
 a $\bQ$-name for the $\al$-th generic branch through $T_i$.
For
any condition $p\in \bQ$, for
$(i,\al)\in
 I_0\times \vec\delta_p$,  $p$ forces that $\dot{b}_{i,\al}\re l_p= p(i,\al)$.
For $(i,\al)\in I_1\times\vec\delta_p$,  $p$ forces that $\splitpred_{T}(\dot{b}_{i,\al}\re l_p)= p(i,\al)$.
For $\vec{\al}=\lgl\al_0,\dots,\al_{d}\rgl\in[\kappa]^{d+1}$,
\begin{equation}
\mathrm{let\ \ }\dot{b}_{\vec{\al}}\mathrm{\  \  denote\ \ }
\lgl \dot{b}_{0,\al_0},\dots,\dot{b}_{d,\al_d}\rgl.
\end{equation}
For $l\in L$, we shall  use the abbreviation
\begin{equation}
\dot{b}_{\vec\al}\re l
\mathrm{\ \ to\ denote \ \ }
\SP_T(\dot{b}_{\vec\al}\re l),
\end{equation}
which is exactly
$\{\dot{b}_{i,\al_i}\re l :i\in I_0\}\cup \{\splitpred_T(\dot{b}_{i,\al_i}\re l):i\in I_1\}$.

Similarly to  the proof of Theorem \ref{thm.matrixHL},
we will find  infinite  pairwise disjoint sets $K_i\sse \kappa$, $i\le d$, such that $K_0<K_1<\dots <K_d$,
and conditions $p_{\vec\al}$, $\vec\al\in \prod_{i\le d}K_i$,
such that these conditions are pairwise compatible,
have the same images in $T$, and force the same color $\varepsilon^*$  for $h'(\dot{b}_{\vec\al}\re l)$  for  $\dot{\mathcal{U}}$ many levels  $l$ in $L$.
Moreover, the nodes $\{t^*_i:i\le d\}$ obtained from the application of the \Erdos-Rado Theorem for this setting
will  extend
 $\{s_i:i\le d\}$
and  form a member of $\Ext_{T}(A,P)$.
The arguments are  quite  similar to those in Theorem \ref{thm.matrixHL}, so we only fill in the details for arguments which are necessarily different.
\vskip.1in

\noindent{\bf \underline{Part I}.}
Given $p\in\bQ$ and $\vec\al\in   [\vec\delta_p]^{d+1}$,
let
\begin{equation}
P(p,\vec\al)=\{p_{\vec\al}(i,\al_i):i\le  d\}.
\end{equation}
For each $\vec\al\in[\kappa]^{d+1}$,
choose a condition $p_{\vec{\al}}\in\bQ$ such that
\begin{enumerate}
\item
 $\vec{\al}\sse\vec{\delta}_{p_{\vec\al}}$.

\item
$P(p,\vec\al) \in\Ext_T(A,P)$.
\item
There is an  $\varepsilon_{\vec{\al}}\in 2$ such that
$p_{\vec{\al}}\forces$ ``$h(\dot{b}_{\vec{\al}}\re l)=\varepsilon_{\vec{\al}}$
for $\dot{\mathcal{U}}$ many $l$ in $L$."

\item
$h'(P(p_{\vec\al},\vec\al))=\varepsilon_{\vec{\al}}$.
\end{enumerate}

Properties (1) -  (4) can be guaranteed  as follows.
For each $i\le d$, let $u_i$ denote the member of $P$ which extends $s_i$.
For each  $\vec{\al}\in[\kappa]^{d+1}$, let
$$
p^0_{\vec{\al}}=\{\lgl (i,\delta), u_i\rgl: i\le d, \ \delta\in\vec{\al} \}.
$$
Then $p^0_{\vec{\al}}$ is a condition in $\bP$ and
$\vec\delta_{p_{\vec\al}^0}= \vec\al$, so (1) holds for every $p\le p^0_{\vec{\al}}$.
Further,
$\ran(p^0_{\vec\al})$ is a member of $\Ext_T(A,P)$ since it  equals $P$.
For any $p\le p_{\vec\al}^0$,
(ii) of the definition of the partial ordering on $\bQ$ guarantees that
 $P(p,\vec\al)$
 has no new pre-cliques over $\ran(p)$, and hence
is also  a member of $\Ext_T(A,P)$.
Thus,  (2) holds for any $p\le p_{\vec\al}^0$.
Take  an extension $p^1_{\vec{\al}}\le p^0_{\vec{\al}}$ which
forces  $h'(\dot{b}_{\vec{\al}}\re l)$ to be the same value for
$\dot{\mathcal{U}}$  many  $l\in L$,
and which decides that value, denoted by  $\varepsilon_{\vec{\al}}$.
Then any $p\le p^1_{\vec{\al}}$ satisfies (3).

Take $p_{\vec\al}^3\le p^2_{\vec\al}$  which  decides  $h'(\dot{b}_{\vec\al}^3 \re l)=\varepsilon_{\vec\al}$, for some $l$ such that
$l_{p^2_{\vec\al}}<l \le l_{p^3_{\vec\al}}$.
If $l= l_{p^3_{\vec\al}}$, let $p_{\vec\al}=p_{\vec\al}^3$.
Otherwise,
let $\vec\delta_{\vec\al}=\vec\delta_{p^2_{\vec\al}}$ and define
 $p_{\vec\al}$ as follows:
For each $i\in I_0$,
for  $\delta\in \vec\delta_{\vec\al}$,
let
$p_{\vec\al}(i,\delta)=p_{\vec\al}^3(i,\delta)\re l$.
For each $i\in I_1$,
for $\delta\in \vec\delta_{\vec\al}$,
let
$p_{\vec\al}(i,\delta)=\splitpred_T (p_{\vec\al}^3(i,\delta)\re l)$.
Then $p_{\vec\al}$ is a condition in $\bQ$,
and $p_{\vec\al}\le p_{\vec\al}^2$, so it satisfies (1) - (3).
Furthermore,
$h'(P(p_{\vec\al},\vec\al))=\varepsilon_{\vec\al}$, so  $p_{\vec\al}$ satisfies (4).

We are assuming $\kappa=\beth_{2d+2}$.
Let $D_e=\{0,2,\dots,2d\}$ and  $D_o=\{1,3,\dots,2d+1\}$, the sets of  even and odd integers less than $2d+2$, respectively.
Let $\mathcal{I}$ denote the collection of all functions $\iota: (2d+2)\ra (2d+2)$ such that
$\iota\re D_e$
and $\iota\re D_o$ are strictly  increasing sequences
and $\{\iota(0),\iota(1)\}<\{\iota(2),\iota(3)\}<\dots<\{\iota(2d),\iota(2d+1)\}$.
For $\vec{\theta}\in[\kappa]^{2d+2}$,
$\iota(\vec{\theta}\,)$ determines the pair of sequences of ordinals $(\theta_{\iota(0)},\theta_{\iota(2)},\dots,\theta_{\iota(2d))}), (\theta_{\iota(1)},\theta_{\iota(3)},\dots,\theta_{\iota(2d+1)})$,
both of which are members of $[\kappa]^{d+1}$.
Denote these as $\iota_e(\vec\theta\,)$ and $\iota_o(\vec\theta\,)$, respectively.
Let $\vec{\delta}_{\vec\al}$ denote $\vec\delta_{p_{\vec\al}}$,
 $k_{\vec{\al}}$ denote $|\vec{\delta}_{\vec\al}|$,
and let $l_{\vec{\al}}$ denote  $l_{p_{\vec\al}}$.
Let $\lgl \delta_{\vec{\al}}(j):j<k_{\vec{\al}}\rgl$
denote the enumeration of $\vec{\delta}_{\vec\al}$
in increasing order.
Define a coloring  $f$ on $[\kappa]^{2d+2}$ into countably many colors as follows:
Given  $\vec\theta\in[\kappa]^{2d+2}$ and
 $\iota\in\mathcal{I}$, to reduce the number of subscripts,  letting
$\vec\al$ denote $\iota_e(\vec\theta\,)$ and $\vec\beta$ denote $\iota_o(\vec\theta\,)$,
define
\begin{align}\label{eq.fiotatheta(c)}
f(\iota,\vec\theta\,)= \,
&\lgl \iota, \varepsilon_{\vec{\al}}, k_{\vec{\al}},
\lgl \lgl p_{\vec{\al}}(i,\delta_{\vec{\al}}(j)):j<k_{\vec{\al}}\rgl:i\le d\rgl,\cr
& \lgl  \lgl i,j \rgl: i\le d,\ j<k_{\vec{\al}},\ \mathrm{and\ } \delta_{\vec{\al}}(j)=\al_i \rgl, \cr
&\lgl \lgl j,k\rgl:j<k_{\vec{\al}},\ k<k_{\vec{\beta}},\ \delta_{\vec{\al}}(j)=\delta_{\vec{\beta}}(k)\rgl\rgl.
\end{align}
Let $f(\vec{\theta}\,)$ be the sequence $\lgl f(\iota,\vec\theta\,):\iota\in\mathcal{I}\rgl$, where $\mathcal{I}$ is given some fixed ordering.
By the \Erdos-Rado Theorem,
there is a subset $K\sse\kappa$ of cardinality $\aleph_1$
which is homogeneous for $f$.

Take $K'\sse K$ such that between each two members of $K'$ there is a member of $K$.
Then take subsets $K_i\sse K'$ such that  $K_0<\dots<K_{d}$
and   each $|K_i|=\aleph_0$.
The following four lemmas are direct analogues of
Lemmas \ref{lem.onetypes}, \ref{lem.j=j'},  \ref{lem.compat}, and \ref{lem.subclaimA}.
Their proofs follow by simply making the correct notational substitutions, and so are omitted.

\begin{lem}\label{lem.onetypes(c)}
There are $\varepsilon^*\in 2$, $k^*\in\om$,
and $ \lgl t_{i,j}: j<k^*\rgl$, $i\le d$,
 such that
for all $\vec{\al}\in \prod_{i\le d}K_i$ and  each $i\le d$,
 $\varepsilon_{\vec{\al}}=\varepsilon^*$,
$k_{\vec\al}=k^*$,  and
$\lgl p_{\vec\al}(i,\delta_{\vec\al}(j)):j<k_{\vec\al}\rgl
=
 \lgl t_{i,j}: j<k^*\rgl$.
\end{lem}

Let $l^*=|t_{i_0}|$.
Then for each $i\in I_0$,
 the nodes  $t_{i,j}$,  $j<k^*$,    have  length $l^*$;
and for each $i\in I_1$,
the nodes $t_{i,j}$,  $j<k^*$,  have length in the interval $(l^T_{n-1},l^T_n)$,
where $n$ is the index of the coding node in $T$ of length $l^*$.

\begin{lem}\label{lem.j=j'(c)}
Given any $\vec\al,\vec\beta\in \prod_{i\le d}K_i$,
if $j,k<k^*$ and $\delta_{\vec\al}(j)=\delta_{\vec\beta}(k)$,
 then $j=k$.
\end{lem}

For any $\vec\al\in \prod_{i\le d}K_i$ and any $\iota\in\mathcal{I}$, there is a $\vec\theta\in[K]^{2d+2}$ such that $\vec\al=\iota_o(\vec\theta)$.
By homogeneity of $f$, there is a strictly increasing sequence
$\lgl j_i:i\le d\rgl$  of members of $k^*$ such that for each $\vec\al\in \prod_{i\le d}K_i$,
$\delta_{\vec\al}(j_i)=\al_i$.
For each $i\le d$, let $t^*_i$ denote $t_{i,j_i}$.
Then  for each $i\le d$ and each $\vec\al\in \prod_{i\le d}K_i$,
\begin{equation}
p_{\vec\al}(i,\al_i)=p_{\vec{\al}}(i, \delta_{\vec\al}(j_i))=t_{i,j_i}=t^*_i.
\end{equation}

\begin{lem}\label{lem.compat(c)}
For any finite subset $\vec{J}\sse \prod_{i\le d}K_i$,
the set of conditions $\{p_{\vec{\al}}:\vec{\al}\in \vec{J}\,\}$ is  compatible.
Moreover,
$p_{\vec{J}}:=\bigcup\{p_{\vec{\al}}:\vec{\al}\in \vec{J}\,\}$
is a member of $\bP$ which is below each
$p_{\vec{\al}}$, $\vec\al\in\vec{J}$.
\end{lem}

\begin{lem}\label{lem.subclaimA(c)}
If $\beta\in \bigcup_{i\le d}K_i$,
$\vec{\al}\in\prod_{i\le d}K_i$,
and $\beta\not\in\vec\al$,
 then
$\beta$ is not  a member of   $\vec{\delta}_{\vec{\al}}$.
\end{lem}

\noindent{\bf \underline{Part II}.}
Let $(n_j)_{j\in \bN}$ denote the set of  indices for which  there is an
$X\in \MPE_T(A,C)$
with $X=\max(V)$ for some
$V$ of $r_{n_j}[B,T]$.
 For  $i\in I_0$,
let $u^*_i=t^*_i$.
For $i\in I_1$,
let $u_i^*$ be the  leftmost extension of $t^*_i$ in $T\re l^*$.
Note that $\{u_i^*:i\le d\}$ has no new pre-cliques over $A_e$, since leftmost extensions of splitting nodes with no new pre-cliques add no new pre-cliques; this follows from the WP of  $T$.
Extend each node $u$ in $B^+\setminus A_e$   to its  leftmost extension in $T\re l^*$ and label that extension $u^*$.
Let
\begin{equation}
U^*=\{u^*_i:i\le d\}\cup\{u^*:u\in B^+ \setminus A_e\}.
\end{equation}
Then  $U^*$ extends $B^+$,
and $U^*$ has no new pre-cliques over $B$.
Let $n_{-1}$ be the integer such that
$B=r_{n_{-1}}(B)$.
Take
  $S_0\in r_{n_0}[B,T]$
such that
the nodes in $\max(r_{n_{-1}+1}(S_0))$ extend the nodes in $U^*$.
This is possible by Lemma \ref{lem.HLconstruction}.

Suppose that $j\in\bN$, and  for all $i<j$,  we have chosen  $S_i\in r_{n_i}[B,T]$ such that
 $i<i'<j$ implies
$S_i\sqsubset S_{i'}$, and
$h'$ is constant of value $\varepsilon^*$ on $\Ext_{S_{i}}(A,P)$.
Take $V_j\in r_{n_j}[S_{j-1},T]$, and
let $X$ denote $\max(V_j)$.
Notice that
each member of
$\Ext_X(A,P)$
extends the nodes in $U^*$.
By the definition of $n_j$, the set of nodes $X$ contains a coding node.
For each $i\in I_0$,
let $Y_i$ denote the set of all $t\in T_i\cap X$
which have immediate extension $0$ in $T$.
Let  $n$ be such that $l_X=l^T_n$.
For each $i\in I_1$,
let $Y_i$ denote the set of all
splitting  predecessors of nodes in
$T_i\cap X$ which split in the interval
$(l^T_{n-1},l^T_n]$ of $T$.
For each $i\le d$,
let $J_i$ be a subset of $K_i$ of size $|Y_i|$,
and enumerate the members of $Y_i$ as $q(i,\delta)$, $\delta\in J_i$.
Let $\vec{J}$ denote the set of $\vec\al\in\prod_{i\le d}J_i$ such that the set $\{q(i,\al_i):i\le d\}$
has no new pre-cliques over $A$.
Thus,
the collection of sets  $\{q(i,\al_i):i\le d\}$, $\vec\al\in \vec{J}$, is exactly the collection of sets of nodes in the interval  $(l^T_{n-1},l^T_n]$ of $T$
 which are members of
 $\Ext_{T}(A,P)$.
Moreover, for
$\vec\al\in \vec{J}$
and   $i\le d$,
\begin{equation}
q(i,\al_i)\contains t^*_i=p_{\vec{\al}}(i,\al_i).
\end{equation}

To complete the construction of the desired $q\in \bQ$ for which $q\le p_{\vec\al}$ for all $\vec\al\in \vec{J}$,
let  $\vec{\delta}_q=\bigcup\{\vec{\delta}_{\vec\al}:\vec\al\in \vec{J}\}$.
For each pair $(i,\gamma)$  with  $\gamma\in\vec{\delta}_q\setminus
J_i$,
there is at least one $\vec{\al}\in\vec{J}$ and some $m<k^*$ such that $\gamma=\delta_{\vec\al}(m)$.
As in Case (a) of Theorem \ref{thm.matrixHL},
for any other $\vec\beta\in\vec{J}$ for which $\gamma\in\vec{\delta}_{\vec\beta}$,
it follows
 that
$p_{\vec\beta}(i,\gamma)=p_{\vec{\al}}(i,\gamma)=t^*_{i,m}$ and  $\delta_{\vec\beta}(m)=\gamma$.
If $i\in I_0$,
let $q(i,\gamma)$ be the leftmost extension
 of $t_{i,m}^*$ in  $T\re l^{V_j}_{n_j}$.
If $i\in I_1$, let $q(i,\gamma)$ be the leftmost extension of $t^*_{i,m}$ to a splitting node in $T$
in the interval
 $(l^{V_j}_{n_j-1}, l^{V_j}_{n_j}]$.
Such a splitting node must exist, because
the coding node in $X$ must have  no pre-cliques
 with  $t^*_i$ (since $i\in I_1$).
 Thus, by Lemma \ref{lem.poc}
 the leftmost extension of $t^*_i$  in $T$ to length $l_X$ has no pre-cliques with the coding node in $X$, so it has  a splitting predecessor in the interval
  $(l^{V_j}_{n_j-1}, l^{V_j}_{n_j}]$.
Define
\begin{equation}
q=\bigcup_{i\le d}\{\lgl (i,\al),q(i,\al)\rgl: \al\in \vec{\delta}_q\}.
\end{equation}
By a  proof similar to that of
Claim \ref{claim.qbelowpal},
it follows that
$q\le p_{\vec\al}$,
for each $\vec\al\in \vec{J}$.

Take an $r\le q$ in  $\bP$ which  decides some $l_j$ in $L$,
and such that
for all $\vec\al\in\vec{J}$,  $h'(\dot{b}_{\vec\al}\re l_j)=\varepsilon^*$.
Without loss of generality, we may assume that the maximal nodes in $r$ have length $l_j$.
If $q(i',\al')$ is a coding node for some $i'\in I_0$ and $\al'\in J_{i'}$,
then let $c_r$ denote $r(i',\al')$;
otherwise, let $c_r$ denote the leftmost extension  in $T$  of the coding node in $X$ to  length $l_j$.
Let $c_X$ denote the coding node in $X$.

Let $Z_0$ denote those nodes in  $\SP_T(X)$
which have length equal to 
$l_X$
and
 are not in $\bigcup_{i\in I_0} Y_i$.
For each $z\in Z_0$,
let $s_z$ denote the leftmost extension of $z$ in $T$ to length $l_j$.
Let $Z_1$ denote the set of all 
nodes in $\splitpred_T(X)$ which are not in 
$Z_0\cup \bigcup_{i\in I} Y_i$.
For each $z\in Z_1$, let $s_z$ denote the  splitting predecessor  of the leftmost extension of $z$ in $T$ to length $l_j$.
This splitting node $s_z$  exists in $T$ for the following reason:
If $z$ is a  splitting predecessor of a node in $X$,
then $z$ has no pre-$k$-clique with  $c_X$,  so the leftmost extension of $z$ to any length has no pre-$k$-cliques  with any extension of $c_X$.
In particular, the set $\{s_z:z\in Z_0\cup Z_1\}$ has no new  pre-cliques  over $X$.

Let
\begin{equation}
Z^-=\{r(i,\al):i\le d,\ \al\in J_i\}\cup\{s_z:z\in Z_0\cup Z_1\}.
\end{equation}
Let $Z^*$ denote all extensions in $T$ of the members of $Z^-$ to length $l_j$.
Note that $Z^-=\splitpred_T(Z^*)$, which end-extends $\splitpred_T(X)$.
Let $m$ denote the index such that the maximal coding node in $V_j$ below $c_X$ is $c^{V_j}_{n_m}$.
Note that $Z^*$ has no new pre-cliques over $\splitpred_T(X)$;
furthermore,  the tree induced by
$r_{n_m}(V_j)\cup Z^*$ is strongly similar to $V_j$, except that  the coding node might possibly be in the wrong place.
Using Lemma \ref{lem.HLconstruction},
 there is an $S_j\in r_{n_j}[r_{n_m}(V_j),T]$  with  $\max(S_j)$ extending $Z^*$.
Then every member of $\Ext_{S_j}(A,P)$ has the same $h'$ color $\varepsilon^*$,
by the choice of $r$,
since each minimal pre-extension in $\MPE_{S_j}(A,C)$ extends some member of $\Ext_{S_j}(A,P)$
which extends members in $\ran(r)$ and so have $h'$-color $\varepsilon^*$.

Let $S=\bigcup_{j\in\bN} S_j$.
Then $S$ is a strong coding tree in $[B,T]$.
Given any $Y\in\Ext_S(A,C)$,
 there is some
 $X\in\MPE_S(A,C)$
such that $Y$ extends $X$.
Since
$\SP_S(X)$ is in $\Ext_{S_j}(A,P)$ for some $j\in\bN$,
$\SP_S(X)$ has
 $h'$ color $\varepsilon^*$.
Thus,
$Y$ has
 $h$-color $\varepsilon^*$.
\end{proof}


\begin{lem}\label{lem.Case(b)}
Assume \ref{assumption.6}.
Then there is a strong coding subtree $S\le T$ such that for each  copy $A'$ of $A$ in $S$,
$h$ is homogeneous on $\Ext_S(A',C)$.
\end{lem}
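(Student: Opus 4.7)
The plan is to use a fusion argument that iteratively applies Lemmas \ref{lem.endhomog} and \ref{lem.Case(c)} across all copies of $A$ inside $T$. The key observation is that each copy $A'$ of $A$ in $T$ has $\max(A')$ sitting at some finite level, and only finitely many copies of $A$ have maximal nodes contained in any given $r_n(T)$. Both auxiliary lemmas preserve initial segments: Lemma \ref{lem.endhomog} produces a subtree in $[r_m(T),T]$, and Lemma \ref{lem.Case(c)} produces a subtree in $[B,T]$ for any fixed valid $B$ with $\max(A') \sse \max(B)$. This will let us handle copies one at a time without destroying earlier work, since passing to a subtree only shrinks $\Ext(A',C)$ and therefore preserves any previously established homogeneity.

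First I would construct a decreasing sequence $T = T_0 \ge T_1 \ge T_2 \ge \cdots$ of strong coding subtrees of $T$ together with an increasing sequence of initial segments $U_0 \sqsubset U_1 \sqsubset \cdots$ satisfying the fusion condition $U_n \sqsubseteq T_m$ for all $m \ge n$. At stage $n+1$, enumerate the finitely many copies $A'_{n,1},\dots,A'_{n,k_n}$ of $A$ in $T_n$ whose maximal level is contained in $r_{n+1}(T_n)$ and which were not handled at an earlier stage. For each such copy $A'_{n,j}$, choose a finite valid subtree $B_{n,j}$ of the current tree satisfying $U_n \sqsubseteq B_{n,j}$ and $\max(A'_{n,j}) \sse \max(B_{n,j})$, apply Lemma \ref{lem.endhomog} above $B_{n,j}$ to achieve end-homogeneity of $h$ on $\Ext(A'_{n,j},C)$, and then apply Lemma \ref{lem.Case(c)} to upgrade to full homogeneity on $\Ext(A'_{n,j},C)$, both inside $[B_{n,j},\cdot\,]$. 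After handling $j = 1,\dots,k_n$ in turn, let $T_{n+1}$ be the resulting strong coding tree and take $U_{n+1}$ to be an initial segment of $T_{n+1}$ properly extending $U_n$.

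Setting $S = \bigcup_n U_n$ then yields a strong coding subtree of $T$, since $U_n \sqsubseteq T_m$ for all $m \ge n$ and each $T_m \in \mathcal{T}_k$. Any copy $A'$ of $A$ in $S$ has $\max(A')$ at some finite level, so $A'$ appears as some $A'_{n,j}$ in the enumeration at a finite stage. Since $S$ is contained in $T_{n+1}$ above $U_n$, we have $\Ext_S(A',C) \sse \Ext_{T_{n+1}}(A',C)$, on which $h$ is homogeneous by construction. Hence $h$ is homogeneous on $\Ext_S(A',C)$ for every copy $A'$ of $A$ in $S$, giving the lemma.

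The main obstacle I anticipate lies in the bookkeeping of the fusion and the enumeration of copies of $A$. One has to arrange the enumeration so that every copy of $A$ appearing in the final $S$ is treated at some finite stage, which will rest on the fact that each such copy is finite and so visible at a bounded level, together with the fact that $U_n$ grows to exhaust $S$. One must also verify that the choice of $B_{n,j}$ and the successive applications of Lemmas \ref{lem.endhomog} and \ref{lem.Case(c)} within a single stage can be made compatible, i.e., that at each substage the hypotheses of those lemmas (validity, SWP, the correct form of $C\setminus A'$) still hold in the current subtree — this should follow because strong isomorphism between $A$ and $A'$ transfers the SWP via Lemma \ref{lem.copy}, and the extension $C\setminus A'$ within any $T_n$ is strongly isomorphic to $\tilde X$, so Assumption \ref{assumption.6} is maintained throughout.
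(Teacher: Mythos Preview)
Your proposal is correct and follows essentially the same fusion argument as the paper: build a decreasing chain of strong coding trees while growing an initial segment, and at each stage run through the finitely many new copies of $A$, applying first Lemma~\ref{lem.endhomog} and then Lemma~\ref{lem.Case(c)} to each. The one point on which the paper's proof differs from your outline is that the paper only treats copies $A'$ that are \emph{valid} in the current tree (so that Assumption~\ref{assumption.6} holds for $A'$), and then at the very end passes from the fusion $U=\bigcup_i U_i$ to a further $S\le U$ chosen so that every $r_m(S)$ is valid in $U$; this guarantees that every copy of $A$ in $S$ is valid in $U$ and hence was handled at some stage. Your outline instead proposes to choose a valid $B_{n,j}$ with $\max(A'_{n,j})\sse\max(B_{n,j})$ for every copy, which amounts to the same thing provided such a $B_{n,j}$ exists---and that is precisely the validity issue you flag in your final paragraph. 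So the two arguments coincide up to this bookkeeping choice.
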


\begin{proof}
Let $(n_i)_{i\in\bN}$ be the sequence of integers such that $r_{n_i}(T)$ contains a  copy  of $A$
which is valid in
$r_{n_i}(T)$ and such that $\max(A)\sse\max(r_{n_i}(T))$.
Let $n_{-1}=0$, $T_{-1}=T$,
and
 $U_{-1}=r_{0}(T)$.

Suppose $i\in\bN$, and $U_{i-1}\cong r_{n_{i-1}}(T)$
 and $T_{i-1}$ are given satisfying that for each copy $A'$ of $A$
  valid in $U_{i-1}$ with $\max(A)\sse \max(U_{i-1})$,
$h$ is homogeneous on $\Ext_{U_{i-1}}(A',C)$.
Let $U_i$ be in $r_{n_i}[U_{i-1},T_{i-1}]$.
Enumerate all
copies
$A'$ of $A$ which are valid in $U_i$ and have $\max(A')\sse \max(U_i)$ as $\lgl A_0,\dots,A_m\rgl$.
Apply Lemma \ref{lem.endhomog} to obtain  $R_{0}\in [U_i,T_{i-1}]$ which is end-homogeneous for $\Ext_{R_0}(A_0,C)$.
Then
apply Lemma \ref{lem.Case(c)} to
obtain $R'_{0}\in [U_i,R_0]$
such that $\Ext_{R'_{0}}(A_0,C)$ is homogeneous for $h$.
Given $R'_{j}$  for $j<m$,
apply Lemma \ref{lem.endhomog} to obtain a $R_{j+1}\in [U_i,R'_{j}]$ which is end-homogeneous for $\Ext_{R_{j+1}}(A_{j+1},C)$.
Then
apply Lemma \ref{lem.Case(c)} to
obtain $R'_{j+1}\in [U_i,R_{j+1}]$
such that $\Ext_{R'_{j+1}}(A_{j+1},C)$ is homogeneous for $h$.
Let $T_i=R'_m$.

Let $U=\bigcup_{i\in\bN}U_i$.
Then $U\le T$ and $h$ has the same color on $\Ext_U(A',C)$
for   each copy
$A'$ of $A$ which is valid in $U$.
Finally, take $S\le U$ such that for each $m\in\bN$,
$r_m(S)$ is valid in $U$.
Then
 each copy  $A'$ of $A$  in $S$ is valid in $U$.
Hence,
$h$ is homogeneous on  $\Ext_S(A',C)$, for each copy $A'$ of $A$ in $S$.
\end{proof}

For the setting of Case (a) in Theorem \ref{thm.matrixHL}, a similar lemma holds.
The proof is omitted, as it is almost identical, making the obvious changes.

\begin{lem}\label{lem.fusionsplit}
Let $T$ be  a member of $\mathcal{T}_k$,  let $A,\tilde{X},h$ be as in Case (a) of Theorem \ref{thm.matrixHL}, and let $C=A\cup\tilde{X}$.
Then there is a strong coding tree $S\le T$ such that for each  $A'\sse S$ with $A'\cong A$,
$\Ext_S(A',C)$ is homogeneous for $h$.
\end{lem}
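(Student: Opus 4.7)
The proof will parallel that of Lemma \ref{lem.Case(b)}, but will be somewhat simpler because Case (a) of Theorem \ref{thm.matrixHL} already yields full homogeneity on $\Ext_S(A,\tilde{X})$ rather than mere end-homogeneity. In particular, there is no analogue of Lemma \ref{lem.Case(c)} needed: Theorem \ref{thm.matrixHL}(a) plays the role of both the end-homogeneity lemma and the uniformization step.

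The plan is to build a fusion sequence of strong coding trees. Let $(n_i)_{i\in\bN}$ enumerate those integers $n$ such that $r_n(T)$ contains a valid copy $A'$ of $A$ with $\max(A')\sse\max(r_n(T))$. Set $T_{-1}=T$ and $U_{-1}=\emptyset$. Suppose inductively that $T_{i-1}\in\mathcal{T}_k$ and $U_{i-1}\in r_{n_{i-1}}[\emptyset,T_{i-1}]$ have been constructed so that for every copy $A'$ of $A$ which is valid in $U_{i-1}$ with $\max(A')\sse\max(U_{i-1})$, the coloring $h$ is homogeneous on $\Ext_{U_{i-1}}(A',C)$. Pick some $U_i\in r_{n_i}[U_{i-1},T_{i-1}]$ and enumerate the copies $A_0,\dots,A_m$ of $A$ which are valid in $U_i$ and satisfy $\max(A_j)\sse\max(U_i)$.

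For each $j\le m$ in turn, apply Case (a) of Theorem \ref{thm.matrixHL} (with $A_j$ in place of $A$ and $B=U_i$) to the current tree $R_{j-1}$ (starting from $R_{-1}=T_{i-1}$) to obtain $R_j\in[U_i,R_{j-1}]$ such that $\Ext_{R_j}(A_j,C)$ is monochromatic for $h$. Note that $R_j$ extends $U_i$, so the homogeneity obtained at earlier stages $j'<j$ is preserved on $\Ext_{R_j}(A_{j'},C)\sse\Ext_{R_{j'}}(A_{j'},C)$. Set $T_i=R_m$. By construction, for every copy $A'$ of $A$ valid in $U_i$ with $\max(A')\sse\max(U_i)$, the coloring $h$ is homogeneous on $\Ext_{U_i}(A',C)\sse\Ext_{T_i}(A',C)$, which completes the inductive step.

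Let $U=\bigcup_{i\in\bN}U_i$. Since $U_{i-1}\sqsubset U_i$ for each $i$ and each $U_i$ is valid in $T_{i-1}\le T$, standard fusion arguments (Theorem \ref{thm.GOODnonempty} together with Lemma \ref{lem.HLconstruction}) give $U\le T$. Moreover, any copy $A'$ of $A$ which is valid in $U$ with $\max(A')\sse\max(U_i)$ for some $i$ has $\Ext_U(A',C)$ homogeneous for $h$. Finally, apply Theorem \ref{thm.GOODnonempty} once more to select $S\in[\emptyset,U]$ such that every finite approximation $r_m(S)$ is valid in $U$. Then every copy $A'$ of $A$ contained in $S$ is automatically valid in $U$ (with $\max(A')$ contained in $\max(U_i)$ for the appropriate $i$), and hence $\Ext_S(A',C)\sse\Ext_U(A',C)$ is homogeneous for $h$, as required.

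The main obstacle is the standard bookkeeping issue of fusion: ensuring that successive applications of Theorem \ref{thm.matrixHL}(a) to different copies $A_j$ do not destroy homogeneity already obtained for earlier copies, and that the resulting limit tree is actually a strong coding tree. Both points are handled by insisting that every $R_j$ produced lies in $[U_i,R_{j-1}]$, so that $U_i$ itself is preserved as an initial segment, and by the extension machinery of Section \ref{sec.ExtLem} which guarantees the limit $U$ belongs to $\mathcal{T}_k$.
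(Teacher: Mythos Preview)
Your proposal is correct and follows exactly the approach the paper intends: the paper omits the proof, stating it ``is almost identical [to that of Lemma~\ref{lem.Case(b)}], making the obvious changes,'' and you have correctly identified those changes---namely that in Case~(a), Theorem~\ref{thm.matrixHL} already yields full homogeneity on $\Ext_S(A,\tilde{X})$, so a single application of it replaces the two-step combination of Lemma~\ref{lem.endhomog} and Lemma~\ref{lem.Case(c)} used in the Case~(b) argument. The fusion bookkeeping and the final pass to an $S\le U$ with all approximations valid in $U$ are carried over verbatim from Lemma~\ref{lem.Case(b)}.
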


Finally, for the case of $k\ge 4$, a new phenomenon appears: new singleton pre-$a$-cliques for $a\ge 4$ must be dealt with. 
The steps are very similar to the case when the level set $\tilde{X}$ contains a coding node.
\vskip.1in

\noindent\bf Case (b${}'$). \rm
Assume $k\ge 4$. 
Let $A$ and $C$ be fixed non-empty finite valid subtrees of a strong coding tree $T\in\mathcal{T}_k$ such that
\begin{enumerate}
\item
$A$ and $C$ both satisfy the Strict Witnessing Property; and
\item
$C\setminus A$ is a level set containing exactly  one new singleton pre-$a$-clique, for some $a\in [4,k]$.
\end{enumerate}

Given a copy $A'$ of $A$ in  a strong $\mathcal{H}_k$-coding tree $T$, let $\Ext_T(A',C)$  denote the set of all $C'$ contained in $T$ which end-extend $A'$ and such that $C'\cong A'$.

\begin{lem}[Case (b${}'$)]\label{lem.Case(bb)}
Given $A$ and $C$ as above, there is a strong coding subtree $S\le T$ such that for each  copy $A'$ of $A$ in $S$,
$h$ is homogeneous on $\Ext_S(A',C)$.
\end{lem}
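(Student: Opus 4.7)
The plan is to mirror the proof strategy of Lemma \ref{lem.Case(b)} almost verbatim, treating the distinguished node that becomes a new singleton pre-$a$-clique in $\tilde{X}:=C\setminus A$ analogously to how a coding node is treated in Case (b). Let $t_*$ be the node of $\tilde{X}$ at which the new singleton pre-$a$-clique appears, and let $C^* = \{c^C_{i_0},\dots,c^C_{i_{a-3}}\}$ be the set of coding nodes of $C$ that witnesses it (by the SWP of $C$ these exist and are unique in the appropriate sense, and since the clique is \emph{new} at level $l_C$, all of $C^*$ lies in $A$). Enumerate $\tilde{X}$ as $\{t_0,\dots,t_d\}$ with $t_d=t_*$ and $t_{i_0}=0^{(l_C)}$, and set $I_1=\{i\le d: t_i^+(|c^C_{i_{a-3}}|)=1\}$, $I_0=\{0,\dots,d\}\setminus I_1$. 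Note $d\in I_1$ by the witnessing requirement. The key structural observation is that, just as coding nodes impose rigid passing-number constraints on surrounding nodes, the witnessing set $C^*$ imposes rigid constraints on the extensions that can carry a singleton pre-$a$-clique at $t_*$.

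First, I would prove the analogue of Case (b) of Theorem \ref{thm.matrixHL}: for any coloring $h$ of $\Ext_T(A,\tilde X)$ there is $S\in[B,T]$ monochromatic on $\Ext_S(A,\tilde X)$. Repeat the Harrington-style forcing verbatim, with conditions $p\colon (d+1)\times\vec\delta_p\to T$ where $p(d)$ is now constrained to be a node that, extended leftmost to the next coding node length, has passing number $1$ at every node of $C^*$ (so that $p(d)$ itself carries a new singleton pre-$a$-clique witnessed by $C^*$), and for $j\in\{0,1\}$, $i\in I_j$, the passing number of $p(i,\delta)$ at the level of $p(d)$ matches $j$. The density arguments (Claim \ref{claim.densehigh}), the choice of $p_{\vec\alpha}$, the \Erdos--Rado application, and the compatibility Lemma \ref{lem.compat} go through identically; the construction of $U_{m_j}$ follows Part II Case (b) with $t_*$ replacing the coding node.

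Second, I would establish end-homogeneity on $\Ext_T(A,C)$ along the lines of Lemma \ref{lem.endhomog}, then uniformize to true homogeneity via the second forcing of Lemma \ref{lem.Case(c)}, using splitting predecessors of the $I_1$-coordinates together with straight extensions of $I_0$-coordinates. The auxiliary coloring $h'$ is again well-defined because, given any minimal pre-extension of the appropriate form, the passing-number profile with respect to $C^*$ determines the unique completion that produces a new singleton pre-$a$-clique witnessed by $C^*$. Finally, a fusion exactly as in Lemma \ref{lem.Case(b)} yields an $S\le T$ such that $\Ext_S(A',C)$ is homogeneous for every copy $A'$ of $A$ in $S$.

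The main obstacle I expect is verifying that the ``no new pre-cliques'' condition in the partial order interacts correctly with the witnessing set $C^*$: because $C^*\subseteq A$ is fixed throughout, every legitimate extension $X\in\Ext_T(A,\tilde X)$ automatically inherits $C^*$ as its witness, so the only extra new pre-cliques one could accidentally introduce between $l_A$ and $l_X$ are those of size $\ge 2$, which are already forbidden by validity and the SWP of $A\cup\tilde X$. The proof of the analogue of Claim \ref{claim.extensiongood} and of the compatibility lemma must explicitly invoke this: leftmost extensions in $T$ above $A$ cannot create new pre-cliques of size $\ge 2$ (by Lemma \ref{lem.poc}), and the rigidly-prescribed passing numbers at $C^*$ ensure that each condition's range carries exactly one new singleton pre-$a$-clique, namely at $p(d)$, which matches the one in $\tilde{X}$. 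Once this bookkeeping is carried out, every step of Cases (b) and (b${}$-analogues ports over without further change.
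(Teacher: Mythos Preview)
Your overall strategy is correct and is exactly what the paper does: rerun Case~(b) of Theorem~\ref{thm.matrixHL} together with Lemmas~\ref{lem.endhomog}, \ref{lem.Case(c)}, and \ref{lem.Case(b)}, with the coding node replaced by the node $t_*$ carrying the new singleton pre-$a$-clique. The paper's own proof says precisely this in two sentences.

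There is, however, a technical slip in your setup. You assert that the witnessing set $C^*=\{c^C_{i_0},\dots,c^C_{i_{a-3}}\}$ lies in $A$, citing the SWP of $C$. This fails on two counts. First, the Strict Witnessing Property (Definition~\ref{defn.SWP}) only invokes the Witnessing Property (Definition~\ref{defn.WP}), which requires witnesses only for pre-cliques of size at least two; singleton pre-$a$-cliques need not be witnessed inside $C$ at all. Second, and more decisively, since the pre-$a$-clique at $t_*$ is \emph{new} over $A$, the node $t_*\re l_A$ is not yet a pre-$a$-clique, so at least one witnessing coding node (in $\bT_k$) must have length strictly greater than $l_A$ and hence cannot lie in $A$. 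Consequently your partition $I_0,I_1$ via passing numbers at $c^C_{i_{a-3}}\in A$ is not well-defined, and in any case is not needed: since $\tilde{X}$ contains no coding node, there are no internal passing-number constraints among its non-distinguished members to preserve. The correct constraint on $p(d)$ in the forcing is simply that $p(d)$ extend $s_d$ and be a new singleton pre-$a$-clique over $A$ (this replaces ``$p(d)$ is a coding node extending $s_d$'' from Case~(b)); the set $L$ of admissible lengths is the set of levels in $T$ at which such nodes occur. With this adjustment, the rest of your outline---and the paper's two-sentence proof---goes through.
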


\begin{proof}
The proof follows from simple modifications of the proofs of 
Case (b) in Theorem 
\ref{thm.matrixHL} and 
Lemmas \ref{lem.endhomog}, \ref{lem.Case(c)}, and \ref{lem.Case(b)}.
Just replace the coding node  in Case (b) with  the new singleton
 pre-$a$-clique in  Case (b$'$).  
Splitting predecessors work as before.
\end{proof}

\noindent {\bf Proof of Theorem \ref{thm.MillikenSWP}}.
The proof is by induction on the number of critical nodes, where in this proof,  by {\em critical node} we mean a coding node, a splitting node, or 
a new singleton pre-$a$-clique for  some $a\in[4,k]$.
Suppose first that $A$ consists of a single node.
Then $A$ consists of a single  splitting node in $T$ on the leftmost branch of $T$,
so the  strongly isomorphic copies of
$A$ are exactly  the leftmost splitting nodes
in $T$.
Recall that $\Seq[0]$ denotes the set of all finite length sequences of $0$'s.
Thus, the copies of $A$ in $T$ are exactly those
 splitting nodes in $T$ which  are  members of $\Seq[0]$.
Let $h$ be any finite coloring on the splitting nodes in the leftmost branch of $T$.
By Ramsey's Theorem,
 infinitely many  splitting nodes in the leftmost branch of $T$ must have the same $h$ color.
 By the Extension Lemmas in Section \ref{sec.ExtLem},
 there is a  subtree $S\le T$
 in which all splitting nodes in the leftmost branch of $S$ have  the  same $h$ color.

Now assume that $n\ge 1$ and  the theorem holds
for each finite tree $B$  with  $n$ or less critical nodes
such that $B$ satisfies the SWP and  $\max(B)$ contains a node which is a sequence of all $0$'s.
Let $C$ be a finite tree with $n+1$ critical nodes
containing a maximal node in $\Seq[0]$, and suppose
 $h$  maps  the copies of $C$ in $T$ into finitely many colors.
Let  $d$ denote the maximal critical node in $C$ and let
  $B=\{t\in C: |t|<|d|\}$.
Apply 
Lemma \ref{lem.Case(b)}, \ref{lem.fusionsplit} or \ref{lem.Case(bb)}, as appropriate, 
to obtain $T'\le T$ so that for each copy $V$ of $B$ in $T'$,  the set $\Ext_{T'}(V,C)$ is homogeneous for $h$.
Define $g$ on the copies of $B$ in $T'$ by  letting $g(V)$ be the value of  $h$ on $V\cup X$ for any $X\in\Ext_{T'}(V,C)$.
By the induction hypothesis,
there is an $S\le T'$ such that $g$ is homogeneous on  all copies of $B$ in $S$.
It follows that $h$ is homogeneous on the copies of $C$ in $S$.

To finish, let $A$ be any finite tree satisfying the SWP.
If  $\max(A)$ does not contain a member of $\Seq[0]$,
let $l_A$ denote the longest length of nodes in $A$,
and let $\tilde{A}$ be the tree induced by $A\cup\{0^{(l_A)}\}$.
Otherwise, let $\tilde{A}=A$.
Let  $g$ be a  finite coloring of the copies of $A$ in  $T$.
To each copy $B$ of $\tilde{A}$ in $T$ there corresponds a unique copy of $A$ in $T$, denoted $\varphi(B)$:
If $\tilde{A}=A$, then $\varphi(B)=B$;
if $\tilde{A}\ne A$, then $\varphi(B)$ is $B$ with the leftmost node in $\max(B)$ removed.
For each copy $B$ of $\tilde{A}$, define
$h(B)=g(\varphi(B))$.
Take $S\le T$ homogeneous for $h$.
Then $S$ is homogeneous for $g$ on  the copies of $A$ in $S$.
\hfill $\square$


\section{Main Ramsey Theorem for strong $\mathcal{H}_k$-coding trees}\label{sec.MainThm}

The third phase of this article takes place in this and the next section.
Subsection \ref{sec.squiggle} develops the notion of incremental trees, which sets the stage for envelopes for incremental antichains.
These envelopes
transform finite antichains of coding nodes to finite trees with the Strict Witnessing Property,
enabling  applications of
Theorem \ref{thm.MillikenSWP} to deduce
 Theorem \ref{thm.mainRamsey}.
 This theorem takes a finite coloring of
 all antichains of coding nodes strictly similar to a given finite antichain of coding nodes
 and  finds a strong coding tree in which the coloring has one color.
After showing  in Lemma \ref{lem.bD} that any strong coding tree contains an antichain of coding nodes coding a Henson graph,
we will apply  Theorem \ref{thm.mainRamsey} to prove that each Henson graph has finite big Ramsey degrees, thus obtaining the main result of this paper in Theorem \ref{finalthm}.

\subsection{Incremental trees}\label{sec.squiggle}

The new notions of {\em incremental new pre-cliques} and {\em incrementally witnessed pre-cliques}, and {\em  incremental trees} are defined now.
The main lemma of this subsection, Lemma \ref{lem.squiggletree},  shows  that given a strong coding tree $T$,
there is an incremental strong coding subtree $S\le T$ and
   a set $W\sse T$ of coding nodes disjoint from $S$ such that
   all pre-cliques in $S$ are incrementally witnessed by coding nodes in $W$.
This sets the stage for the development of   {\em envelopes} with the Strict Witnessing Property  in the next subsection.

\begin{defn}[Incremental  Pre-Cliques]\label{defn.incrementalpo}
Let  $S$ be a subtree of $\bT_k$,
 and let $\lgl l_j:j<\tilde{j}\rgl$
list in increasing order the minimal lengths
of new  pre-cliques  in $S$, except for singleton new pre-$3$-cliques, where $\tilde{j}\in\bN$ or $\tilde{j}=\bN$.
We say that
$S$ has {\em incremental  new pre-cliques},
or simply $S$ is
{\em incremental},
 if
 letting
\begin{equation}
S_{l_j,1}:=\{t\re l_j:t\in S, \ |t|>l_j,\mathrm{\  and \ } t(l_j)=1\},
\end{equation}
 the following hold
 for each $j<\tilde{j}$:
\begin{enumerate}
\item
$S_{l_j,1}$
is a  new  pre-$a$-clique for some $a\in [3,k]$, and no proper subset of $S_{l_j,1}$ is a new pre-$b$-clique for any $b\in[3,k]$;
\item
If $a=3$ and
 $S_{l_j,1}$ has more than two  members, then for each
 proper subset $X\subsetneq S_{l_j,1}$ of size at least $2$,
 for some $i<j$,  $X\re l_i=S_{l_i,1}$ and is also a pre-$3$-clique;
\item
If $a>3$ and
 $S_{l_j,1}$ has  at least two  members, then for each
 proper subset $X\subsetneq S_{l_j,1}$,
   for some $i<j$,  $X\re l_i=S_{l_i,1}$ and is also a pre-$a$-clique;
\item
If $a>3$,
then there are   $l_{j-1}<l^3<\dots<l^a=l_j$ such that for each $3\le b\le a$,
$S_{l_j,1}\re l^b$ is a pre-$b$-clique.
Furthermore, for some $m$, $|d^S_m|<l^3<l^a=l_j<|d^S_{m+1}|$.
\end{enumerate}

A tree $S\in\mathcal{T}_k$ is called an {\em incremental strong coding tree}
if  $S$ is incremental and moreover,
the node
$d^S_{m+1}$  in  (4) is a coding node in $S$.
\end{defn}

Note that every subtree of an incremental  strong coding tree is  incremental, but a strong coding subtree of an incremental strong coding tree need not be an incremental strong coding tree.
Note also that in (4), the pre-$b$-cliques
for $3\le b<a$
 at the levels $l^3$ through $l^a$ are not new, but they build up to the new pre-$a$-clique in the  interval $(l_{j-1},l_j]$.
This  redundancy  will actually make the definition of the envelopes simpler.

\begin{defn}[Incrementally Witnessed  Pre-Cliques]\label{defn.incremental}
Let $S,T\in \mathcal{T}_k$ be such that $S$ is incremental and
$S\le T$.
We say that  the  pre-cliques   in $S$  are  {\em incrementally witnessed} by a set of witnessing coding nodes $W\sse T$
if the following hold.
Given that $\lgl l_j:j\in\bN\rgl$ is the increasing enumeration of the minimal lengths of new pre-cliques in $S$,
for each $j\in\bN$ the following hold:
\begin{enumerate}
\item
$|d^S_{m_n-1}|<l_j< l^S_n$ for some $n\in\bN$.
\item
If $S_{l_j,1}$ is a new pre-$a_j$-clique of size at least two, where
$a_j\in [3,k]$,
then there exist coding nodes
$w_j^3,\dots, w_j^{a_j}$  in $T$ such that,
letting $W$ denote
$\bigcup_{j\in\bN}\{ w_j^3,\dots w_j^{a_j}\}$,
the set of all these witnessing coding nodes,
\begin{enumerate}

\item[(a)]
The set of nodes
$\{ w_j^3,\dots ,w_j^{a_j} : j\in\bN\}$ forms a pre-$(a_j-2)$-clique which  witnesses the pre-$a_j$-clique in $S_{l_j,1}$.

\item[(b)]
The nodes in $\{ w_j^3,\dots ,w_j^{a_j}\}$  do not form  pre-cliques  with any nodes in
 $(W\setminus \{ w_j^3,\dots ,w_j^{a_j} \})\cup (S\re |w^{a_j}_j|\setminus S_{l_j,1}')$
 where $S_{l_j,1}'$ denotes the set  of nodes in  $S\re  |w^{a_j}_j|$ which  end-extend $S_{l_j,1}$.

\item[(c)]
If  $Z\sse \{ w_j^3,\dots ,w_j^{a_j}  \}\cup (S\re |w^{a_j}_j|)$ forms a pre-clique, then
$Z\re l_j\cap S$ must be contained in $S_{l_j,1}$.

\end{enumerate}
Recalling that
$w^{\wedge}$ denotes $w\re l$, where $l$ is least such that $w(l)\ne 0$, we have
\begin{enumerate}
\item[(d)]
$|d^S_{m_n-1}|<|(w_j^3)^{\wedge}|<\dots<|(w_j^{a_j})^{\wedge}|<
|w_j^3|<\dots<|w_j^{a_j}|$.
\item[(e)]
If $|d^S_{m_n-1}|<l_{j+1}< l^S_n$,
then
$\max(l_{j},|w^{a_j}_j|)<
|(w_{j+1}^3)^{\wedge}|$.
\end{enumerate}
\end{enumerate}
\end{defn}

For $k=3$, in the terminology of \cite{DobrinenJML20}, (c) says that the only nodes in $S$ with which
$\{w_j^3,\dots, w_j^{a_j} \}$ has parallel $1$'s  (pre-$3$-cliques) are  in $S_{l_j,1}$.

In what follows, we shall say that a strong coding tree $S$ such that $S\le T$ is {\em valid} in $T$ if for each $m\in\bN$, $r_m(S)$ is valid in $T$.
Since $S$ is a strong coding tree, this is equivalent to   $\max(r_m(S))$  being free in $T$ for each $m\in\bN$.

\begin{lem}\label{lem.squiggletree}
Let $T\in\mathcal{T}_k$ be a strong coding tree.
Then there is an incremental strong coding tree $S\le T$ and a set of coding nodes $W\sse T$ such that  each
 new pre-clique in $S$ is incrementally
 witnessed in $T$ by  coding nodes in $W$.
\end{lem}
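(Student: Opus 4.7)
The plan is to build $S$ and $W$ simultaneously by a level-by-level recursion inside $T$, using the extension lemmas of Section \ref{sec.ExtLem} as the basic construction tools. Fix an enumeration $\langle u_i : i \in \mathbb{N}\rangle$ of $\bT_k$ arranged so that $|u_i| \le |u_j|$ when $i < j$; at stage $i$ we will decide the critical node of $S$ responsible for realizing a copy of $u_i$, in the spirit of the construction of $\bT_k$ itself in Example \ref{ex.bTp}. The recursion maintains a finite valid subtree $S_m \in \mathcal{AT}^k_{*}$ of $T$ satisfying the Strict Witnessing Property together with a finite set $W_m$ of coding nodes of $T$ disjoint from $S_m$, and it guarantees inductively that every new pre-clique appearing inside $S_m$ has already been incrementally witnessed by members of $W_m$.

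At each step there are three tasks, to be performed in order. First, if $u_i$ can be realized without forcing any new pre-clique, I would extend $S_{m}$ to have a splitting or coding node in the appropriate cone; this is done via Lemma \ref{lem.factssplit} followed by Lemma \ref{lem.pnc}, using leftmost extensions in $T$ everywhere else so that, by Lemma \ref{lem.poc}, no unintended pre-cliques are introduced. Second, when realizing $u_i$ would create a new pre-$a$-clique $X = \{s_j : j \in J\}$ for some $a \in [3,k]$, I first run through the proper subsets $X' \subsetneq X$ of size at least two (in some fixed order) and, for each, use an application of Lemma \ref{lem.facts} to push the relevant cones leftmost to a coding level of $T$ where the immediate successors realize the pre-$b$-clique pattern of $X'$, taking splitting predecessors to record this as a genuine new pre-clique in a dedicated interval of $S$. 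This guarantees clauses (2), (3), and (4) of Definition \ref{defn.incrementalpo}: the pre-$a$-clique $X$ at length $l_j$ is accompanied by a chain $l^3 < \cdots < l^a = l_j$ of subordinate pre-$b$-clique lengths, sitting in the same interval $(|d^S_{m}|,|d^S_{m+1}|]$, while every proper subset has already occurred as a strictly earlier new pre-clique. Third, once $X$ has been realized, I use the $K_k$-Free Branching Criterion and the passing-number control of Lemma \ref{lem.pnc} to select, in the next interval of $T$ above $l_j$, coding nodes $w_j^3, \dots, w_j^{a_j}$ whose splitting predecessors start strictly above $|d^S_{m_n-1}|$ and whose heights are strictly increasing, arranged so that they form a pre-$(a_j - 2)$-clique that witnesses $X$ and so that the leftmost extensions of everything else bypass them with passing number $0$; these are added to $W_m$ to form $W_{m+1}$, while the next coding level of $T$ above is used to continue $S$.

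The main obstacle is clause (b)-(c) in Definition \ref{defn.incremental}: the witnesses $\{w_j^3,\dots,w_j^{a_j}\}$ must not form spurious pre-cliques with previously chosen witnesses in $W$ or with nodes of $S$ outside the designated set $S_{l_j,1}'$. To handle this I would exploit that all witnesses and all nodes of $S$ outside $S_{l_j,1}$ can be taken as leftmost (hence all-$0$) extensions at every relevant level below $|(w_j^3)^{\wedge}|$; combined with the Witnessing Property of $T$ (Lemma \ref{lem.psim.properties}(3)) and the freeness arguments in Lemma \ref{lem.poc}, this forces any would-be new pre-clique involving a new witness to be already witnessed at the designated level. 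A small bookkeeping complication is that when $a_j < k$ one must avoid inadvertently lifting the pre-$a_j$-clique to a pre-$k$-clique via the new witness; this is controlled by choosing the witnesses to pass only through the coding nodes in $T$ that already witness $X$ at length $l_j$, which is possible by the $k$-FBC and Lemma \ref{lem.pnc} since $a_j - 2 \le k - 2$ leaves room to split.

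Finally, setting $S = \bigcup_m S_m$ and $W = \bigcup_m W_m$, one verifies that $S$ satisfies the $K_k$-Free Branching Criterion and the Strong Witnessing Property and has a coding node at each declared coding level, so by Lemma \ref{lem.internalcharacterization} $S$ is strongly isomorphic to $\bT_k$ and thus a member of $\mathcal{T}_k$; by construction it is moreover an incremental strong coding tree and its new pre-cliques are incrementally witnessed by $W$.
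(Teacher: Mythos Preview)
Your overall strategy---build $S$ and $W$ together by recursion, ordering the new pre-cliques so that proper subsets appear first, and splitting off witnesses from the leftmost branch of $T$---is the right one and matches the paper's intent. But the organization of your recursion is off in a way that matters.

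The paper does not walk through an enumeration of nodes of $\bT_k$. It proceeds \emph{coding-interval by coding-interval}: given $S_{n-1}\in r_{m_{n-1}+1}[0,T]$, it first takes an arbitrary $U_n\in r_{m_n+1}[S_{n-1},T]$, and then enumerates \emph{all} subsets $X_j\subseteq\max(U_n)$ that carry a new pre-clique over $r_{m_n}(U_n)$, ordered so that (i) smaller $a$ comes first and (ii) for fixed $a$, no $X_j$ contains a later $X_{j'}$. It then rebuilds the last interval of $U_n$, processing the $X_j$ one at a time: for each it finds splitting nodes on the leftmost branch, uses Lemma~\ref{lem.pnc} to produce coding-node witnesses $w_{n,j}^3,\dots,w_{n,j}^{a_j}$, and extends the nodes of $X_j$ with passing number $1$ at these witnesses while every other node is extended leftmost. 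The point is that this creates the new pre-clique for $X_j$ at a level strictly between those for $X_{j-1}$ and $X_{j+1}$, all of them inside the \emph{same} interval $(|d^S_{m_n-1}|,l^S_n]$ of $S$. Finally it wraps up by taking $S_n\in r_{m_n+1}[r_{m_n}(U_n),T]$ end-extending this construction.

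Two concrete gaps in your proposal stem from the node-by-node framing. First, a single critical node of $S$ (specifically a coding node) typically hosts \emph{many} new pre-cliques simultaneously, not one; your ``when realizing $u_i$ would create a new pre-$a$-clique $X$'' handles only a single $X$ per step and gives no mechanism for enumerating and interleaving all of them. The preliminary choice of $U_n$ is exactly what tells you which $X_j$'s to process and in what order. Second, your ``dedicated interval of $S$'' for each proper subset $X'\subsetneq X$ cannot be right: the intervals of $S$ are fixed by the strong isomorphism to $\bT_k$, so you cannot insert extra ones. What Definition~\ref{defn.incrementalpo} asks is that the proper subsets appear at earlier \emph{levels $l_i<l_j$}, and in the paper's construction these levels all live in the same interval of $S$, separated only by levels of $T$.

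A smaller point: $S_m\in\mathcal{AT}^k$ has the Strong Witnessing Property (Lemma~\ref{lem.fsctvalid}), not the Strict Witnessing Property of Definition~\ref{defn.SWP}; the latter would force at most one new pre-clique per interval, which is false for approximations of strong coding trees.
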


\begin{proof}
Recall that for any tree $T\in\mathcal{T}_k$,
the sequence
$\lgl m_n:n\in\bN\rgl$ denotes the indices
such that $d^T_{m_n}=c^T_n$; that is,  the $m_n$-th critical node in $T$ is the $n$-th coding node in $T$.

If $k=3$, fix some  $S_0\in r_{m_0+1}[0,T]$ which is
 valid  in $T$.
Then  $S_0$ has exactly one coding node,   $c^{S_0}_0$, and it has ghost coding node $c^{S_0}_{-1}$, which is the shortest splitting node in $S_0$.
There are no pre-cliques in $S_0$.

For $k=3$ and $n\ge 1$, or for $k\ge 4$ and $n\ge 0$,  proceed as follows:
If $k\ge 4$, let $S_{-1}$ consist of  the  stem of  $T$, that is $d^T_0$.
Suppose  that   we have chosen $S_{n-1}\in r_{m_{n-1}+1}[0,T]$
 valid in $T$  and $W_{n-1}\sse T$ so that $S_{n-1}$ is incremental and
 each new pre-clique in $S_{n-1}$ is
incrementally witnessed by some coding nodes in $W_{n-1}$.
Take some $U_n\in r_{m_n+1}[S_{n-1},T]$  such that  $r_{m_n}(U_n)$ is valid in $T$.
Let   $V=\max(r_{m_n}(U_n))$.


Let $\lgl X_j:j<\tilde{j}\rgl$ enumerate  those subsets of $\max(U_n)$ which have new
pre-cliques over $r_{m_n}(U_n)$ so that
for each pair $j<j'<\tilde{j}$,
if $X_j$ is a new pre-$a$-clique
and $X_{j'}$ is a new pre-$a'$-clique, 
then
\begin{enumerate}
\item
 $a\le a'$; 
\item
If $a=a'$, then  $X_j\not\contains X_{j'}$.
\end{enumerate}
Note that (1) implies that, in the case that $k\ge 4$,  for each $a\in [3,k-1]$,  all new pre-$a$-cliques are enumerated before any new pre-$(a+1)$-clique is enumerated.
Furthermore,  every new pre-clique in $\max(U_n)$ over  $r_{k_n}(U_n)$ is enumerated in $\lgl X_j:j<\tilde{j}\rgl$  whether or not it is maximal.
By (2), all new pre-$a$-cliques  composed of two nodes  are listed before  any new pre-$a$-clique consisting of three nodes, etc.
For each $j<\tilde{j}$,
let $Y_j=X_j\re (l_V+1)$.

By properties (1) and (2), $X_0$ must be a pre-$3$-clique consisting of two nodes.
The construction process in this case is similar to  the construction above for  $S_0$ when  $k>3$.
By Lemma \ref{lem.perfect},
 there is a splitting node $s\in T$ such that $s$ is a sequence of $0$'s and $|s|> l_V+1$.
Extend all nodes in $V$ leftmost in $T$ to the length  $|s|$, and call this set of nodes $Z$.
Apply Lemma \ref{lem.pnc} to obtain $V_0$ end-extending $Z$ so that the following hold:
The
 node in $V_0$ extending $s^{\frown}1$ is a coding node, call it $w_{n,0}$;
 the two nodes in $V_0$ extending the nodes in $Y_0$ both have passing number $1$ at $w_{n,0}$;
all other nodes in $V_0$ are leftmost extensions of the nodes in $V^+\setminus Y_0$;
and the only new pre-clique in $V_0$ is the nodes in $V_0$ extending $Y_0$.
Let $W_{n,0}=\{w_{n,0}\}$.

 Given $j<\tilde{j}-1$ and $V_j$,
 let $Y'_{j+1}$ be the set of those nodes in $V_j$ which  extend the nodes in $Y_{j+1}$.
 Let $a\in[3,k]$ be such that $X_{j+1}$ is a new pre-$a$-clique.
Applying  Lemma \ref{lem.perfect} $a-2$ times,
obtain
splitting nodes $s_i$, $i<a-2$, in $T$ which are sequences of $0$ such that
 $l_{V_j}<|s_0|<\dots<|s_{a-3}|$.
 Extend all nodes ${s_i}^{\frown}1$, $i<a-2$,
 leftmost in $T$ to length $|s_{a-3}|+1$;
 and extend
  the nodes in $V_j$  leftmost in $T$ to length $|s_{a-3}|+1$ and denote this set of nodes as $Z$.
 By Lemma \ref{lem.poc}, this adds no new pre-cliques over $V_j$.
 Next apply  Lemma \ref{lem.pnc} $a-2$  times to obtain $V_{j+1}$ end-extending $Z$ and coding nodes $w_{n,j+1,i}\in T$,   $i<a-2$,
 such that
 letting  $Y''_{j+1}$ be those nodes in $V_{j+1}$ extending nodes in $Y'_{j+1}$,
 the following hold:
\begin{enumerate}
 \item
 $|w_{n,j+1,0}|<\dots<|w_{n,j+1,a-3}|$;
 \item
 The nodes in $V_{j+1}$ all have length
 $|w_{n,j+1,a-3}|$;
 \item
For each   $i<a-2$,
 all nodes in
 $\{w_{n,j+1,i'}:i<i'<a-2\}
 \cup Y''_{j+1}$ have passing number $1$ at
 $w_{n,j+1,i}$.
 \item
 All nodes in $V_{j+1}\setminus Y''_{j+1}$ are leftmost extensions of nodes in $V_j\setminus Y'_{j+1}$.
 \item
 The only new pre-clique
 in $V_{j+1}$ above $V^+$ is the set of nodes in
 $Y''_{j+1}$.
\end{enumerate}
Let $W_{n,j+1}=\{w_{n,j+1,i}:i<a-2\}$.

After $V_{\tilde{j}-1}$ has been constructed,
take some $S_n\in r_{m_n+1}[r_{m_n}(U_n),T]$ such that $\max(S_n)$ end-extends $V_{\tilde{j}-1}$, by Lemma  \ref{lem.HLconstruction}.
Let $W_n=\bigcup_{j<\tilde{j}}W_{n,j}$.

To finish, let $S=\bigcup_{n\in\bN} S_n$ and $W=\bigcup_{n\in\bN}W_n$.
Then $S\le T$, $S$ is incremental, and
the pre-cliques  in $S$ are incrementally witnessed  by coding nodes in $W$.
\end{proof}


\subsection{Ramsey theorem for  strict similarity types}\label{sec.1color}

The main Ramsey theorem   for strong coding trees
is Theorem \ref{thm.mainRamsey}:
Given a  finite coloring of all strictly similar copies (Definition \ref{defn.ssimtype})
 of a fixed  finite  antichain in an incremental strong coding tree,
 there is a subtree which is again a strong coding tree in which all strictly similar copies of the antichain have the same color.
Such antichains will have envelopes which  have the Strict Witnessing Property.
Moreover,  all  envelopes of a fixed incremental antichain of coding nodes will be  strongly isomorphic to each other.
This will allow for an application of
Theorem \ref{thm.MillikenSWP}  to obtain
 the same color for all copies of a given  envelope, in some subtree in $\mathcal{T}_k$.
From this, we will deduce Theorem \ref{thm.mainRamsey}.

Recall that a set of nodes $A$ is an {\em antichain} if no node in $A$ extends any other node in $A$.
In what follows, by   {\em antichain}, we mean  an antichain of coding nodes.
If $Z$ is an antichain,
then   the {\em tree induced by $Z$}
is the set of nodes
\begin{equation}
\{z\re |u|:z\in Z\mathrm{\ and\ } u\in Z^{\wedge}\}.
\end{equation}
We say that an antichain satisfies the  Witnessing Property (Strict Witnessing Property) if and only if the tree it induces satisfies the Witnessing Property (Strict Witnessing Property).

Fix, for the rest of this section,  an incremental strong coding tree
$T\in\mathcal{T}_k$, as in
Lemma \ref{lem.squiggletree}.
Notice that any strong coding subtree of $T$ will also be incremental.
Furthermore, any antichain in $T$ must be incremental.

\begin{defn}[Strict similarity type]\label{defn.ssimtype}
Suppose  $Z\sse T$ is a finite   antichain   of coding nodes.
Enumerate the nodes of $Z$ in  increasing order of length as $\lgl z_i:i<\tilde{i}\rgl$ (excluding, as usual, new singleton pre-$3$-cliques).
Enumerate all nodes in  $Z^{\wedge}$  as $\lgl u^Z_m:m< \tilde{m}\rgl$ in order of increasing length.
Thus, each $u^Z_m$ is either a splitting node in $Z^{\wedge}$ or else  a coding node  in $Z$.
 List the minimal levels of new pre-cliques  in $Z$
 in increasing order as $\lgl l_j: j<\tilde{j}\rgl$.
For each $j<\tilde{j}$, let $I^Z_{l_j}$ denote the set of those
 $i<\tilde{i}$ such that
$\{z_i\re l_j :i\in I^Z_{l_j} \}$ is the new pre-clique in $Z\re l_j$.
The sequence
\begin{equation}
\lgl  \lgl l_j:j<\tilde{j}\rgl,
\lgl I^Z_{l_j}:j< \tilde{j}\rgl,
\lgl |u^Z_m|:m<\tilde{m}\rgl\rgl
\end{equation}
is  {\em  the strict similarity  sequence of $Z$}.

Let $Y$ be another finite  antichain in  $T$, and
let
\begin{equation}
\lgl
\lgl p_j:j<\tilde{k}\rgl,
\lgl I^Y_{p_j}:j< \tilde{k}\rgl,
\lgl |u^Y_m|:m<\tilde{q}\rgl\rgl
\end{equation}
be its  strict similarity  sequence.
We say that $Y$ and $Z$ have the same {\em strict similarity type} or are {\em strictly similar},  and write $Y\sssim Z$, if
\begin{enumerate}
\item
The tree induced by
$Y$  is strongly isomorphic to the tree induced by $Z$, so in particular, $\tilde{m}=\tilde{q}$;
\item
$\tilde{j}=\tilde{k}$;
\item
For each $j<\tilde{j}$, $I^Y_{p_j}=I^Z_{l_j}$;
and
\item
The function $\varphi:\{p_{j}:j<\tilde{j}\}\cup\{|u^Y_m|:m<\tilde{m}\}\ra \{l_{j}:j<\tilde{j}\}\cup\{|u^Z_m|:m<\tilde{m}\}$,
defined by $\varphi(p_{j})=l_{j}$ and
$\varphi(u^Y_m)=u^Z_m$,
is an order preserving bijection between these two linearly ordered sets of natural numbers.
\end{enumerate}
Define
\begin{equation}
\Sim^{ss}_T(Z)=\{Y\sse T:Y\sssim Z\}.
\end{equation}
\end{defn}

Note that  if  $Y\sssim Z$, then
 the map $f:Y\ra Z$ defined  by $f(y_i)=z_i$, for each $i<\tilde{i}$, induces the  strong similarity map from the tree induced by $Y$ onto  the tree induced by $Z$.
Then  $f(u^Y_m)=u^Z_m$, for each $m<\tilde{m}$.
Further,
by (3) and  (4) of Definition \ref{defn.ssimtype},
 this map preserves the order in which  new pre-cliques appear, relative to all  other  new pre-cliques in $Y$ and $Z$  and the nodes in $Y^{\wedge}$ and $Z^{\wedge}$.



The following notion of envelope is defined in terms of structure without regard to an ambient strong coding tree.
Given a fixed  incremental strong coding tree $T$,
in any given strong coding subtree  $U\le T$, there will
 certainly be  finite subtrees  of  $U$ which have no envelope in $U$.
 The point of Lemma \ref{lem.squiggletree} is that
  there will be
a  strong coding subtree  $S\le U$ along with a set of witnessing coding nodes $W\sse U$ so that each finite antichain in  $S$ has an   envelope consisting of nodes from $W$.
Thus, envelopes of antichains in $S$ will exist in $U$.
Moreover, $S$ must be incremental, since $S\le U\le T$.

\begin{defn}[Envelopes]\label{defn.envelope}
Let $Z$ be a finite  incremental antichain of coding nodes.
An envelope of $Z$, denoted $E(Z)$, consists of $Z$ along with a set of coding nodes $W$ such that $Z\cup W$  satisfies Definition \ref{defn.incremental}.
\end{defn}

Thus, all  new pre-cliques in  an envelope
$E(Z)=Z\cup W$ are incrementally witnessed by coding nodes in $W$.
The set $W$ is called the set of {\em witnessing coding nodes} in the envelope.
The next fact follows immediately from the definitions.

\begin{fact}\label{fact.Claim1}
Let
 $Z$ be any antichain  in  an incremental strong coding tree.
Then any envelope of $Z$ has incrementally witnessed pre-cliques, which implies that $Z$ has
 the  Strict Witnessing Property.
\end{fact}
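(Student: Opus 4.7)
The plan is to verify, by unpacking the relevant definitions, that both assertions follow by inspection, as the paper suggests.

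For the first assertion --- that any envelope $E(Z) = Z \cup W$ has incrementally witnessed pre-cliques --- the proof is essentially tautological. By Definition \ref{defn.envelope}, an envelope is defined to be a set $Z \cup W$ satisfying Definition \ref{defn.incremental}, which is precisely the statement that its pre-cliques are incrementally witnessed. I would only need to point this out.

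For the second assertion --- that $Z$ has the Strict Witnessing Property --- I would invoke the convention that an antichain satisfies SWP iff its induced tree does, and then verify each clause of Definition \ref{defn.SWP} for the tree induced by $Z$. Since $Z$ lies in an incremental strong coding tree, its new pre-cliques satisfy Definition \ref{defn.incrementalpo}. I would first use clause (1) of \ref{defn.incrementalpo}, which says each $S_{l_j,1}$ is a minimal new pre-clique at level $l_j$, combined with clause (4), which places each $l_j$ inside a unique interval $(|d^S_m|,|d^S_{m+1}|]$ with $d^S_{m+1}$ a coding node of $S$, to derive SWP clause (1): at most one new pre-clique of size $\geq 2$, or a singleton with its associated new pre-cliques, per interval. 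Clauses (2) and (3) of \ref{defn.incrementalpo} translate directly into SWP clause (2), that every proper subset of a new pre-$a$-clique of size $\geq 3$ appears as a new pre-clique in an earlier interval. The Witnessing Property itself follows from the incremental witnesses $w_j^3,\dots,w_j^{a_j}$ supplied by the envelope, whose structural constraints (a)--(e) of Definition \ref{defn.incremental} ensure that the pre-cliques of $Z$ are witnessed in the sense of Definition \ref{def.WinT}.

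The step I expect to require the most care is reconciling the ``witnessing lives inside the tree'' requirement of the WP with the fact that the witnessing coding nodes $W$ formally lie outside $Z$. The likely resolution is that condition (c) of Definition \ref{defn.incremental} --- restricting the pre-cliques formed by $W$ with $S$-nodes to those already visible inside the pre-clique set $S_{l_j,1}$ --- together with the alignment of intervals given by (d) and (e), guarantees that when one passes to the induced tree of $Z$ alone, the structural footprint of the witnesses suffices to verify the WP condition on a per-interval basis. Beyond this bookkeeping, no additional machinery should be needed: the fact functions as a bridge assembling Definitions \ref{defn.incrementalpo}, \ref{defn.incremental}, \ref{defn.envelope}, and \ref{defn.SWP} into the format required by the main Ramsey theorem of the next subsection.
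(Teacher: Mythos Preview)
Your handling of the first assertion is correct and matches the paper: by Definition~\ref{defn.envelope} an envelope is \emph{defined} to satisfy Definition~\ref{defn.incremental}, so this is tautological.

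For the second assertion, the tension you identify --- that the witnessing coding nodes $W$ lie outside $Z$ --- is not a bookkeeping subtlety to be finessed but a signal that the statement as printed contains a typo. An arbitrary antichain $Z$ in an incremental strong coding tree need \emph{not} satisfy the Witnessing Property: a new pre-clique among nodes of $Z$ can occur at a level witnessed only by a coding node of the ambient tree that is not in $Z$, and the entire purpose of the envelope construction is to adjoin exactly those missing witnesses. Your proposed resolution, that ``the structural footprint of the witnesses suffices to verify the WP condition'' for $Z$ alone, cannot work, since Definition~\ref{defn.WP} explicitly requires the witnessing coding nodes to belong to the tree in question.

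The intended reading --- confirmed by the paper's remark that the fact ``follows immediately from the definitions,'' and by Lemma~\ref{lem.envelopesbehave}, where it is the \emph{envelope} that is asserted to have the SWP --- is that $E(Z)=Z\cup W$, not $Z$, has the Strict Witnessing Property. With this correction the second assertion is indeed immediate: the incremental witnessing structure of Definition~\ref{defn.incremental} places each new pre-clique of $E(Z)$ in an interval ending at a coding node of $W\subseteq E(Z)$, witnessed by coding nodes in $W$, and the incrementality conditions (one new pre-clique per interval, proper subsets appearing in earlier intervals) yield the two clauses of Definition~\ref{defn.SWP} directly. No further argument is needed.
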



\begin{lem}\label{lem.envelopesbehave}
Let $Y$ and $Z$ be strictly similar  incremental antichains of coding nodes.
Then  any envelope of $Y$ is strongly isomorphic  to any envelope of $Z$, and both envelopes have the Strict Witnessing Property.
\end{lem}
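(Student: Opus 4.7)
The second assertion is immediate from Fact~\ref{fact.Claim1}, since every envelope has incrementally witnessed pre-cliques and Definition~\ref{defn.incremental} forces the Strict Witnessing Property. So the substance lies in the first assertion: constructing a strong isomorphism between arbitrary envelopes $E(Y)=Y\cup W_Y$ and $E(Z)=Z\cup W_Z$.

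The plan is to use the strict similarity sequence of $Y$ (equivalently $Z$) as a common backbone. Enumerate $Y=\lgl y_i:i<\tilde{i}\rgl$ and $Z=\lgl z_i:i<\tilde{i}\rgl$ and the meet closures $Y^{\wedge},Z^{\wedge}$ as in Definition~\ref{defn.ssimtype}. Let $\lgl p_j:j<\tilde{j}\rgl$ and $\lgl l_j:j<\tilde{j}\rgl$ be the minimal lengths of new pre-cliques, with $I^Y_{p_j}=I^Z_{l_j}$. For each $j$, the pre-clique at level $p_j$ in $Y$ is a pre-$a_j$-clique with $a_j$ determined by $|I^Y_{p_j}|$ together with the redundancy condition (4) of Definition~\ref{defn.incrementalpo}; since this index data agrees on both sides, the corresponding pre-clique in $Z$ at level $l_j$ is a pre-$a_j$-clique with the same $a_j$. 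By clause~(2) of Definition~\ref{defn.incremental}, $W_Y$ contains exactly $a_j-2$ witnessing coding nodes $w_{Y,j}^3,\dots,w_{Y,j}^{a_j}$ for this pre-clique, and similarly for $W_Z$. So I already have a bijection on witnesses matching $w_{Y,j}^b\mapsto w_{Z,j}^b$.

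Next I would define $f:E(Y)\to E(Z)$ piecewise: on $Y$ by $y_i\mapsto z_i$; extended to the tree induced by $Y$ via the given strong isomorphism of induced trees; and on $W_Y$ by $w_{Y,j}^b\mapsto w_{Z,j}^b$. Then I would verify each clause of Definition~\ref{def.3.1.likeSauer}: bijectivity and preservation of lex order, meets, relative lengths, initial segments, coding-node status, and passing numbers. The key tool is that Definition~\ref{defn.incremental}(d)-(e) rigidly pin down the relative ordering of the lengths $|(w_{Y,j}^b)^{\wedge}|$ and $|w_{Y,j}^b|$ with respect to each other, to the critical nodes $d^S_m$, and to the pre-clique levels $p_j$; clauses~(a)-(c) determine their passing numbers (passing number $1$ precisely at nodes in the pre-clique being witnessed and at later witnesses in the same block, and $0$ everywhere else). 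Since all this structural data is recoverable from the strict similarity sequence plus the fixed $a_j$'s, the same ordering and passing-number pattern holds on the $Z$ side, so $f$ preserves meets and passing numbers.

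Finally, to upgrade $f$ from a strong similarity to a strong isomorphism I need to check it preserves maximal new pre-cliques interval by interval (Definition~\ref{defn.stable}). Because both envelopes satisfy SWP, each new pre-clique in $E(Y)$ (respectively $E(Z)$) of size $\ge 2$ is the unique new pre-clique in its own interval and is automatically maximal there; these pre-cliques are exactly the $Y$-pre-cliques together with the auxiliary ones produced among the witnessing coding nodes in clause~(2)(a), and $f$ sends them to the corresponding pre-cliques in $E(Z)$ in matching intervals. Singleton new pre-cliques, which arise only at witnessing coding nodes, are likewise matched. Hence $f$ is a strong isomorphism.

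The main obstacle I anticipate is bookkeeping the witnessing nodes cleanly: showing that the lengths $|(w^b)^{\wedge}|$ and $|w^b|$ are forced by clauses~(d)-(e) of Definition~\ref{defn.incremental} into a unique relative order modulo strict similarity, so that $f$ genuinely preserves relative lengths between witnesses of different pre-cliques (not just within a single block). Once this ordering is extracted from the definition, the rest of the verification is routine, since passing numbers are dictated by clauses~(a)-(c) and everything else is transported through the already-given strong isomorphism of the induced trees.
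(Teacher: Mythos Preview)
Your proposal is correct and follows essentially the same route as the paper. The only difference is organizational: the paper builds the strong isomorphism by induction on the number $\tilde{j}$ of new pre-cliques, extending the map one witnessing block $\{w_j^3,\dots,w_j^{a_j}\}$ at a time, whereas you define the bijection globally and then verify Definitions~\ref{def.3.1.likeSauer} and~\ref{defn.stable} directly --- both arguments rest on the same rigidity supplied by clauses (a)--(e) of Definition~\ref{defn.incremental}.
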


\begin{proof}
Let $Y=\{y_i:i<\tilde{i}\}$ and $Z=\{z_i:i<\tilde{i}\}$ be the enumerations of $Y$ and $Z$ in order of increasing length,
and  let
\begin{equation}
\lgl \lgl l_j:j<\tilde{j}\rgl,\lgl I^Y_{l_j}:j<\tilde{j}\rgl, \lgl |u^Y_m|:m<\tilde{m}\rgl\rgl
\end{equation}
and
\begin{equation}
\lgl \lgl p_j:j<\tilde{j}\rgl,\lgl I^Z_{p_j}:j<\tilde{j}\rgl, \lgl |u^Z_m|:m<\tilde{m}\rgl\rgl
\end{equation}
be their strict similarity sequences, respectively.
Let $E=Y\cup V$ and $F=Z\cup W$ be any envelopes of $Y$ and $Z$, respectively.
For each $j<\tilde{j}$, let $a_j\ge 3$ be such that
$I_{l_j}^Y$ is a new pre-$a_j$-clique.
Then the members of  $V$ may be  labeled as
$\{v_j^3,\dots,v_j^{a_j}:j<\tilde{j}\}$ with the property that  for each $j<\tilde{j}$,
 given  the least $m<\tilde{m}$ such that
 $|v_j^{a_j}|<|u^Y_m|$,  we have
 $|u^Y_{m-1}|<|(v_j^3)^{\wedge}|$.
This  follows from Definition \ref{defn.incremental}.
Since $Y$ and $Z$ have the same strict similarity type,
it follows that  for each $j<\tilde{j}$, $I_{p_j}^Z$ is also a new pre-$a_j$-clique.
Furthermore,  $W=\{w_j^3,\dots,w_j^{a_j}:j<\tilde{j}\}$,
where for each $j<\tilde{j}$,
 given  the least $m<\tilde{m}$ such that
 $|w_j^{a_j}|<|u^Z_m|$,  we have that  $|u^Z_{m-1}|<|(w_j^3)^{\wedge}|$.
Thus, $V$ and $W$ both have the same size, label it $J$.

Let $\tilde{n}=\tilde{i}+J$,
and let $\{e_n:n<\tilde{n}\}$ and $\{f_n:n<\tilde{n}\}$ be the enumerations of $E$ and $F$ in order of increasing length, respectively.
For each $j<\tilde{j}$,
let $n_j$ be the index in $\tilde{n}$ such that
$e_{n_j}=v_j$ and $f_{n_j}=w_j$.
For  $n<\tilde{n}$,
let $E(n)$ denote
the tree induced by $E$ restricted to those nodes of length less than or equal to $|e_n|$; precisely,
$E(n)=\{e\re |t|: e\in E$, $t\in E^{\wedge}$, and $ |t|\le |e_n|\}$.
Define $F(n)$ similarly.

We prove  that $E\cong F$ by induction on $\tilde{j}$.
If $\tilde{j}=0$, then $E=Y$ and $F=Z$, so $E\cong F$ follows from $Y\sssim Z$.
Suppose now that $\tilde{j}\ge 1$ and that, letting $j=\tilde{j}-1$,
 the induction hypothesis gives that
$E(n)\cong F(n)$ for the maximal $n<\tilde{n}$ such that
$e_n\in Y^{\wedge}$ and $|e_n|<l_j$.
Let $m$ be the least integer below $\tilde{m}$ such that
$|u^Y_{m}|>l_j$.
Then $e_n=u^Y_{m-1}$ and
the only  nodes in $E^{\wedge}$
in the interval $(|u^Y_{m-1}|,|u^Y_{m}|)$
are
$(v^3_j)^{\wedge},\dots, (v^{a_j}_j)^{\wedge}, v^3_j,\dots, v^{a_j}_j$.
Likewise, the only nodes in $F^{\wedge}$ in the interval
$(|u^Z_{m-1}|,|u^Z_{m}|)$
are $(w^3_j)^{\wedge},\dots, (w^{a_j}_j)^{\wedge}, w^3_j,\dots, w^{a_j}_j$.

By the induction hypothesis, there is a strong isomorphism  $g:E(n)\ra F (n)$.
Extend it
to a strong isomorphism
$g^*:E(n')\ra F(n')$, where $n'=\tilde{n}-1$
as follows:
Define
$g^*=g$ on $E(n)$.
For each $i\in [3,a_j]$, let $g^*((v^i_j)^{\wedge})=
(w^i_j)^{\wedge}$ and $g^*(v^i_j)=
w^i_j$.
Recall that the nodes $\{v^3_j,\dots v^{a_j}_j\}$  form a pre-$(a_j-1)$-clique and only have mutual pre-cliques with nodes in $\{y_i:i\in I^Y_{l_j}\}$, witnessing this set, and no other members of $E$.
Likewise,  for
$\{w^3_j,\dots v^{a_j}_j\}$   and  $\{z_i:i\in I^Z_{lp_j}\}$.
Thus, $g^*$ from $E(n'')$ to $F(n'')$ is a strict similarity map, where $n''<\tilde{n}$ is the index such that $v^{a_j}_j=e_{n''}$.
If $n''<\tilde{n}-1$,
then  $\{e_q:n''<q<\tilde{n}\}\sse Y^{\wedge}$
and
$\{f_q:n''<q<\tilde{n}\}\sse Z^{\wedge}$.
Since these sets have no new pre-cliques and are strictly similar, the map $g^*(e_q)=f_q$, $n''<q<\tilde{n}$,
is a strong isomorphism.
Thus, we have constructed a strong isomorphism $g^*:E\ra F$.
It follows from the definitions that envelopes satisfy the Strict Witnessing Property.
\end{proof}

\begin{lem}\label{lem.lastpiece}
Suppose
 $Z$ is a finite antichain  of coding nodes and $E$ is   an envelope of $Z$ in $T$.
Enumerate the nodes
in $Z$ and $E$ in
order of increasing length as
$\lgl z_i:i<\tilde{i}\rgl$ and
 $\lgl e_k:k<\tilde{k}\rgl$,
respectively.
Given   any $F\sse T$ with   $F\cong E$,
let $F\re Z:=\{f_{k_i}:i<\tilde{i}\}$, where
$\lgl f_k:k<\tilde{k}\rgl$
enumerates the nodes in $F$ in order of increasing length
 and for each $i<\tilde{i}$,
$k_i$ is the index such that
 $e_{k_i}=z_i$.
Then
$F\re Z$
is strictly similar to $Z$.
\end{lem}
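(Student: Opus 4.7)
Since $F \cong E$, fix a strong isomorphism $g : E \to F$. By clauses (1), (2), (4) of Definition \ref{def.3.1.likeSauer} (bijection, preservation of lexicographic order, preservation of relative meet-lengths), $g$ preserves the length ordering of nodes: whenever $|e| < |e'|$ for $e, e' \in E$, we have $|g(e)| < |g(e')|$. Consequently, if $\langle e_k : k < \tilde{k}\rangle$ enumerates $E$ in order of increasing length and $\langle f_k : k < \tilde{k}\rangle$ does the same for $F$, then $g(e_k) = f_k$ for every $k < \tilde{k}$. In particular $g(z_i) = g(e_{k_i}) = f_{k_i}$ for each $i < \tilde{i}$, so $g[Z] = F \upharpoonright Z$. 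Because $g$ preserves coding nodes (clause (6)), $F \upharpoonright Z$ is an antichain of coding nodes.

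The next step is to verify the four conditions of Definition \ref{defn.ssimtype} for $F \upharpoonright Z \sssim Z$. Condition (1) (strong isomorphism of induced trees) follows from the fact that $g$ preserves meets, so $g$ maps $Z^{\wedge}$ bijectively onto $(F \upharpoonright Z)^{\wedge}$, and the restriction $g \upharpoonright Z^{\wedge}$ inherits clauses (1)--(7) of Definition \ref{def.3.1.likeSauer} from $g$ itself; in fact, since $g$ is a strong isomorphism of $E$ onto $F$ and the witnessing coding nodes in $E \setminus Z$ (resp.\ $F \setminus (F \upharpoonright Z)$) produce precisely the same maximal new pre-clique data over $Z^{\wedge}$ (resp.\ $(F\upharpoonright Z)^{\wedge}$), the restriction is itself a strong isomorphism. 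Condition (4) (order-preservation of the combined sequence of critical-node lengths and pre-clique levels) is handled by clause (4) of Definition \ref{def.3.1.likeSauer} together with the fact that $g$ preserves the strict-isomorphism data from Definition \ref{defn.stable}: the relative length order of the $|u_m^Z|$ is preserved, and each minimal level $l_j$ of a new pre-$a$-clique in $Z$ sits in a specific critical interval of $E$ (bounded by the levels of the associated witness nodes), whose image under $g$ is the corresponding critical interval in $F$.

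The heart of the argument is verifying conditions (2) and (3): that $F \upharpoonright Z$ has exactly the same number of new pre-cliques as $Z$, with the same index sets $I_{l_j}^Z$. Here I would use the fact (Lemma \ref{lem.envelopesbehave} and its proof) that envelopes in an incremental strong coding tree have the Strict Witnessing Property, and that the witnesses $\{v_j^3,\dots,v_j^{a_j}\} \subseteq E \setminus Z$ of a new pre-$a_j$-clique in $Z$ at level $l_j$ are uniquely pinned down by the incremental structure. Applying $g$, the nodes $\{g(v_j^3),\dots,g(v_j^{a_j})\}$ lie in $F \setminus (F \upharpoonright Z)$, and by clauses (6), (7) of Definition \ref{def.3.1.likeSauer} together with Observation \ref{obs.urcool}, they witness a new pre-$a_j$-clique in $F \upharpoonright Z$ with the \emph{same} index set $I_{l_j}^Z$, at the length $|g(v_j^{a_j})|$ that corresponds via $g$ to $l_j$. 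Conversely, any new pre-clique in $F \upharpoonright Z$ pulls back through $g^{-1}$ to a new pre-clique in $Z$, using that $F$ (being $\cong E$) has the Strict Witnessing Property by Lemma \ref{lem.copy}; so the correspondence is bijective. This yields the required matching of strict similarity sequences.

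The main obstacle I anticipate is the bookkeeping in this last step: one must be careful that the \emph{minimal} level of appearance of a pre-$a_j$-clique in $Z$ maps under $g$ to the minimal level of appearance of the corresponding pre-$a_j$-clique in $F \upharpoonright Z$, rather than some later level. This is where the incremental structure (clauses (1)--(4) of Definition \ref{defn.incrementalpo}) and the tight control given by the Strict Witnessing Property become essential, since they guarantee that in both $E$ and $F$ the pre-cliques appear at uniquely determined levels in uniquely determined critical intervals, and $g$ transports these intervals rigidly. Once this rigidity is observed, the proof reduces to a routine verification of the four clauses of Definition \ref{defn.ssimtype}.
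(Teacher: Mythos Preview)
Your proposal is correct and follows essentially the same route as the paper: identify the strong isomorphism $g:E\to F$, observe that it preserves the length ordering so that $g(e_k)=f_k$ and hence $g[Z]=F\re Z$, and then argue that the restriction of $g$ to $Z$ witnesses strict similarity because $g$ preserves passing numbers at coding nodes and transports the witnessing coding nodes of $E\setminus Z$ to those of $F\setminus(F\re Z)$. The paper's proof is considerably terser---it compresses your verification of conditions (2) and (3) into the single displayed equation $\{k:e_k(|e_j|)=1\}=\{k:f_k(|f_j|)=1\}$ (preservation of passing numbers at each coding-node level) and does not spell out the ``minimal level'' bookkeeping you flagged, taking it as implicit from the incremental witnessing structure---but the underlying mechanism is the same.
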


\begin{proof}
Recall that $E$ has incrementally witnessed new pre-cliques  and $F\cong E$  implies that   $F$ also has this  property, and hence has the SWP.
Let   $\iota_{Z,F}:Z\ra F$ be the injective map defined via $\iota_{Z,F}(z_i)=f_{k_i}$, $i<\tilde{i}$,
and let
 $F\re Z$ denote  $\{f_{k_i}:i<\tilde{i}\}$, the image of $\iota_{Z,F}$.
Then $F\re Z$ is a subset of $F$ which we claim is strictly similar to $Z$.

Since $F$ and $E$ each have incrementally witnessed new pre-cliques,
 the  strong similarity map $g:E\ra F$
satisfies that for each $j<\tilde{k}$, the  indices of the new pre-cliques  at level of the $j$-th coding node are the same:
\begin{equation}
\{k<\tilde{k}:e_k(|e_j|)=1\}=
\{k<\tilde{k}: g(e_k)(|g(e_j)|)=1\}
=\{k<\tilde{k}:f_k(|f_j|)=1\}.
\end{equation}
Since $\iota_{Z,F}$ is the restriction of $g$ to $Z$,
$\iota_{Z,F}$ also
takes each
  new pre-clique  in $Z$
 to the corresponding  new pre-clique  in  $F\re Z$, with the same set of indices.
Thus, $\iota_{Z,F}$ witnesses that $F\re Z$ is
strictly similar to  $Z$.
\end{proof}




\begin{thm}[Ramsey Theorem for Strict Similarity Types]\label{thm.mainRamsey}
Let $Z$ be a finite antichain of coding nodes in   an incremental  strong coding tree $T$,
and suppose  $h$  colors  all subsets of $T$ which are strictly similar to $Z$ into finitely many colors.
Then there is an incremental strong coding tree $S\le T$ such that
all subsets of $S$ strictly similar to $Z$ have the same $h$ color.
\end{thm}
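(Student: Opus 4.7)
The plan is to translate the given coloring of antichains strictly similar to $Z$ into a coloring of strongly isomorphic copies of a fixed envelope $E$ of $Z$, invoke Theorem \ref{thm.MillikenSWP} (which is applicable because envelopes satisfy the Strict Witnessing Property), and then recover from the resulting monochromatic tree an incremental subtree in which every strictly similar copy of $Z$ can actually be enveloped.

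First, I would fix an envelope $E = E(Z)$. By Fact \ref{fact.Claim1} the tree $E$ satisfies the Strict Witnessing Property, and by Lemma \ref{lem.envelopesbehave} every envelope of an antichain strictly similar to $Z$ is strongly isomorphic to $E$. For any copy $F \cong E$ with $F \sse T$, Lemma \ref{lem.lastpiece} ensures that $F \re Z$ is strictly similar to $Z$, so the assignment
\begin{equation}
h^{*}(F) := h(F \re Z)
\end{equation}
defines a finite coloring of the copies of $E$ in $T$. Since $E$ is a finite subtree of $T$ satisfying the SWP, Theorem \ref{thm.MillikenSWP} yields a strong $\mathcal{H}_{k}$-coding tree $R \le T$ and a single color $c$ with $h^{*}(F) = c$ for every copy $F$ of $E$ inside $R$.

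To produce the final tree, I would apply Lemma \ref{lem.squiggletree} to $R$ to obtain an incremental strong $\mathcal{H}_{k}$-coding tree $S \le R$ together with a set $W \sse R$ of witnessing coding nodes incrementally witnessing every new pre-clique of $S$. I claim this $S$ works. Given any $Y \sse S$ with $Y \sssim Z$, the strict similarity with $Z$ forces each new pre-clique of $Y$ to appear at a level of $S$ where the corresponding pre-clique of $S$ has just become new, so that the witnesses for $S$ provided by $W$ restrict to a set of witnessing coding nodes assembling with $Y$ into an envelope $E(Y) \sse R$. By Lemma \ref{lem.envelopesbehave}, $E(Y) \cong E$, so $E(Y)$ is a copy of $E$ in $R$; and $E(Y) \re Z = Y$ by construction, so $h(Y) = h^{*}(E(Y)) = c$.

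The delicate step will be the penultimate one: verifying that for every antichain $Y \sse S$ with $Y \sssim Z$, the witnessing coding nodes supplied by Lemma \ref{lem.squiggletree} really do suffice to build an envelope of $Y$ inside $R$. This requires a careful accounting of how strict similarity types control where new pre-cliques first appear, so that the witnesses tuned to the new pre-cliques of $S$ align precisely with the new pre-cliques of $Y$, together with checking that no spurious pre-cliques arise in the union $Y \cup W(Y)$ that would violate the incremental envelope conditions of Definition \ref{defn.incremental}.
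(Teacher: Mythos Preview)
Your proposal is correct and follows essentially the same route as the paper: define $h^{*}$ on copies of an envelope via $F\mapsto h(F\re Z)$, apply Theorem \ref{thm.MillikenSWP} to homogenize $h^{*}$, and then use Lemma \ref{lem.squiggletree} on the resulting tree to ensure every $Y\sssim Z$ in the final $S$ can be enveloped inside the homogeneous tree. One small point worth tightening: you write ``fix an envelope $E=E(Z)$'' and later treat $E$ as a subtree of $T$, but $Z$ itself need not have an envelope inside $T$ (the witnessing coding nodes required by Definition \ref{defn.incremental} may fail to exist in $T$ at the right levels). The paper handles this by first applying Lemma \ref{lem.squiggletree} to $T$ to get $U\le T$ with witnesses $V\sse T$, choosing some $Y\sssim Z$ inside $U$, and taking $E$ to be an envelope of that $Y$; then $E\sse T$ and Theorem \ref{thm.MillikenSWP} applies. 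The ``delicate step'' you flag is exactly the one the paper also leaves implicit: it simply asserts that Lemma \ref{lem.squiggletree} yields $S\le T'$ and $W\sse T'$ such that every $Y\sse S$ with $Y\sssim Z$ has an envelope in $T'$, relying on the fact that every new pre-clique of such a $Y$ is a new pre-clique of $S$ and hence is already incrementally witnessed by $W$.
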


\begin{proof}
First, note that there is an envelope $E$
of a copy of $Z$ in $T$:
By  Lemma \ref{lem.squiggletree},
 there is an incremental strong coding tree $U\le T$ and a set of coding nodes $V\sse T$
such that each  $Y\sse U$  which is strictly similar to $Z$
has an envelope  in $T$ by adding nodes from $V$.
Since $U$ is strongly isomorphic to $T$,
there is  subset $Y$ of $U$ which is  strictly similar  to  $Z$.
Let
$E$ be any envelope of $Y$ in $T$, using witnessing coding nodes from $V$.

By Lemma \ref{lem.envelopesbehave}, all envelopes of copies of $Z$ are strongly isomorphic and have the SWP.
For each $F\cong E$,
define
$h^*(F)=h(F\re Z)$,
where
 $F\re Z$ is  the  subset of $F$ provided by Lemma \ref{lem.lastpiece}.
The set $F\re Z$
is strictly similar to $Z$,
so  the coloring $h^*$ is well-defined.
By
Theorem \ref{thm.MillikenSWP}, there is
a strong coding tree $T'\le T$ such that
$h^*$ is monochromatic  on all  strongly isomorphic copies of $E$ in $T'$.
 Lemma \ref{lem.squiggletree} implies there is an incremental strong coding tree $S\le T'$ and a set of coding nodes $W\sse T'$
such that each  $Y\sse S$  which is strictly similar to $Z$
has an envelope $F$ in $T'$, so that
 $h(Y)=h^*(F)$.
Therefore, $h$ takes only one color on
all strictly similar copies of $Z$ in $S$.
\end{proof}


\section{The  Henson graphs  have  finite big Ramsey degrees}\label{sec.7}

From the results in previous sections, we now prove the
 main theorem of this paper, Theorem \ref{finalthm}.
This result follows from
Ramsey Theorem \ref{thm.mainRamsey} for strict similarity types
  along with Lemma \ref{lem.bD} below.

For a strong coding tree $T$,  let $(T,\sse)$
be the reduct of $(T,\mathbb{N};\sse,<,c)$.
Then  $(T,\sse)$
is simply the tree structure of $T$, disregarding the difference between coding nodes and non-coding nodes.
We say that two trees $(T,\sse)$ and $(S,\sse)$ are {\em strongly similar trees}
if they
 satisfy
Definition 3.1 in  \cite{Sauer06}.
This is the  same as modifying Definition \ref{def.3.1.likeSauer} by deleting
 (6) and changing (7) to apply to passing numbers of  {\em all} nodes in the trees.
By saying that two finite trees are strongly similar trees, we are implicitly  assuming that
their extensions  to their immediate successors of their
 maximal nodes  are still strongly similar.
Thus, strong similarity of finite trees implies passing numbers of  their immediate extensions are preserved.
Given an antichain $D$ of coding nodes from a strong coding tree, let $L_D$ denote the set of all
lengths of nodes $t\in D^{\wedge}$
such that
 $t$ is not the splitting predecessor of any coding node in $D$.
Define
\begin{equation}\label{eq.D^*}
D^*=\bigcup\{t  \re l:t\in D^{\wedge}\setminus D
\mathrm{\ and\ }l\in L_D\}.
\end{equation}
Then $(D^*,\sse)$ is a tree.

\begin{lem}\label{lem.bD}
Let $T\in\mathcal{T}_k$ be a strong coding tree.
Then there is an infinite  antichain of coding nodes  $D\sse T$  which code
 $\mathcal{H}_k$ in  the same way as $\bT_k$:
$c^{D}_n(l^{D}_i)=c^k_n(l^k_i)$,
for all $i<n$.
Moreover,
$(D^*,\sse)$  and $(\bT_k,\sse)$ are strongly similar as  trees.
\end{lem}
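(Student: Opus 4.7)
\medskip

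\textbf{Proof proposal for Lemma \ref{lem.bD}.}
The plan is to construct $D$ recursively, simulating the coding structure of $\bT_k$ inside $T$ via the extension lemmas of Section \ref{sec.ExtLem}. Fix the strong isomorphism $f_T : \bT_k \to T$ so that $c^T_n = f_T(c^k_n)$. We will build a nested sequence $B_{-k+2} \sqsubset B_{-k+3} \sqsubset \dots \sqsubset B_0 \sqsubset B_1 \sqsubset \dots$ of finite subtrees of $T$, together with coding nodes $d_n \in T$ for $n \ge 0$, such that for each $n$: (i) $B_n$ is valid in $T$ and satisfies the Strict Witnessing Property; (ii) the coding nodes $d_0, \dots, d_n$ all lie in $B_n$ and form an antichain; (iii) $B_n$, viewed as a tree (forgetting the coding-node designations), is strongly similar \emph{as a tree} to $r_{m_n+1}(\bT_k)$, via a similarity map sending $d_i \mapsto c^k_i$; and (iv) the passing numbers match: $d_n(|d_i|) = c^k_n(|c^k_i|)$ for every $i<n$.

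The inductive step from $B_n$ to $B_{n+1}$ proceeds as follows. Using Lemma \ref{lem.factssplit}, first introduce above $\max(B_n)$ the same skew splitting pattern that $\bT_k$ exhibits between $c^k_n$ and $c^k_{n+1}$, doing so in reverse lexicographic order so that no new pre-cliques appear. Then apply Lemma \ref{lem.pnc} to extend to the next available coding level $l^T_p$ in $T$: designate as the ``distinguished'' maximal node the leftmost extension of the node at the appropriate position, and assign to every other maximal node $s$ of the extension the passing number $\varepsilon_s = c^k_{n+1}(|c^k_i|)$, where $s$ corresponds to $d_i$ under the similarity of (iii). These choices are legal for Lemma \ref{lem.pnc} because $\bT_k$ is $K_k$-free, so the assigned $\varepsilon$-values automatically avoid completing a pre-$k$-clique with the new coding node. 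The coding node $d_{n+1}$ is placed at this level as the rightmost (``$1$-side'') extension of the newly created splitting node corresponding to $c^k_{n+1}$'s position in $\bT_k$, which ensures $d_{n+1}$ is incomparable with every $d_i$, $i \le n$, and yet has the correct passing numbers at each $d_i$. Lemma \ref{lem.pnc} guarantees $B_{n+1}$ inherits validity and the Strict Witnessing Property from $B_n$, so (i)--(iv) persist.

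Setting $D = \{d_n : n \in \bN\}$ gives an infinite antichain of coding nodes in $T$ with $c^D_n(l^D_i) = c^k_n(l^k_i)$ for all $i<n$ by (iv). For the moreover claim, let $\widetilde{D} = \bigcup_n B_n$. By (iii), $\widetilde{D}$ is strongly similar as a tree to $\bT_k$. The meet closure $D^\wedge$ consists precisely of the coding nodes $\{d_n\}$ together with the splitting nodes of $\widetilde{D}$, and the construction of $D^*$ prescribed in (\ref{eq.D^*}) removes exactly the coding nodes and their immediate splitting predecessors, which correspond under the tree similarity to the analogous nodes in $\bT_k$ (each coding node of $\bT_k$ sits uniquely between two consecutive critical nodes, with its splitting predecessor being its meet with a sibling path). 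Hence the strong-tree-similarity between $\widetilde{D}$ and $\bT_k$ descends to a strong-tree-similarity between $(D^*,\sse)$ and $(\bT_k,\sse)$.

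The main obstacle is maintaining the antichain property while simultaneously matching all the prescribed passing numbers and preserving the Strict Witnessing Property. This is resolved by placing each $d_{n+1}$ at the rightmost extension of its designated splitting node: this choice forces $d_{n+1}$ to diverge from $d_n$ (and from every earlier $d_i$ extending the same branch through the splitting pattern), so no $d_i$ can be an initial segment of $d_{n+1}$; and it is compatible with all passing-number requirements because Lemma \ref{lem.pnc} explicitly permits any passing-number assignment consistent with $K_k$-freeness. The rest is bookkeeping to verify that the similarity with $r_{m_n+1}(\bT_k)$ at each stage is preserved, which is guaranteed by the structural content of Lemmas \ref{lem.factssplit}, \ref{lem.pnc}, and \ref{lem.facts}.
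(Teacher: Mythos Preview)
Your overall strategy is the paper's: recursively build the antichain using the extension lemmas, inserting one extra splitting node per interval so that the new coding node sits on a terminal side-branch while a sibling branch carries the continuing $\bT_k$-structure. The difficulty is that your inductive hypothesis (iii) is incompatible with that very construction, and the $D^*$ conclusion you draw from it does not follow.

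If each $d_i$ lies on the right side of an extra splitting node (as your final paragraph says), then $B_n$ has strictly more critical levels than $r_{m_n+1}(\bT_k)$---one extra split per coding node---and at each coding level it has one more maximal node (the terminal $d_i$ in addition to the continuing sibling). Hence there is no bijection, let alone a strong tree-similarity, between $B_n$ and $r_{m_n+1}(\bT_k)$, and the map ``$d_i\mapsto c^k_i$'' cannot be part of one: $d_i$ is terminal in $B_n$ while $c^k_i$ is not terminal in $r_{m_n+1}(\bT_k)$. Your claim that $\widetilde D=\bigcup_n B_n$ is strongly similar to $\bT_k$ therefore fails, and the sentence ``$D^*$ removes exactly the coding nodes and their immediate splitting predecessors, which correspond under the tree similarity to the analogous nodes in $\bT_k$'' is based on a similarity that does not exist (indeed $\bT_k$ has no such extra predecessors to remove).

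The paper's bookkeeping is two-track: it maintains a labelling $d_t$ of the \emph{continuing} maximal nodes by $t\in\bT_k(m_n)$, with $c^{\bD}_n$ on a separate terminal branch created by an extra split $v_s$ placed \emph{first} in the interval; the node $d_{c^k_n}$ (the $0$-side of $v_s$), not $c^{\bD}_n$, is what corresponds to $c^k_n$. The inductive invariant is that $r_{m_n+1}(\bD^*)$---with the extra-split levels deleted and the terminal coding nodes absent because no splitting node lies above them---is strongly tree-similar to $r_{m_n+1}(\bT_k)$. Replacing your (iii) by this invariant (similarity of the starred object, carried by the $d_t\leftrightarrow t$ correspondence) repairs the argument; the rest of your outline then matches the paper's proof.
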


\begin{proof}
We will  construct a subtree
 $\bD\sse\bT_k$
such that $D$
the set of coding nodes in $\bD$  form
 an antichain
satisfying the lemma.
Then, since $T\in\mathcal{T}_k$  implies $T\cong \bT_k$,
letting $\varphi:\bT_k\ra T$ be the strong similarity map  between $\bT_k$ and $T$,
the image of $\varphi$ on the coding nodes of  $\bD$ will yield an antichain of coding nodes $D\sse T$ satisfying the lemma.

We will construct $\bD$ so that  for each $n$,
the node of length $l^{\bD}_{n}$ which  is going to be extended to the next  coding node $c^{\bD}_{n+1}$ will  extend to a splitting node in $\bD$
of length smaller than that of any other splitting node in the $(n+1)$-st interval of $\bD$.
Above that  splitting node, the splitting will be regular in the interval until  the next coding node.
Recall that  for each $i\in\mathbb{N}$,
 $\bT_k$
 has either a coding node or  else a splitting node of length $i$.
To avoid some superscripts, let
 $l_n=|c^k_n|$
and $p_n=|c^{\bD}_n|$.
Let
$j_n$  be the index such that $c^{\bD}_n=c^k_{j_n}$, so that
 $p_n$  equals $l_{j_n}$.
The set of  nodes in  $\bD\setminus\{c^{\bD}_n\}$ of length  $p_n$
 shall be indexed  as $\{d_t:t\in \bT_k\cap\Seq_{l_n}\}$.
Recall that $m_n$ is the index such that the $m_n$-th critical node $d^k_{m_n}$ of $\bT_k$ is the $n$-th coding node $c^k_n$ of $\bT_k$.

We define inductively  on $n\ge -1$ finite  trees with coding nodes,
 $r_{m_n+1}(\bD):=  \bD\cap\Seq_{ \le p_{n}}$,
 and
strong similarity maps  of the trees
$\varphi :
r_{m_n+1}(\bT_k)\ra r_{m_n+1}(\bD^*)$,
where $|\varphi(c^k_n)|=p_n$.
Recall that the node $\lgl\rgl$ is the ghost coding node $c^k_{-1}$ in $\bT_k$.
Define  $d_{\lgl\rgl}=\varphi(\lgl\rgl)=\lgl\rgl$.
The node $\lgl\rgl$ splits in $\bT_k$,
so the node $d_{\lgl\rgl}$ will split in $\bD$.
Suppose  that $n\in \mathbb{N}$ and we have constructed
$r_{m_{n-1}+1}(\bD)$
 satisfying the lemma.
By the induction hypothesis,
there is a strong similarity map  of the trees
$\varphi :
r_{m_{n-1}+1}(\bT_k)\ra r_{m_{n-1}+1}(\bD^*)$.
For  $t\in  \bT_k(m_{n-1})$, let
$d_t$ denote $\varphi(t)$.

Let $s$ denote the  node  in $\bT_k  (m_{n-1})$  which  extends to the coding node $c^k_n$.
Let $v_s$ be a splitting node in $\bT_k$ extending $d_s$.
Let $u_s={v_s}^{\frown}1$
and extend all nodes $d_t$,
$t\in \bT_k(m_{n-1})
\setminus\{s\}$,
 leftmost to length $|u_s|$ and label these $d_t'$.
Extend ${v_s}^{\frown}0$ leftmost to length $|u_s|$ and label it $d'_s$.
Let $X=\{d'_t:t\in \bT_k(m_{n-1})\}\setminus\{u_s\}$
 and let
 \begin{equation}
 \Spl(n)=\{t\in \bT_k(m_{n-1}): t\mathrm{\ extends\ to\ a\ splitting\ node\ in\ the\ } n\mathrm{-th\ interval\ of\ } \bT_k\}.
 \end{equation}

Apply Lemma \ref{lem.facts} to obtain a coding node $c^{\bD}_n$ extending $u_s$
and nodes $d_w$, $w\in \bT_k(m_n)$,
so that,
letting  $p_n=|c^{\bD}_n|$
and
\begin{equation}
\bD(m_n)=
\{d_t:t\in \bT_k(m_n)\}\cup\{c^{\bD}_n\},
\end{equation}
and
for $m\in (m_{n-1},m_n)$, defining $\bD(m)=\{ d_t\re |s_m|:t\in \bD(m_n)\}$,
where $s_m$ is the $m$-th splitting node in $\bD(m_n)^{\wedge}$,
 the following hold:
$r_{m_n+1}(\bD)$ satisfies the Witnessing Property
and
$r_{m_n+1}(\bD^*)$ is strongly similar  as  a tree
to
$r_{m_n+1}(\bT_k)$.
Thus, the coding nodes in $r_{m_n+1}(\bD)$
code exactly the same graph as the coding nodes in $r_{m_n+1}(\bT_k)$.

Let $\bD=\bigcup_{m\in\mathbb{N}}\bD(m)$.
Then the set of coding nodes in  $\bD$  forms an antichain of maximal nodes in $\bD$.
Further,
the tree generated by the
 the meet closure of the set $\{c^{\bD}_n:n<\om\}$
is exactly  $\bD$,
and
 $\bD^*$  and  $\bT_k$
are strongly similar as trees.
By the construction,
for each pair $i<n<\om$, $c^{\bD}_n(p_i)=c^k_n(l_i)$;
 hence they code $\mathcal{H}_k$ in the same order.

To finish,  let $f_T$ be the strong isomorphism  from $\bT_k$ to $T$.
Letting $D$ be the $f_T$-image of $\{c^{\bD}_n:n<\om\}$,
we  see that $D$ is an antichain  of coding nodes in $T$
such that $D^*$ and $\bD^*$ are strongly similar trees,
and hence $D^*$ is strongly similar as  a tree to $\bT_k$.
Thus, the antichain of coding nodes $D$
 codes $\mathcal{H}_k$ and satisfies  the lemma.
\end{proof}

Recall that the Henson graph $\mathcal{H}_k$ is, up to isomorphism, the homogeneous $k$-clique-free graph on countably many vertices which is universal for all $k$-clique-free graphs on countably many vertices.

\begin{mainthm}\label{finalthm}
For each $k\ge 3$,
the Henson graph $\mathcal{H}_k$  has finite big Ramsey degrees.
\end{mainthm}

\begin{proof}
Fix $k\ge 3$ and let $\G$ be a finite $K_k$-free graph.
Suppose $f$   colors of all the copies of $\G$ in
$\mathcal{H}_k$ into finitely many colors.
By Example
 \ref{ex.bTp},
there is a
 strong coding tree $\bT_k$  such that the coding nodes in $\bT_k$ code  a  $\mathcal{H}_k$.
Let $\mathcal{A}$ denote the set of all
antichains of coding nodes
 of $\bT_k$ which code a copy of $\G$.
For each $Y\in\mathcal{A}$,
let
$h(Y)=f(\G')$,
where
$\G'$ is the copy of $\G$
coded by the coding nodes in $Y$.
Then $h$  is a finite coloring on $\mathcal{A}$.

Let $n(\G)$ be the number of different strict similarity types
of incremental antichains of coding nodes in  of $\bT_k$ coding $\G$,
and let $\{Z_i:i<n(\G)\}$ be a set of one representative from  each of  these   strict similarity types.
Successively
apply Theorem \ref{thm.mainRamsey}
  to obtain incremental strong coding trees $\bT_k\ge T_0\ge\dots\ge T_{n(\G)-1}$ so that for each $i<n(\G)$,
$h$ is takes only one color on
 the set of
 incremental antichains of coding nodes $A\sse T_i$ such that $A$ is strictly similar to $Z_i$.
Let $S=T_{n(\G)-1}$.

By Lemma \ref{lem.bD}
there is an antichain of coding nodes $D\sse S$  which  codes $\mathcal{H}_k$ in the same way as $\bT_k$.
Every set of coding nodes in $D$ coding $\G$ is automatically  incremental,  since $S$ is incremental.
Therefore, every copy  of $\G$ in the copy of $\mathcal{H}_k$ coded by the coding nodes in $\D$
is coded by an incremental  antichain of coding nodes.
Thus, the number of
 strict similarity types of  incremental  antichains in $\bT_k$  coding $\G$
provides an upper bound for
the big Ramsey degree of $\G$ in $\mathcal{H}_k$.
\end{proof}

Thus, each Henson graph has finite big Ramsey degrees.
Moreover,  given a finite $k$-clique-free graph,
the big Ramsey degree  $T(\mathrm{G},\mathcal{H}_k)$
is bounded by the number of strict similarity types of incremental antichains coding copies of $\mathrm{G}$ in $\bT_k$.



\section{Future Directions}

This article developed a unified approach to proving upper bounds for big Ramsey degrees of all  Henson graphs.
The main phases of the proof were as follows:
I. Find the correct structures to code $\mathcal{H}_k$ and prove Extension Lemmas.
II. Prove an analogue of Milliken's Theorem for finite trees with certain structure.
In the case of the Henson graphs, this is the Strict Witnessing Property.
III.  Find a means for turning finite antichains into finite trees with the Strict Witnessing Property so as to deduce a Ramsey Theorem for finite antichains from the previous Milliken-style theorem.
This general approach should apply to a large class of ultrahomogeneous structures with forbidden configurations.
It will be interesting to see where the dividing line is between those structures for which this methodology works and those for which it does not.
The author conjectures that similar approaches will work for forbidden configurations which are irreducible in the sense of \cite{Nesetril/Rodl77} and \cite{Nesetril/Rodl83}.

Although we have not yet proved the lower bounds
to obtain the precise
 big Ramsey degrees  $T(\G,\mathcal{K}_k)$, we conjecture that they will be exactly the number of strict similarity
 types of
incremental
antichains  coding $\G$.
We further conjecture that once found, the lower bounds will satisfy the conditions   needed for  Zucker's work in \cite{Zucker19}.
If  so, then each Henson graph would admit a big Ramsey structure and  any big Ramsey flow will be a universal completion flow, and any two universal completion flows will be universal.
We mention some bounds  on big Ramsey degrees, found by computing the number of distinct strict similarity types of incremental antichains of coding nodes.
Let $\bar{K}_2$ denote  two vertices with no edge between them. 
$T(K_2,\mathcal{H}_3)=2$ was proved by Sauer in \cite{Sauer98}.  
This is in fact the number of strict similarity types of two coding nodes coding an edge in $\bT_3$.
The number of strict similarity types of incremental antichains coding a non-edge in $\bT_3$ is seven, so 
\begin{equation}
T(\bar{K}_2,\mathcal{H}_3) \le 7.
\end{equation}
The number of strict similarity types of incremental antichains coding an edge in $\bT_4$ is 44, so 
\begin{equation}
T(K_2,\mathcal{H}_4)\le 44.
\end{equation}
The number of strict similarity types of incremental antichains coding a non-edge in $\bT_4$ is quite large. 
These numbers  grow quickly as more pre-cliques are allowed.  
Since any copy of $\mathcal{H}_k$ can be enumerated and coded by a strong coding tree, which wlog can be assumed to be incremental, it seems that these strict similarity types should persist.

We point out that by a compactness argument,
one can obtain finite versions of the  two main Ramsey theorems  in this article.
In particular, the finite version of Theorem \ref{thm.mainRamsey} may well produce better  bounds for the sizes of finite $K_k$-free graphs instantiating that  the \Fraisse\  class  $\mathcal{G}_k^{<}$
has the Ramsey property.

Curiously, the methodology in this paper and \cite{DobrinenJML20}  is also having impact on \Fraisse\ structures without forbidden configurations.
In  \cite{DobrinenRado19}, the author recently  developed trees with coding nodes to code copies of the Rado graph
and used forcing  arguments similar to, but much simpler than, those  in Section \ref{sec.5}
 to answer
a question of \cite{Kechris/Pestov/Todorcevic05} regarding infinite dimensional Ramsey theory of copies of the Rado graph.
These methods work also for the rationals, and ongoing work is to discover all aspects of Ramsey and anti-Ramsey theorems for colorings of definable sets of spaces of such \Fraisse\ structures, the aim being to extend theorems of Galvin-Prikry in \cite{Galvin/Prikry73} and Ellentuck in \cite{Ellentuck74} to a wide collection of \Fraisse\ classes.
Lastly,
modifications and generalizations of this approach  seem likely to
produce   a  general theorem
for  big Ramsey degrees for a large collection of relational \Fraisse\ structures  without forbidden configurations.


\subsection{Updates: May, 2022}

Since  the posting of the  first version of this  paper on the arxiv in January 2019,  much progress has been made on big Ramsey degrees and related Ramsey theory of infinite structures. 
The author's paper \cite{Dobrinen_ICM}, which will appear in the Proceedings of the 2022 International Congress of Mathematicians, 
provides an overview of current knowledge  of big Ramsey degrees of countable  homogeneous structures.
Here, we briefly mention   recent work which has been directly influenced by this paper and its predecessor \cite{DobrinenJML20}, which found upper bounds for big Ramsey degress of the triangle-free Henson graph, $\mathcal{H}_3$.

Via a small tweak of the  strong coding trees in \cite{DobrinenJML20},
exact big Ramsey degrees for $\mathcal{H}_3$ were characterized  by the author in \cite{DobrinenH_3ExactDegrees20}.
(The small tweak is an instance of the more general notion  of ``controlled coding levels"--Definition 5.6.5
of  \cite{Balko7}.)
An equivalent characterization
(based on upper bounds in \cite{Hubicka_CS20} and \cite{Zucker20}) was found independently around the same time  by 
Balko,  Chodounsk\'{y}, 
 Hubi\v{c}ka,   Kone\v{c}n\'{y}, Vena, and Zucker in an unposted preprint.
Big Ramsey degrees in $\mathcal{H}_3$ are exactly characterized by diagonal antichains  of coding nodes in the strong coding trees defined in \cite{DobrinenH_3ExactDegrees20}
and the order in which pairs of new
 pre-$3$-cliques  appear.
This is the  part  of the author's notion of strict similarity type  essential to  unavoidable colorings.
In particular, 
the big Ramsey degree $T(\bar{K}_2,\mathcal{H}_3)$ is now known to be exactly 5.

Building on methods in this paper, Zucker  in \cite{Zucker20} 
developed an approach using fully branching  (similar to our Definition 4.1)  ``left-leaning" coding trees, forcing, and a new style of envelope
to  handle
 all free amalgamation classes with finitely many  relations of arity at most two and finitely many forbidden irreducible structures;
 he proved that their 
\Fraisse\ limits have  finite big Ramsey degrees. 
Zucker  crystalized the essential properties  of new pre-cliques  in this paper to  so-called `age-changes' which made possible his broad approach.
The use of fully branching coding trees 
 led  to a shorter proof of upper bounds, though at the cost of  looser  bounds
for Henson graphs  than those in this paper.
By 
combining Zucker's approach in  \cite{Zucker20} with 
 properties inherent in  the strong coding trees and strict similarity types in this paper  as well as   new ideas, 
exact big Ramsey degrees  were characterized  for 
all free amalgamation classes with finitely many  relations of arity at most two and finitely many forbidden irreducible structures in the recent paper \cite{Balko7} by 
Balko,  Chodounsk\'{y}, Dobrinen, 
 Hubi\v{c}ka,   Kone\v{c}n\'{y}, Vena, and Zucker.
In particular, we now know that 
the big Ramsey degree $T(K_2,\mathcal{H}_4)$ is  exactly 17.

Building on ideas from this paper in another direction is the work of Coulson, Dobrinen, and Patel in \cite{CDP21}.
That work distills an amalgamation property true for the rationals and the Rado graph, and false for Henson graphs,  called SDAP,  which  guarantees that the forcing partial order of the sort used in Theorem \ref{thm.matrixHL} here 
 can be simply end-extension.
By working with diagonal coding trees for \Fraisse\ structures satisfying a strengthened version of SDAP, called SDAP$^+$, 
the following results were obtained:
(1) Indivisibility for \Fraisse\ limits  satisfying SDAP$^+$ with finitely many relations of finite arity;
(2) Exact big Ramsey degrees for  \Fraisse\ limits  satisfying SDAP$^+$ with finitely many relations of arity at most two, moreover characterized just by diagonal antichains and passing types.
Some interesting byproducts of the approach in  \cite{CDP21} are that  exact big Ramsey degrees are found directly, with no need for envelopes, and  previous results 
\cite{Laflamme/NVT/Sauer10},  \cite{Laflamme/Sauer/Vuksanovic06},   and  \cite{Sauer06} are recovered by the one method.

We point out that the exact big Ramsey degrees in \cite{Balko7} and in \cite{CDP21} satisfy the conditions of Zucker in  \cite{Zucker19} to admit big Ramsey structures.

A third line of results using coding trees  involve
infinite dimensional Ramsey theory for \Fraisse\ structures. 
The goal is to extend the Galvin-Prikry  \cite{Galvin/Prikry73} or Ellentuck \cite{Ellentuck74} theorem to subspaces of the Baire space in which all points (infinite subsets of $\om$) represent a subcopy of a given \Fraisse\ structure; coding trees help ensure that one is actually taking a subcopy of the given \Fraisse\ structure, not just any infinite  substructure.
An investigation of this area was
 asked for in \cite{Kechris/Pestov/Todorcevic05} and begun in the author's paper \cite{DobrinenRado19}, which obtained a Galvin-Prikry analogue for the Rado graph. 
The author has extended this to  \Fraisse\ structures satisfying SDAP$^+$ with finitely many relations of arity at most two, building on work in \cite{CDP21}; the Galvin-Prikry analogue there recovers exact big Ramsey degrees.
Work in this direction continues.

In 2020, Hubi\v{c}ka found  a new  way 
 to obtain upper bounds for big Ramsey degrees of the generic partial order as well as the triangle-free Henson graph in \cite{Hubicka_CS20}.
His method used 
parameter spaces of words and a coding of pre-$3$-cliques in that  setting,
producing
the first forcing-free proof of upper bounds for $\mathcal{H}_3$. 
This formed the basis for the characterization of exact big Ramsey degrees of the generic partial order in 
\cite{BalkoPO},
by 
Balko,  Chodounsk{\'{y}}, Dobrinen, 
 Hubi{\v{c}}ka,   Kone{\v{c}}n{\'{y}}, Vena,  and Zucker.
Hubi\v{c}ka's  method for upper bounds has been extended to 
upper bounds for 
relational structures in  a finite relational language with relations of arity at most 2 described by forbidden induced cycles
 in \cite{BalkoForbCycles}.
Big Ramsey degrees for structures with relations of  higher arities was initiated in \cite{Hubicka_et4_19} and \cite{Hubicka_et4_19withproofs} for the generic $3$-regular hypergraph, using the product Milliken theorem.
Category-theoretic methods are also currently being employed to discover exact big Ramsey degrees as well as transport existence of upper bounds in the work of Dasilva Barbosa \cite{Barbosa20} and Ma\v{s}ulovi\'{c} in \cite{Masulovic18}.
Work on further types of structures with  an expanding collection  of methods is continuing to be fruitful.

\bibliographystyle{amsplain}
\bibliography{references}

\providecommand{\bysame}{\leavevmode\hbox to3em{\hrulefill}\thinspace}
\providecommand{\MR}{\relax\ifhmode\unskip\space\fi MR }
\providecommand{\MRhref}[2]{%
  \href{http://www.ams.org/mathscinet-getitem?mr=#1}{#2}
}
\providecommand{\href}[2]{#2}
\begin{thebibliography}{10}

\bibitem{Abramson/Harringon78}
F.G. Abramson and L.A. Harrington, \emph{Models without indiscernibles},
  Journal of Symbolic Logic \textbf{43} (1978), no.~3, 572--600.

\bibitem{BalkoPO}
M.~Balko, D.~Chodounsk{\'{y}}, N.~Dobrinen, J.~Hubi{\v{c}}ka,
  M.~Kone{\v{c}}n{\'{y}}, L.~Vena, and A.~Zucker, \emph{Big {R}amsey degrees of
  the generic partial order}, Extended Abstracts EuroComb 2021, Springer, 2021,
  pp.~637--643.

\bibitem{Balko7}
\bysame, \emph{Exact big {R}amsey degrees via coding trees},  (2021), 97 pp,
  Submitted. arXiv:2110.08409.

\bibitem{BalkoForbCycles}
M.~Balko, D.~Chodounsk{\'{y}}, J.~Hubi{\v{c}}ka, M.~Kone{\v{c}}n{\'{y}},
  J.~Ne{\v{s}}et{\v{r}}il, and L.~Vena, \emph{Big {R}amsey degrees and
  forbidden cycles}, Extended Abstracts EuroComb 2021, Springer, 2021,
  pp.~436--441.

\bibitem{Hubicka_et4_19}
M.~Balko, D.~Chodounsk{\'{y}}, J.~Hubi{\v{c}}ka, M.~Kone{\v{c}}n{\'{y}}, and
  L.~Vena, \emph{Big {R}amsey degrees of 3-uniform hypergraphs}, Acta
  Mathematica Universitatis Comenianae \textbf{LXXXVIII} (2019), no.~3,
  415--422.

\bibitem{Hubicka_et4_19withproofs}
\bysame, \emph{Big {R}amsey degrees of 3-uniform hypergraphs are finite},
  Combinatorica (2022), 14 pp, https://doi.org/10.1007/s00493-021-4664-9.

\bibitem{Barbosa20}
Keegan~Dasilva Barbosa, \emph{A categorical notion of precompact expansions},
  Archive for Mathematical Logic (2020), 29 pp, Submitted. arXiv:2002.11751.

\bibitem{CDP21}
Rebecca Coulson, Natasha Dobrinen, and Rehana Patel, \emph{Fra{\"{i}}ss{\'{e}}
  classes with simply characterized big {R}amsey structures},  (2021), 69 pp,
  Submitted. arXiv:2010.02034.

\bibitem{DevlinThesis}
Dennis Devlin, \emph{Some partition theorems for ultrafilters on $\omega$},
  Ph.D. thesis, Dartmouth College, 1980.

\bibitem{DobrinenRado19}
Natasha Dobrinen, \emph{Borel sets of {R}ado graphs and {R}amsey's theorem},
  European Journal of Combinatorics, Proceedings of the 2016 Prague DocCourse
  on Ramsey Theory, 29 pp, To appear. arXiv:1904.00266v1.

\bibitem{Dobrinen_ICM}
\bysame, \emph{Ramsey theory of homogeneous structures: current trends and open
  problems}, 22 pp, arXiv:2110.00655.

\bibitem{DobrinenH_3ExactDegrees20}
\bysame, \emph{Ramsey theory of the universal homogeneous triangle-free graph,
  {P}art {II}: {E}xact big {R}amsey degrees}, 22pp, arXiv:2009.01985.

\bibitem{DobrinenRIMS17}
\bysame, \emph{Forcing in {R}amsey theory}, Proceedings of 2016 RIMS Symposium
  on Infinite Combinatorics and Forcing Theory (2017), 17--33.

\bibitem{DobrinenJML20}
\bysame, \emph{The {R}amsey theory of the universal homogeneous triangle-free
  graph}, Journal of Mathematical Logic \textbf{20} (2020), no.~2, 2050012, 75
  pp.

\bibitem{Dobrinen/Hathaway16}
Natasha Dobrinen and Daniel Hathaway, \emph{The {H}alpern-{L}{\"{a}}uchli
  {T}heorem at a measurable cardinal}, Journal of Symbolic Logic \textbf{82}
  (2017), no.~4, 1560--1575.

\bibitem{Dobrinen/Hathaway18}
\bysame, \emph{Forcing and the {H}alpern-{L}{\"{a}}uchli {T}heorem}, Journal of
  Symbolic Logic \textbf{85} (2020), no.~1, 87--102.

\bibitem{DodosKBK}
Dodos and Kanellopoulos, \emph{Ramsey theory for product spaces}, American
  Mathematical Society, 2016.

\bibitem{Dzamonja/Larson/MitchellQ09}
M.~D{\v{z}}amonja, J.~Larson, and W.~J. Mitchell, \emph{A partition theorem for
  a large dense linear order}, Israel Journal of Mathematics \textbf{171}
  (2009), 237--284.

\bibitem{Dzamonja/Larson/MitchellRado09}
\bysame, \emph{Partitions of large {R}ado graphs}, Archive for Mathematical
  Logic \textbf{48} (2009), no.~6, 579--606.

\bibitem{El-Zahar/Sauer89}
Mohamed El-Zahar and Norbert Sauer, \emph{The indivisibility of the homogeneous
  ${K}_n$-free graphs}, Journal of Combinatorial Theory, Series B \textbf{47}
  (1989), no.~2, 162--170.

\bibitem{Ellentuck74}
Erik Ellentuck, \emph{A new proof that analytic sets are {R}amsey}, Journal of
  Symbolic Logic \textbf{39} (1974), no.~1, 163--165.

\bibitem{Erdos/Hajnal/Posa75}
Paul Erd{\H{o}}s, Andr{\'{a}}s Hajnal, and Lajos P{\'{o}}sa, \emph{Strong
  embeddings of graphs into coloured graphs}, Colloquia Mathematica Societatis
  J{\'{a}}nos Bolyai, 10, vol. I, Infinite and finite sets (A.~Hajanal,
  R.~Rado, and V.~S{\'{o}}s, eds.), vol.~10, North-Holland, 1973, pp.~585--595.

\bibitem{Fouche98}
W.L. Fouch{\'{e}}, \emph{Symmetries in {R}amsey theory}, East-West Journal of
  Mathematics \textbf{1} (1998), 43--60.

\bibitem{Fraisse54}
Roland Fra{\"{i}}ss{\'{e}}, \emph{Sur l'extension aux relations de quelques
  propri{\'{e}}t{\'{e}}s des ordres}, Annales Scientifiques de l'{\'{E}}cole
  Normale Sup{\'{e}}rieure \textbf{71} (1954).

\bibitem{Galvin/Prikry73}
Fred Galvin and Karel Prikry, \emph{Borel sets and {R}amsey's {T}heorem},
  Journal of Symbolic Logic \textbf{38} (1973), no.~2, 193--198.

\bibitem{Glasner/Weiss02}
Eli Glasner and Benjamin Weiss, \emph{Minimal actions of the group
  $\mathbb{S}(\mathbb{Z})$ of permutations of the integers}, Geometric and
  Functional Analysis \textbf{12} (2002), no.~5, 964--988.

\bibitem{Graham/Leeb/Rothschild72}
R.~L. Graham, K.~Leeb, and B.~L. Rothschild, \emph{Ramsey's theorem for a class
  of categories}, Advances in Mathematics \textbf{8} (1972), 417--433.

\bibitem{Graham/Leeb/Rothschild73}
\bysame, \emph{Errata: ``{R}amsey's theorem for a class of categories},
  Advances in Mathematics \textbf{10} (1973), 326--327.

\bibitem{Graham/Rothschild71}
R.~L. Graham and B.~L. Rothschild, \emph{Ramsey's theorem for $n$-parameter
  sets}, Transactions of the American Mathematical Society \textbf{159} (1971),
  257--292.

\bibitem{Halpern/Lauchli66}
J.~D. Halpern and H.~L{\"{a}}uchli, \emph{A partition theorem}, Transactions of
  the American Mathematical Society \textbf{124} (1966), 360--367.

\bibitem{Halpern/Levy71}
J.~D. Halpern and A.~L{\'{e}}vy, \emph{The {B}oolean prime ideal theorem does
  not imply the axiom of choice}, Axiomatic Set Theory, Proc. Sympos. Pure
  Math., Vol. XIII, Part I, Univ. California, Los Angeles, Calif., 1967,
  American Mathematical Society, 1971, pp.~83--134.

\bibitem{Henson71}
C.~Ward Henson, \emph{A family of countable homogeneous graphs}, Pacific
  Journal of Mathematics \textbf{38} (1971), no.~1, 69--83.

\bibitem{Hubicka_CS20}
Jan Hubi{\v{c}}ka, \emph{Big {R}amsey degrees using parameter spaces},  (2020),
  19 pp, Preprint. arXiv:2009.00967.

\bibitem{Kechris/Pestov/Todorcevic05}
Alexander Kechris, Vladimir Pestov, and Stevo Todorcevic,
  \emph{Fra{\"{i}}ss{\'{e}} limits, {R}amsey theory, and topological dynamics
  of automorphism groups}, Geometric and Functional Analysis \textbf{15}
  (2005), no.~1, 106--189.

\bibitem{Komjath/Rodl86}
P{\'{e}}ter Komj{\'{a}}th and Vojt{\v{e}}ch R{\"{o}}dl, \emph{Coloring of
  universal graphs}, Graphs and Combinatorics \textbf{2} (1986), no.~1, 55--60.

\bibitem{Laflamme/NVT/Sauer10}
Claude Laflamme, Lionel Nguyen Van~Th{\'{e}}, and Norbert Sauer,
  \emph{Partition properties of the dense local order and a colored version of
  {M}illiken's theorem}, Combinatorica \textbf{30} (2010), no.~1, 83--104.

\bibitem{Laflamme/Sauer/Vuksanovic06}
Claude Laflamme, Norbert Sauer, and Vojkan Vuksanovic, \emph{Canonical
  partitions of universal structures}, Combinatorica \textbf{26} (2006), no.~2,
  183--205.

\bibitem{Larson08}
Jean Larson, \emph{Counting canonical partitions in the random graph},
  Combinatorica \textbf{28} (2008), no.~6, 659--678.

\bibitem{Laver84}
Richard Laver, \emph{Products of infinitely many perfect trees}, Journal of the
  London Mathematical Society (2) \textbf{29} (1984), no.~3, 385--396.

\bibitem{Masulovic18}
Dragan Ma{\v{s}}ulovi{\'{c}}, \emph{Finite big {R}amsey degrees in universal
  structures}, Journal of Combinatorial Theory, Series A \textbf{170} (2020),
  30 pp.

\bibitem{Milliken79}
Keith~R. Milliken, \emph{A {R}amsey theorem for trees}, Journal of
  Combinatorial Theory, Series A \textbf{26} (1979), 215--237.

\bibitem{Milliken81}
\bysame, \emph{A partition theorem for the infinite subtrees of a tree},
  Transactions of the American Mathematical Society \textbf{263} (1981), no.~1,
  137--148.

\bibitem{Nesetril/Rodl77}
Jaroslav Ne{\v{s}}et{\v{r}}il and Vojt{\v{e}}ch R{\"{o}}dl, \emph{Partitions of
  finite relational and set systems}, Journal of Combinatorial Theory Series A
  \textbf{22} (1977), no.~3, 289--312.

\bibitem{Nesetril/Rodl83}
\bysame, \emph{Ramsey classes of set systems}, Journal of Combinatorial Theory
  Series A \textbf{34} (1983), no.~2, 183--201.

\bibitem{NVT08}
Lionel Nguyen Van~Th{\'{e}}, \emph{Big {R}amsey degrees and divisibility in
  classes of ultrametric spaces}, Canadian Mathematical Bulletin \textbf{51}
  (2008), no.~3, 413--423.

\bibitem{NVT13}
\bysame, \emph{More on the {K}echris-{P}estov-{T}odorcevic correspondence},
  Fundamenta Mathematicae \textbf{222} (2013), no.~1, 19--47.

\bibitem{NVTHabil}
\bysame, \emph{Structural {R}amsey theory with the
  {K}echris-{P}estov-{T}odorcevic correspondence in mind}, Habilitation thesis,
  Universit{\'{e}} d'Aix-Marseille, 2013, p.~48 pp.

\bibitem{Pestov98_2}
Vladimir Pestov, \emph{On free actions, minimal flows, and a problem by ellis},
  Transactions of the American Mathematical Society \textbf{350} (1998),
  no.~10, 4149--4165.

\bibitem{Pouzet/Sauer96}
Maurice Pouzet and Norbert Sauer, \emph{Edge partitions of the {R}ado graph},
  Combinatorica \textbf{16} (1996), no.~4, 505--520.

\bibitem{Ramsey30}
Frank~P. Ramsey, \emph{On a problem of formal logic}, Proceedings of the London
  Mathematical Society \textbf{30} (1929), 264--296.

\bibitem{Sauer98}
Norbert Sauer, \emph{Edge partitions of the countable triangle free homogenous
  graph}, Discrete Mathematics \textbf{185} (1998), no.~1--3, 137--181.

\bibitem{Sauer06}
\bysame, \emph{Coloring subgraphs of the {R}ado graph}, Combinatorica
  \textbf{26} (2006), no.~2, 231--253.

\bibitem{Shelah91}
Saharon Shelah, \emph{Strong partition relations below the power set:
  consistency -- was {S}ierpinski right? {II}}, Sets, Graphs and Numbers
  (Budapest, 1991), vol.~60, Colloq. Math. Soc. J{\'{a}}nos Bolyai,
  North-Holland, 1991, pp.~637--688.

\bibitem{TodorcevicBK10}
Stevo Todorcevic, \emph{Introduction to {R}amsey {S}paces}, Princeton
  University Press, 2010.

\bibitem{Farah/TodorcevicBK}
Stevo Todorcevic and Ilijas Farah, \emph{Some {A}pplications of the {M}ethod of
  {F}orcing}, Yenisei Series in Pure and Applied Mathematics, 1995.

\bibitem{Zhang17}
Jing Zhang, \emph{A tail cone version of the {H}alpern-{L}{\"{a}}uchli theorem
  at a large cardinal}, Journal of Symbolic Logic \textbf{84} (2019), no.~2,
  473--496.

\bibitem{Zucker19}
Andy Zucker, \emph{Big {R}amsey degrees and topological dynamics}, Groups,
  Geometry and Dynamics (2018), 235--276.

\bibitem{Zucker20}
\bysame, \emph{A {N}ote on {B}ig {R}amsey degrees},  (2020), 21pp, Submitted.
  arXiv:2004.13162.

\end{thebibliography}

\end{document}